\newcommand{\vertiii}[1]{{\left\vert\kern-0.25ex\left\vert\kern-0.25ex\left\vert #1
    \right\vert\kern-0.25ex\right\vert\kern-0.25ex\right\vert}}
\numberwithin{equation}{section} \numberwithin{figure}{section}
\numberwithin{table}{section} \setlength{\oddsidemargin}{0in}
\theoremstyle{plain}
\newtheorem{theorem}{Theorem}[section]
\newtheorem{proposition}[theorem]{Proposition}
\newtheorem{lemma}[theorem]{Lemma}
\newtheorem{cor}[theorem]{Corollary}
\newtheorem{rem}[theorem]{Remark}
\def\beq{\begin{equation}}
\def\eeq{\end{equation}}
\def\R{\mathbb{R}}
\def\C{\mathbb{C}}
\def\ep{\varepsilon}
\def\eps{\varepsilon}
\def\la{\lambda}
\begin{document}

\title[]{Thomas-Fermi approximation for coexisting two component Bose-Einstein condensates and nonexistence of vortices for small rotation}

\author{Amandine Aftalion}
\address{CNRS UMR 8100, Laboratoire de Math\'ematiques de Versailles, Universit\'{e} de Versailles Saint-Quentin, 45
avenue des Etats-Unis, 78035 Versailles C\'edex, France.}
\email{amandine.aftalion@uvsq.fr}

\author{Benedetta Noris}
\address{INdAM-COFUND Marie Curie Fellow, Laboratoire de Math\'ematiques de Versailles, Universit\'{e} de Versailles Saint-Quentin, 45 avenue des Etats-Unis, 78035 Versailles C\'edex, France.}
\email{benedettanoris@gmail.com}

\author{Christos Sourdis}
\address{Department of Applied Mathematics and Department of Mathematics, University of
Crete, GR--714 09 Heraklion, Crete, Greece.}
\email{csourdis@tem.uoc.gr}

\date{\today}

\begin{abstract}
We study minimizers of a Gross--Pitaevskii energy
describing  a two-component Bose-Einstein condensate
 confined in a radially symmetric harmonic trap and set into
rotation. We consider the case of coexistence
of the components in the Thomas-Fermi regime, where a small
parameter $\ep$ conveys a singular perturbation.   The
minimizer of the energy without rotation is determined as the
positive solution of a system of coupled PDE's for which we show uniqueness. The
limiting problem for $\ep =0$ has degenerate and irregular behavior at  specific
radii, where the gradient blows up.
 By means of a perturbation argument, we obtain precise estimates
for the convergence of the minimizer to this limiting profile, as $\ep$ tends to 0. For low rotation, based
on these estimates, we can show that the ground states remain real valued and do not have vortices, even
 in the region of small density.
\end{abstract}

\maketitle

\section{Introduction}
\subsection{The problem}

In this paper, we study the behavior of the minimizers of the
following energy functional describing a two component Bose-Einstein condensate
\begin{equation}\label{eqEnergyOmega}
\begin{split}
E^\Omega_\ep(u_1,u_2)=\sum_{j=1}^2 \int_{\R^2} \left\{ \frac{|\nabla u_j|^2}{2} + \frac{|x|^2}{2\ep^2}|u_j|^2 +\frac{g_j}{4\ep^2}|u_j|^4 -\Omega x^{\perp}\cdot(i u_j,\nabla u_j) \right\} \,dx \\
+\frac{g}{2\ep^2} \int_{\R^2} |u_1|^2|u_2|^2 \,dx
\end{split}
\end{equation}
 in the space
\begin{equation}\label{H_space_definition}
\mathcal{H}=\left\{(u_1,u_2):\ u_j\in H^1(\R^2,\C), \
\int_{\R^2}|x|^2|u_j|^2\,dx<\infty, \ \|u_j\|_{L^2(\R^2)}=1,\
j=1,2\right\}.
\end{equation}
The parameters $g_1,g_2,g,\ep$ and $\Omega$ are positive: $\Omega$ is the
angular velocity corresponding to the rotation of the condensate, $x^\perp=(-x_2,x_1)$ and $\cdot$ is the
scalar product for vectors, whereas $(\ ,\ )$ is the complex scalar product, so
that  we have
\[
x^{\perp}\cdot(iu,\nabla u)=x^{\perp}\cdot\frac{iu\nabla\bar u-i\bar
u\nabla u}{2}= -x_2\frac{iu\partial_{x_1}\bar u-i\bar
u\partial_{x_1}u}{2}+x_1\frac{iu\partial_{x_2}\bar u-i\bar
u\partial_{x_2}u}{2}.
\]
 Here, $g_j$ is the self interaction of each component (intracomponent coupling) while $g$ measures the effect of interaction between the two components (intercomponent coupling).
 We are interested in studying the existence and behavior of the
minimizers in the limit when $\ep$ is
small, describing strong interactions, also called the Thomas-Fermi limit. We  assume the condition
\begin{equation}\label{eq:condition_on_g_thomas_fermi}
g^2<g_1g_2,
\end{equation}
which  implies that the two components $u_1$ and $u_2$ of the
minimizers can coexist, as opposed to the segregation case
$g^2>g_1g_2$. Additionally, we can assume without loss of generality
that
\begin{equation}\label{eq:condition_R_1<R_2}
0<g_1\leq g_2.
\end{equation}

We start with the analysis of the minimizers of the energy functional
$E_\varepsilon^0$ without rotation, namely with $\Omega=0$.
 Up to multiplication by a complex number
 of modulus 1, the minimizers
$(\eta_{1,\ep},\eta_{2,\ep})$ of $E_\varepsilon^0$ in $\mathcal{H}$
 are positive solutions of the following system of coupled
Gross--Pitaevskii equations:
\begin{subequations}\label{eq:main_eta_1_eta_2}
\begin{align} \label{eq:main_eta_1_eta_2a}
&-\ep^2 \Delta \eta_{1,\ep}+|x|^2\eta_{1,\ep}+g_1|\eta_{1,\ep}|^2\eta_{1,\ep}+g\eta_{1,\ep}|\eta_{2,\ep}|^2=\la_{1,\ep}\eta_{1,\ep} \quad  \text{ in } \R^2, \\
 \label{eq:main_eta_1_eta_2b}
&-\ep^2 \Delta
\eta_{2,\ep}+|x|^2\eta_{2,\ep}+g_2|\eta_{2,\ep}|^2\eta_{2,\ep}+g|\eta_{1,\ep}|^2\eta_{2,\ep}=\la_{2,\ep}\eta_{2,\ep}
\quad  \text{ in } \R^2,
 \\
& \     \eta_{i,\varepsilon}(x)\to 0 \ \textrm{as}\ \ |x|\to
  \infty,\ i=1,2,& \label{limeqeta}
\end{align}
\end{subequations}
where $\la_{1,\ep}$, $\la_{2,\ep}$ are the Lagrange multipliers due
to the constraints. We will also refer to the positive minimizers as
ground state solutions.
 Formally
setting $\varepsilon=0$ in (\ref{eq:main_eta_1_eta_2}) gives rise to
the nonlinear algebraic system
\begin{equation}\label{eqlinarAlgebr}
\left\{ \begin{array}{ll}
 |x|^2\eta_1+g_1\eta_1^3+g\eta_1\eta_2^2=\la_{1,0}\eta_1 \quad & \text{ in } \R^2, \\
&\\
|x|^2\eta_2+g_2\eta_2^3+g\eta_1^2\eta_2=\la_{2,0}\eta_2 \quad &
\text{ in } \R^2,
\end{array}\right.
\end{equation}
where $\eta_i\geq 0$  satisfy
$\|\eta_i\|_{L^2(\mathbb{R}^2)}=1$, $i=1,2$. In the region where
neither $\eta_i$ is identically zero, this yields the system
\begin{equation}\label{redetai}
\left\{ \begin{array}{ll}
 g_1\eta_1^2+g\eta_2^2=\la_{1,0}-|x|^2, \\
&\\
g\eta_1^2+g_2\eta_2^2=\la_{2,0}-|x|^2.
\end{array}\right.
\end{equation} This leads to  the condition (\ref{eq:condition_on_g_thomas_fermi})
 and the fact that the supports of $\eta_i$ are
   compact sets: more precisely,  the supports of $\eta_i$ are
    either 2 disks or a disk and an annulus.
 The limiting geometry is two disks when
 \begin{equation}\label{eq:condition_on_g_two_disks}
0<g<\frac{g_1+\sqrt{g_1^2+8g_1g_2}}{4}.
\end{equation}
% which comes from the inequality $2g^2-g_1g-g_1g_2<0$.
 This condition
is more restrictive than \eqref{eq:condition_on_g_thomas_fermi}, in
the sense that \eqref{eq:condition_R_1<R_2} and
\eqref{eq:condition_on_g_two_disks} together imply
\eqref{eq:condition_on_g_thomas_fermi}.
   If, on the contrary, we assume that \begin{equation}\label{eqannulus}g>\frac{g_1+\sqrt{g_1^2+8g_1g_2}}{4},\end{equation} then
the limiting configuration consists of a disk and an annulus; in
this case, the assumption $g_1\leq g_2$ implies $g>g_1$. It is helpful to introduce the following quantities:
\begin{equation}\label{eqGammas}
\Gamma_1=1-\frac{g}{g_1}, \qquad \Gamma_2=1-\frac{g}{g_2}, \qquad
\Gamma=1-\frac{g^2}{g_1g_2}.
\end{equation}
Under the assumptions
\eqref{eq:condition_on_g_thomas_fermi}-\eqref{eq:condition_R_1<R_2}
and \eqref{eq:condition_on_g_two_disks}, we  prove that
\begin{equation}\label{eqIntroConverg}\eta_{i,\varepsilon}^2 \to a_i \ \ \textrm{uniformly\ in}\ \mathbb{R}^2 \ \textrm{as} \
\ep\to0,\ i=1,2,
\end{equation}
where $\sqrt{a_1},\ \sqrt{a_2}$ are the solutions to
(\ref{eqlinarAlgebr}) with $L^2$ constraint 1, so that
\begin{equation}\label{eq:a_i_def}
a_1(x)=\left\{ \begin{array}{ll}
a_{1,0}(x), \quad & |x|\leq R_{1,0}, \\
& \\
0, \quad & |x|\geq R_{1,0},
\end{array}\right.
\ \ \ a_2(x)=\left\{ \begin{array}{ll}
a_{2,0}(x), \quad & |x|\leq R_{1,0}, \\
& \\
a_{2,0}(x)+\frac{g}{g_2}a_{1,0}(x), \quad & R_{1,0} \leq |x|\leq R_{2,0}, \\
& \\
0, \quad & |x|\geq R_{2,0},
\end{array}\right.
\end{equation}
with $R_{1,0}\leq R_{2,0}$ determined explicitly in terms of
$g,g_1,g_2$ (see (\ref{eq:R_i_def})), and
\begin{equation}\label{eq:a_i0_def}
a_{1,0}(x)=\frac{\Gamma_2}{g_1\Gamma}(R_{1,0}^2-|x|^2), \qquad
a_{2,0}(x)=\frac{R_{2,0}^2-R_{1,0}^2}{g_2}+\frac{\Gamma_1}{g_2\Gamma}
(R_{1,0}^2-|x|^2),
\end{equation}
\begin{equation}\label{eq:a_20out}
a_{2,0}(x)+\frac{g}{g_2}a_{1,0}(x)=\frac1{g_2} (R_{2,0}^2-|x|^2).
\end{equation}
We note that $R_{1,0}<R_{2,0}$ if $g_1<g_2$ and $a_1\equiv a_2$ if $g_1=g_2$.
 Moreover, we  show that $\la_{i,\ep}\to \la_{i,0}$, $i=1,2$,
where
\begin{equation}\label{eq:lambda_1_def}
\la_{1,0}=\frac{g}{g_2}R_{2,0}^2+\Gamma_2R_{1,0}^2, \qquad \qquad
\la_{2,0}=R_{2,0}^2.
\end{equation}

Because of
\eqref{eq:condition_on_g_thomas_fermi}-\eqref{eq:condition_R_1<R_2},
we always have that $\Gamma$ and $\Gamma_2$ are positive. On the other hand,
$\Gamma_1$ can have either sign:
  if $g<g_1$,  the singular limits $a_i$ consist of two decreasing
functions, and in the case $g>g_1$, $a_2$
 is increasing near the origin and up to $R_{1,0}$ (though it remains strictly
positive under assumption (\ref{eq:condition_on_g_two_disks})) and then decreasing, while $a_1$ is decreasing. If
$g=g_1$, we have that $a_2$ is constant on the ball of radius
$R_{1,0}$. We remark that the first derivatives of $\sqrt{a_1}$ and
$\sqrt{a_2}$ have an \emph{infinite} jump discontinuity across the
circles $|x|=R_{1,0}$ and $|x|=R_{2,0}$ respectively, while the
first derivative of $\sqrt{a_2}$ has a \emph{finite} jump
discontinuity across $|x|=R_{1,0}$ (if $g_1<g_2$).
In particular, neither function belongs to the Sobolev
space $H^1(\mathbb{R}^2)$. Actually, their maximal regularity is
that of the H\"{o}lder space $C^\frac{1}{2}(\mathbb{R}^2)$.

In the case of \eqref{eq:condition_on_g_thomas_fermi}-\eqref{eq:condition_R_1<R_2}
and \eqref{eqannulus}, that is when $a_1$ is supported in a disk and $a_2$ in an annulus, we  also define the corresponding functions
 $a_i$ and prove (\ref{eqIntroConverg}).

%\begin{figure}[htbp]
%\begin{center}
%\includegraphics[width=4.0in]{befign.pdf}
%\caption{The graphs of $\sqrt{a_1}$ (in black) and $\sqrt{a_2}$ (in grey) for $g=2$, $g_1=1$, $g_2=8$.} \label{figai}
%\end{center}
%\end{figure}

Based on the estimates for the convergence in
(\ref{eqIntroConverg}), we will show that for a large range of
velocities  $\Omega$, the minimizers of $E_\varepsilon^\Omega$ in
$\mathcal{H}$ coincide with the minimizers  of $E_\varepsilon^0$,
provided that $\varepsilon>0$ is sufficiently small.

The aim of this paper is threefold: \begin{enumerate}
\item prove the uniqueness of the positive solution $(\eta_{1,\varepsilon},\eta_{2,\varepsilon})$ of
 (\ref{eq:main_eta_1_eta_2}) (given any $\lambda_{1,\varepsilon}$ and $\lambda_{2,\varepsilon}$), and of the minimizer of $E_\varepsilon^0$ in $\mathcal{H}$
 (modulo a constant complex phase),
\item get precise estimates on the convergence, as  $\varepsilon\to 0$, of  $(\eta_{1,\varepsilon},\eta_{2,\varepsilon})$, the
positive minimizer of $E_\varepsilon^0$ in $\mathcal{H}$, to the
singular limit $(\sqrt{a_1},\sqrt{a_2})$ defined in
(\ref{eq:a_i_def}),
\item prove that for $\Omega$ below a critical velocity, the
minimizers of $E_\varepsilon^\Omega$ in $\mathcal{H}$ have no
vortices in $\mathbb{R}^2$, provided that $\varepsilon>0$ is
sufficiently small.
\end{enumerate}

Point 1 relies on the division of two possible positive solutions componentwise, and proving that each  quotient is equal to a constant
of modulus 1.

 Point 2 is the extension to the system of the results of \cite{KaraliSourdisGround}  for a single equation. The idea
 is  to apply a perturbation argument to construct a positive  solution to (\ref{eqsystem}),
 ``near'' $(\sqrt{a_1},\sqrt{a_2})$. Then, the uniqueness
 result in Point 1
allows us to conclude that the constructed solution
%that is provided by the
%perturbation argument
is indeed  the
  ground state. Therefore, we are able to
  obtain precise
asymptotic estimates for the behavior of
 the ground state as $\varepsilon\to 0$.
We emphasize that, even though the system (\ref{eq:main_eta_1_eta_2}) is
 coupled,  we are going to reduce it, at leading order, to two independent Gross-Pitaevskii
 equations. The proof of Point 2, when both condensates are disks, with different techniques, is the topic of a paper in preparation
  by C. Gallo \cite{Gallo}.

  Point 3 relies on fine estimates for the Jacobian from \cite{jerard}. It consists in extending the proof of \cite{AftalionJerrardLetelierJFA} for a single equation to the system, which works well
  once the difficult results of points 1 and 2 have been established.

%%%%%%%%%%%%%%%%%%%%%%%%%%%%%%%%%%%%%%%%%%%%%%%%%%%%%%%%%%%%%%%%%%%%%%%%%%%%%%%%%%%%%%%%%%%%%

\subsection{Motivation and known results}

 Two component condensates
  can describe a single isotope in two different
hyperfine spin states, two different isotopes of the same atom or
isotopes of two different atoms.  We refer to \cite{AftalionMason}
for more details  on the modeling and the experimental references.

According to the respective values of $g_1$, $g_2$, $g$ and
$\Omega$, the minimizers exhibit very different properties in terms
 of shape of the bulk, defects and coexistence of the components or spatial separation, as $\varepsilon\to 0$. In a recent  paper,  Aftalion and Mason \cite{AftalionMason} have produced phase diagrams to
  classify the types of minimizers according to the parameters of the
  problem. Below, we summarize their findings.
   \begin{itemize}
   \item Coexisting condensates with vortex lattices: each condensate is a disk, and, for sufficiently large rotation, displays a vortex lattice.
    The specificity is that each vortex in component 1 creates a peak in component 2 and vice versa.
     It is this
     interaction between peaks and vortices which governs the shape of the vortex lattice.
     For some parameter regimes, the square lattice gets stabilized because
     it has less energy than the triangular lattice \cite{AftalionMasonWei}.
     \item phase separation with radial symmetry: component 1 is a disk while component 2 is an annulus.
    New defects emerge such as giant skyrmions and the presence of peaks inside the annulus, corresponding to vortices in the disk.  \item
    phase separation and complete breaking of symmetry with either droplets  or vortex sheets. \end{itemize}

    It turns out that the sign of the parameter
  $\Gamma$ defined in (\ref{eqGammas}) plays an important  role: if $\Gamma>0$, the two components coexist
   while if $\Gamma<0$, they separate or segregate (case of droplets, vortex sheets). In the case of no rotation,
 the segregation behavior in two component condensates has been  studied by many authors: regularity of the wave function \cite{NoTaTeVe}, regularity of the interface \cite{CaffLin2}, asymptotic behavior near the interface \cite{BeLinWeiZhao,BeTer,dwz}, $\Gamma$-convergence to a Modica-Mortola type energy \cite{AAJRL2013} in the case of a trapped condensate.
 On the other hand, the case of coexistence is the topic of emerging works in terms of vortices: \cite{AftalionMasonWei} for
 a trapped condensate and \cite{ABM} for a homogeneous condensate. The results that are the core of this paper are totally
  new and will allow a much better description of vortices.
 Indeed, in order to understand the behaviour of vortices in a trapped condensate, one has first
    to understand the effect of the trap at leading order on the profile. Therefore, one requires very precise estimates on the
   ground state at $\Omega=0$ for small $\varepsilon$. This is the analogue of what has been obtained for the single
   component case
    that we  now recall.
 Many papers \cite{Aft,AftalionJerrardLetelierJFA,GalloPelinovskyAA,IgnatMillotJFA,KaraliSourdisGround} have studied the one
component analogue of the energy functional (\ref{eqEnergyOmega}),
namely the functional
\begin{equation}\label{eqenergy1comp}
J_\varepsilon^\Omega(u)=\int_{\R^2} \left\{ \frac{|\nabla u|^2}{2} +
\frac{|x|^2}{2\ep^2}|u|^2 +\frac{\gamma}{4\ep^2}|u|^4
-\Omega x^{\perp}\cdot(i u,\nabla u) \right\} \,dx \\
\end{equation}
under the constraint $\int_{\R^2}|u|^2\,dx=1$, where $\gamma$ is
some positive constant.

In the following theorem we have collected various results from
\cite{AftalionJerrardLetelierJFA,GalloPelinovskyAA,KaraliSourdisGround}
(see also Appendix \ref{secAppenGS} herein)  concerning the
minimizers of the energy $J_\varepsilon^0$ without rotation.

\begin{theorem}\label{thmGSNoRotationScalar}
For every $\varepsilon>0$, there exists a unique positive minimizer
$\eta_\varepsilon$ of $J_\varepsilon^0$ with $L^2$ constraint 1, and
any minimizer has the form $e^{i\alpha}\eta_\varepsilon$ for
some $\alpha\in \mathbb{R}$. The minimizer $\eta_\varepsilon$ is
radial and there is a unique pair $(\eta_{\eps}, \lambda_\eps)$ which is a solution of
\begin{equation}\label{eqIntroGSscalarEq}
-\varepsilon^2 \Delta \eta +|x|^2\eta
+\gamma\eta^3=\lambda_\varepsilon \eta,\ x\in \mathbb{R}^2,\
\eta(x)\to 0\ \textrm{as}\ |x|\to \infty,
\end{equation}
with $\eta$ positive. Let
\[
a_0(x)=\left\{\begin{array}{ll}
                \frac{\lambda_0-|x|^2}{\gamma}, & |x|\leq R_0, \\
                  &   \\
                 0, & |x|\geq R_0,
              \end{array}
 \right.
 %\ \ \ a_{0,\varepsilon}(x)=\left\{\begin{array}{ll}
 %               \frac{\lambda_\varepsilon-|x|^2}{\gamma}, & |x|\leq R_{0,\varepsilon}, \\
 %                 &   \\
 %                0, & |x|\geq R_{0,\varepsilon},
 %             \end{array}
 %\right.
\]
where $\lambda_0>0$ is uniquely determined from the condition
$\int_{\mathbb{R}^2}^{}a_0(x)dx=1$ and $R_0=\sqrt{\lambda_0}$.

There exist constants $c,C,\delta>0$, with $\delta<\frac{R_0}{4}$,
such that the following properties hold:
\[
|\lambda_\varepsilon-\lambda_0|\leq C|\log
\varepsilon|\varepsilon^2,
\]
%\[
%\left|\eta_\varepsilon(r)-\varepsilon^\frac{1}{3}\beta_{\varepsilon}
%V\left(\beta_{\varepsilon}
%\frac{r-R_{0,\varepsilon}}{\varepsilon^\frac{2}{3}}
%\right)\right|\leq C\left\{\begin{array}{ll}
%                 \varepsilon+|r-R_{0,\varepsilon}|^\frac{3}{2} & \textrm{if}\ -\delta \leq r-R_{0,\varepsilon}\leq 0, \\
%                   &   \\
%                 \varepsilon \exp\left\{-c\frac{|r-r_{i,\varepsilon}|}{\varepsilon^\frac{2}{3}} \right\} &
%                 \textrm{if} \ 0\leq r-R_{0,\varepsilon}\leq
%                 \delta,
%               \end{array}
%\right.
%\]
%where $V$ is the Hastings-McLeod solution \cite{hastingsbook} to the
%Painlev\'{e}-II equation \cite{fokas}, namely the unique solution to
%the boundary value problem
%\begin{equation}\label{eqpainleve}
%v''=v(v^2+s),\ s\in \mathbb{R};\ v(s)-\sqrt{-s}\to 0\ \textrm{as}\
%s\to -\infty,\ v(s)\to 0 \ \textrm{as}\ s\to \infty,
%\end{equation}
%and
%$\beta_{\varepsilon}=\left(-a_0'(R_{0,\varepsilon})\right)^\frac{1}{3}$;
%\[
%\left|\eta_\varepsilon(r)-\sqrt{a_{0,\varepsilon}(r)} \right|\leq
%C\varepsilon^2 |r-R_{0,\varepsilon}|^{-\frac{5}{2}}\ \ \textrm{if}\
%C\varepsilon^\frac{2}{3}\leq R_{0,\varepsilon}-r\leq \delta;
%\]
%\[
%\left|\eta_\varepsilon(r)-\sqrt{a_{0,\varepsilon}(r)} \right|\leq
%C\varepsilon^2 \ \ \textrm{if}\ 0 \leq r\leq R_{0,\varepsilon}-
%\delta;
%\]
%\[
%\eta_\varepsilon(r)\leq
%C\varepsilon^\frac{1}{3}\exp\left\{-c\frac{r-r_{0,\varepsilon}}{\varepsilon^\frac{2}{3}}\right\},\
%\ r\geq R_{0,\varepsilon}.
%\]
\[
\|\eta_{\varepsilon}-\sqrt{a_0}\|_{L^\infty(\mathbb{R}^2)}\leq
C\varepsilon^\frac{1}{3},
\]
\[
\left|\eta_{\varepsilon}(r)-\sqrt{a_0(r)}\right|\leq
C\varepsilon^\frac{1}{3}\sqrt{a_0(r)},\ \ 0\leq r=|x|\leq
R_{0}-\varepsilon^\frac{1}{3},
\]
\[
\|\eta_{\varepsilon}-\sqrt{a_0}\|_{L^\infty(|x|\leq
R_{0}-\delta)}\leq C|\log \varepsilon|\varepsilon^2,
\]
and
\[
\eta_\varepsilon(r)\leq
C\varepsilon^\frac{1}{3}\exp\left\{-c\frac{r-R_{0}}{\varepsilon^\frac{2}{3}}\right\},\
\ r\geq R_{0},
\]
for sufficiently small $\varepsilon>0$.
\end{theorem}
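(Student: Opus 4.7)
The plan is to establish, in order: (a)~existence, uniqueness and radial symmetry of the positive minimizer $\eta_\varepsilon$ and the characterization of all minimizers as $e^{i\alpha}\eta_\varepsilon$; (b)~uniqueness of positive decaying solutions of the Euler--Lagrange equation~(\ref{eqIntroGSscalarEq}); (c)~the quantitative asymptotics as $\varepsilon\to 0$.

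For~(a), I would apply the direct method: the trapping potential $|x|^2/\varepsilon^2$ yields compact embedding of $\{u\in H^1(\R^2):\int|x|^2|u|^2<\infty\}$ into $L^p(\R^2)$ for $p\in[2,\infty)$, so any minimizing sequence has a weakly convergent subsequence realizing the infimum. Writing $u=|u|e^{i\varphi}$ one has $|\nabla u|^2=|\nabla|u||^2+|u|^2|\nabla\varphi|^2$; since $\Omega=0$, the rotation term vanishes and only $|u|$ enters the rest of the functional, which forces $\varphi\equiv\text{const}$ for any minimizer. Hence minimizers are of the form $e^{i\alpha}|u|$, and Schwarz symmetrization decreases the Dirichlet and potential parts of the energy without altering $\|u\|_{L^2}$ and $\|u\|_{L^4}$, so the positive minimizer is radial and nonincreasing. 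Uniqueness follows by expressing the functional in terms of $\rho=u^2$ subject to $\int\rho=1$: the nonlinear piece $\gamma\rho^2/(4\varepsilon^2)$ is strictly convex in $\rho$, while the kinetic piece $\tfrac12\int|\nabla\sqrt{\rho}|^2$ is also convex in $\rho$ (Benguria--Brezis--Lieb convexity of the Fisher information), so the functional is strictly convex on an affine constraint.

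For~(b), a positive solution of (\ref{eqIntroGSscalarEq}) is automatically radial by the moving-plane method and then strictly decreasing by the ODE, while the $L^2$ mass depends strictly monotonically on $\lambda$ through the standard variational identity; hence the pair $(\eta_\varepsilon,\lambda_\varepsilon)$ is unique. For~(c), I would implement the perturbative scheme developed in \cite{KaraliSourdisGround}. In the bulk, away from $\{|x|=R_0\}$, the natural approximation is $\sqrt{a_0}$, whose linearized operator $-\varepsilon^2\Delta+(|x|^2-\lambda_0)+3\gamma a_0=-\varepsilon^2\Delta+2\gamma a_0$ is uniformly positive on $|x|\leq R_0-\delta$; since the residual when $\sqrt{a_0}$ is plugged into (\ref{eqIntroGSscalarEq}) is $-\varepsilon^2\Delta\sqrt{a_0}=O(\varepsilon^2)$, one obtains the $|\log\varepsilon|\varepsilon^2$ interior bound by a sub-/super-solution argument (the logarithm reflects the $\log$ behaviour of the two-dimensional Green's function), and the $L^2$ normalization then yields the corresponding estimate on $\lambda_\varepsilon$. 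Near $|x|=R_0$ I would use the Painlev\'e--II rescaling $r=R_0+\varepsilon^{2/3}s$, $\eta=\varepsilon^{1/3}v(s)$, whose leading-order problem is the Hastings--McLeod connection; this gives the global $L^\infty$ bound of order $\varepsilon^{1/3}$, the relative bound $C\varepsilon^{1/3}\sqrt{a_0}$ on $r\leq R_0-\varepsilon^{1/3}$, and, after matching to an Agmon-type barrier on $r\geq R_0$, the exponential decay estimate at the scale $\varepsilon^{2/3}$.

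The main obstacle is the construction of sufficiently accurate global sub- and super-solutions that interpolate smoothly between the Thomas--Fermi bulk profile $\sqrt{a_0}$ and the inner Painlev\'e--II layer while preserving positivity and the sharp exponents $\varepsilon^{1/3}$ and $\varepsilon^{2/3}$. This is the technical core of \cite{KaraliSourdisGround}, and precisely the step that must be lifted to the coupled system in Point~2 of the introduction, after the uniqueness assertion of Point~1 has been secured.
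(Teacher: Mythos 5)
Your outline is sound and essentially reaches the same endgame as the paper, but it gets to uniqueness and radial symmetry by a route that is genuinely different from the one the paper exploits. The paper treats Theorem~\ref{thmGSNoRotationScalar} as a known statement, citing \cite{AftalionJerrardLetelierJFA,GalloPelinovskyAA,KaraliSourdisGround} and Appendix~\ref{secAppenGS}; the machinery it \emph{does} develop is the system analogue in Section~\ref{secUniq}, and comparing against that is instructive. There, uniqueness of positive solutions of the Euler--Lagrange equation for fixed $\lambda$ is obtained by a Brezis--Oswald division argument (studying $u/\eta$ and integrating against $(\psi^2-1)/\psi$, with careful control of boundary terms at infinity), radial symmetry then falls out \emph{as a corollary of uniqueness} (rotate the solution), and the characterization of all minimizers as $e^{i\alpha}\eta$ comes from the energy-splitting identity \eqref{ensplit} together with $\Gamma>0$. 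Your proposal instead uses Schwarz symmetrization plus the Benguria--Brezis--Lieb convexity of $\rho\mapsto\int|\nabla\sqrt{\rho}|^2$ for uniqueness and radiality of the minimizer, and moving planes plus monotonicity of $\lambda\mapsto\|\eta_\lambda\|_{L^2}$ for uniqueness of the pair $(\eta_\varepsilon,\lambda_\varepsilon)$. Both are valid for the scalar problem. The trade-off: your convexity argument is cleaner and in fact still works for the system under \eqref{eq:condition_on_g_thomas_fermi} (the interaction form $g_1\rho_1^2+2g\rho_1\rho_2+g_2\rho_2^2$ is convex precisely when $g^2\le g_1g_2$), but it only addresses the \emph{constrained minimizer}, not uniqueness of positive solutions of the PDE for a \emph{given} $\lambda$, which is what the perturbation construction in Section~\ref{secPerturb} ultimately needs; the paper's division argument covers both. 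Moving planes, which you invoke, are explicitly avoided in the paper because the system is non-cooperative.

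On the asymptotics (your part~(c)) your plan matches \cite{KaraliSourdisGround} and Appendix~\ref{secAppenGS} closely: linearization about $\sqrt{a_0}$ in the bulk where the potential reduces to $2\gamma a_0\ge c$, Painlev\'e--II/Hastings--McLeod inner layer at the scale $\varepsilon^{2/3}$, algebraic matching, and an Agmon barrier beyond $R_0$. One small caveat: the heuristic you give for the $|\log\varepsilon|$ factor (``the $\log$ behaviour of the two-dimensional Green's function'') is not the mechanism used in the cited work. There the factor arises when one enforces the $L^2$ normalization: after the leading $\varepsilon^2\Delta\sqrt{A_\varepsilon}/(2A_\varepsilon)$ correction is isolated, the residual of $\eta_\varepsilon^2-A_\varepsilon^+$ integrated over the transition annulus $D\varepsilon^{2/3}<|r-R_0|<\delta$ accumulates a logarithm, and this is what propagates into $|\lambda_\varepsilon-\lambda_0|$ and thence into the $L^\infty$ interior bound with $a_0$ (as opposed to $a_{0,\varepsilon}$, for which the bound is a clean $O(\varepsilon^2)$ as in \eqref{eqGSuUnif}). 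Since you correctly defer the detailed matched asymptotics to \cite{KaraliSourdisGround}, this is only a matter of getting the right intuition, not a gap in the plan.
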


In fact, given $\lambda>0$, the assertions of the above theorem hold
for the unique positive solution of  equation
(\ref{eqIntroGSscalarEq}) with $\lambda$ in place of
$\lambda_\varepsilon$.
 Using these estimates and a Jacobian estimate from \cite{jerard},
the following theorem is proven in
\cite{AftalionJerrardLetelierJFA}:
\begin{theorem}
Assume that $u_\varepsilon$, $\eta_\eps$ minimize respectively  $J_\varepsilon^\Omega$, $J_\varepsilon^0$ under the constraint of $L^2$ norm 1. There exist $\tilde{\varepsilon}_0, \tilde{\omega}_0,
\tilde{\omega}_1>0$ such that if
$0<\varepsilon<\tilde{\varepsilon}_0$ and $\Omega\leq
\tilde{\omega}_0 |\log \varepsilon|-\tilde{\omega}_1 \log|\log
\varepsilon|$ then $u_\varepsilon=e^{i\alpha}\eta_\varepsilon$ in
$\mathbb{R}^2$ for some constant $\alpha$.
\end{theorem}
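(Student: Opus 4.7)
The plan is to use the Lassoued--Mironescu energy decomposition together with the Jacobian estimate of \cite{jerard}, in the spirit of the argument of \cite{AftalionJerrardLetelierJFA}. Writing the minimizer as $u_\varepsilon = \eta_\varepsilon v_\varepsilon$ (which is well defined since $\eta_\varepsilon>0$ everywhere), a direct computation using the Euler--Lagrange equation for $\eta_\varepsilon$ together with the $L^2$-constraint $\int\eta_\varepsilon^2|v_\varepsilon|^2=1$ produces the splitting
\begin{equation*}
J_\varepsilon^\Omega(\eta_\varepsilon v_\varepsilon) = J_\varepsilon^0(\eta_\varepsilon) + F_\varepsilon(v_\varepsilon;\Omega),
\end{equation*}
where
\begin{equation*}
F_\varepsilon(v;\Omega) = \int_{\mathbb{R}^2}\eta_\varepsilon^2\left\{\frac{|\nabla v|^2}{2} + \frac{\gamma\,\eta_\varepsilon^2}{4\varepsilon^2}(1-|v|^2)^2\right\}dx - \Omega\int_{\mathbb{R}^2}\eta_\varepsilon^2\,x^\perp\cdot(iv,\nabla v)\,dx.
\end{equation*}
Since $\eta_\varepsilon$ is itself a real, positive competitor on which the rotation term vanishes, minimality of $u_\varepsilon$ forces $F_\varepsilon(v_\varepsilon;\Omega)\leq 0$.

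The core step is then to show that the (indefinite) rotation term is strictly dominated by the nonnegative Ginzburg--Landau piece as long as $\Omega\ll|\log\varepsilon|$. For this, I would rewrite
\begin{equation*}
\int\eta_\varepsilon^2\,x^\perp\cdot(iv_\varepsilon,\nabla v_\varepsilon)\,dx = 2\int\psi_\varepsilon\, Jv_\varepsilon\,dx + \text{far-field remainder},
\end{equation*}
where $Jv=\tfrac12 d(iv,dv)$ is the vorticity and the ``stream function'' $\psi_\varepsilon$ is the unique radial, decaying function with $\nabla^\perp\psi_\varepsilon=\eta_\varepsilon^2 x^\perp$; Theorem~\ref{thmGSNoRotationScalar} readily gives $\|\psi_\varepsilon\|_\infty\leq C$ uniformly in $\varepsilon$. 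On a ball $\{|x|\leq R_0-\delta\}$, where $\eta_\varepsilon^2\geq c_\delta>0$ by the same theorem, I would invoke the sharp Jacobian estimate of \cite{jerard} to bound $\int\psi_\varepsilon Jv_\varepsilon$ by $\|\psi_\varepsilon\|_\infty/|\log\varepsilon|$ times the weighted Ginzburg--Landau energy of $v_\varepsilon$, up to a remainder of polynomial order in $\varepsilon$. Outside this ball, the pointwise decay $\eta_\varepsilon(r)\leq C\varepsilon^{1/3}\exp\{-c(r-R_0)/\varepsilon^{2/3}\}$ and $\int|u_\varepsilon|^2\leq 1$ render both the kinetic and rotation contributions negligible as long as $\Omega$ grows no faster than $|\log\varepsilon|$.

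Inserting these estimates into $F_\varepsilon(v_\varepsilon;\Omega)\leq 0$ yields an inequality of the form
\begin{equation*}
\left(1 - C\frac{\Omega}{|\log\varepsilon|}\right)\int\eta_\varepsilon^2\left\{\frac{|\nabla v_\varepsilon|^2}{2} + \frac{\gamma\,\eta_\varepsilon^2}{4\varepsilon^2}(1-|v_\varepsilon|^2)^2\right\}dx \leq C\,\Omega\,\varepsilon^{s}
\end{equation*}
for some $s>0$. Choosing $\tilde\omega_0$ small enough that the parenthesis is bounded away from zero and $\tilde\omega_1$ large enough to absorb the logarithmic remainder absorbs exactly in the stated window $\Omega\leq \tilde\omega_0|\log\varepsilon|-\tilde\omega_1\log|\log\varepsilon|$, the left-hand side must vanish. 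Thus $|v_\varepsilon|\equiv1$ and $\nabla v_\varepsilon\equiv0$, so $v_\varepsilon\equiv e^{i\alpha}$ and $u_\varepsilon=e^{i\alpha}\eta_\varepsilon$.

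\emph{The main obstacle} is the treatment of the transition region $\{R_0-\delta\leq|x|\leq R_0+O(\varepsilon^{2/3})\}$ where the weight $\eta_\varepsilon^2$ degenerates: the Jacobian estimate of \cite{jerard} is naturally formulated on domains where the GL density has a uniform lower bound, and one must glue it to the decay estimates of Theorem~\ref{thmGSNoRotationScalar} without losing an extra $|\log\varepsilon|$ in the process. The fine balance between these two contributions is precisely what produces the $\log|\log\varepsilon|$ correction in the admissible threshold for $\Omega$.
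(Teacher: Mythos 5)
Your skeleton is exactly the one used in \cite{AftalionJerrardLetelierJFA} (and in Section \ref{secRotation} of this paper for the two-component analogue): the Lassoued--Mironescu splitting, minimality forcing $F_\varepsilon(v_\varepsilon;\Omega)\leq 0$, the integration by parts converting the rotation term into a Jacobian against a primitive of $r\eta_\varepsilon^2(r)$, and Jerrard's estimate. However, as written, the argument has a genuine gap at the very last step, and a weaker but still substantive issue in the cut-off.

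\emph{The main gap is the conclusion.} You arrive at an inequality of the form
$\bigl(1-C\Omega/|\log\varepsilon|\bigr)\mathrm{GL}_\varepsilon(v_\varepsilon)\leq C\,\Omega\,\varepsilon^{s}$
and then declare that the left-hand side ``must vanish.'' It does not: this inequality only yields \emph{smallness} of $\mathrm{GL}_\varepsilon(v_\varepsilon)$, not exact vanishing, whereas the theorem asserts the exact identity $u_\varepsilon=e^{i\alpha}\eta_\varepsilon$ for each sufficiently small $\varepsilon$. The proof in \cite{AftalionJerrardLetelierJFA} (reproduced for the system in Propositions \ref{prop:estimate_log_ep-11}--\ref{prop:v>1/2} and the proof of Theorem \ref{thm:nonexistence_vortices}) closes this gap in three further steps: from the small bound one first deduces $\mathrm{GL}_\varepsilon(v_\varepsilon)\leq C|\log\varepsilon|^{-11}$; this gives a clearing-out property $|v_\varepsilon|\geq\tfrac12$ on the support of the cut-off; one then introduces the phase $w_\varepsilon=v_\varepsilon/|v_\varepsilon|$, which has $Jw_\varepsilon\equiv 0$, subtracts $Jw_\varepsilon$ from $Jv_\varepsilon$ in the rotation term and re-estimates, obtaining an inequality of the form $C\leq C\cdot\varepsilon|\log\varepsilon|^{19/4}\cdot C$ for the rotation contribution $C$, which forces $C=0$ exactly, and only then does the Ginzburg--Landau energy of $v_\varepsilon$ vanish identically. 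Without this bootstrapping, your ``must vanish'' step is not justified.

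\emph{The secondary issue is the cut-off.} You propose to apply Jerrard's estimate on a ball $\{|x|\leq R_0-\delta\}$ with $\delta$ fixed, and dismiss the outside using the decay $\eta_\varepsilon(r)\leq C\varepsilon^{1/3}\exp\{-c(r-R_0)\varepsilon^{-2/3}\}$. But that decay only holds for $r\geq R_0$; on the fixed-width annulus $\{R_0-\delta<r<R_0\}$ the density $\eta_\varepsilon^2$ is bounded below by $c\,\delta$, and since $\Omega\sim|\log\varepsilon|$ the rotation contribution there is $O(\Omega\delta)$, which is not negligible --- indeed, it diverges. The argument in the paper works with a shrinking cut-off supported in $\{|x|\leq R_0-|\log\varepsilon|^{-3/2}\}$; on its complement the auxiliary ratio $F_\varepsilon(r)=\eta_\varepsilon^{-2}(r)\int_r^\infty s\eta_\varepsilon^2\,ds$ (not the stream function $\psi_\varepsilon$ itself) satisfies $\Omega F_\varepsilon\leq 1/4$, which is what makes the outer term $B_i$ nonnegative (Proposition \ref{prop:B_i>0}) and lets the inner Jacobian estimate carry the whole weight. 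Using $\|\psi_\varepsilon\|_\infty$ rather than $\|F_{0}\|_\infty$ also produces the wrong threshold constant $\tilde\omega_0$: the sharp constant is $\bigl(2\|F_{0}\|_\infty\bigr)^{-1}$, as in the condition (\ref{eq:Omega_condition2}). You correctly identified the transition region as ``the main obstacle,'' but the correct fix is the $|\log\varepsilon|^{-3/2}$ cut-off and the ratio $F_\varepsilon$, not a gluing across the $O(\varepsilon^{2/3})$ shell.
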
 This leads, in particular, to the uniqueness of the ground state for small $\Omega$.

%%%%%%%%%%%%%%%%%%%%%%%%%%%%%%%%%%%%%%%%%%%%%%%%%%%%%%%%%%%%%%%%%%%%%%%%%%%%%%%%%%%%%%%%%%%%%%%%%%%%%%%%%%%%%%%%%%%%%%%%

\subsection{Main results}

 Our first result concerns uniqueness and radial symmetry for the problem without rotation.
\begin{theorem}\label{thm:uniqueness} Assume that (\ref{eq:condition_on_g_thomas_fermi}) holds.
\begin{enumerate}
                         \item  Let us fix some $\lambda_{i,\varepsilon}>0$, $i=1,2$. Then, the positive solution of (\ref{eq:main_eta_1_eta_2})
                          is unique, if it exists.
                         \item The positive minimizer
$(\eta_{1,\ep},\eta_{2,\ep})$ of $E_\ep^0$ in $\mathcal{H}$ is
unique  and radially symmetric. Every other
minimizer has the form
$(e^{i\alpha_1}\eta_{1,\ep},e^{i\alpha_2}\eta_{2,\ep})$, where
$\alpha_1,\alpha_2$ are constants. If $g_1=g_2$ then
$\eta_{1,\varepsilon}\equiv\eta_{2,\varepsilon}$.
                       \end{enumerate}
                       \end{theorem}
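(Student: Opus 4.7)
The plan for part (1) is to formalize the ``componentwise division'' hint in the introduction through a Picone-type identity. Fix $\lambda_{1,\varepsilon},\lambda_{2,\varepsilon}>0$ and suppose $(\eta_{1},\eta_{2})$ and $(\tilde\eta_{1},\tilde\eta_{2})$ are two positive solutions of \eqref{eq:main_eta_1_eta_2}. First I would invoke the strong maximum principle and standard elliptic regularity to conclude that each $\eta_i$ is smooth and strictly positive, so that the quotients $\tilde\eta_i/\eta_i$ are well-defined test functions. Next, I would test the equation for $\eta_i$ with $\tilde\eta_i^2/\eta_i$, the equation for $\tilde\eta_i$ with $\tilde\eta_i$, subtract, and integrate over $\mathbb{R}^2$. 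The difference of the kinetic terms rearranges, after integration by parts, into the nonnegative Picone square
\[
\int_{\mathbb{R}^2} \left|\nabla\tilde\eta_i - \frac{\tilde\eta_i}{\eta_i}\nabla\eta_i\right|^2 dx \geq 0.
\]
Writing $D_i := \eta_i^2 - \tilde\eta_i^2$, this produces the inequality $g_i\int \tilde\eta_i^2 D_i\,dx + g\int \tilde\eta_i^2 D_j\,dx \geq 0$ for each $i\in\{1,2\}$, $j\neq i$. Swapping the roles of the tilded and untilded pair and adding, the Lagrange-multiplier contributions cancel (they equal $\lambda_i=\tilde\lambda_i$ in part~(1)), yielding
\[
g_i\int_{\mathbb{R}^2} D_i^2\,dx + g\int_{\mathbb{R}^2} D_i D_j\,dx \leq 0, \qquad i=1,2.
\]
Summing over $i$ gives $g_1\int D_1^2 + 2g\int D_1 D_2 + g_2\int D_2^2 \leq 0$. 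By hypothesis \eqref{eq:condition_on_g_thomas_fermi} the quadratic form $g_1 x^2 + 2g xy + g_2 y^2$ is strictly positive definite, so $D_1\equiv D_2\equiv 0$, hence $\eta_i\equiv\tilde\eta_i$.

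For part~(2), let $(u_1,u_2)\in\mathcal{H}$ minimize $E_\varepsilon^0$. Writing $u_j = f_j e^{i\theta_j}$ with $f_j=|u_j|$, one has $|\nabla u_j|^2 = |\nabla f_j|^2 + f_j^2|\nabla\theta_j|^2$, while every remaining term in $E_\varepsilon^0$ depends only on $f_j$ (this is where $\Omega=0$ enters decisively). Hence $(f_1,f_2)$ is itself a minimizer, and every minimizer has $\nabla\theta_j=0$ on $\{f_j>0\}$. The Euler--Lagrange system \eqref{eq:main_eta_1_eta_2} together with the strong maximum principle forces $f_j>0$ on all of $\mathbb{R}^2$ (since $f_j\not\equiv 0$ by the $L^2$ constraint), so each $\theta_j$ is a global constant and the minimizer has the claimed form. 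Applied to two positive minimizers whose Lagrange multipliers may a priori differ, the argument from part~(1) goes through because the only additional contribution is a term proportional to $(\lambda_i-\tilde\lambda_i)(\int\eta_i^2-\int\tilde\eta_i^2)$, which vanishes thanks to the common normalization $\int\eta_i^2=\int\tilde\eta_i^2=1$. This establishes uniqueness of the positive minimizer.

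Radial symmetry is then immediate: for any rotation $R$ of $\mathbb{R}^2$, $(\eta_{1,\varepsilon}\circ R,\eta_{2,\varepsilon}\circ R)$ is again a positive minimizer and must therefore coincide with $(\eta_{1,\varepsilon},\eta_{2,\varepsilon})$. In the same spirit, when $g_1=g_2$ the system \eqref{eq:main_eta_1_eta_2} is invariant under swapping the two components, so $(\eta_{2,\varepsilon},\eta_{1,\varepsilon})$ is a positive minimizer and uniqueness forces $\eta_{1,\varepsilon}\equiv\eta_{2,\varepsilon}$.

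The main technical point I anticipate is the rigorous justification of the integrations by parts on $\mathbb{R}^2$ in the Picone step: one needs decay estimates for $\eta_i$ and $|\nabla\eta_i|$ strong enough that the boundary flux on $\partial B_R$ vanishes as $R\to\infty$, and one needs $\tilde\eta_i/\eta_i$ to remain a bounded, integrable-weight test function. Both follow from the confining potential $|x|^2$ via Agmon-type estimates combined with the strict positivity furnished by the maximum principle, but they are the details that must be verified rather than assumed.
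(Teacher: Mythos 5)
Your part-(1) argument, once unfolded, is the same computation the paper performs. Testing the equation for $\eta_i$ by $\tilde\eta_i^2/\eta_i$ and that for $\tilde\eta_i$ by $\tilde\eta_i$, subtracting, then swapping the roles and adding, produces exactly the two Picone squares that appear in the paper's quotient identity: writing $\psi_i=\tilde\eta_i/\eta_i$ and multiplying \eqref{equniqEqlin} by $(\psi_i^2-1)/\psi_i$, the term $\eta_i^2|\nabla\psi_i|^2(1+\psi_i^{-2})$ is precisely the sum of your two Picone squares, and the zeroth-order contribution is $g_iD_i^2+gD_iD_j$ with $D_i=\eta_i^2-\tilde\eta_i^2$. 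Both routes land on $\int_{\R^2}(g_1D_1^2+2gD_1D_2+g_2D_2^2)\,dx\leq 0$, and positive definiteness from $g^2<g_1g_2$ finishes.

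Where you are too casual is the boundary terms. You assert that the boundedness of the ratio $\tilde\eta_i/\eta_i$ follows from the confining potential via Agmon-type estimates, but the two solutions can a priori decay with different Gaussian exponents, so $\tilde\eta_i^2/\eta_i$ need not be bounded and is not obviously an admissible test function on $\R^2$. The paper does not try to prove the ratio bounded; instead it runs a Berestycki--Caffarelli-style weighted cutoff argument (multiplying \eqref{equniqEqlin} by $r^\alpha\chi_R^2\psi_i$) to establish $\int_{\R^2} r^\alpha u_i^2\,|\nabla\eta_i|^2/\eta_i^2\,dx<\infty$, after which the coarea formula supplies a sequence $R_k\to\infty$ along which the boundary flux in \eqref{equniqlastRel} vanishes. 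That is the real technical content of Proposition \ref{proUniqGeneral}, and your proposal does not supply a substitute for it.

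For part (2) you take a genuinely different route: you reduce to a positive minimizer by the polar/diamagnetic decomposition and then re-run the part-(1) computation, noting that the Lagrange-multiplier contributions cancel because both minimizers share $\|\eta_i\|_{L^2}=1$. That cancellation is correct. But to invoke part (1) you must first know that a positive minimizer actually solves \eqref{eq:main_eta_1_eta_2}, including the decay condition \eqref{limeqeta}; this is not automatic from $(\eta_1,\eta_2)\in\mathcal{H}$ and you do not address it (the paper obtains it by a barrier argument after the $L^\infty$ bound of Lemma \ref{lemma:L_infty_bounds}). The paper avoids the detour entirely by proving the energy-splitting identity \eqref{ensplit} via Proposition \ref{lemma:splitting_energy_no_rotation}, where $F_\ep^0\geq 0$ follows directly from $g^2<g_1g_2$ and $F_\ep^0\leq 0$ from minimality, giving $|v_i|\equiv 1$ in one stroke. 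Your radial-symmetry and $g_1=g_2$ arguments agree with the paper's (Remarks \ref{remark:g_1=g_2} and \ref{remark:unique}).
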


In the case $g_1=g_2$, the system reduces to a single equation. Therefore,
 for the next result, we can
assume that
\begin{equation}\label{eq14+}
g_1<g_2.
\end{equation}

Our main result provides estimates on the convergence of the positive
minimizer $(\eta_{1,\ep},\eta_{2,\ep})$ to its limiting profile
$\left(\sqrt{a_1},\sqrt{a_2} \right)$ as $\varepsilon\to 0$, first in the case of two disks.

\begin{theorem}\label{thmMain} Assume that (\ref{eq:condition_on_g_thomas_fermi}),
 (\ref{eq:condition_on_g_two_disks}) and (\ref{eq14+}) hold. Recall that the $a_i$ are defined by
 (\ref{eq:a_i_def})-(\ref{eq:a_i0_def}) and $\lambda_{i,0}$ by (\ref{eq:lambda_1_def}). Let $(\eta_{1,\ep},\eta_{2,\ep})$ be the positive minimizer of $E_\ep^0$ in $\mathcal{H}$.
  Then,  $(\eta_{1,\ep},\eta_{2,\ep})$ is a solution of (\ref{eq:main_eta_1_eta_2}), for some positive
   Lagrange multipliers $\lambda_{1,\varepsilon}$, $\lambda_{2,\varepsilon}$, and
 there exist constants $c,C,\delta>0$, with
$\delta<\min\left\{\frac{R_{1,0}}{4},\frac{R_{2,0}-R_{1,0}}{4}
\right\}$, such that the following estimates hold:
\begin{equation}\label{eq14Lagr}
|\lambda_{i,\varepsilon}-\lambda_{i,0}|\leq C|\log
\varepsilon|\varepsilon^2,
\end{equation}
\begin{equation}\label{eq:main_theorem_relation2}
\|\eta_{i,\varepsilon}-\sqrt{a_i}\|_{L^\infty(\mathbb{R}^2)}\leq
C\varepsilon^\frac{1}{3},
\end{equation}
\begin{equation}\label{eq14complex}
\left|\eta_{i,\varepsilon}(r)-\sqrt{a_i(r)}\right|\leq
C\varepsilon^\frac{1}{3}\sqrt{a_i(r)},\ \ |x|\leq
R_{i,0}-\varepsilon^\frac{1}{3},
\end{equation}
\begin{equation}\label{eq14bootstrap}
\sum_{i=1}^{2}\|\eta_{i,\varepsilon}-\sqrt{a_i}\|_{L^\infty(|x|\leq
R_{1,0}-\delta)}+\|\eta_{2,\varepsilon}-\sqrt{a_{2}}\|_{L^\infty(R_{1,0}+\delta\leq
|x|\leq R_{2,0}-\delta)}\leq C|\log \varepsilon|\varepsilon^2,
\end{equation}
and
\begin{equation}\label{eq14decay}
\eta_{i,\varepsilon}(r)\leq
C\varepsilon^\frac{1}{3}\exp\left\{-c\frac{r-R_{i,0}}{\varepsilon^\frac{2}{3}}\right\},\
\ r\geq R_{i,0},\ \ i=1,2,
\end{equation}
for sufficiently small $\varepsilon>0$.
\end{theorem}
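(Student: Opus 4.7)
The plan is to follow the strategy outlined after Point 2 in the introduction: construct, via a perturbation argument, a positive solution $(\eta_{1,\ep}^{\mathrm{ap}}, \eta_{2,\ep}^{\mathrm{ap}})$ of (\ref{eq:main_eta_1_eta_2}) that lies close to $(\sqrt{a_1},\sqrt{a_2})$ in a suitable weighted norm, and then invoke Theorem \ref{thm:uniqueness} to identify it with the ground state $(\eta_{1,\ep}, \eta_{2,\ep})$. Once this identification is made, the estimates (\ref{eq14Lagr})--(\ref{eq14decay}) will be read off directly from the construction.

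First I would build a composite approximation. The plane decomposes into three regions: the inner disk $\{|x|<R_{1,0}\}$ where both components coexist, the annulus $\{R_{1,0}<|x|<R_{2,0}\}$ where only $\eta_2$ is of order one, and the exterior $\{|x|>R_{2,0}\}$ where both components are exponentially small. Away from the two circles $|x|=R_{1,0}$ and $|x|=R_{2,0}$, the pair $(\sqrt{a_1},\sqrt{a_2})$ is itself an $O(\varepsilon^2)$-approximate solution, the residual coming entirely from the $-\varepsilon^2\Delta$ term. Near $|x|=R_{2,0}$, $\eta_2$ develops a boundary layer on the natural scale $\varepsilon^{2/3}$; since $\eta_1$ is already exponentially small in the annulus, the coupling $g\eta_1^2\eta_2$ in (\ref{eq:main_eta_1_eta_2b}) is a higher-order perturbation, and the layer coincides at leading order with the scalar Thomas--Fermi layer of Theorem \ref{thmGSNoRotationScalar} with $\gamma=g_2$. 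Near $|x|=R_{1,0}$, $\eta_1$ develops its own $\varepsilon^{2/3}$ layer, and the finite derivative jump of $\sqrt{a_2}$ is regularized on a thinner scale; matching across these scales produces $(\eta_{1,\ep}^{\mathrm{ap}},\eta_{2,\ep}^{\mathrm{ap}})$.

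With the approximation in hand, I would linearize (\ref{eq:main_eta_1_eta_2}) around it. The principal part of the linearized operator is block-diagonal, each diagonal block being of the scalar form $-\varepsilon^2\Delta+V_i(x)-\lambda_{i,0}$ with $V_i$ the effective Thomas--Fermi potential of component $i$. By the scalar analysis of \cite{KaraliSourdisGround} underlying Theorem \ref{thmGSNoRotationScalar}, each block is invertible with uniform bounds in weighted $L^\infty$ spaces adapted to the $\varepsilon^{2/3}$ layer scale. The off-diagonal entries $2g\eta_{1,\ep}^{\mathrm{ap}}\eta_{2,\ep}^{\mathrm{ap}}$ are bounded and supported in $\{|x|\le R_{1,0}+O(\varepsilon^{2/3})\}$, so they can be absorbed either by a Neumann series or by a direct contraction in the same weighted norm, producing a genuine solution of (\ref{eq:main_eta_1_eta_2}) within $O(\varepsilon^{1/3})$ of the approximation in $L^\infty(\mathbb{R}^2)$. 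The Lagrange multipliers $\lambda_{i,\varepsilon}$ are chosen along the way by enforcing $\|\eta_{i,\varepsilon}\|_{L^2}=1$; this two-dimensional implicit-function step is nondegenerate because the algebraic system (\ref{redetai}) is solvable thanks to (\ref{eq:condition_on_g_thomas_fermi}), and it delivers (\ref{eq14Lagr}).

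The hard part will be the analysis at the inner interface $|x|=R_{1,0}$, where the two components genuinely interact: $\eta_1$ is in a singular Painlev\'e-type regime while $\eta_2$ remains of order one with only H\"older $1/2$ regularity at $\varepsilon=0$. In (\ref{eq:main_eta_1_eta_2a}) the coupling $g\eta_1\eta_2^2$ is \emph{not} a higher-order perturbation there, since $\eta_2^2$ is bounded away from zero on this circle; it must be absorbed into the layer equation for $\eta_1$. I would handle this by replacing the scalar Painlev\'e profile used in \cite{KaraliSourdisGround} by the solution of a modified one-dimensional problem in which the cubic coefficient is $g_1$ but the potential is shifted by $g\,a_2(R_{1,0})$, which is exactly what is needed to match $\sqrt{a_{1,0}}$ on the inside to exponential decay on the outside. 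The remaining estimates then follow from the construction: (\ref{eq:main_theorem_relation2}) from the uniform $L^\infty$ error, (\ref{eq14complex}) from the explicit form of the inner Painlev\'e-type profile (which is $(1+O(\varepsilon^{1/3}))\sqrt{a_i}$ up to distance $\varepsilon^{1/3}$ from the boundary), (\ref{eq14bootstrap}) from a bootstrap based on the $O(\varepsilon^2)$ bulk residual, and (\ref{eq14decay}) from a super/subsolution argument in the exterior using an Agmon-type weight $e^{-c(r-R_{i,0})/\varepsilon^{2/3}}$. Uniqueness from Theorem \ref{thm:uniqueness} then forces the constructed pair to be $(\eta_{1,\ep},\eta_{2,\ep})$, closing the argument.
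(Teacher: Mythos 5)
Your overall strategy -- build an approximate solution by matching scalar Thomas--Fermi layers, invert the linearization, perturb to a genuine solution, and invoke Theorem \ref{thm:uniqueness} -- is the paper's strategy. However, two of the technical steps you describe would fail as stated, and both failures trace back to not using the algebraic slaving of $\eta_2$ to $\eta_1$ in the coexistence region.

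First, the effective layer equation for $\eta_1$ near $R_{1,0}$ is wrong in your proposal. You keep the cubic coefficient $g_1$ and propose merely to shift the potential by $g\,a_2(R_{1,0})$. But on the $\varepsilon^{2/3}$ layer scale $\eta_2$ is not frozen: from \eqref{eqsystemlo2}, $\eta_2^2-a_{2,\varepsilon}=-\tfrac{g}{g_2}(\eta_1^2-a_{1,\varepsilon})$, so $\eta_2^2$ adjusts by an amount of the same size as $\eta_1^2$. Substituting this back into \eqref{eqsystem1} renormalizes the cubic coefficient from $g_1$ to $g_1-\tfrac{g^2}{g_2}=g_1\Gamma$, yielding \eqref{eqgroundstate1}. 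With your frozen-$\eta_2$ ansatz the Hastings--McLeod amplitude would be $\varepsilon^{1/3}g_1^{-1/2}(2R_{1,0})^{1/3}$ instead of the correct $\varepsilon^{1/3}(g_1\Gamma)^{-1/2}(2\Gamma_2 R_{1,0})^{1/3}$; these agree only when $g_1=g_2$. The residual left in the $\eta_1$ equation by your approximation is therefore of the same order as the terms retained, and the perturbation argument cannot close.

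Second, the linear step as you describe it does not go through. The off-diagonal entry $2g\check\eta_1\check\eta_2$ of $\mathcal L$ is bounded, as you say, but it is of \emph{order one} throughout $B_{R_{1,0}}$, the same size as the diagonal blocks -- so neither a Neumann series nor a perturbative contraction treating the coupling as small can converge. The paper's Proposition \ref{proL} handles this by first rewriting the operator in $B_{R_\varepsilon}$ using the slaving relation (so $\check\eta_2^2-a_{2,\varepsilon}=-\tfrac{g}{g_2}(\check\eta_1^2-a_{1,\varepsilon})$), and then in the quadratic form \eqref{eqLbilin} completing the square: the coupled terms combine into $2\bigl(\tfrac{g}{\sqrt{g_2}}\check\eta_1\varphi+\sqrt{g_2}\check\eta_2\psi\bigr)^2\ge 0$, while the remaining self-interaction of $\varphi$ carries the coefficient $g_1-\tfrac{g^2}{g_2}>0$. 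It is precisely the assumption $g^2<g_1g_2$ that makes this coercivity possible; without exploiting that structure the order-one coupling cannot be absorbed.

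A smaller remark: you propose to pick $\lambda_{1,\varepsilon},\lambda_{2,\varepsilon}$ by an implicit-function step enforcing the $L^2$ constraints, which would require verifying nondegeneracy of a $2\times 2$ Jacobian. The paper avoids this entirely: it fixes $\lambda_{i,\varepsilon}$ equal to the Lagrange multipliers of the actual minimizer (which are already known to satisfy the rough bound of Proposition \ref{prop:convergence_lagrange_multipliers}), constructs a solution for those fixed values, and lets Theorem \ref{thm:uniqueness}(1) do the identification; the normalization is then automatic. The improved estimate \eqref{eq14Lagr} is derived afterwards (Proposition \ref{proLagrangeImproved}) using the detailed layer asymptotics together with the constraints $\int\eta_{i,\varepsilon}^2=\int a_i=1$, not from the IFT step.
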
This theorem is the natural extension of what is known for a single condensate described in Theorem \ref{thmGSNoRotationScalar}. The  fine behavior of the minimizer near $R_{1,0}$ and $R_{2,0}$, as
$\varepsilon\to 0$, is established in Theorem \ref{thmLong} in
Section \ref{secPerturb}. It is based on a perturbation argument which proves very powerful
 in this system case, where we have not managed to extend the sub and super solutions techniques of \cite{alama,AftalionJerrardLetelierJFA}.

Based on the above,  we can show the absence of vortices for the
minimizers of $E_\varepsilon^\Omega$ with  small rotation.

\begin{theorem}\label{thm:nonexistence_vortices}
Assume that (\ref{eq:condition_on_g_thomas_fermi}), (\ref{eq:condition_R_1<R_2}) and
 (\ref{eq:condition_on_g_two_disks})  hold. Let $(u_{1,\ep},u_{2,\ep})$ be a minimizer of $E_\ep^\Omega$ in
$\mathcal{H}$. There exist $\ep_0,\omega_0,\omega_1>0$ such that if
$0<\ep<\ep_0$ and $\Omega\leq \omega_0|\log\ep|-\omega_1
\log|\log\ep|$ then $u_{i,\ep}=e^{i\alpha_i}\eta_{i,\ep}$ in $\R^2$
for some constants $\alpha_i$, $i=1,2$.
\end{theorem}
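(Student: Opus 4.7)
The plan is to adapt the energy-splitting method of \cite{AftalionJerrardLetelierJFA} from the scalar case to the coupled system, using Theorem \ref{thmMain} as a black box for the fine information on the rotation-free profile $(\eta_{1,\varepsilon}, \eta_{2,\varepsilon})$. Existence of a minimizer of $E_\varepsilon^\Omega$ on $\mathcal{H}$ is standard. By Theorem \ref{thm:uniqueness}(2), it suffices to show $|u_{j,\varepsilon}| \equiv \eta_{j,\varepsilon}$ for $j = 1, 2$: indeed, $(|u_{1,\varepsilon}|, |u_{2,\varepsilon}|)$ is then admissible with $E_\varepsilon^0$-energy at most $E_\varepsilon^0(\eta_{1,\varepsilon}, \eta_{2,\varepsilon})$, hence equal to the latter, and then $v_j := u_{j,\varepsilon}/\eta_{j,\varepsilon}$ is a unimodular function on $\R^2$ which the Euler--Lagrange system forces to be a constant.

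The first step is the substitution $u_{j,\varepsilon} = \eta_{j,\varepsilon} v_j$, legitimate since $\eta_{j,\varepsilon} > 0$. Direct computation, using \eqref{eq:main_eta_1_eta_2} and the mass constraints $\int(|u_j|^2 - \eta_j^2)\,dx = 0$ to cancel all terms linear in $|v_j|^2 - 1$, yields
\[
E_\varepsilon^\Omega(u_1, u_2) - E_\varepsilon^0(\eta_1, \eta_2) = \sum_{j=1}^2 F_{\varepsilon, j}^\Omega(v_j) + G_\varepsilon(v_1, v_2),
\]
where $F_{\varepsilon, j}^\Omega(v_j) = \int (\tfrac{1}{2}\eta_{j,\varepsilon}^2 |\nabla v_j|^2 + \tfrac{g_j}{4\varepsilon^2}\eta_{j,\varepsilon}^4 (|v_j|^2 - 1)^2 - \Omega \eta_{j,\varepsilon}^2 x^\perp \cdot (iv_j, \nabla v_j))\,dx$ is a weighted Ginzburg--Landau functional of scalar type, and $G_\varepsilon(v_1, v_2) = \frac{g}{2\varepsilon^2} \int \eta_{1,\varepsilon}^2 \eta_{2,\varepsilon}^2 (|v_1|^2 - 1)(|v_2|^2 - 1)\,dx$ is a coupling remainder. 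By Cauchy--Schwarz together with the Thomas--Fermi condition $g^2 < g_1 g_2$, this remainder is absorbed into a fraction of the two quartic terms of $F_{\varepsilon, j}^\Omega$, reducing the problem to two decoupled weighted Ginzburg--Landau functionals.

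The second step applies the Jacobian estimate of \cite{jerard} to each $F_{\varepsilon, j}^\Omega$. The rotation term is controlled by a constant times $\Omega$ times the weighted vortex content of $v_j$, while the Dirichlet plus quartic parts bound the same content from below by essentially $|\log \varepsilon|$, minus a correction of order $\log|\log \varepsilon|$ coming from the boundary layer near $|x| = R_{j,0}$. The estimates \eqref{eq:main_theorem_relation2}--\eqref{eq14bootstrap} of Theorem \ref{thmMain} allow the replacement of the weight $\eta_{j,\varepsilon}^2$ by the limit profile $a_j$ with negligible error in both bounds, and the exponential decay \eqref{eq14decay} rules out vortices outside the supports of the $a_j$. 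Combining, we obtain $E_\varepsilon^\Omega(u_1, u_2) - E_\varepsilon^0(\eta_1, \eta_2) \geq (c|\log \varepsilon| - C\Omega - C' \log|\log \varepsilon|) N$, where $N$ is the total weighted vortex count. The right side is strictly positive unless $N = 0$ whenever $\Omega \leq \omega_0 |\log \varepsilon| - \omega_1 \log|\log \varepsilon|$ with $\omega_0$ small and $\omega_1$ large. Combined with the minimality $E_\varepsilon^\Omega(u_1, u_2) \leq E_\varepsilon^\Omega(\eta_1, \eta_2)$, this forces $N = 0$, hence $|v_j| \equiv 1$, concluding the proof.

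The main obstacle will be implementing the Jacobian vortex balance on the annular region $R_{1,0} < |x| < R_{2,0}$, where $\eta_{2,\varepsilon}^2$ is comparable to the bulk profile $a_2$ while $\eta_{1,\varepsilon}^2$ is already exponentially small: the vortex-cost bound for the second component must be extracted with a weight that degenerates at the outer boundary $|x| = R_{2,0}$ and changes regime across $|x| = R_{1,0}$, which is precisely the setting covered by the fine boundary-layer estimates of Theorem \ref{thmMain}. Once this is in place, the scheme above parallels the scalar argument of \cite{AftalionJerrardLetelierJFA}.
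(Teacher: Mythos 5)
Your overall plan matches the paper's strategy --- the splitting $E_\varepsilon^\Omega(u_1,u_2)=E_\varepsilon^0(\eta_1,\eta_2)+F_\varepsilon^\Omega(v_1,v_2)$ with $v_j=u_j/\eta_j$, the Cauchy--Schwarz absorption of the coupling term via $g^2<g_1g_2$, and the eventual appeal to Jerrard's Jacobian estimate --- and the final reduction to $|v_j|\equiv 1$ via Theorem~\ref{thm:uniqueness}(2) is the right route. However, there are two places where the proposal elides a step that is nontrivial and that the paper devotes substantial machinery to.

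First, the rotation term $\Omega\int\eta_{j}^2\, x^\perp\cdot(iv_j,\nabla v_j)\,dx$ is not in a form to which the Jacobian estimate applies. The paper's key move (Lemma~\ref{lemma:tilde_F_negative}) is to integrate by parts using the auxiliary primitives $\xi_{j,\varepsilon}$, exploiting $\nabla^\perp\xi_{j,\varepsilon}=-\eta_{j,\varepsilon}^2\,x^\perp$, to rewrite this term as $2\Omega\int\eta_{j}^2 F_{j,\varepsilon}\,Jv_j\,dx$ (plus a boundary term that must be killed along a sequence of radii). Only then does the Jacobian estimate become usable, and the uniform pointwise bounds on $F_{j,\varepsilon}$ --- in particular that $F_{j,\varepsilon}$ vanishes linearly at $R_{j,0}$ and converges to $F_{j,0}$ at rate $\varepsilon^{1/3}$ --- are exactly what feeds into the threshold $\omega_0|\log\varepsilon|-\omega_1\log|\log\varepsilon|$. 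Your proposal mentions ``the weighted vortex content'' but never makes the change of variables; Section~\ref{secAuxiliary} of the paper exists entirely to justify this step, and it requires the refined pointwise control of Theorem~\ref{thmLong}, not just the $L^\infty$ estimates of Theorem~\ref{thmMain}.

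Second, the final implication ``$N=0$, hence $|v_j|\equiv 1$'' is a genuine gap. Absence of vorticity does not by itself force $|v_j|\equiv 1$. The paper's argument instead shows quantitatively that the pieces $A_i,B_i,C_i$ of the (decoupled) functional are all $O(|\log\varepsilon|^{-11})$, uses this to establish a clearing-out property $|v_j|\geq\frac12$ on the bulk, and then writes $w_j=v_j/|v_j|$ with $Jw_j=0$ to bootstrap to the \emph{exact} vanishing $C_1+C_2=0$, whence $A_i=B_i=0$ and only then $|v_j|\equiv 1$ and $\nabla v_j\equiv 0$. Your proposed inequality $E_\varepsilon^\Omega-E_\varepsilon^0\geq(c|\log\varepsilon|-C\Omega-C'\log|\log\varepsilon|)N$ is not what the Jacobian estimate delivers directly, and even granting it, deducing that the modulus is identically one still requires the clearing-out step and the $w_j$-trick. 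These are the parts you would need to supply to turn the plan into a proof.
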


In the case of a disk and an annlus, that is when (\ref{eqannulus}) holds instead of (\ref{eq:condition_on_g_two_disks}), we can prove the equivalent of Theorem \ref{thmMain}. Generalizing Theorem \ref{thm:nonexistence_vortices} is harder, because
 vortices may exist in the central hole of component 2 and this has to be tackled by other techniques than the ones
  in this paper.
\begin{theorem}\label{thmMainann} Assume that (\ref{eq:condition_on_g_thomas_fermi}),
  (\ref{eqannulus})   and (\ref{eq14+}) hold. We define the functions $a_i$  by
 \beq \label{a1ann}
a_1(r)=\left\{\begin{array}{ll}
         a_{1,0}(r)+\frac{g}{g_1}a_{2,0}(r), & 0\leq r \leq R_{2,0}^-, \\
           &   \\
         a_{1,0}(r), & R_{2,0}^-\leq r \leq R_{1,0},\\
           &   \\
         0, & r\geq R_{1,0},
       \end{array}
\right. \hbox{ where } a_{i,0}(r)=\frac{1}{g_i
\Gamma}\left(\lambda_{i,0}-\frac{g}{g_{i+1}}\lambda_{{i+1},0}-\Gamma_{i+1} r^2
\right),\eeq \beq\label{a2ann}
a_2(r)=\left\{\begin{array}{ll}
                0, & 0\leq r\leq R_{2,0}^-, \\
                  &   \\
                a_{2,0}(r),  & R_{2,0}^-\leq r \leq R_{1,0}, \\
                  &   \\
                a_{2,0}(r)+\frac{g}{g_2}a_{1,0}(r), & R_{1,0}\leq r \leq R_{2,0}^+, \\
                  &   \\
                 0 & r\geq R_{2,0}^+.
              \end{array}
 \right.
\eeq and \beq\label{lambdaiann}\lambda_{2,0}=(R_{2,0}^+)^2,\ \lambda_{1,0}-\frac{g}{g_2}\lambda_{2,0}=\Gamma_2
R_{1,0}^2\hbox{ and } \lambda_{2,0}-\frac{g}{g_1}\lambda_{1,0}= \Gamma_1
(R_{2,0}^-)^2\eeq where $\lambda_{1,0}$, $\lambda_{2,0}$
will be given by (\ref{Riann}).
  Let $(\eta_{1,\ep},\eta_{2,\ep})$ be the positive minimizer of $E_\ep^0$ in $\mathcal{H}$.
  Then there exist constants $c,C,\delta>0$,  such that, for sufficiently small $\varepsilon>0$, (\ref{eq14Lagr}), (\ref{eq:main_theorem_relation2})  hold,
    (\ref{eq14complex}) holds for $i=1$ and is replaced, for $i=2$ by
\begin{equation}\label{eq14complexann}
\left|\eta_{2,\varepsilon}(r)-\sqrt{a_2(r)}\right|\leq
C\varepsilon^\frac{1}{3}\sqrt{a_2(r)},\ \hbox{ for } R_{2,0}^-+\varepsilon^\frac{1}{3}<|x|\leq
R_{2,0}^+-\varepsilon^\frac{1}{3},
\end{equation} (\ref{eq14decay}) holds with $R_{2,0}^+$ instead of $R_{2,0}$, and on
fixed compact sets away from $|x|=R_{1,0}$ and $|x|=R_{2,0}^\pm$,
$(\eta_{1,\varepsilon},\eta_{2,\varepsilon})$ is close to
$(\sqrt{a_1},\sqrt{a_2})$ with an error of order
$\mathcal{O}(|\log \varepsilon|)\varepsilon^2$.
\end{theorem}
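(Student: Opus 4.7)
The plan is to adapt the perturbation-and-uniqueness scheme used for Theorem \ref{thmMain} to the disk--annulus geometry, where the limiting profile $(\sqrt{a_1},\sqrt{a_2})$ now has \emph{three} singular circles, $r=R_{2,0}^-$, $r=R_{1,0}$ and $r=R_{2,0}^+$, instead of two. At each of these circles exactly one of the two components transitions between a regular Thomas--Fermi profile and zero, while the other is either smoothly positive or already exponentially small; this structural decoupling is what will make the inner layer analysis tractable.

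First I would construct a global approximate solution $(\eta_1^{\mathrm{app}},\eta_2^{\mathrm{app}})$ by matched asymptotic expansions. In each of the three regular regions $(0,R_{2,0}^-)$, $(R_{2,0}^-,R_{1,0})$ and $(R_{1,0},R_{2,0}^+)$, a formal expansion in powers of $\varepsilon^2$ produces smooth corrections to $(\sqrt{a_1},\sqrt{a_2})$: in the overlap annulus the coupled algebraic system (\ref{redetai}) is inverted using $\Gamma>0$; in the central disk only the equation for $\eta_1$ is nontrivial at leading order, while $\eta_2$ satisfies a linearly stable Schr\"odinger-type problem with positive effective potential; in the outer annulus the roles are reversed. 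Near each singular circle, the rescaling $r=R+\varepsilon^{2/3}s$, $\eta=\varepsilon^{1/3}v$ reduces the vanishing component to the Painlev\'e-II Hastings--McLeod connection, exactly as in \cite{KaraliSourdisGround}, because at the inner scale the coupling term produced by the non-vanishing companion is constant to leading order. Matching these inner profiles with the outer expansions yields an approximate solution of (\ref{eq:main_eta_1_eta_2}) that already satisfies (\ref{eq:main_theorem_relation2})--(\ref{eq14complexann}) together with the Lagrange multiplier estimate (\ref{eq14Lagr}).

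Second, I would promote this approximation to a true solution via a contraction argument. Writing $(\eta_{1,\varepsilon},\eta_{2,\varepsilon})=(\eta_1^{\mathrm{app}},\eta_2^{\mathrm{app}})+\varphi$ and linearizing produces an operator $-\varepsilon^2\Delta+M_\varepsilon(x)$ with $M_\varepsilon$ a symmetric $2\times 2$ matrix potential. Thanks to $\Gamma>0$, $M_\varepsilon$ is uniformly coercive on the overlap annulus; on the two single-component regions it decouples into scalar operators of the type analyzed in \cite{KaraliSourdisGround}. Weighted $L^\infty$ estimates for $(-\varepsilon^2\Delta+M_\varepsilon)^{-1}$, combined with the $\varepsilon^{1/3}$ gain coming from the Painlev\'e layers, close a fixed point iteration in a ball of radius $\mathcal{O}(|\log\varepsilon|\varepsilon^2)$ in the bulk and $\mathcal{O}(\varepsilon^{1/3})$ near the singular circles. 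Adjusting the Lagrange multipliers by $\mathcal{O}(|\log\varepsilon|\varepsilon^2)$ to enforce the unit $L^2$ constraint on each component and applying Theorem \ref{thm:uniqueness}(1) then identifies the constructed solution with the minimizer $(\eta_{1,\varepsilon},\eta_{2,\varepsilon})$.

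The main obstacle is the new central region $r<R_{2,0}^-$, which has no analogue in Theorem \ref{thmMain}: there $\eta_{2,\varepsilon}$ must remain strictly positive while being exponentially small, and the matching at $r=R_{2,0}^-$ links an exponentially decaying tail inside the hole to a Painlev\'e-II transition outside. A direct computation using (\ref{lambdaiann}) shows that the effective potential $|x|^2+g\,a_1(r)-\lambda_{2,0}$ vanishes at $r=R_{2,0}^-$ and is strictly positive on $[0,R_{2,0}^-)$, since its derivative equals $2r\Gamma_1<0$ (recall $g>g_1$). A standard barrier/maximum principle argument then yields a decay estimate of the form $\eta_{2,\varepsilon}(r)\le C\varepsilon^{1/3}\exp\{-c(R_{2,0}^--r)/\varepsilon^{2/3}\}$, mirroring (\ref{eq14decay}); this decay is strong enough to render the feedback $g\eta_{2,\varepsilon}^2$ into the equation for $\eta_{1,\varepsilon}$ negligible on compact subsets of the central disk, so that $\eta_{1,\varepsilon}$ there behaves to leading order as a scalar ground state and the overall construction goes through as in Theorem \ref{thmMain}.
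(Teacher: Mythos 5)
Your overall strategy mirrors the paper's: approximate $(\eta_{1,\varepsilon},\eta_{2,\varepsilon})$ by solving the reduced (diffusion-free) system in the three bulk regions, resolve each of the circles $R_{2,0}^-<R_{1,0}<R_{2,0}^+$ by a scalar Gross--Pitaevskii/Painlev\'e-II layer (in the paper this is packaged as the scalar ground states $\hat{\eta}_{2,\varepsilon}^-$, $\hat{\eta}_{1,\varepsilon}$, $\hat{\eta}_{2,\varepsilon}^+$ of Theorem \ref{thmGSscalar}, glued at the midpoints $r_\varepsilon$ and $R_\varepsilon$), invert the linearization using the coercivity furnished by $\Gamma>0$ together with the decoupled scalar structure away from the coexistence annulus, run a contraction argument, and prove positivity in the hole $\{|x|<R_{2,\varepsilon}^-\}$ exactly as you do, from $\Gamma_1<0$ and (\ref{lambdaiann}) via the maximum principle (Proposition \ref{proPositiveann}); your computation of the effective potential $\Gamma_1\left(r^2-(R_{2,0}^-)^2\right)$ is the same as the paper's. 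Two methodological differences are minor: the paper estimates the inverse of the linearized operator in $\varepsilon$-weighted $L^2$/Sobolev norms rather than weighted $L^\infty$ norms (it remarks that uniform norms, natural in the scalar case, are less convenient for the coupled system), and the improved multiplier estimate (\ref{eq14Lagr}) is obtained a posteriori from the refined pointwise estimates combined with the mass constraints (Proposition \ref{proLagrangeImproved}, Remark \ref{rem3.4}), not from the construction itself.

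The genuine gap is your identification step. You propose to tune the Lagrange multipliers so that the constructed solution has unit $L^2$ masses and then to invoke Theorem \ref{thm:uniqueness}(1). But that theorem gives uniqueness of positive solutions only for \emph{fixed} multipliers: if your tuned multipliers $(\tilde{\lambda}_{1,\varepsilon},\tilde{\lambda}_{2,\varepsilon})$ differ from the multipliers $(\lambda_{1,\varepsilon},\lambda_{2,\varepsilon})$ of the actual minimizer, no conclusion can be drawn; you would need uniqueness of normalized positive solution/multiplier \emph{pairs} for the system, which is not established in the paper (it is only stated in the scalar case, Theorem \ref{thmGSNoRotationScalar}). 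The paper circumvents precisely this difficulty (it notes that perturbation arguments do not directly accommodate integral constraints) by running the construction with the minimizer's own multipliers, whose rough convergence $|\lambda_{i,\varepsilon}-\lambda_{i,0}|\leq C\varepsilon|\log\varepsilon|^{1/2}$ is known beforehand from the energy estimates (Proposition \ref{prop:convergence_lagrange_multipliers} and Remark \ref{rem3.4}); then uniqueness at those fixed multipliers identifies the constructed solution with the minimizer, and the $L^2$ constraints are inherited automatically. Your argument becomes correct if you replace the constraint-tuning by this indirect device; as written, the final identification does not follow.
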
 We point out that if $g_2<g_1$, then an analogous theorem holds exchanging the subscript 1 and 2
 in the formulae.

\subsection{Methods of proof and outline of the paper}

 Theorem \ref{thm:uniqueness} is proved by assuming that there are two solutions,
 studying their ratio componentwise and writing the system satisfied
  by the ratio, as inspired by \cite{brezis-oswald,LaMi}. For the first part
   of the theorem, we need decay properties at infinity of the solutions of
 (\ref{eq:main_eta_1_eta_2}), that we prove in a similar way to
 a Liouville theorem in \cite{berestyckiCaffarelli}.  We point out that the system is non-cooperative, and
        the usual moving plane method
      does not seem to apply easily in this case to derive radial symmetry of positive solutions. Nevertheless, our result implies that since positive solutions of
     (\ref{eq:main_eta_1_eta_2}) are unique, they are thus radial.

 For the second part of the theorem,
  we use the decay of finite energy solutions and extra estimates for radial functions.
  A key relation is the following splitting of energy: if $(\eta_{1},\eta_{2})$
   is a ground state among radial functions, then for any $(u_1,u_2)$,
    $E_\ep^0( u_1,
u_2)=E_\ep^0(\eta_1,\eta_2)+F_\ep^0(v_1,v_2)$, where $v_i=
u_i/\eta_i$ and
\begin{equation}\label{ensplit}
%\begin{split}
F_\ep^0(v_1,v_2)=\sum_{i=1}^2 \int_{\R^2} \left\{ \frac{\eta_i^2}{2}|\nabla v_i|^2+\frac{g_i}{4\ep^2}\eta_i^4(|v_i|^2-1)^2 \right\} \,dx
+ \frac{g}{2\ep^2} \int_{\R^2}\eta_1^2\eta_2^2(1-|v_1|^2)(1-|v_2|^2)
\, dx.
%\end{split}
\end{equation} The condition $\Gamma >0$, that is $g>\sqrt{g_1 g_2}$, implies that $F_\ep^0 (v_1,v_2)\geq 0$.
 If we assume that $(u_1,u_2)$ is a ground state of $E_\ep^0$, then using the sign of $F_\ep^0$, we find that $(u_1,u_2)$ is equal, up to a multiplication by a complex number of modulus 1, to $(\eta_1,\eta_2)$. Thus, any ground state is radially
 symmetric and equal, up to a multiplication by a complex number of modulus 1, to $(\eta_1,\eta_2)$.

   \hfill

      Theorem \ref{thmMain} contains a fine asymptotic behaviour of
      the ground state $(\eta_{1,\varepsilon},\eta_{2,\varepsilon})$
       as $\ep$ tends to zero. The difficulty is especially in the regions
       near $R_{1,0}$, $R_{2,0}$ where the approximate inverted parabola matches an
       exponentially small function in a region of size $\ep^{2/3}$.
  The general procedure is to first construct a
sufficiently good approximate solution to the problem (\ref{eq:main_eta_1_eta_2}), for small
$\varepsilon>0$, with coefficients
$\lambda_{1,\varepsilon}$ and $\lambda_{2,\varepsilon}$ being
 {equal to the unique Lagrange multipliers} that are provided by
Theorem \ref{thm:uniqueness}.  Then, using the invertibility properties of the linearized operator about this
approximate solution, we perturb it to a genuine
solution.
 The first
uniqueness result  of Theorem \ref{thm:uniqueness} implies that this constructed
solution coincides with the positive minimizer of $E_0^\varepsilon$
in $\mathcal{H}$. The method is a generalization to the system
 case of the tools developed in \cite{KaraliSourdisGround} for the single equation.

 In order to construct this approximate solution,
we rewrite the system
(\ref{eq:main_eta_1_eta_2}) as
\begin{subequations}\label{eqsystem}
%\begin{cases}
 \begin{align}
 & -\varepsilon^2\Delta  \eta_1+g_1\eta_1\left(\eta_1^2-a_{1,\varepsilon}(x) \right)+g\eta_1\left(\eta_2^2-a_{2,\varepsilon}(x) \right)=0, \label{eqsystem1}\\ \label{eqsystem2}
&-\varepsilon^2\Delta
\eta_2+g_2\eta_2\left(\eta_2^2-a_{2,\varepsilon}(x)
\right)+g\eta_2\left(\eta_1^2-a_{1,\varepsilon}(x) \right)=0,\\ &\eta_i(x)\to 0\ \ \textrm{as}\ \ |x|\to \infty,\ \ i=1,2.\label{eqsystemBdry}
\end{align}
%\end{cases}
\end{subequations}
 where $a_{1,\ep}$, $a_{2,\ep}$ are the $\ep$ equivalent to (\ref{eq:a_i0_def}), that is
\begin{equation}\label{eqa12}
a_{1,\varepsilon}(x)=\frac{1}{g_1
\Gamma}\left(\lambda_{1,\varepsilon}-\frac{g}{g_2}\lambda_{2,\varepsilon}-\Gamma_2|x|^2
\right),\ \ a_{2,\varepsilon}(x)=\frac{1}{g_2
\Gamma}\left(\lambda_{2,\varepsilon}-\frac{g}{g_1}\lambda_{1,\varepsilon}-\Gamma_1|x|^2
\right),
\end{equation}
and
\begin{equation}\label{eqR12}
R_{1,\varepsilon}^2=\frac{1}{\Gamma_2}\left(\lambda_{1,\varepsilon}-\frac{g}{g_2}\lambda_{2,\varepsilon}\right),\
\ R_{2,\varepsilon}^2=\lambda_{2,\varepsilon}.
\end{equation} At leading order,  in the regions where neither $\eta_i$ is close to zero, we expect that the $\ep^2 \Delta \eta_i$ terms are negligible
 so that at leading order,
 \begin{subequations}\label{eqsystemlo}
%\begin{cases}
 \begin{align}
 & g_1\left(\eta_1^2-a_{1,\varepsilon}(x) \right)+g\left(\eta_2^2-a_{2,\varepsilon}(x) \right)=0, \label{eqsystemlo1}\\ \label{eqsystemlo2}
&g_2\left(\eta_2^2-a_{2,\varepsilon}(x)
\right)+g\left(\eta_1^2-a_{1,\varepsilon}(x) \right)=0.
\end{align}
%\end{cases}
\end{subequations} Near $R_{1,0}$,  the term
$\varepsilon^2 \Delta \eta_1$ cannot be neglected so that we use (\ref{eqsystemlo2}) to express $\eta_2^2-a_{2,\varepsilon}$ and insert it
 into (\ref{eqsystem1}) to find a scalar equation for $\eta_1$. In the region of coexistence, that is in the disk of
 radius $R_{1,0}$,
   $\eta_2$ is obtained from $\eta_1$ by (\ref{eqsystemlo2}), while outside this disk, $\eta_1$ is small and can be neglected in  (\ref{eqsystem2}).
 This reduces the system (\ref{eqsystem}) to two independent approximate scalar problems:
\begin{subequations}\label{eqgroundstate}\begin{align}\label{eqgroundstate1}
&-\varepsilon^2\Delta
\eta_1+\left(g_1-\frac{g^2}{g_2}\right)\eta_1\left(\eta_1^2-a_{1,\varepsilon}(x)
\right)=0,\ \ x\in \mathbb{R}^2;\ \eta_1(x)\to 0\ \textrm{as}\
|x|\to \infty,
\\\label{eqgroundstate2}
&-\varepsilon^2\Delta
\eta_2+g_2\eta_2\left(\eta_2^2-a_{2,\varepsilon}(x)-\frac{g}{g_2}a_{1,\varepsilon}(x)
\right)=0,\ \ x\in \mathbb{R}^2;\ \eta_2(x)\to 0\ \textrm{as}\
|x|\to \infty,\end{align}
\end{subequations}
whose unique positive solutions are called
$\hat{\eta}_{1,\varepsilon}$ and $\hat{\eta}_{2,\varepsilon}$.  The properties of $\hat{\eta}_{1,\varepsilon}$ and $\hat{\eta}_{2,\varepsilon}$  can be deduced from an analogue of Theorem
\ref{thmGSscalar} (see Proposition \ref{proGS} below).  We point out that
they are linearly nondegenerate, which implies that the spectrum of
the associated linearized operators to (\ref{eqgroundstate1}) and
(\ref{eqgroundstate2})  consists only of
positive eigenvalues.
 The main
features of $a_{1,\varepsilon}$ and $a_{2,\varepsilon}$ that are
used for studying $\hat{\eta}_{1,\varepsilon}$ and
$\hat{\eta}_{2,\varepsilon}$ are that $a_{1,\varepsilon}$ and
$a_{2,\varepsilon}+\frac{g}{g_2}a_{1,\varepsilon}$ change sign
 once, from positive to negative as $|x|$ crosses
$R_{1,\varepsilon}$ and $R_{2,\varepsilon}$ respectively and that
\begin{equation}\label{eqnondegeneracy}
a_{1,\varepsilon}'(R_{1,\varepsilon})\to -c<0\ \ \textrm{and}\ \
\left(a_{2,\varepsilon}+\frac{g}{g_2}a_{1,\varepsilon}\right)'(R_{2,\varepsilon})\to
-c'<0\ \ \textrm{as}\ \varepsilon\to 0.
\end{equation}In
particular,  $\hat{\eta}_{1,\varepsilon}^2$ and
$\hat{\eta}_{2,\varepsilon}^2$ converge to $\left(a_{1,0}\right)^+$
and $\left(a_{2,0}+\frac{g}{g_2}a_{1,0}\right)^+$,
uniformly on $\mathbb{R}^2$,  as $\varepsilon\to 0$.

Equation
(\ref{eqgroundstate1}) provides an effective approximation for  (\ref{eqsystem1}) up to a neighborhood of
$R_{2,0}$, where the term $\varepsilon^2 \Delta \eta_{2}$ is
expected to be of equal or higher order than $\varepsilon^2\Delta
\eta_1$ as $\varepsilon\to 0$.  The approximate solutions to (\ref{eqsystem}) that are
constructed in this way match in $C^k$,  in  fixed intervals contained
between $R_{1,0}$ and $R_{2,0}$.  Therefore, we can pick any point
 in $(R_{1,0},R_{2,0}$), for instance the middle point and
we can smoothly glue the solutions together, via a standard interpolation
argument  to create a global approximate solution to the
problem (\ref{eqsystem}). We remark that,
in order to estimate the remainder that this approximation leaves in
(\ref{eqsystem}), we have to prove some new estimates for the
behavior of the derivatives of the ground states of
(\ref{eqgroundstate1}) and (\ref{eqgroundstate2}) near $R_{1,0}$ and
$R_{2,0}$ respectively, as $\varepsilon\to 0$.

 The next step is to
study the associated linearized operator to the system
(\ref{eqsystem}) about this approximation (see
(\ref{eqlinearization--}) below). To this end, as before, our
approach is to reduce the corresponding coupled linearized system to
the two independent scalar linearized problems that are associated
to (\ref{eqgroundstate1}) and (\ref{eqgroundstate2}). As we have
already remarked, the spectrum of the latter scalar  operators
consists only of strictly positive eigenvalues. This property allows
us to apply a domain decomposition argument to handle the ``two
center'' difficulty of the problem. We are able to show that the
associated linearized operator to the system (\ref{eqsystem}) about
the approximate solution is invertible, for small $\varepsilon>0$,
and estimate its inverse in various suitable Sobolev $L^2$-norms. We
point out that this is in contrast to the scalar case, where it is
more convenient to estimate the inverse in uniform norms (see
\cite{GalloPelinovskyAA,KaraliSourdisGround}). Armed with these
estimates, we can apply a contraction mapping argument to prove the
existence of a true solution to (\ref{eqsystem}), near the
approximate one with respect to a suitable $\varepsilon$-dependent
Sobolev norm, for small $\varepsilon>0$. Finally, we can show that the solution is
 positive and obtain the
uniform estimates of Theorem \ref{thmMain} by building on the
Sobolev estimates and making use of the equation. In particular, we
also make use of some carefully chosen weighted uniform norms, see
(\ref{eqnormStar}), which are partly motivated by \cite{pacard-riviere}.

We note that our approach can be extended to cover the more
complicated case of the disk and annulus configuration, that is when
(\ref{eqannulus}) is assumed.

In order to prove Theorem \ref{thm:nonexistence_vortices}, we need to study some auxiliary functions $\xi_{i,\eps}$,
involving the primitive of $s\eta_{i,\eps}^2 (s)$, where $(\eta_{1,\eps},\eta_{2,\eps}$ is the positive minimizer of $E_\varepsilon^0$. For this purpose, we use the
estimates that link (\ref{eqsystem}) to (\ref{eqgroundstate}), obtained in the proof of Theorem
\ref{thmMain}, to derive the estimates for  $\xi_{i,\eps}$ as perturbations of those
 for the scalar equation.
 Then, we use a division trick which  splits
the energy as the sum of the energy of the minimizer without
rotation plus a reduced energy (see Lemma
\ref{lemma:splitting_energy}). The reduced energy bears
similarities to a weighted coupled Ginzburg-Landau  energy
 and, after integrating by parts, includes a Jacobian
(see Lemma \ref{lemma:tilde_F_negative}). Assumption (\ref{eq:condition_on_g_thomas_fermi})  allows us to
treat this coupled energy as two uncoupled ones of analogous form,
and conclude by following the arguments of
\cite{AftalionJerrardLetelierJFA} which are based on the control of
the auxiliary functions and  a Jacobian estimate due
to \cite{jerard}.

% When restricting our attention to
%radial solutions, system (\ref{eq:main_eta_1_eta_2}) can be recast
%as a $5$-dimensional slow-fast system of ordinary differential
%equations, with four fast variables $(\eta_1,\varepsilon \eta_1',\eta_2,\varepsilon\eta_2')$ and one slow variable $r$, see
%\cite{jones}.  The
%difficulty lies on the fact that the slow manifold admits
%pitchfork bifurcations as the slow variable crosses $R_{1,0}$ and $R_{2,0}$.

\hfill

The organization of the paper is as follows: in Section \ref{secUniq}, we prove Theorem \ref{thm:uniqueness}.
 In Section \ref{sec:energy_estimates}, we obtain the first rough estimates
 for the asymptotic behavior of
$(\eta_{1,\varepsilon}^2,\eta_{2,\varepsilon}^2)$ as $\varepsilon\to 0$.
 In Section \ref{secPerturb}, we apply the perturbation argument to prove Theorem \ref{thmMain}.
  In Section \ref{secann}, we prove Theorem \ref{thmMainann}.
In Section \ref{secAuxiliary}, we study some auxiliary functions
 that will be useful for the estimates with rotation.
In Section \ref{secRotation}, we prove Theorem
\ref{thm:nonexistence_vortices}.
 We close the paper with two appendixes. In Appendix
\ref{secAppenGS}, we summarize some known results about the scalar
ground states of Theorem \ref{thmGSNoRotationScalar}, in a more
general setting, and derive estimates for their derivatives. In
Appendix \ref{ApAlg}, we have postponed the proof of a technical
estimate from Section \ref{secPerturb} that is related to our use of
the weighted norms.

\subsection{Notation}By $c/C$ we denote small/large positive
generic constant, whose
values may decrease/increase from line to line.  By $\mathcal{O}(\cdot)$ and
$o(\cdot)$, we denote the standard Landau symbols. We write $r=|x|$ to denote the Euclidean distance
of a point $x$ from the origin. By $B_R$ we denote the
Euclidean ball of radius $R$ and center $0$.

%\tableofcontents

%%%%%%%%%%%%%%%%%%%%%%%%%%%%%%%%%%%%%%%%%%%%%%%%%%%%%%%%%%%%%%%%%%%%%%%%%%%%%%%%%%%%%%%%
%%%%%%%%%%%%%%%%%%%%%%%%%%%%%%%%%%%%%%%%%%%%%%%%%%%%%%%%%%%%%%%%%%%%%%%%%%%%%%%%%%%%%%%%%%%%%
%%%%%%%%%%%%%%%%%%%%%%%%%%%%%%%%%%%%%%%%%%%%%%%%%%%%%%%%%%%%%%%%%%%%%%%%%%%%%%%%%%%%%%%%%

\section{Uniqueness issues}\label{secUniq}

In this section, we prove Theorem \ref{thm:uniqueness}. Since
the result holds for every $\ep>0$, we  often omit the
subscript $\ep$.

\subsection{Uniqueness of positive solutions of (\ref{eq:main_eta_1_eta_2})}
Given positive $\lambda_{1,\varepsilon}$,
$\lambda_{2,\varepsilon}$, we want to prove
 the uniqueness of the positive solutions of
(\ref{eq:main_eta_1_eta_2}).  We
use some ideas from \cite{brezis-oswald} which deals with a class of scalar equations in bounded domains.
 In order to extend it to the entire space,
 we have to establish
some control on the decay  of positive,
possibly  non-radial, solutions.

\begin{lemma}\label{lemma:L_infty_bounds}
Let $(u_1,u_2)$ be a positive solution of (\ref{eq:main_eta_1_eta_2a})-(\ref{eq:main_eta_1_eta_2b}), then
\[
u_i^2\leq \lambda_{i,\ep}/g_i, \qquad \|\nabla
u_i\|_{L^\infty(\R^2)}\leq
C\frac{\sqrt{\lambda_{i,\ep}}(\lambda_{i,\ep}+\lambda_{j,\ep}+1)}{\ep},\ i=1,2, \ j\neq i.
\]
\end{lemma}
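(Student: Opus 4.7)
Both estimates come directly from the PDE via the maximum principle and interior regularity. Positive solutions under consideration lie in the energy class $\mathcal{H}$ and, by standard elliptic regularity, are smooth and (together with all their derivatives) tend to zero at infinity, so the maximum principle applies globally.

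\textbf{Pointwise bound.} Each $u_i$ attains its positive maximum at some $x_i^\star\in\mathbb{R}^2$, where $\Delta u_i(x_i^\star)\le 0$. Plugging $x_1^\star$ into \eqref{eq:main_eta_1_eta_2a}, dividing by $u_1(x_1^\star)>0$, and discarding the non-negative terms $|x_1^\star|^2$ and $g u_2^2(x_1^\star)$ gives
\[
g_1 u_1^2(x)\le g_1 u_1^2(x_1^\star)\le \lambda_{1,\varepsilon}\quad\text{for all }x\in\mathbb{R}^2,
\]
and analogously for $u_2$.

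\textbf{Gradient bound.} For any $x_0\in\mathbb{R}^2$ I would rescale by $v(y):=u_1(x_0+\varepsilon y)$, $w(y):=u_2(x_0+\varepsilon y)$ on $y\in B_1$, so that $v$ solves $-\Delta v=F$ with $F(y)=v\bigl(\lambda_{1,\varepsilon}-|x_0+\varepsilon y|^2-g_1 v^2-g w^2\bigr)$. When $|x_0|^2\le 2\lambda_{1,\varepsilon}$, combining the pointwise bound from the previous step with $|x_0+\varepsilon y|\le|x_0|+\varepsilon$ on $B_1$ yields $\|F\|_{L^\infty(B_1)}\le C\sqrt{\lambda_{1,\varepsilon}}(\lambda_{1,\varepsilon}+\lambda_{2,\varepsilon}+1)$, so standard interior gradient estimates for Poisson's equation (e.g.\ Gilbarg--Trudinger Thm.~3.9) give
\[
|\nabla v(0)|\le C\bigl(\|v\|_{L^\infty(B_1)}+\|F\|_{L^\infty(B_1)}\bigr)\le C\sqrt{\lambda_{1,\varepsilon}}(\lambda_{1,\varepsilon}+\lambda_{2,\varepsilon}+1),
\]
and undoing the scaling via $\nabla v(0)=\varepsilon\nabla u_1(x_0)$ produces the asserted inequality. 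The outer regime $|x_0|^2>2\lambda_{1,\varepsilon}$ is handled by first observing that the pointwise bound and the sign of the cubic terms yield $-\varepsilon^2\Delta u_1+\tfrac{1}{2}|x|^2 u_1\le 0$ there; comparison with radial Gaussian supersolutions of the form $A\exp(-c|x|^2/\varepsilon)$ then gives exponential decay of $u_1$, which makes both $\|v\|_{L^\infty(B_1)}$ and $\|F\|_{L^\infty(B_1)}$ (even after multiplication by the polynomially large factor $|x_0|^2$) negligible, so the rescaled gradient estimate holds trivially.

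\textbf{Main obstacle.} The unbounded trap coefficient $|x|^2$ prevents a one-shot scaling argument from producing a global $x$-independent bound: the two-regime decomposition, combining interior Schauder-type estimates in the bulk with Gaussian barrier comparison in the tail, is what yields a single constant with the stated dependence on $\lambda_{1,\varepsilon}$ and $\lambda_{2,\varepsilon}$.
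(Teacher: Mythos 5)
The pointwise bound is where your proof diverges from the paper and where there is a genuine gap. You evaluate the equation at an interior maximum point $x_i^\star$, but the existence of such a point is exactly what cannot be assumed at this stage. Your justification — ``positive solutions under consideration lie in the energy class $\mathcal{H}$ and, by standard elliptic regularity, are smooth and (together with all their derivatives) tend to zero at infinity'' — is circular within the paper's logical structure. Lemma~\ref{lemma:L_infty_bounds} is deliberately stated for positive solutions of \eqref{eq:main_eta_1_eta_2a}--\eqref{eq:main_eta_1_eta_2b} \emph{without} the decay condition \eqref{limeqeta}, and the paper notes immediately after the proof that \eqref{limeqeta} is then \emph{derived} from the $L^\infty$ bound by a barrier argument. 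This matters because non-radial positive minimizers of $E_\varepsilon^0$ in $\mathcal H$ are a priori only known to solve \eqref{eq:main_eta_1_eta_2a}--\eqref{eq:main_eta_1_eta_2b}; Strauss's lemma gives pointwise decay only in the radial case, and the $H^1(\R^2)$ membership alone does not give uniform decay (nor even boundedness) of a possibly non-radial function. The paper's proof avoids this entirely: set $w_i=(\sqrt{g_i}u_i-\sqrt{\lambda_{i,\varepsilon}})/\varepsilon$ and apply Kato's inequality, $\Delta w_i^+ \ge \chi_{\{w_i\ge 0\}}\Delta w_i$. On $\{w_i\ge 0\}$, plugging in the equation and discarding the nonnegative terms $|x|^2 u_i$ and $g u_i u_j^2$ yields $\Delta w_i^+ \ge (w_i^+)^3$ in the distributional sense on all of $\R^2$, and the Brezis Liouville theorem \cite{brezisLiouville} then forces $w_i^+\equiv 0$, i.e.\ $g_i u_i^2\le \lambda_{i,\varepsilon}$. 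No decay, boundedness, or attainment of the supremum is assumed.

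For the gradient bound, your argument is essentially the paper's (rescale by $\varepsilon$, bound the right-hand side using the pointwise bound already obtained, apply interior elliptic regularity, undo the scaling). Your explicit two-regime split into $|x_0|^2\le 2\lambda_{1,\varepsilon}$ and its complement, with a Gaussian barrier in the tail, is a reasonable and careful way to make the $x$-uniformity of the constant transparent; the paper's one-line rescaling $z_i(y)=u_i(\varepsilon(y-x))$ is written more tersely. In any case this step rests on the pointwise bound, so fixing the circularity above is what is needed.
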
 The  proof is adapted from
\cite{AftalionJerrardLetelierJFA} and \cite{IgnatMillotJFA}.
\begin{proof}
Let $ w_i=(\sqrt{g_i} u_i-\sqrt{\lambda_{i,\ep}})/\ep$, then Kato's inequality yields
\[
\Delta w_i^+ \geq \chi_{\{ w_i\geq0\}} \Delta w_i \geq \chi_{\{
w_i\geq0\}} \frac{\sqrt{g_i}}{\ep^3} u_i(g_i u_i^2-\la_{i,\ep}),
\]
where $\chi$ is the characteristic function of a set. Then we obtain
\[
\Delta w_i^+ \geq \chi_{\{ w_i\geq0\}} \frac{\ep
w_i+\sqrt{\la_{i,\ep}}}{\ep^2}  w_i (\ep w_i+2\sqrt{\la_{i,\ep}})
\geq ( w_i^+)^3.
\]
A non-existence result by Brezis \cite{brezisLiouville} implies that
$ w_i^+\equiv0$, so that the first bound is proved. In fact, since
$u_i$ is bounded, it follows by a standard barrier argument that
(\ref{limeqeta}) is also satisfied.

Now fix $x\in\R^2$, $L>0$ and for $y\in B_{2L}(x)$, let $z_i(y)=
u_i(\ep(y-x))$. Then
\[
-\Delta
z_i=-z_i(\ep^2|y-x|^2+g_iz_i^2+gz_j^{2}-\la_{i,\ep})=:h_{i,\ep}(y),\
\ \ (i\neq j).
\]
We have proved above that there exists $C>0$ independent of $\ep$ and of
$x$ such that $\|h_{i,\ep}\|_{L^\infty(B_{2L}(x))}\leq
C\sqrt{\lambda_{i,\ep}}(\lambda_{i,\ep}+\lambda_{j,\ep}+1)$.
Standard regularity theory for elliptic equations implies $\|\nabla
z_i\|_{L^\infty(B_{L}(x))}\leq
C\sqrt{\lambda_{i,\ep}}(\lambda_{i,\ep}+\lambda_{j,\ep}+1)$, and in
turn the second part of the statement.
\end{proof} This implies in particular uniform bounds for the
 solutions of (\ref{eq:main_eta_1_eta_2}).
 In the following lemma, we prove that positive solutions of
(\ref{eq:main_eta_1_eta_2}) decay super-exponentially fast as
$|x|\to \infty$.
\begin{lemma}\label{lemma:decay_for_large_x}
Let $(u_1,u_2)$ be a positive solution of
\eqref{eq:main_eta_1_eta_2}. For every  $k>0$, let
$r_i=\sqrt{(1+k)\la_{i,\ep}}$ and
\[
W_i(s)= \max_{\partial B_{r}} u_i \cdot \exp\left( -\frac{1}{2\ep}
\sqrt{\frac{k}{1+k}}(s^2-r^2) \right) \qquad \text{for }\ s\geq
r\geq r_i, \ \ i=1,2.
\]
Then we have $u_i(x)\leq W_i(|x|)$ for $|x|\geq r\geq r_i$, $i=1,2$. Moreover,
\begin{equation}\label{equniqGenExp}
|u_i(x)|+\left|\nabla u_i(x) \right|\leq C_\varepsilon
e^{-c_\varepsilon |x|^2},\ \ x\in \mathbb{R}^2,\ i=1,2.
\end{equation}
\end{lemma}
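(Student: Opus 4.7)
The plan is to compare $u_i$ with the explicit Gaussian supersolution $W_i$ via the weak maximum principle applied to the linear operator $L_\varepsilon v := \varepsilon^2 \Delta v - (|x|^2 - \lambda_{i,\varepsilon}) v$, which has a non-positive zero-order coefficient in the region $\{|x| \ge r_i\}$. The key point is that in this region the cubic self-interaction and coupling terms have the right sign, so that $u_i$ is a subsolution of the associated linear equation. Then verification that $W_i$ is a supersolution reduces to a direct algebraic check, and the desired pointwise Gaussian decay follows. The gradient bound will then be obtained by standard interior elliptic estimates applied on unit balls.

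More concretely, I would proceed as follows. First, rewriting the equation for $u_i$ as $-\varepsilon^2\Delta u_i = (\lambda_{i,\varepsilon}-|x|^2 - g_i u_i^2 - g u_j^2)u_i$ and using positivity of $u_i,u_j$ and $g_i,g$, I obtain $L_\varepsilon u_i = g_i u_i^3 + g u_i u_j^2 \ge 0$ throughout $\R^2$, and in particular on $\{|x|\ge r_i\}$ the coefficient $-(|x|^2-\lambda_{i,\varepsilon})$ is $\le 0$. Second, for $W_i(s) = M_i \exp\!\bigl(-\alpha(s^2-r^2)\bigr)$ with $M_i = \max_{\partial B_r}u_i$ and $\alpha = \tfrac{1}{2\varepsilon}\sqrt{k/(1+k)}$, a direct radial computation gives
\[
\Delta W_i(s) = (4\alpha^2 s^2 - 4\alpha)\,W_i(s),
\]
so $L_\varepsilon W_i \le 0$ on $\{|x|\ge r\}$ is equivalent to $(1-4\varepsilon^2\alpha^2)s^2 \ge \lambda_{i,\varepsilon} - 4\varepsilon^2\alpha$; with the chosen $\alpha$ this reads $s^2/(1+k) \ge \lambda_{i,\varepsilon} - 2\varepsilon\sqrt{k/(1+k)}$, which holds for $s \ge r \ge r_i = \sqrt{(1+k)\lambda_{i,\varepsilon}}$. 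Hence $W_i$ is the required barrier.

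Third, I would compare $u_i$ with $W_i$ on the annulus $A_R = \{r \le |x|\le R\}$. By construction $u_i \le W_i$ on $\partial B_r$. By the preceding lemma (whose proof established $u_i(x)\to 0$ at infinity via Kato's inequality and a barrier argument), for any $\eta>0$ we may choose $R$ so large that $u_i \le \eta$ on $\partial B_R$. Applying the weak maximum principle to $L_\varepsilon(u_i - W_i - \eta) \ge 0$ (noting that $L_\varepsilon(W_i+\eta) \le L_\varepsilon W_i \le 0$ because $|x|^2-\lambda_{i,\varepsilon}>0$ in $A_R$) yields $u_i \le W_i + \eta$ on $A_R$; letting $R\to\infty$ and then $\eta\to 0^+$ gives $u_i \le W_i$ on $\{|x|\ge r\}$. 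Choosing for instance $r = r_i$ and absorbing $M_i e^{\alpha r_i^2}$ into a constant $C_\varepsilon$ produces the Gaussian bound $u_i(x) \le C_\varepsilon e^{-c_\varepsilon |x|^2}$.

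Finally, for the gradient estimate I would rescale the equation on balls of radius $\varepsilon$ centered at $x$ with $|x|$ large and apply standard interior $W^{2,p}$ and Schauder bounds: the right-hand side $(\lambda_{i,\varepsilon}-|y|^2-g_iu_i^2-gu_j^2)u_i$ is bounded pointwise by $C_\varepsilon(1+|x|^2)e^{-c_\varepsilon|x|^2}$ on a unit ball around $x$, which together with the already-established pointwise bound on $u_i$ gives $|\nabla u_i(x)|\le C_\varepsilon' e^{-c_\varepsilon'|x|^2}$ after slightly shrinking the constant in the exponent. The only genuinely delicate step is the comparison argument on the unbounded annulus; everything else is either an algebraic verification or a standard regularity estimate. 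The use of $u_i \to 0$ at infinity (inherited from Lemma \ref{lemma:L_infty_bounds}) is essential to close the maximum-principle argument.
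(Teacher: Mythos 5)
Your proof is correct and follows essentially the same route as the paper: you verify that $W_i$ is a supersolution and $u_i$ a subsolution of a linear operator with nonnegative zero-order coefficient on $\{|x|\ge r\}$ (the paper uses the lower bound $\tfrac{k}{1+k}|x|^2\le |x|^2-\lambda_{i,\varepsilon}$, you keep the coefficient itself, an immaterial difference), then close the comparison using the decay $u_i\to 0$ at infinity inherited from the previous lemma (via exhaustion by annuli with an $\eta$-shift rather than the paper's contradiction at a negative interior minimum), and finally obtain the gradient bound by standard interior elliptic estimates, exactly as in the paper.
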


\begin{proof}
Given $k>0$, let $r\geq r_i=\sqrt{(1+k)\la_{i,\ep}}$. For $|x|\geq
r$ we have $\la_{i,\ep} \leq \frac{|x|^2}{1+k}$, and hence
$|x|^2-\la_{i,\ep}\geq \frac{k}{1+k}|x|^2$, so that $u_i$ satisfies
\[
-\Delta u_i+\frac{ u_i}{\ep^2}\frac{k}{1+k}|x|^2 \leq -\Delta
u_i+\frac{ u_i}{\ep^2}(|x|^2-\la_{i,\ep})=-\frac{ u_i}{\ep^2}(g_i
u_i^2+gu_j^2)\leq 0
\]
for every $|x|\geq r$ (here $j\neq i$). On the other hand, it is
easy to check that
\[
-\Delta W_i+\frac{W_i}{\ep^2}\frac{k}{1+k}|x|^2 > 0 \quad
\text{for } |x|\geq r, \qquad W_i(r)\geq u_i(x) \text{ for } |x|=r.
\]
Suppose by contradiction that $W_i- u_i$ is negative somewhere in
the exterior of $\bar{B}_r$. Since both functions go to zero at
infinity, there exists $\bar x$, with $|\bar x| >r$, where
 $W_i- u_i$ reaches its minimum:
$W_i(|\bar x|)- u_i(\bar x)<0$ and $\Delta W_i(|\bar x|)-\Delta
u_i(\bar x)\geq 0$. By subtracting the two differential inequalities
satisfied by $ u_i$ and $W_i$, and then evaluating at $\bar x$, we
obtain a contradiction.

Lemma \ref{lemma:L_infty_bounds} implies a uniform bound for $\max u_i$.
Using  (\ref{eq:main_eta_1_eta_2}), we obtain that
\[
\left|\Delta u_i(y) \right|\leq C_\varepsilon e^{-c_\varepsilon
|x|^2},\ \ y\in B_1(x),\ \ x\in \mathbb{R}^2.
\]
 By
standard interior  elliptic  estimates, we deduce (\ref{equniqGenExp}).
\end{proof}

\begin{proposition}\label{proUniqGeneral}
Assume (\ref{eq:condition_on_g_thomas_fermi}). Given
$\lambda_{i,\varepsilon}>0$, then problem
(\ref{eq:main_eta_1_eta_2}) has at most one positive solution.
\end{proposition}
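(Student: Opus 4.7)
The plan is to adapt the classical Br\'ezis--Oswald division trick from a single equation to the coupled system, exploiting the coexistence inequality $g^{2}<g_{1}g_{2}$ to obtain a positive-definite quadratic form that forces two positive solutions to coincide. Suppose $(u_{1},u_{2})$ and $(v_{1},v_{2})$ are two positive solutions of \eqref{eq:main_eta_1_eta_2} with the same Lagrange multipliers $\lambda_{i,\varepsilon}$. Dividing the $u_{i}$-equation by $u_{i}$ and the $v_{i}$-equation by $v_{i}$ and subtracting eliminates the potential $|x|^{2}$ and the multipliers, leaving, for $i=1,2$ and $j\neq i$, the pointwise identity
\[
\varepsilon^{2}\!\left(\frac{\Delta v_{i}}{v_{i}}-\frac{\Delta u_{i}}{u_{i}}\right)=-g_{i}(u_{i}^{2}-v_{i}^{2})-g(u_{j}^{2}-v_{j}^{2}).
\]

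Next I would multiply this identity by $u_{i}^{2}-v_{i}^{2}$, integrate over $\mathbb{R}^{2}$, sum over $i$, and apply the Picone-type identity
\[
\int_{\mathbb{R}^{2}}(u_{i}^{2}-v_{i}^{2})\!\left(\frac{\Delta v_{i}}{v_{i}}-\frac{\Delta u_{i}}{u_{i}}\right)dx=\int_{\mathbb{R}^{2}}\Bigl|\nabla u_{i}-\tfrac{u_{i}}{v_{i}}\nabla v_{i}\Bigr|^{2}+\Bigl|\nabla v_{i}-\tfrac{v_{i}}{u_{i}}\nabla u_{i}\Bigr|^{2}\,dx\geq 0,
\]
obtained by integration by parts, using $\int(\Delta u_{i})u_{i}=-\int|\nabla u_{i}|^{2}$ and $\int\frac{\Delta u_{i}}{u_{i}}v_{i}^{2}\,dx=\int\tfrac{v_{i}^{2}}{u_{i}^{2}}|\nabla u_{i}|^{2}-2\tfrac{v_{i}}{u_{i}}\nabla u_{i}\cdot\nabla v_{i}\,dx$ and the analogue with $u,v$ swapped. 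The combined identity then reads
\[
0\leq\varepsilon^{2}\sum_{i=1}^{2}\int_{\mathbb{R}^{2}}\Bigl|\nabla u_{i}-\tfrac{u_{i}}{v_{i}}\nabla v_{i}\Bigr|^{2}+\Bigl|\nabla v_{i}-\tfrac{v_{i}}{u_{i}}\nabla u_{i}\Bigr|^{2}\,dx=-\int_{\mathbb{R}^{2}}Q\bigl(u_{1}^{2}-v_{1}^{2},u_{2}^{2}-v_{2}^{2}\bigr)\,dx,
\]
where $Q(a,b):=g_{1}a^{2}+2gab+g_{2}b^{2}$ has discriminant $4(g^{2}-g_{1}g_{2})<0$ by \eqref{eq:condition_on_g_thomas_fermi}, so $Q$ is positive definite and the right-hand side is nonpositive. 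Both sides must therefore vanish, giving $u_{i}^{2}\equiv v_{i}^{2}$ on $\mathbb{R}^{2}$ and hence $u_{i}\equiv v_{i}$ by positivity.

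The main obstacle, and the very reason for the preparatory Lemmas \ref{lemma:L_infty_bounds}--\ref{lemma:decay_for_large_x} just established, is justifying the integration by parts on the unbounded domain $\mathbb{R}^{2}$: the implicit test functions $(u_{i}^{2}-v_{i}^{2})/u_{i}$ and $(u_{i}^{2}-v_{i}^{2})/v_{i}$ carry reciprocals $1/u_{i}$, $1/v_{i}$ that might a priori blow up at infinity. I would carry out the entire computation multiplied by a smooth radial cut-off $\chi_{R}\in C_{c}^{\infty}(\mathbb{R}^{2})$ with $\chi_{R}\equiv 1$ on $B_{R}$, $\chi_{R}\equiv 0$ outside $B_{2R}$ and $|\nabla\chi_{R}|\leq C/R$, perform integration by parts on $B_{2R}$ (where no boundary terms arise), and then let $R\to\infty$. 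The super-exponential Gaussian bounds $u_{i},v_{i},|\nabla u_{i}|,|\nabla v_{i}|\leq C_{\varepsilon}e^{-c_{\varepsilon}|x|^{2}}$ provided by Lemma \ref{lemma:decay_for_large_x}, together with matching two-sided Gaussian comparison of the solutions against the explicit barriers $W_{i}$ of that same lemma, will make the commutator terms supported in the annulus $\{R\leq|x|\leq 2R\}$ decay super-polynomially in $R$, so that dominated convergence extends the Picone identity to all of $\mathbb{R}^{2}$ and the Br\'ezis--Oswald computation closes.
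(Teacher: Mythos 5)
Your algebraic core is correct and is in fact the same computation the paper performs, merely set up in the $(u_i,v_i)$ variables rather than through the quotients $\psi_i=u_i/\eta_i$. Indeed one can check that
\[
\eta_i^2|\nabla\psi_i|^2\bigl(1+\psi_i^{-2}\bigr)
=\Bigl|\nabla u_i-\tfrac{u_i}{\eta_i}\nabla\eta_i\Bigr|^{2}
+\Bigl|\nabla \eta_i-\tfrac{\eta_i}{u_i}\nabla u_i\Bigr|^{2},
\]
so your Picone expression coincides with the gradient terms in \eqref{equniqlastRel}, and the positive-definite form $Q$ is exactly the same use of \eqref{eq:condition_on_g_thomas_fermi} as the paper's inequality \eqref{eq:coercivity}.

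The gap is in the passage to infinity. You claim that multiplying by a cut-off $\chi_R$ and invoking ``matching two-sided Gaussian comparison'' from Lemma \ref{lemma:decay_for_large_x} will send the commutator terms on $\{R\le|x|\le 2R\}$ to zero. This does not hold. First, Lemma \ref{lemma:decay_for_large_x} provides only an \emph{upper} bound; the lower bound is Lemma \ref{lemma:bound_from_below}, and its Gaussian rate
$\alpha_i/2\approx\tfrac{1}{2\ep}\sqrt{1+\tfrac{g\lambda_{j,\ep}}{g_j\lambda_{i,\ep}}}$
is \emph{strictly larger} than the best upper-bound rate $\tfrac{1}{2\ep}$ obtained by letting $k\to\infty$ in $W_i$. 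The two bounds therefore do not match: if $u_i,|\nabla v_i|\le Ce^{-c_u|x|^2}$ and $v_i\ge ce^{-c_l|x|^2}$ with $c_l>c_u$, then on the annulus the worst-case bound for your commutator integrand is
\[
\frac{u_i^2|\nabla v_i|}{v_i}\lesssim \exp\bigl((4c_l-3c_u)R^2\bigr),
\]
which \emph{blows up} as $R\to\infty$ since $4c_l>3c_u$. So the naive cut-off argument is not available, and dominated convergence cannot close the Picone identity on $\mathbb{R}^2$ as written.

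The paper circumvents precisely this difficulty without any lower bound on the solutions: it tests equation \eqref{equniqEqlin} against $r^\alpha\chi_R^2\psi_i$ with a polynomial weight $\alpha>2$ (an idea traced to \cite{berestyckiCaffarelli}), deduces the weighted integrability
$\int_{\mathbb{R}^2}r^\alpha\bigl(u_i^2\tfrac{|\nabla\eta_i|^2}{\eta_i^2}
+\eta_i^2\tfrac{|\nabla u_i|^2}{u_i^2}\bigr)\,dx\le C_\varepsilon$
using only the Gaussian \emph{upper} bound \eqref{equniqGenExp}, and then invokes the co-area formula to extract radii $R_k\to\infty$ along which the boundary integral in \eqref{equniqlastRel} vanishes. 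Your decomposition $\int\chi_R(u_i^2-v_i^2)\tfrac{\Delta v_i}{v_i}$ generates commutator terms of the same kind as the paper's boundary terms, so to salvage your argument you would need to import this same weighted-test-function and co-area machinery --- the Gaussian decay alone is not enough.
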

\begin{proof}
Let $(\eta_1,\eta_2)$ and $(u_1,u_2)$ be two positive solutions of
(\ref{eq:main_eta_1_eta_2}) with the same $\lambda_{1,\varepsilon}$
and $\lambda_{2,\varepsilon}$. Then, the function $\psi_i=
u_i/\eta_i$ solves the following
equation with $j\neq i$:
\begin{equation}\label{equniqEqlin}
\begin{split}
-\nabla\cdot(\eta_i^2\nabla \psi_i)  &= u_i\Delta\eta_i-\eta_i\Delta u_i  \\
&= \frac{\eta_i^2\psi_i}{\ep^2}\left[
g_i\eta_i^2(1-\psi_i^2)+g\eta_j^2(1-\psi_j^2)\right].
\end{split}
\end{equation}We want to show
 that $\psi_i$ is identically equal to $1$. To
this end, we multiply equation (\ref{equniqEqlin}) by
$(\psi_i^2-1)/\psi_i$ in a ball of radius $R$ to obtain
\begin{align*}
\int_{B_R} \left\{ \eta_i^2|\nabla\psi_i|^2\left(1+\frac{1}{\psi_i^2}\right)+\frac{\eta_i^2}{\ep^2}\left[ g_i\eta_i^2(\psi_i^2-1)^2 +g\eta_j^2(\psi_i^2-1)(\psi_j^2-1)\right]  \right\} \,dx \\
=\int_{\partial B_R} \left\{ \left(  u_i-\frac{\eta_i^2}{ u_i}
\right)\nabla u_i-\left(\frac{
u_i^2}{\eta_i}-\eta_i\right)\nabla\eta_i \right\}\cdot\nu\, d\sigma,
\end{align*}
where $\nu$ denotes the outer unit normal vector to $\partial B_R$.
We sum the previous identities for $i=1,2$ and then we use the
following inequality
\begin{equation}\label{eq:coercivity}
|2g\eta_1^2\eta_2^2(\psi_1^2-1)(\psi_2^2-1)|\leq
(g_1-\gamma)\eta_1^4(\psi_1^2-1)^2+(g_2-\gamma)\eta_2^4(\psi_2^2-1)^2,
\end{equation}
where $0<\gamma<\min\{g_1,g_2\}$ is such that
\begin{equation}\label{eq:g_mathfrak_definition}
g\leq\sqrt{g_1-\gamma}\sqrt{g_2-\gamma},
\end{equation}
which exists by \eqref{eq:condition_on_g_thomas_fermi}. We get
\begin{equation}\label{equniqlastRel}
\begin{split}
\sum_{i=1}^2 \int_{B_R} \left\{ \eta_i^2|\nabla\psi_i|^2\left( 1+\frac{1}{\psi_i^2}\right)+\frac{\gamma}{\ep^2} \eta_i^4(\psi_i^2-1)^2 \right\}\,dx \\
\leq \sum_{i=1}^2 \int_{\partial B_R} \left\{ \left(
 u_i-\frac{\eta_i^2}{ u_i}
\right)\nabla u_i-\left(\frac{
u_i^2}{\eta_i}-\eta_i\right)\nabla\eta_i \right\}\cdot\nu\, d\sigma.
\end{split}
\end{equation}
To conclude that $\psi_i\equiv 1$, it is enough to show
that there exist $R_k\to \infty$ such that the right-hand side of
(\ref{equniqlastRel}), with $R=R_k$, tends to zero as $k\to \infty$.
This task will take up the rest of the proof.

 Let $\chi$ be a smooth cutoff function in
$\mathbb{R}^2$ which is identically equal to $1$ in the unit ball
and identically equal to $0$ outside of the ball of radius $2$. For
all $R\geq 1$, we define $\chi_R=\chi(\cdot/R)$. In the remaining part of this proof, we
 denote by $C_\varepsilon$ a positive generic constant  which is
independent of $R\geq 1$. We multiply (\ref{equniqEqlin}) by $r^\alpha \chi_R^2\psi_i$,
where $\alpha>2$ $(r=|x|)$, and integrate the resulting identity by
parts over $\mathbb{R}^2$ to find that
\[
\left| \int_{B_{2R}}^{}\chi_R^2 \left(r^\alpha \nabla \psi_i+\alpha
r^{\alpha-1}\frac{x}{r}\psi_i \right)\eta_i^2 \nabla
\psi_idx+\int_{B_{2R}}^{}r^\alpha \psi_i 2 \chi_R \nabla \chi_R
\eta_i^2 \nabla \psi_idx
 \right|\leq C_\varepsilon,
\]
where we have also made use of (\ref{equniqGenExp}) and of the definition
of $\psi_i$.  Our motivation for including $\chi_R^2$ comes from
\cite[Thm. 1.8]{berestyckiCaffarelli}.
 Thanks to the
elementary inequalities
\[\left|
\chi_R^2 r^{\alpha-1}\frac{x}{r}\psi_i \eta_i^2 \nabla \psi_i\right|
\leq d \chi_R^2r^\alpha \eta_i^2 |\nabla
\psi_i|^2+\frac{1}{2d}\chi_R^2r^{\alpha-2}\psi_i^2\eta_i^2\ \ \
\forall\ d>0,
\]
and
\[
\left|2r^\alpha \psi_i  \chi_R \nabla \chi_R \eta_i^2 \nabla \psi_i
\right|\leq d\chi_R^2r^\alpha\eta_i^2|\nabla
\psi_i|^2+\frac{1}{d}r^\alpha \psi_i^2 |\nabla\chi_R|^2\eta_i^2\ \ \
\forall\ d>0,
\]
choosing a sufficiently small $d>0$ (independent of $R$), via
(\ref{equniqGenExp}),  we infer that
\[
\int_{\mathbb{R}^2}^{}r^\alpha \chi_R^2 \eta_i^2 |\nabla \psi_i|^2dx
\leq C_\varepsilon.
\]
By Lebesgue's monotone convergence theorem, letting $R\to \infty$ in
the above relation, we obtain that
\[
\int_{\mathbb{R}^2}^{}r^\alpha \eta_i^2 |\nabla \psi_i|^2dx \leq
C_\varepsilon.
\]
Replacing $\psi_i$ by its value and using (\ref{equniqGenExp}),
we find that
\[
\int_{\mathbb{R}^2}^{}r^\alpha  u_i^2\frac{|\nabla
\eta_i|^2}{\eta_i^2}dx \leq C_\varepsilon.
\]
Reversing the roles of $u_i$ and $\eta_i$, and summing, we reach
\[
\int_{\mathbb{R}^2}^{}r^\alpha\sum_{i=1}^{2} \left(
u_i^2\frac{|\nabla \eta_i|^2}{\eta_i^2}+\eta_i^2\frac{|\nabla
u_i|^2}{u_i^2}\right)dx \leq C_\varepsilon.
\]
Therefore, by the co-area formula, there
exists a sequence $R_k\to \infty$ such that
\[
R_k^\alpha \int_{\partial B_{R_k}}^{}\sum_{i=1}^{2} \left(
u_i^2\frac{|\nabla \eta_i|^2}{\eta_i^2}+\eta_i^2\frac{|\nabla
u_i|^2}{u_i^2}\right)d\sigma \to 0\ \ \textrm{as}\ k\to \infty.
\]
To conclude, we note that the above relation, the Cauchy-Schwarz
inequality, and (\ref{equniqGenExp}), imply that the right-hand side
of (\ref{equniqlastRel}) at $R=R_k$ tends to zero as $k\to \infty$,
as desired. We remark that, in the case of radial symmetry, one can
argue directly, analogously to \cite{AftalionJerrardLetelierJFA}, by
making use of Lemma \ref{lemma:eta'/eta_bound} below.
\end{proof}

\begin{rem}\label{remark:g_1=g_2}
If $g_1=g_2$ and
$\la_{1,\ep}=\la_{2,\ep}$, then $\eta_1=\eta_2$. Indeed, $(\eta_1,\eta_2)$ and $(\eta_2,\eta_1)$
are both positive solutions to (\ref{eq:main_eta_1_eta_2}) and we can apply
Proposition \ref{proUniqGeneral}.
\end{rem}
\begin{rem}\label{remark:unique} The uniqueness result of Proposition \ref{proUniqGeneral} yields radial symmetry of $u_1$ and $u_2$.
\end{rem}

We observe that the proof of Proposition \ref{proUniqGeneral}
applies to provide also the following local uniqueness result, since in this case, the boundary terms vanish.

\begin{proposition}\label{lemma:local_uniqueness}
Assume \eqref{eq:condition_on_g_thomas_fermi}. Given
$\lambda_{i,\varepsilon}>0$, $i=1,2$, and a bounded domain $B\subset
\R^2$ with Lipschitz continuous boundary, if $(\eta_1,\eta_2)$ and
$( u_1, u_2)$ are positive solutions to the elliptic system in
\eqref{eq:main_eta_1_eta_2} on $\bar{B}$ such that $ u_i=\eta_i$ on
$\partial B$ for $i=1,2$, then $ u_i\equiv\eta_i$ in $B$.
\end{proposition}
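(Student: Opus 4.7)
The plan is to mirror the argument of Proposition~\ref{proUniqGeneral}, which becomes substantially simpler on a bounded domain because the boundary integrals vanish automatically and no decay information at infinity is needed. Since $(\eta_1,\eta_2)$ and $(u_1,u_2)$ are strictly positive on the compact set $\bar B$, the quotients $\psi_i:=u_i/\eta_i$ are well-defined, smooth, and bounded above and below away from $0$ on $\bar B$, so all manipulations below are justified.

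Exactly as in the derivation of \eqref{equniqEqlin}, a direct computation shows that for $i=1,2$ and $j\neq i$,
\[
-\nabla\cdot(\eta_i^2\nabla \psi_i)=\frac{\eta_i^2\psi_i}{\ep^2}\bigl[g_i\eta_i^2(1-\psi_i^2)+g\eta_j^2(1-\psi_j^2)\bigr]\quad \text{in }B.
\]
I would test this equation against $(\psi_i^2-1)/\psi_i$ and integrate by parts over $B$. The boundary term produced by the integration by parts has exactly the form
\[
\int_{\partial B}\Bigl\{\bigl(u_i-\tfrac{\eta_i^2}{u_i}\bigr)\nabla u_i-\bigl(\tfrac{u_i^2}{\eta_i}-\eta_i\bigr)\nabla \eta_i\Bigr\}\cdot\nu\,d\sigma,
\]
and the hypothesis $u_i=\eta_i$ on $\partial B$ makes each of the scalar factors $u_i-\eta_i^2/u_i$ and $u_i^2/\eta_i-\eta_i$ vanish identically on $\partial B$. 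Thus the whole boundary contribution is $0$, with no need for the cut-off function, weighted-$L^2$ estimates, or choice of radii $R_k\to\infty$ used in the global case.

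Summing the resulting identities over $i=1,2$ and applying the pointwise coercivity bound \eqref{eq:coercivity} together with \eqref{eq:g_mathfrak_definition} (both valid under the single assumption \eqref{eq:condition_on_g_thomas_fermi}) yields, in place of \eqref{equniqlastRel},
\[
\sum_{i=1}^2\int_B\Bigl\{\eta_i^2|\nabla\psi_i|^2\bigl(1+\tfrac{1}{\psi_i^2}\bigr)+\tfrac{\gamma}{\ep^2}\eta_i^4(\psi_i^2-1)^2\Bigr\}\,dx\leq 0.
\]
Each term in the integrand is non-negative and $\eta_i>0$ on $\bar B$, so the inequality forces $\nabla\psi_i\equiv 0$ and $\psi_i^2\equiv 1$; combined with $\psi_i>0$, this gives $\psi_i\equiv 1$, i.e.\ $u_i\equiv\eta_i$ on $B$.

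There is no genuine obstacle here beyond identifying the correct test function and the right coercivity estimate; both are already in place from the proof of Proposition~\ref{proUniqGeneral}. The one point to highlight is that the boundary condition $u_i=\eta_i$ is precisely what is needed to kill the boundary term \emph{without} any growth control, which is why the argument transports cleanly from $\R^2$ to an arbitrary bounded Lipschitz domain.
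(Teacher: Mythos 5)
Your proof is correct and follows exactly the route the paper intends: the paper disposes of this proposition with the single remark that the argument of Proposition~\ref{proUniqGeneral} applies because the boundary terms vanish, and you have spelled out why — the test function $(\psi_i^2-1)/\psi_i$ (equivalently, each of the scalar factors $u_i-\eta_i^2/u_i$ and $u_i^2/\eta_i-\eta_i$) vanishes on $\partial B$ once $u_i=\eta_i$ there, so no cutoff, weighted estimate, or choice of radii $R_k$ is needed, and the coercivity bound \eqref{eq:coercivity} together with the strict positivity of $\eta_i$ on the compact set $\bar B$ forces $\psi_i\equiv 1$.
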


\subsection{Uniqueness and radial symmetry of the ground state}

We now turn to the uniqueness and radial symmetry of the positive
minimizer of the energy without rotation
\begin{align}\label{eq:energy_E_0_ep}
E^0_\ep(u_1,u_2)=\sum_{i=1}^{2} \int_{\R^2}\left\{ \frac{|\nabla
u_i|^2}{2} + \frac{|x|^2}{2\ep^2}| u_i|^2 +\frac{g_i}{4\ep^2}|
u_i|^4 \right\} \,dx +\frac{g}{2\ep^2}\int_{\R^2}| u_1|^2| u_2|^2
\,dx
\end{align}
in the space $\mathcal{H}$ which is defined in
(\ref{H_space_definition}).
%To this aim, we shall also provide some preliminary estimates on the minimizers.

If $( u_1, u_2)$ minimizes $E_\ep^0$ in $\mathcal{H}$, then the
diamagnetic inequality
implies $E^0_\ep(| u_1|,| u_2|)\leq E^0_\ep( u_1, u_2)$, so that $
u_i$ differs from $| u_i|$ by a constant complex phase. In fact, by
the strong maximum principle, we can assume that $ u_i$ are positive
functions.
%that is belong to the space
%\begin{equation}\label{H+_space_definition}
%\mathcal{H}^+=\left\{( u_1, u_2)\in \mathcal{H}:\  u_i\in
%H^1(\R^2,\R), \  u_i >0 \right\}.
%\end{equation}
By elliptic regularity, positive minimizers of $E_\varepsilon^0$ in
$\mathcal{H}$ lead to smooth solutions of
(\ref{eq:main_eta_1_eta_2a})-(\ref{eq:main_eta_1_eta_2b}) for some positive Lagrange multipliers
$\la_{1,\ep},\la_{2,\ep}$. Nevertheless, we have to prove that (\ref{limeqeta}) holds too.
 A priori, we only know that it holds for radial functions by the Strauss lemma \cite{straussCMP}.
 In the subsequent lemma, we provide a lower bound for the decay rate
of positive solutions to (\ref{eq:main_eta_1_eta_2}) as $|x|\to
\infty$. The following proof  is adapted from
\cite{AftalionJerrardLetelierJFA} and \cite{IgnatMillotJFA}.

\begin{lemma}\label{lemma:bound_from_below}
Let $(u_1,u_2)$ be a positive solution of
\eqref{eq:main_eta_1_eta_2}. Let
\[
w_i(s)= \min_{\partial B_r} u_i \cdot \exp\left(
-\frac{\alpha_i}{2}(s^2-r^2) \right) \qquad \text{for }\ s\geq
r>\sqrt{\la_{i,\ep}},
\]
where, for $i,j=1,2$ and $j\neq i$, we have defined
\[
\alpha_i=\frac{1}{\la_{i,\ep}}+\sqrt{\frac{1}{\la_{i,\ep}^2}+\frac{1}{\ep^2}\left(1+\frac{g\la_{j,\ep}}{g_j\la_{i,\ep}}\right)}.
\]
Then $ u_i(x)\geq w_i(|x|)$ for $|x| \geq r>\sqrt{\la_{i,\ep}}$.
\end{lemma}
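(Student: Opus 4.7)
The plan is to run a comparison-principle argument on the exterior domain $\{|x| \geq r\}$, in the same spirit as the upper-bound proof in Lemma~\ref{lemma:decay_for_large_x} but with the roles of barrier and solution reversed. I would introduce the linear operator
\[
\mathcal{L} v := -\ep^2 \Delta v + \bigl(|x|^2 + g\lambda_{j,\ep}/g_j\bigr)\, v \qquad (j \neq i),
\]
and show that $u_i$ is a supersolution and $w_i$ is a strict subsolution of $\mathcal{L} v = 0$ on $\{|x| > \sqrt{\lambda_{i,\ep}}\}$. The decoupling into a scalar problem is possible precisely because Lemma~\ref{lemma:L_infty_bounds} supplies the uniform bounds $u_k^2 \leq \lambda_{k,\ep}/g_k$ for $k=1,2$, which allow one to replace the unknown coupling $g u_j^2$ by the constant $g\lambda_{j,\ep}/g_j$ in a controlled way.

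For the supersolution property, I would rewrite the PDE satisfied by $u_i$ as
\[
\mathcal{L} u_i = u_i\bigl[(\lambda_{i,\ep} - g_i u_i^2) + g(\lambda_{j,\ep}/g_j - u_j^2)\bigr] \geq 0,
\]
since both bracketed terms are nonnegative. For the barrier, a routine radial computation gives, for any $w = A e^{-\alpha(s^2-r^2)/2}$,
\[
\mathcal{L} w = w\bigl[2\alpha\ep^2 + (1 - \ep^2 \alpha^2)|x|^2 + g\lambda_{j,\ep}/g_j\bigr].
\]
The heart of the argument is that $\alpha_i$ is the positive root of the quadratic $\ep^2 \alpha^2 - 2\ep^2\alpha/\lambda_{i,\ep} - (1 + g\lambda_{j,\ep}/(g_j\lambda_{i,\ep})) = 0$; substituting this identity collapses the bracket to
\[
\bigl(2\ep^2 \alpha_i + g\lambda_{j,\ep}/g_j\bigr)\bigl(1 - |x|^2/\lambda_{i,\ep}\bigr),
\]
which is strictly negative whenever $|x|^2 > \lambda_{i,\ep}$.

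A standard comparison step then finishes the proof. On $\partial B_r$ one has $u_i \geq \min_{\partial B_r} u_i = w_i(r)$, and both functions vanish at infinity (by \eqref{limeqeta}, strengthened in Lemma~\ref{lemma:decay_for_large_x}, for $u_i$, and by Gaussian decay for $w_i$). If $w_i - u_i$ were positive somewhere in the exterior, it would attain an interior positive maximum at some $\bar x$ with $|\bar x| > r$, where $-\ep^2 \Delta(w_i - u_i)(\bar x) \geq 0$ and $(|\bar x|^2 + g\lambda_{j,\ep}/g_j)(w_i - u_i)(\bar x) > 0$ combine to give $\mathcal{L}(w_i - u_i)(\bar x) > 0$, contradicting $\mathcal{L} w_i(\bar x) - \mathcal{L} u_i(\bar x) < 0$. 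The only real subtlety is the algebraic calibration of $\alpha_i$: it must be chosen so that the threshold $r > \sqrt{\lambda_{i,\ep}}$ in the hypothesis matches exactly the region where the subsolution inequality for $w_i$ becomes strict, which is precisely what the coupling-dependent correction $g\lambda_{j,\ep}/(g_j\lambda_{i,\ep})$ inside the square root in the definition of $\alpha_i$ records.
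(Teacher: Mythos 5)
Your proposal is correct and follows essentially the same route as the paper: the bounds $u_k^2\leq\lambda_{k,\ep}/g_k$ from Lemma \ref{lemma:L_infty_bounds} make $u_i$ a supersolution and $w_i$ a strict subsolution of the decoupled operator $-\ep^2\Delta+\bigl(|x|^2+g\lambda_{j,\ep}/g_j\bigr)$, and the comparison at an interior extremum finishes the argument exactly as in Lemma \ref{lemma:decay_for_large_x}. Your explicit factorization of the bracket as $\bigl(2\ep^2\alpha_i+g\lambda_{j,\ep}/g_j\bigr)\bigl(1-|x|^2/\lambda_{i,\ep}\bigr)$ is just a cleaner way of writing the paper's observation that the choice of $\alpha_i$ makes the bracket vanish at $|x|^2=\lambda_{i,\ep}$ while $1-\ep^2\alpha_i^2<0$.
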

\begin{proof}
We know from Lemma \ref{lemma:L_infty_bounds} that $
u_i^2\leq\la_{i,\ep}/g_i$ for $i=1,2$. Thus,  $ u_i$
satisfies
\[
-\ep^2\Delta u_i+ u_i\left(|x|^2+g
\frac{\la_{j,\ep}}{g_j}\right)\geq 0,\ \ x\in \mathbb{R}^2,\ \
(j\neq i).
\]
On the other hand, our choice of $\alpha_i$ implies that
\[
-\ep^2\Delta w_i+ w_i\left(|x|^2+g \frac{\la_{j,\ep}}{g_j}\right) <
w_i \left(2\alpha_i
\ep^2+\frac{g}{g_j}\la_{j,\ep}+(1-\ep^2\alpha_i^2)\la_{i,\ep}\right)
= 0,\ \ |x|>r,
\]
where we have used that $1-\ep^2\alpha_i^2<0$. The maximum principle can
now be applied, as in Lemma \ref{lemma:decay_for_large_x}, to yield
the desired lower bound.
\end{proof}

In the case of radial solutions, we can show the following lemma analogous to the one in \cite{AftalionJerrardLetelierJFA}.

\begin{lemma}\label{lemma:eta'/eta_bound}
Let $(u_1,u_2)$ be a positive radial solution of
\eqref{eq:main_eta_1_eta_2}. There exists $C_\ep>0$ independent of
$r$ such that, for $i=1,2$, we have
\[
| u_i'(r)|\leq C_\ep r  u_i(r) \qquad \text{for }\  r >2
\sqrt{\la_{i,\ep}}.
\]
\end{lemma}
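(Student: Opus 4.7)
The plan is to work directly with the radial equation. Writing $w=u_i$, $\lambda=\lambda_{i,\varepsilon}$, the equation for $u_i$ in polar form can be put in divergence form as
\[
(rw')' = \frac{rw}{\varepsilon^2}\bigl(r^2+g_i w^2 + g u_j^2 - \lambda\bigr).
\]
By Lemmas \ref{lemma:L_infty_bounds} and \ref{lemma:decay_for_large_x} we have $u_i(x)+|\nabla u_i(x)|\leq C_\varepsilon e^{-c_\varepsilon|x|^2}$, so in particular $r w'(r)\to 0$ as $r\to\infty$. Integrating the identity above from $r$ to $\infty$ thus gives
\[
-rw'(r) = \frac{1}{\varepsilon^2}\int_r^{\infty} sw(s)\bigl(s^2+g_i w^2(s) + g u_j^2(s) - \lambda\bigr)\, ds,
\]
and for $r>2\sqrt{\lambda}$ the integrand is strictly positive (hence $w'<0$ in this range, as expected).

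Next I bound the factors separately. The uniform $L^\infty$ bounds of Lemma \ref{lemma:L_infty_bounds} give $g_i w^2 + g u_j^2 \leq C_\varepsilon$, so the polynomial factor is controlled by $C_\varepsilon(1+s^2)$. For the function $w(s)$ with $s\geq r$, I use the tail estimate of Lemma \ref{lemma:decay_for_large_x}: taking, say, $k=1$, then for $r>2\sqrt{\lambda}>\sqrt{2\lambda}=r_i$ and using radial symmetry ($\max_{\partial B_r} u_i = w(r)$),
\[
w(s) \leq w(r)\,\exp\!\left(-\tfrac{1}{2\varepsilon\sqrt{2}}(s^2-r^2)\right),\qquad s\geq r.
\]

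Substituting these bounds, the problem reduces to estimating $e^{c_\varepsilon r^2}\int_r^\infty s(1+s^2)e^{-c_\varepsilon s^2}\,ds$ with $c_\varepsilon=\tfrac{1}{2\varepsilon\sqrt{2}}$. The change of variable $t=s^2$ and one integration by parts yield
\[
\int_r^\infty s(1+s^2)e^{-c_\varepsilon s^2}\,ds \leq C_\varepsilon\, r^2 e^{-c_\varepsilon r^2}
\]
for all $r$ large enough. This gives $-rw'(r)\leq C_\varepsilon r^2 w(r)$, i.e.\ the required inequality, for $r$ beyond some threshold $r_\varepsilon^\ast$. On the compact range $2\sqrt{\lambda}<r\leq r_\varepsilon^\ast$, the bound follows from continuity, since there $w$ is bounded below by a positive constant and $|w'|$ is globally bounded by Lemma \ref{lemma:L_infty_bounds}.

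The main obstacle is simply matching the exponential upper bound of Lemma \ref{lemma:decay_for_large_x} with the Gaussian tail computation so that the resulting constant is uniform in $r$; everything else is a routine application of the results already proved in this section.
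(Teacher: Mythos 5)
Your proof is correct, but it follows a genuinely different route from the paper's. The paper's argument is a short sandwich: with $k=3$ in Lemma~\ref{lemma:decay_for_large_x} one has $W_i(r)=u_i(r)$ and $u_i\leq W_i$ for $s>r$, so $u_i'(r)\leq W_i'(r)\leq 0$; and from Lemma~\ref{lemma:bound_from_below}, $u_i\geq w_i$ with $u_i(r)=w_i(r)$, so $u_i'(r)\geq w_i'(r)=-\alpha_i\,r\,u_i(r)$. Reading off the two barrier derivatives at the touching point gives $-\alpha_i\,r\,u_i(r)\leq u_i'(r)\leq 0$ immediately, with the explicit constant $C_\ep=\alpha_i$. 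You instead rewrite the radial equation in divergence form, integrate from $r$ to $\infty$ (using the super-exponential decay of $u_i$ and $\nabla u_i$ to kill the boundary term at infinity), and estimate the resulting Gaussian-weighted tail integral using only the upper barrier of Lemma~\ref{lemma:decay_for_large_x}, supplemented by a compactness argument on the finite range $2\sqrt{\la_{i,\ep}}<r\leq r_\ep^\ast$. The gain of your approach is that it does not invoke Lemma~\ref{lemma:bound_from_below} at all, so the lower barrier is dispensable for this purpose; the cost is a longer computation, a case split, and a less explicit constant. The sign information $u_i'(r)<0$ drops out automatically in both approaches, from the barrier comparison in the paper and from the positivity of the integrand in yours.
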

\begin{proof}
Since $u_{i}$ is radial, and $r>2 \sqrt{\la_{i,\ep}}$, an
application of Lemma \ref{lemma:decay_for_large_x} (with $k=3$)
yields that
\[
u_{i}(r)=W_{i}(r)\quad\text{ and } \quad u_{i}(s)\leq W_{i}(s)\
\text{ for }\ s>r.
\]
It follows that $ u_i'(r)\leq W_i'(r)\leq 0$. Similarly, an
application of Lemma \ref{lemma:bound_from_below} yields that $
u_i'(r)\geq w_i'(r)=-\alpha_i r  u_i(r)$, completing the proof.
\end{proof}

In order to proceed, we need the following splitting of the energy:

\begin{proposition}\label{lemma:splitting_energy_no_rotation} Assume \eqref{eq:condition_on_g_thomas_fermi}.
Let $(\eta_1,\eta_2)$ be a minimizer among radial functions in $\mathcal{H}$
 with $\eta_i>0$. Let $( u_1, u_2)\in
\mathcal{H}$. Then the splitting of energy (\ref{ensplit}) holds.
\end{proposition}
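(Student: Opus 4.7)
The approach is to expand $E_\ep^0(u_1,u_2)$ pointwise under the substitution $u_i = \eta_i v_i$, legitimate because $\eta_i > 0$ on $\R^2$, and to recognize $E_\ep^0(\eta_1,\eta_2)$ as the ``$v$-independent'' residue through the Euler--Lagrange equations for $\eta_i$. For complex $v_i$ and real $\eta_i$, the pointwise identity
$$|\nabla u_i|^2 = |v_i|^2 |\nabla \eta_i|^2 + \eta_i^2 |\nabla v_i|^2 + \tfrac{1}{2}\nabla(\eta_i^2)\cdot \nabla |v_i|^2$$
holds. I would integrate this over a ball $B_R$ and integrate the last term by parts once, using $\Delta(\eta_i^2) = 2\eta_i\Delta\eta_i + 2|\nabla \eta_i|^2$, to arrive at
$$\int_{B_R} |\nabla u_i|^2\,dx = \int_{B_R}\eta_i^2 |\nabla v_i|^2\,dx - \int_{B_R}\eta_i\Delta\eta_i\,|v_i|^2\,dx + \int_{\partial B_R}\eta_i\,\partial_\nu\eta_i\,|v_i|^2\,d\sigma.$$

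Next, since $(\eta_1,\eta_2)$ is a minimizer among radial functions, there exist Lagrange multipliers $\lambda_{1,\ep},\lambda_{2,\ep}$ such that $(\eta_1,\eta_2)$ solves the Euler--Lagrange system (\ref{eq:main_eta_1_eta_2}). Substituting $-\ep^2 \eta_i \Delta \eta_i = (\lambda_{i,\ep}-|x|^2)\eta_i^2 - g_i\eta_i^4 - g\eta_i^2 \eta_j^2$, combining with the remaining energy terms (which, under the substitution, become $\eta_i^2|v_i|^2$, $\eta_i^4|v_i|^4$, and $\eta_1^2\eta_2^2|v_1|^2|v_2|^2$), and then applying the two algebraic identities
$$|v_i|^4 - 2|v_i|^2 = (|v_i|^2 - 1)^2 - 1, \qquad |v_1|^2|v_2|^2 - |v_1|^2 - |v_2|^2 = (1-|v_1|^2)(1-|v_2|^2) - 1,$$
produces exactly $F_\ep^0(v_1,v_2)$ plus a $v$-free residue. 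The $\lambda$-terms, after using the mass constraint $\int \eta_i^2|v_i|^2\,dx = \int |u_i|^2\,dx = 1$, contribute $\frac{1}{2\ep^2}(\lambda_{1,\ep}+\lambda_{2,\ep})$; testing (\ref{eq:main_eta_1_eta_2}) against $\eta_i$ and rearranging shows that this residue coincides with $E_\ep^0(\eta_1,\eta_2)$.

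The main obstacle is to show that the boundary term $\tfrac{1}{2}\int_{\partial B_R}\eta_i\,\partial_\nu\eta_i\,|v_i|^2\,d\sigma$ vanishes along a suitable sequence $R_k \to \infty$; since $(u_1,u_2)\in\mathcal{H}$ is not assumed radial, no Strauss-type decay is available for $v_i$. Instead, I would exploit the radiality of $\eta_i$: Lemma \ref{lemma:eta'/eta_bound} gives $|\eta_i'(R)|\leq C_\ep R\,\eta_i(R)$ for $R > 2\sqrt{\lambda_{i,\ep}}$, so on such $\partial B_R$,
$$\left|\int_{\partial B_R}\eta_i\,\partial_\nu\eta_i\,|v_i|^2\,d\sigma\right|\leq C_\ep R \int_{\partial B_R}|u_i|^2\,d\sigma.$$
Setting $I_i(r) = \int_{\partial B_r}|u_i|^2\,d\sigma$, the moment bound $\int_{\R^2}|x|^2|u_i|^2\,dx < \infty$ built into $\mathcal{H}$ and the co-area formula yield $r^2 I_i(r) \in L^1(0,\infty)$; any nonnegative integrable $\phi$ on $(0,\infty)$ satisfies $\liminf_{r\to\infty} r\phi(r) = 0$ (otherwise $\phi \gtrsim 1/r$ at infinity, contradicting integrability), producing $R_k \to \infty$ with $R_k^3 I_i(R_k) \to 0$, which is more than enough. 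Passing to the limit $R = R_k \to \infty$ in the identity on $B_{R_k}$ yields the splitting on $\R^2$.
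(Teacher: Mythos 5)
Your proof is correct and takes essentially the same approach as the paper: expand $E_\ep^0$ under $u_i=\eta_i v_i$, substitute the Euler--Lagrange equations for $\eta_i$, apply the two algebraic identities, and kill the boundary term along a sequence $R_k\to\infty$ using Lemma \ref{lemma:eta'/eta_bound} together with the second-moment bound built into $\mathcal{H}$. The only bookkeeping difference is that the paper tests the equation against $\eta_i(|v_i|^2-1)$, which absorbs the Lagrange-multiplier term at the outset and yields a boundary term with both an $|u_i|^2$ and an $\eta_i^2$ piece, whereas you test against $\eta_i|v_i|^2$ and invoke the mass constraint at the end, giving a slightly simpler boundary term.
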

\begin{proof}
We test the equation for $\eta_i$ by $\eta_{i}(|v_i|^2-1)$ in a ball
of radius $R$ and then integrate by parts. As a result, we get the
term
\[
\begin{split}
\int_{B_R}\left\{|\nabla\eta_i|^2(|v_i|^2-1)+2\eta_i\nabla\eta_i\cdot(v_i,\nabla v_i)\right\}\,dx = \int_{B_R} -\Delta \eta_i \eta_i(|v_i|^2-1)\,dx \\
+\int_{\partial B_R}\left(\frac{| u_i|^2}{\eta_i}-\eta_i\right)
\nabla\eta_i\cdot\nu\, d\sigma,
\end{split}
\]
 Lemmas
\ref{lemma:L_infty_bounds} and \ref{lemma:eta'/eta_bound} apply to
$\eta_{i}$ to provide
\[
\left|\int_{\partial B_R}\left(\frac{ |u_i|^2}{\eta_i}-\eta_i\right)
\nabla\eta_i\cdot\nu\, d\sigma \right| \leq C_\ep \int_{\partial
B_R} R  |u_{i}|^{2} \, d\sigma + C_\ep \int_{\partial B_R} R
\eta_{i}^2 \, d\sigma,
\]
Note that the conditions $(\eta_1,\eta_2)$, $( u_{1}, u_{2})\in
\mathcal{H}$ imply the existence of a sequence $R_{k}\to\infty$ such
that the integrals above vanish along $R_{k}$ via the co-area
formula. Therefore, we have
\[
\begin{split}
\int_{\R^2}(|\nabla\eta_i|^2(|v_i|^2-1)+2\eta_i\nabla\eta_i\cdot(\nabla
v_i,v_i))\,dx = -\frac{1}{\ep^2} \int_{\R^2} \eta_i^2 (|v_i|^2-1) (|x|^2
+g_i\eta_i^2 +g\eta_j^2 )\,dx,
\end{split}
\]
where the Lagrange multiplier term has disappeared because
$\int_{\R^2}\eta_i^2(|v_i|^2-1)\,dx=0$. We replace the last equality
into the definition of $E_\ep^0( u_1, u_2)$ to find
\[
\begin{split}
& E_\ep^0( u_1, u_2)=E_\ep^0(\eta_1v_1,\eta_2v_2)\\
& =\sum_{i=1}^2 \int_{\R^2} \left\{ \frac{|\nabla\eta_i|^2}{2}|v_i|^2+ \eta_i \nabla\eta_i\cdot(\nabla v_i, v_i) +\eta_i^2\frac{|\nabla v_i|^2}{2}+\frac{|x|^2}{2\ep^2}\eta_i^2|v_i|^2+\frac{g_i}{4\ep^2}\eta_i^4 |v_i|^4 \right\}\,dx \\
&+\frac{g}{2\ep^2} \int_{\R^2} \eta_1^2\eta_2^2|v_1|^2 |v_2|^2\,dx \\
& =\sum_{i=1}^2 \int_{\R^2} \left\{ \frac{|\nabla\eta_i|^2}{2}+\eta_i^2\frac{|\nabla v_i|^2}{2}+\frac{|x|^2}{2\ep^2}\eta_i^2+\frac{g_i}{4\ep^2}\eta_i^4 + \frac{g_i}{4\ep^2}\eta_i^4(|v_i|^2-1)^2 \right\} \,dx \\
& + \frac{g}{2\ep^2}
\int_{\R^2}\eta_1^2\eta_2^2(|v_1|^2|v_2|^2-|v_1|^2-|v_2|^2+2)\,dx.
\end{split}
\]
By collecting the term $E_\ep^0(\eta_1,\eta_2)$ in the previous
expression, the result follows.
\end{proof}

\subsection{Proof of Theorem \ref{thm:uniqueness}}\begin{proof}Given $\lambda_{i,\varepsilon}>0$, $i=1,2$, the first assertion of
the theorem is proven in Proposition \ref{proUniqGeneral}.

Now, let $(\eta_1,\eta_2)$ be a minimizer of $E_\ep^0$ in $\mathcal H$ among radial
  functions, and let $( u_1, u_2)$ be a
minimizer of $E^0_\ep$ in $\mathcal{H}$. Since $(\eta_1,\eta_2)$ is
an admissible test function, we have $E_\ep^0( u_1, u_2)\leq
E_\ep^0(\eta_1,\eta_2)$. Consequently, the quantity
$F_\ep^0(v_1,v_2)$,  defined in (\ref{ensplit}), satisfies
\[
F_\ep^0(v_1,v_2)=E_\ep^0( u_1, u_2)- E_\ep^0(\eta_1,\eta_2)\leq0.
\]
%We recall that \eqref{eq:condition_on_g_thomas_fermi} implies the
%existence of $0<\gamma<g_1$ such that
%\begin{equation*}
%g\leq\sqrt{g_1-\gamma}\sqrt{g_2-\gamma}.
%\end{equation*}
On the other hand, recalling (\ref{eq:g_mathfrak_definition}), we
find that
\[
F_\ep^0(v_1,v_2)\geq\sum_{i=1}^2 \int_{\R^2} \left(
\frac{\eta_i^2}{2}|\nabla
v_i|^2+\frac{\gamma}{4\ep^2}\eta_i^4(|v_i|^2-1)^2 \right) \,dx\geq
0.
\]
This implies that $F_\ep^0(v_1,v_2)=0$ and that $|v_i|\equiv 1$ for
$i=1,2$, which implies the second assertion of the theorem. If $g_1=g_2$ then $\eta_1\equiv \eta_2$, as $(\eta_1,\eta_2)$ and
$(\eta_2,\eta_1)$ are both minimizers.
\end{proof}

% Sobolev
%spaces
%\[H^m(a,b,rdr)\equiv
%    \left\{\begin{array}{c}v:(a,b)\to \mathbb{R}\ :\ v \
%\textrm{has\ weak\  derivatives\ up\ to\ order}\ m \ \textrm{and}
%     \\
%    \int_{a}^{b}\left|v^{(j)}(r)  \right|^2r dr<\infty,\ \ j=0,1,\cdot,m
%  \end{array}
%\right\},
%\]
%for $0\leq a<b\leq \infty$, with the convection that
%$H^0(a,b,rdr)\equiv L^2(a,b,rdr)$. These spaces are Hilbert spaces
%when equipped with the obvious norm (see \cite{deFigueiredoRadial}
%for more information).

%%%%%%%%%%%%%%%%%%%%%%%%%%%%%%%%%%%%%%%%%%%%%%%%%%%%%%%%%%%%%%%%%%%%%%%%%%%%%%%%%%%%%%%%%%%%%%%%%%%%
%%%%%%%%%%%%%%%%%%%%%%%%%%%%%%%%%%%%%%%%%%%%%%%%%%%%%%%%%%%%%%%%%%%%%%%%%%%%%%%%%%%%%%%%%%%%%%%%%%%%

%\]

\section{Preliminary estimates for the energy minimizer without rotation}\label{sec:energy_estimates}

In this section we prove that, under assumptions
(\ref{eq:condition_on_g_thomas_fermi}),
 (\ref{eq:condition_on_g_two_disks}) and (\ref{eq14+}), the positive minimizer $(\eta_1,\eta_2)$ provided by Theorem \ref{thm:uniqueness} satisfies
\[
\eta_i^2\to a_i \quad\text{in } L^2(\R^2) \quad\text{ and }\quad
\la_{i,\ep}\to\la_{i,0}\ \ \textrm{as}\ \ \varepsilon\to 0.
\]
This result is achieved through the estimate of the energy of the
minimizer.

\subsection{Limiting profiles}\label{subsecLimitingProf}

%We will show that there exists a unique minimizer of $E_\ep^0$ in
%$\mathcal{H^+}$ and that it is radial. We will denote it by
%$(\eta_1,\eta_2)$ (we omit the subscript $\ep$ in this section). Our aim is to study
%the convergence of $(\eta_1,\eta_2)$ as $\ep\to0$, under the
%assumption that the supports of the limiting functions are two disks
%of radii $R_{1,0}$ and $R_{2,0}$ respectively.

We recall briefly how to calculate the limiting configuration
\eqref{eq:a_i_def}. We first assume the case of two disks \[
D_i=\{x\in\R^2: \ |x|<R_{i,0} \}.
\] where $R_{1,0}<R_{2,0}$ to be determined later.
 If
$x\in D_1$, formally let $\ep=0$ in \eqref{eq:main_eta_1_eta_2} and
solve the resulting algebraic system in $\eta_1^2,\eta_2^2$. This
provides, for $x\in D_1$,
\begin{equation}\label{eq:a_1_alternative_form}
a_{1,0}(x)=\frac{1}{g_1\Gamma}\left(\la_{1,0}-\frac{g}{g_2}\la_{2,0}-\Gamma_2|x|^2\right),
\end{equation}
\begin{equation}\label{eq:a_2_int_alternative_form}
a_{2,0}(x)=\frac{1}{g_2\Gamma}\left(\la_{2,0}-\frac{g}{g_1}\la_{1,0}-\Gamma_1|x|^2\right),
\end{equation}
and also the value of $R_{1,0}$, which is the radius at which
$a_{1,0}$ vanishes:
\begin{equation}\label{eq:R_1_def}
R_{1,0}^2=\frac{1}{\Gamma_2} \left( \la_{1,0}-\frac{g}{g_2}\la_{2,0}
\right).
\end{equation}
If $x\in D_2\setminus D_1$, then $\eta_1=0$ and formally with
$\ep=0$ in \eqref{eq:main_eta_1_eta_2}, we solve the resulting
equation for $\eta_2^2$, to obtain the following limiting behavior
for $\eta_2^2$:
\begin{equation}\label{eq:a_2_ext_alternative_form}
\frac{\la_{2,0}-|x|^2}{g_2},\qquad \text{with}\qquad
R_{2,0}^2=\la_{2,0}.
\end{equation}
Notice that $(\la_{2,0}-|x|^2)/g_2=a_{2,0}+\frac{g}{g_2} a_{1,0}$,
in agreement with our definition of $a_2$ in \eqref{eq:a_i_def}.
Finally, by imposing the normalization conditions
$\|a_1\|_{L^2(\R^2)}=\|a_2\|_{L^2(\R^2)}=1$, we obtain
\[
\la_{2,0}^2=\frac{2(g_2+g)}{\pi}, \qquad
\la_{1,0}-\frac{g}{g_2}\la_{2,0}=\sqrt{\frac{2g_1\Gamma\Gamma_2}{\pi}},
\]
and hence
\begin{equation}\label{eq:R_i_def}
R_{1,0}^4=\frac{2g_1\Gamma}{\pi\Gamma_2}, \qquad
R_{2,0}^4=\frac{2(g_2+g)}{\pi}.
\end{equation}

Notice that, in our setting, the condition (\ref{eq14+}) is
equivalent to $R_{1,0}<R_{2,0}$, as can be deduced from
\eqref{eq:R_i_def}. Next observe that the monotonicity of $a_{2,0}$
depends on the sign of $\Gamma_1$. If $\Gamma_1>0$, then $a_{2,0}$
is decreasing and
\begin{equation}\label{eq:bound_below_a_1}
a_{2,0}(x)\geq a_{2,0}(R_{1,0})=(R_{2,0}^2-R_{1,0}^2)/g_2>0, \quad
x\in D_1.
\end{equation}
If $\Gamma_1=0$ then $a_{2,0}$ is constant, whereas it is increasing
when $\Gamma_1<0$. In this last case, we have, for $x\in \R^2$,
\begin{equation}\label{eq:bound_below_a_2}
a_{2,0}(x)\geq a_{2,0}(0)=\frac{1}{g_2\Gamma}\left(
\la_{2,0}-\frac{g}{g_1}\la_{1,0} \right),
\end{equation}
which is a positive constant thanks to
\eqref{eq:condition_on_g_two_disks}. Condition (\ref{eq:condition_on_g_two_disks}) is thus
 equivalent to having two disks.

In the case of a disk plus annulus, we assume that $a_1$ is supported in a disk $D_1$ of radius $R_{1,0}$
 and $a_2$ on an annulus $$D_2=\{x\in\R^2: \ R_{2,0}^-<|x|<R_{2,0}^+ \}$$ with
  $R_{2,0}^-< R_{1,0}<R_{2,0}^+$. Other rearrangements of $R_{1,0}$, $R_{2,0}^-$, $R_{2,0}^+$ can be excluded, see \cite{AftalionMasonWei}.
  In the coexistence region, that is $R_{2,0}^-< |x|< R_{1,0}$, $(\sqrt{a_{1,0}}, \sqrt{a_{2,0}})$ given by (\ref{eq:a_1_alternative_form})-(\ref{eq:a_2_int_alternative_form}) is the solution of (\ref{redetai}).
    The fact that $a_1$ vanishes at $R_{1,0}$ and $a_2$ at $R_{2,0}^-$ and $R_{2,0}^+$ yields (\ref{lambdaiann}). If
    $$r\leq R_{2,0}^-\leq R_{2,0}^+,\ a_2=0\hbox{ and }a_1=\frac{\lambda_{1,0}-r^2}{g_1}$$
    $$R_{1,0}\leq r,\ a_1=0\hbox{ and }a_2=\frac{\lambda_{2,0}-r^2}{g_2}$$
    which are consistent with (\ref{a1ann})-(\ref{a2ann}).
     The computations of the $L^2$ norms provide
\beq\label{Riann}\la_{1,0}=\sqrt{2\frac{g_1(1+\frac{g_2^2}{g^2}  (1-\Gamma_2)^2)}\pi}\hbox{ and } \la_{2,0}-\la_{1,0}=\sqrt{-\Gamma_1\Gamma_2}\sqrt{2\frac{g_1 g_2^2 (1-\Gamma_2)}{\pi g^2}}.\eeq

\subsection{Energy estimates}

In order to obtain some energy estimates, we first rewrite the
energy functional in a different form.

\begin{lemma}\label{lemma:E_tilde_def} Assume (\ref{eq:condition_on_g_thomas_fermi}),
 (\ref{eq:condition_on_g_two_disks}) and (\ref{eq14+}).
Let $( u_1, u_2)\in \mathcal{H}$, then $E^0_\ep( u_1, u_2)=\tilde
E^0_\ep( u_1, u_2)+K$, where
\begin{align*}
\tilde E^0_\ep( u_1, u_2)&=\sum_{i=1}^2\int_{\R^2} \left\{
\frac{|\nabla u_i|^2}{2}  +\frac{g_i}{4\ep^2} ( |u_i|^2-a_i)^2
\right\}\,dx
+\frac{g}{2\ep^2}\int_{\R^2} ( |u_1|^2-a_1)( |u_2|^2-a_2)\,dx \\
&+ \frac{g_1\Gamma}{2\ep^2}\int_{\R^2\setminus D_1}
 |u_1|^2a_{1,0}^-\,dx + \frac{1}{2\ep^2}\int_{\R^2\setminus D_2}
(g |u_1|^2+g_2 |u_2|^2)\left( a_{2,0}+\frac{g}{g_2}a_{1,0} \right)^-
\,dx
\end{align*}
and $K$ is the following constant (depending on $\ep$)
\[
K=\frac{\lambda_{1,0}+\lambda_{2,0}}{2\ep^2} -\frac{1}{4\ep^2}
\int_{\R^2} ( g_1a_1^2+g_2a_2^2+2ga_1a_2 )\,dx.
\]
\end{lemma}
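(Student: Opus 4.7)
The plan is to verify the identity by direct algebraic manipulation. I would start by expanding the two squares in the definition of $\tilde E_\ep^0$:
\[
\tfrac{g_i}{4\ep^2}(|u_i|^2 - a_i)^2 = \tfrac{g_i}{4\ep^2}|u_i|^4 - \tfrac{g_i}{2\ep^2}a_i|u_i|^2 + \tfrac{g_i}{4\ep^2}a_i^2,
\]
and the analogous expansion of the cross term $\tfrac{g}{2\ep^2}(|u_1|^2-a_1)(|u_2|^2-a_2)$. After summing, the quartic terms $\tfrac{g_i}{4\ep^2}|u_i|^4$ and $\tfrac{g}{2\ep^2}|u_1|^2|u_2|^2$ match those in $E_\ep^0$ exactly, the gradient terms are already identical, and the $a_i$-only terms reproduce the second half of $K$. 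What is left to identify is the linear-in-$|u_i|^2$ piece against the harmonic trap $\tfrac{|x|^2}{2\ep^2}|u_i|^2$ from $E_\ep^0$, modulo the constant $\tfrac{\lambda_{1,0}+\lambda_{2,0}}{2\ep^2}$ (which, using $\|u_i\|_{L^2}^2=1$, I would rewrite as $\tfrac{1}{2\ep^2}\int(\lambda_{1,0}|u_1|^2+\lambda_{2,0}|u_2|^2)\,dx$).

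This reduces the lemma to two pointwise identities
\begin{align*}
-(g_1 a_1+g a_2)+g_1\Gamma\, a_{1,0}^-\,\chi_{\R^2\setminus D_1}+g\bigl(a_{2,0}+\tfrac{g}{g_2}a_{1,0}\bigr)^-\chi_{\R^2\setminus D_2}&=|x|^2-\lambda_{1,0},\\
-(g_2 a_2+g a_1)+g_2\bigl(a_{2,0}+\tfrac{g}{g_2}a_{1,0}\bigr)^-\chi_{\R^2\setminus D_2}&=|x|^2-\lambda_{2,0},
\end{align*}
which I would check in the three regions $D_1$, $D_2\setminus D_1$, $\R^2\setminus D_2$. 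On $D_1$ both indicator functions vanish and the identities are exactly \eqref{redetai}. On $D_2\setminus D_1$ one uses $a_1\equiv 0$, $a_2 = a_{2,0}+\tfrac{g}{g_2}a_{1,0}=\tfrac{1}{g_2}(R_{2,0}^2-|x|^2)$ from \eqref{eq:a_20out}, together with $g_1\Gamma\,a_{1,0}=\lambda_{1,0}-\tfrac{g}{g_2}\lambda_{2,0}-\Gamma_2|x|^2$ from \eqref{eq:a_1_alternative_form}; the terms combine via the elementary identity $\tfrac{g}{g_2}+\Gamma_2=1$ to give $|x|^2-\lambda_{1,0}$. On $\R^2\setminus D_2$ both $a_i$ vanish, so only the $^-$ corrections survive, and the identity $g_1\Gamma+\tfrac{g^2}{g_2}=g_1$ collapses the expression to $-(g_1 a_{1,0}+g a_{2,0})$, which again equals $|x|^2-\lambda_{1,0}$ by the algebraic relation \eqref{redetai} (valid pointwise, with $a_{1,0},a_{2,0}$ extended as polynomials). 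The second identity is handled similarly but is shorter since only one correction term appears.

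I do not expect a genuine obstacle here: the argument is bookkeeping combined with the explicit algebra of $\lambda_{i,0}$, $R_{i,0}$, and $\Gamma, \Gamma_1, \Gamma_2$ summarized in \eqref{eq:a_i_def}--\eqref{eq:lambda_1_def}. The role of the correction integrals $\int_{\R^2\setminus D_1}|u_1|^2 a_{1,0}^-$ and $\int_{\R^2\setminus D_2}(g|u_1|^2+g_2|u_2|^2)(a_{2,0}+\tfrac{g}{g_2}a_{1,0})^-$ is precisely to repair the mismatch between $-g_i a_i - g a_j$ (which is $|x|^2-\lambda_{i,0}$ only in the support of $a_i$) and the true trap $|x|^2-\lambda_{i,0}$ outside the supports; in this way $\tilde E_\ep^0$ retains the interpretation of penalizing $|u_i|^2$ in the forbidden region $\{a_i=0\}$ while the bulk behaviour is encoded in the squared terms $(|u_i|^2-a_i)^2$. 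The main care needed is in keeping track of signs of $a_{1,0}$ in the intermediate annulus $D_2\setminus D_1$, where $a_{1,0}<0$ so $a_{1,0}^-=-a_{1,0}>0$, and this is exactly why $\Gamma_1$ may be of either sign without affecting the identity.
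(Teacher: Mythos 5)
Your proof is correct and follows essentially the same route as the paper: both are direct algebraic expansions that reduce the claimed identity to the piecewise formulas
$|x|^2+g_1a_1+ga_2=\lambda_{1,0}+\Gamma_2(|x|^2-R_{1,0}^2)^+\chi_{D_2}+(|x|^2-\lambda_{1,0})\chi_{\R^2\setminus D_2}$ (and its analogue for the second component), which is what the paper records before inserting into $E^0_\ep$; you have simply expanded $\tilde E^0_\ep$ instead of $E^0_\ep$, and the region-by-region verification you carry out, including the use of $\tfrac{g}{g_2}+\Gamma_2=1$, $g_1\Gamma+\tfrac{g^2}{g_2}=g_1$, and the polynomial extension of \eqref{redetai}, is exactly the same bookkeeping.
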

\begin{proof} We note that \[
|x|^2+g_1a_1+ga_2=\left\{\begin{array}{ll}
\lambda_{1,0}, \quad & x\in D_1, \\
\Gamma_2 |x|^2+\frac{g}{g_2}\lambda_{2,0}, \quad & x\in D_2\setminus D_1, \\
|x|^2, & x\in \R^2\setminus D_2,
\end{array}\right.
\]
and
\[
|x|^2+g_2a_2+ga_1=\left\{\begin{array}{ll}
\lambda_{2,0}, \quad & x\in D_2, \\
|x|^2, & x\in \R^2\setminus D_2.
\end{array}\right.
\]
Therefore, we have:
%\[
%g_i |u_i|^4+2|x|^2 |u_i|^2=g_i( |u_i|^2-a_i)^2+2
%|u_i|^2(|x|^2+g_ia_i)-g_ia_i^2,
%\]
%\[
%|u_1|^2 |u_2|^2=( |u_1|^2-a_1)( |u_2|^2-a_2)+ |u_1|^2
%a_2 + |u_2|^2a_1 -a_1a_2.
%\]
%Hence
\begin{equation*}
\begin{split}
g_1 |u_1|^4+2|x|^2 |u_1|^2 & +g_2 |u_2|^4+2|x|^2 |u_2|^2 +2g |u_1|^2 |u_2|^2= \\
& = g_1( |u_1|^2-a_1)^2 + g_2( |u_2|^2-a_2)^2 + 2g ( |u_1|^2-a_1)( |u_2|^2-a_2) \\
& + 2 |u_1|^2(|x|^2+g_1a_1+ga_2) +2 |u_2|^2(|x|^2+g_2a_2+ga_1) \\
& -g_1a_1^2 -g_2a_2^2 -2g a_1a_2.
\end{split}
\end{equation*}
Inserting the above in the definition of $E_\ep^0$, and rearranging
the terms, gives the statement.
\end{proof}

The following proposition provides some estimates  for the minimizer which will be used in the sequel for estimating
the associated Lagrange multipliers.

\begin{proposition}\label{prop:energy_estimates} Assume (\ref{eq:condition_on_g_thomas_fermi}),
 (\ref{eq:condition_on_g_two_disks}) and (\ref{eq14+}).
Let $(\eta_{1},\eta_{2})$ be the positive minimizer of $E^0_\ep$ in
$\mathcal{H}$ that is provided by Theorem \ref{thm:uniqueness}. If
$\varepsilon>0$ is sufficiently small, for $i=1,2$, we have \beq\label{nablaetai}
\int_{\R^2} |\nabla \eta_i|^2\,dx \leq C|\log\ep|,
\eeq
\beq\label{etaiai}
\int_{\R^2} (\eta_i^2-a_i)^2\,dx \leq C \ep^2|\log\ep|,
%\qquad \int_{\R^2} (\eta_1^2-a_1)(\eta_2^2-a_2)\,dx \leq C \ep^2|\log\ep|.
\eeq
\beq\label{estenergydisk}
\int_{\R^2\setminus D_1}\eta_1^2a_{1,0}^-\,dx+\int_{\R^2\setminus
D_2}(g\eta_1^2+g_2\eta_2^2)\left( a_{2,0}+\frac{g}{g_2}a_{1,0}
\right)^-\,dx\leq C\ep^2|\log\ep|.
\eeq
In particular,  $\eta_i^2\to a_i$ in $L^2(\R^2)$ as
$\ep\to0$.
\end{proposition}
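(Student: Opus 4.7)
The approach is to use the reformulation of Lemma \ref{lemma:E_tilde_def}, which rewrites $E_\ep^0 = \tilde E_\ep^0 + K$, and observe that $\tilde E_\ep^0$ is non-negative on $\mathcal{H}$. Once this is established, combining (i) an upper bound $\tilde E_\ep^0(\eta_1,\eta_2) \leq C|\log\ep|$ coming from a carefully chosen trial function with (ii) the non-negativity of each individual piece of $\tilde E_\ep^0$ will deliver \eqref{nablaetai}--\eqref{estenergydisk} term-by-term, and (3.7) will follow at once from \eqref{etaiai}.

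For the non-negativity, the last two summands of $\tilde E_\ep^0$ are manifestly non-negative. For the quadratic form in $X:=|u_1|^2-a_1$ and $Y:=|u_2|^2-a_2$, the hypothesis \eqref{eq:condition_on_g_thomas_fermi} means the matrix $\bigl(\begin{smallmatrix} g_1 & g \\ g & g_2\end{smallmatrix}\bigr)$ is positive definite, so there exists $\gamma>0$ (the same $\gamma$ used in Section \ref{secUniq}, cf.\ \eqref{eq:g_mathfrak_definition}) with
\[
g_1 X^2 + g_2 Y^2 + 2gXY \geq \gamma\,(X^2+Y^2).
\]
This yields both the non-negativity of $\tilde E_\ep^0$ and the coercivity needed for \eqref{etaiai}.

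For the upper bound, I would construct a radial, compactly supported trial pair $(u_1^\sharp, u_2^\sharp)\in\mathcal H$ obtained by smoothing $(\sqrt{a_1},\sqrt{a_2})$ in thin layers of width $\ep$ around the critical circles $|x|=R_{1,0}$ and $|x|=R_{2,0}$ (where $\sqrt{a_i}$ has an infinite-derivative singularity), and then rescaling each component by a factor $1+O(\ep)$ to restore the $L^2$ normalization. The gradient integrals are then bounded by $C|\log\ep|$ (the logarithm coming from integrating the singularity $|\nabla\sqrt{a_i}|^2\sim 1/\mathrm{dist}(\cdot,\partial D_i)$ cut off at distance $\ep$), while all $\ep^{-2}$-weighted terms contribute only $O(\ep)$, since on the $\ep$-strips the difference $|u_i^\sharp|^2-a_i$ is itself $O(\ep)$, making the squared pinning integrand of order $\ep\cdot\ep^2/\ep^2=\ep$. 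The two negative-part terms vanish identically on the trial function, since $u_i^\sharp$ is supported in $\overline{D_i}$. Hence $\tilde E_\ep^0(u_1^\sharp, u_2^\sharp)\leq C|\log\ep|$.

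By the minimality of $(\eta_1,\eta_2)$ and the identity $E_\ep^0=\tilde E_\ep^0+K$, we obtain $\tilde E_\ep^0(\eta_1,\eta_2)\leq C|\log\ep|$. Since each of the four summands of $\tilde E_\ep^0(\eta_1,\eta_2)$ is non-negative, each is bounded by $C|\log\ep|$: the gradient piece gives \eqref{nablaetai}; the coercive estimate above converts the pinning-plus-cross-term into $\int_{\R^2}(\eta_i^2-a_i)^2\,dx \leq C\ep^2|\log\ep|$, which is \eqref{etaiai}; and the two explicit boundary pieces give \eqref{estenergydisk}. The $L^2$-convergence $\eta_i^2\to a_i$ then follows directly from \eqref{etaiai}. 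The main obstacle I expect is the construction of the trial function: the transition across $|x|=R_{1,0}$ must simultaneously smooth the vanishing of $\sqrt{a_1}$ (infinite jump) and the finite jump of $\sqrt{a_2}$ (present since $g_1<g_2$ by \eqref{eq14+}), while the transition across $|x|=R_{2,0}$ has to handle a second infinite-derivative singularity of $\sqrt{a_2}$; verifying that the $L^2$-renormalization does not spoil the $|\log\ep|$ bound requires some care.
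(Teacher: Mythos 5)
Your argument follows essentially the same route as the paper: the decomposition of Lemma \ref{lemma:E_tilde_def}, coercivity of the quadratic form via the $\gamma$ of \eqref{eq:g_mathfrak_definition}, an upper bound $C|\log\ep|$ from a competitor pair supported in $\overline{D_i}$, and minimality to transfer it to $(\eta_1,\eta_2)$ and then read off each non-negative piece. The paper realizes the competitor as $\tilde\eta_i=h_\ep(a_i)/\|h_\ep(a_i)\|_{L^2}$ with $h_\ep(s)=s/\ep$ for $0\leq s\leq\ep^2$ and $h_\ep(s)=\sqrt s$ otherwise, which implements your strip-smoothing uniformly for both components by a single scalar cut-off (in a layer of width $\sim\ep^2$ rather than $\ep$, but this only changes constants) and makes your worry about the finite derivative jump of $\sqrt{a_2}$ at $R_{1,0}$ moot — a kink is already in $H^1$ and needs no regularization.
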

\begin{proof}
First, we claim that, for small $\varepsilon>0$, we have
\begin{equation}\label{eq:energy_estimates_claim}
\tilde E^0_\ep(\eta_1,\eta_2)\leq C |\log\ep|,
\end{equation}
with $\tilde E^0_\ep$ defined in Lemma \ref{lemma:E_tilde_def}. This
is proved as in \cite{alama} and \cite{AftalionJerrardLetelierJFA},
therefore we only give a sketch here. Consider the competitor
functions
\[
\tilde\eta_i=\frac{h_\ep(a_i)}{\|h_\ep(a_i)\|_{L^2(\R^2)}}, \qquad
\text{where} \qquad h_\ep(s)=\left\{\begin{array}{ll}
s/\ep \quad &\text{ if } \ 0 \leq s\leq \ep^2, \\
\sqrt{s} \quad &\text{ if } \ s>\varepsilon^2.
\end{array}\right.
\]
It is proved in the aforementioned papers that
\[
1-C\ep^2 \leq \int_{\R^2} h_\ep(a_i)^2\,dx \leq1
\]
\[
\int_{\R^2} |\nabla h_\ep(a_i)|^2\,dx \leq C|\log\ep|
\]
\[
\int_{\R^2} (h_\ep(a_i)^2-a_i)^2\,dx \leq C \ep^2.
\]
Here we are implicitly using assumption
\eqref{eq:condition_on_g_two_disks} which ensures that $a_i$ are
positive (recall
\eqref{eq:bound_below_a_1},\eqref{eq:bound_below_a_2}). In addition
notice that
\[
2\int_{\R^2} (h_\ep(a_1)^2-a_1)(h_\ep(a_2)^2-a_2)\,dx \leq
\sum_{i=1}^2 \int_{\R^2} (h_\ep(a_i)^2-a_i)^2\,dx
\]
and that
\[
\int_{\R^2\setminus D_1} \tilde\eta_1^2a_{1,0}^-\,dx=
\int_{\R^2\setminus D_2} (g\tilde\eta_1^2 +g_2\tilde\eta_2^2 )\left(
a_{2,0}+\frac{g}{g_2}a_{1,0} \right)^-  \,dx=0.
\]
Therefore, we have obtained $\tilde
E^0_\ep(\tilde\eta_1,\tilde\eta_2)\leq C |\log\ep|$. Finally, let
$(\eta_1,\eta_2)$ be the positive minimizer, the decomposition
proved in the previous lemma provides
\[
\tilde E^0_\ep(\eta_1,\eta_2)+K\leq \tilde
E^0_\ep(\tilde\eta_1,\tilde\eta_2)+K,
\]
so that \eqref{eq:energy_estimates_claim} is proved.

On the other hand, relation \eqref{eq:g_mathfrak_definition} implies
\[
%\begin{split}
2g \left| \int_{\R^2} (\eta_1^2-a_1)
%\right. & \left.
(\eta_2^2-a_2)\,dx \right|
%  \\
%&
\leq \int_{\R^2} \left\{
(g_1-\gamma)(\eta_1^2-a_1)^2+(g_2-\gamma)(\eta_2^2-a_2)^2\right\}\,dx.
%\end{split}
\]
The result follows by combining this inequality with
\eqref{eq:energy_estimates_claim}.
\end{proof}

In the following proposition, we derive a preliminary estimate
for the Lagrange multipliers. Even though this estimate is far from
optimal, its form will play an important role when we improve it in
Proposition \ref{proLagrangeImproved}.

\begin{proposition}\label{prop:convergence_lagrange_multipliers} Assume (\ref{eq:condition_on_g_thomas_fermi}),
 (\ref{eq:condition_on_g_two_disks}) and (\ref{eq14+}).
Let $(\eta_{1},\eta_{2})$ be the positive minimizer of $E_\ep^0$ in $\mathcal H$. Let $\la_{i,\ep}$ be the associated Lagrange multipliers in
\eqref{eq:main_eta_1_eta_2}. There exists $C>0$ independent of $\ep$
such that, for $i=1,2$,
\begin{equation}\label{eq:convergence_lagrange_multipliers}
|\la_{i,\ep}-\la_{i,0}|\leq C \ep |\log\ep|^{1/2}
\end{equation}
where $\la_{i,0}$ are defined in \eqref{eq:lambda_1_def}.
Given (\ref{eqR12}), this implies
\begin{equation}\label{eqRi-R0}
|R_{i,\varepsilon}-R_{i,0}|\leq C\varepsilon |\log
\varepsilon|^\frac{1}{2},\ \ i=1,2.
\end{equation}
\end{proposition}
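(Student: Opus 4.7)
Multiplying (\ref{eq:main_eta_1_eta_2a}) by $\eta_{1,\varepsilon}$ and integrating over $\R^2$ (legitimate thanks to the super-exponential decay of Lemma \ref{lemma:decay_for_large_x}) gives
\[
\lambda_{1,\varepsilon} = \varepsilon^2 \int_{\R^2}|\nabla\eta_{1,\varepsilon}|^2\,dx + \int_{\R^2}\bigl(|x|^2 + g_1\eta_{1,\varepsilon}^2 + g\eta_{2,\varepsilon}^2\bigr)\eta_{1,\varepsilon}^2\,dx,
\]
and analogously for $\lambda_{2,\varepsilon}$. The same manipulation applied to (\ref{eqlinarAlgebr}), together with $\|a_i\|_{L^1(\R^2)}=1$, produces $\lambda_{1,0} = \int_{\R^2}(|x|^2+g_1a_1+ga_2)a_1\,dx$ and its $\lambda_{2,0}$ analogue (the integrands being supported in the respective disks). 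Since (\ref{nablaetai}) controls $\varepsilon^2\int|\nabla\eta_{i,\varepsilon}|^2$ by $C\varepsilon^2|\log\varepsilon|$, the whole proof reduces to bounding
\[
I_1 := \int_{\R^2}\bigl(|x|^2 + g_1\eta_{1,\varepsilon}^2 + g\eta_{2,\varepsilon}^2 - \lambda_{1,0}\bigr)\eta_{1,\varepsilon}^2\,dx
\]
and the parallel quantity $I_2$.

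The main step is to split $I_1$ over the three regions $D_1$, $D_2\setminus D_1$ and $\R^2\setminus D_2$, and rewrite the parenthetical factor in each region using the Thomas--Fermi identities of Subsection \ref{subsecLimitingProf}. In $D_1$ the factor simplifies to $g_1(\eta_{1,\varepsilon}^2-a_1)+g(\eta_{2,\varepsilon}^2-a_2)$, so by Cauchy--Schwarz and (\ref{etaiai}) this piece contributes $\mathcal{O}(\varepsilon|\log\varepsilon|^{1/2})$. In $D_2\setminus D_1$, using (\ref{eq:a_20out}) and $a_1\equiv 0$ there, it becomes $g_1\Gamma\, a_{1,0}^- + g_1\eta_{1,\varepsilon}^2 + g(\eta_{2,\varepsilon}^2-a_2)$: the weighted term is directly controlled by (\ref{estenergydisk}), while for the quartic term $g_1\int_{D_2\setminus D_1}\eta_{1,\varepsilon}^4$ I combine the uniform bound $\|\eta_{1,\varepsilon}\|_{L^\infty}\leq C$ from Lemma \ref{lemma:L_infty_bounds} with the mass identity
\[
\int_{\R^2\setminus D_1}\eta_{1,\varepsilon}^2\,dx = \int_{D_1}(a_1-\eta_{1,\varepsilon}^2)\,dx \leq |D_1|^{1/2}\,\|\eta_{1,\varepsilon}^2-a_1\|_{L^2(\R^2)} \leq C\varepsilon|\log\varepsilon|^{1/2}.
\]
Finally on $\R^2\setminus D_2$, writing $|x|^2-R_{2,0}^2 = g_2\bigl(a_{2,0}+\tfrac{g}{g_2}a_{1,0}\bigr)^-$ and $R_{2,0}^2 - \lambda_{1,0} = \Gamma_2(R_{2,0}^2-R_{1,0}^2)$, the factor becomes $g_2(a_{2,0}+\tfrac{g}{g_2}a_{1,0})^- + \Gamma_2(R_{2,0}^2-R_{1,0}^2)+g_1\eta_{1,\varepsilon}^2+g\eta_{2,\varepsilon}^2$, and each piece is again handled by (\ref{estenergydisk}) together with the mass identity. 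The analysis of $I_2$ is strictly simpler, because $a_2$ vanishes only outside $D_2$ so only two regions appear.

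The principal obstacle is the tail $\R^2\setminus D_2$: the weight $|x|^2$ grows to infinity and the mere $L^2$-smallness of $\eta_{1,\varepsilon}^2-a_1$ is insufficient. The trick is to absorb $|x|^2-R_{2,0}^2$ into the negative-part weight controlled by (\ref{estenergydisk}). Invoking Lemma \ref{lemma:L_infty_bounds} also requires an a priori bound on $\lambda_{i,\varepsilon}$: this follows from the decomposition $E^0_\varepsilon = \tilde E^0_\varepsilon + K$ of Lemma \ref{lemma:E_tilde_def}, since $\tilde E^0_\varepsilon(\eta_1,\eta_2)\leq C|\log\varepsilon|$ by Proposition \ref{prop:energy_estimates} and the term $\tfrac{1}{2\varepsilon^2}\int|x|^2\eta_i^2$ appears inside $E^0_\varepsilon$, forcing $\int|x|^2\eta_{i,\varepsilon}^2\leq C$ and hence $\lambda_{i,\varepsilon}\leq C$. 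Assembling all the pieces yields $|\lambda_{i,\varepsilon}-\lambda_{i,0}|\leq C\varepsilon|\log\varepsilon|^{1/2}$, and (\ref{eqRi-R0}) then follows from the explicit relations (\ref{eqR12})--(\ref{eq:R_i_def}) since $R_{i,\varepsilon}+R_{i,0}$ is bounded away from zero once $\lambda_{i,\varepsilon}$ lies near $\lambda_{i,0}$.
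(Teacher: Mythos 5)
Your proof is correct and follows essentially the same route as the paper: test the first equation of \eqref{eq:main_eta_1_eta_2} by $\eta_{1,\varepsilon}$, obtain the identity
\[
\lambda_{1,\varepsilon}-\lambda_{1,0}=\varepsilon^2\int_{\R^2}|\nabla\eta_{1,\varepsilon}|^2\,dx
+\int_{\R^2}\bigl(|x|^2+g_1\eta_{1,\varepsilon}^2+g\eta_{2,\varepsilon}^2-\lambda_{1,0}\bigr)\eta_{1,\varepsilon}^2\,dx,
\]
and then control the last integral by exploiting the Thomas--Fermi identities and the energy estimates of Proposition \ref{prop:energy_estimates}. The differences from the paper are mostly presentational and one is a genuine (minor) divergence. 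The paper does not split over the regions $D_1$, $D_2\setminus D_1$, $\R^2\setminus D_2$; it instead writes a single global algebraic identity, parallel to the proof of Lemma \ref{lemma:E_tilde_def}, which expresses $\eta_1^2(|x|^2+g_1\eta_1^2+g\eta_2^2-\lambda_{1,0})$ as a sum of the quadratic forms $g_1(\eta_1^2-a_1)^2+g(\eta_1^2-a_1)(\eta_2^2-a_2)$, the linear term $a_1\bigl(g_1(\eta_1^2-a_1)+g(\eta_2^2-a_2)\bigr)$, and the two negative-part weighted integrals. With this rewriting the linear term is the unique contribution of order $\varepsilon|\log\varepsilon|^{1/2}$ via Cauchy--Schwarz, while all other terms are $O(\varepsilon^2|\log\varepsilon|)$ directly by \eqref{etaiai} and \eqref{estenergydisk}. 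The real divergence is in your treatment of the quartic terms outside $D_1$: you bound $\int_{\R^2\setminus D_1}\eta_1^4$ by $\|\eta_1\|_{L^\infty}^2\int_{\R^2\setminus D_1}\eta_1^2$ and then invoke the mass identity, which in turn forces you to justify an a priori uniform bound on $\lambda_{i,\varepsilon}$ through the energy decomposition. The paper avoids all of this by noting that outside $D_1$ one has $\eta_1^4=(\eta_1^2-a_1)^2$, so these terms are controlled at order $\varepsilon^2|\log\varepsilon|$ by \eqref{etaiai} alone, with no $L^\infty$ bound needed. Your route is correct but less economical; the mass identity is a nice observation, though here it is not necessary.
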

\begin{proof}
We test the equation for $\eta_1$ in \eqref{eq:main_eta_1_eta_2} by
$\eta_1$ itself, integrate by parts (since $\eta_1 \in
H^1(\mathbb{R}^2)$), and then  subtract $\la_{1,0}$ from both sides
to obtain
\[
\la_{1,\ep}-\la_{1,0}=\int_{\R^2}\left\{
\ep^2|\nabla\eta_1|^2+\eta_1^2(|x|^2+g_1\eta_1^2+g\eta_2^2-\la_{1,0})
\right\} \,dx.
\]
%Notice that the boundary term vanishes in the integration by parts
%since, due to the Lemma \ref{lemma:L_infty_bounds}, we have
%\[
%\int_{\partial
%B_{R}}\eta_{1}\nabla\eta_{1}\cdot\nu\,d\sigma\leq
%C\frac{1+\lambda_{i,\ep}}{\ep}\int_{\partial
%B_{R}}\eta_{1}\,d\sigma \to0 \quad \text{ as } R\to\infty.
%\]
With calculations similar to the one used in the proof of Lemma
\ref{lemma:E_tilde_def}, we rewrite the right hand side of the
previous expression in the following form:
\[
\begin{split}
\int_{\R^2}\left\{ \ep^2 |\nabla\eta_1|^2+ g_1(\eta_1^2-a_1)^2
+g(\eta_1^2-a_1)(\eta_2^2-a_2)+g_1a_1(\eta_1^2-a_1) \right.\\ \left.
+ga_1(\eta_2^2-a_2) \right\} \,dx +g_1\Gamma \int_{\R^2\setminus
D_1} a_{1,0}^-\eta_1^2\,dx+ g \int_{\R^2\setminus D_2} \left(
a_{2,0}+\frac{g}{g_2}a_{1,0} \right)^-\eta_1^2\,dx.
\end{split}
\]
We notice that
\[
\begin{split}
\left|\int_{\R^2} a_1\left\{ g_1(\eta_1^2-a_1)+g(\eta_2^2-a_2)
\right\}\,dx\right| \leq \|a_1\|_{L^2(\R^2)} \left(
g_1\|\eta_1^2-a_1\|_{L^2(\R^2)} \right.\\ \left.
+g\|\eta_2^2-a_2\|_{L^2(\R^2)} \right).
\end{split}
\]
Hence by applying Proposition \ref{prop:energy_estimates} we obtain
the convergence of $\la_{1,\ep}$. The convergence of $\la_{2,\ep}$
can be proved similarly.
\end{proof}

\begin{rem}\label{rem3.4}The equivalent of Proposition \ref{prop:energy_estimates} and \ref{prop:convergence_lagrange_multipliers} hold when (\ref{eqannulus}) is assumed instead of
 (\ref{eq:condition_on_g_two_disks}). The only difference is that
  (\ref{estenergydisk}) has to be replaced by \[
\begin{split}
\int_{\{|x|\geq R_{1,0}\}} \eta_{1,\ep}^2a_{1,0}^-\,dx
+\int_{\{|x|\leq R_{2,0}^- \}} \eta_{2,\ep}^2 a_{2,0}^- \,dx
+\int_{\{|x|\geq R_{2,0}^+\}}(g\eta_{1,\ep}^2+g_2\eta_{2,\ep}^2)\left( a_{2,0}+\frac{g}{g_2}a_{1,0} \right)^-\,dx \\
\leq C\ep^2|\log\ep|.
\end{split}
\]\end{rem}

%%%%%%%%%%%%%%%%%%%%%%%%%%%%%%%%%%%%%%%%%%%%%%%%%%%%%%%%%%%%%%%%%%%%%%%%%%%%%%%%%%%%%%%%%%%%%%%%%%%%
%%%%%%%%%%%%%%%%%%%%%%%%%%%%%%%%%%%%%%%%%%%%%%%%%%%%%%%%%%%%%%%%%%%%%%%%%%%%%%%%%%%%%%%%%%%%%%%%%%%%

\section{Refined estimates for the energy minimizer without rotation}\label{secPerturb}

In this section we capture the fine behavior of the minimizer
$(\eta_{1,\varepsilon},\eta_{2,\varepsilon})$, as $\varepsilon\to
0$, by means of  a perturbation argument. Since this type of
approach is in principle not applicable to problems with integral
constraints, we argue indirectly as follows. First, given
$(\lambda_{1,\varepsilon},\lambda_{2,\varepsilon})$ as in the
previous section, for small $\varepsilon>0$, we construct a
positive radial solution of (\ref{eq:main_eta_1_eta_2}) ``near''
$(a_1,a_2)$ by a perturbation argument. Then, the uniqueness result
in Theorem \ref{thm:uniqueness} will imply that this solution
coincides with the unique positive minimizer of $E_\ep^0$ in
$\mathcal{H}$.

%\subsection{Reformulation of the energy minimization problem as a singular perturbed elliptic system without $L^2$-norm constraints}\label{subsecProblem}

\subsection{The main result concerning the minimizer without rotation}
\begin{theorem}\label{thmLong} Assume  that (\ref{eq:condition_on_g_thomas_fermi}),
 (\ref{eq:condition_on_g_two_disks}) and (\ref{eq14+}) hold. Let $(\eta_{1},\eta_{2})$ be the positive minimizer of $E_\ep^0$ in $\mathcal H$. Let $\la_{i,\ep}$ be the associated Lagrange multipliers in
\eqref{eq:main_eta_1_eta_2}.
 There exist constants $c,C,D>0$ and $\delta\in
\left(0,\frac{1}{4}\min\{R_{1,0}, R_{2,0}-R_{1,0} \}\right)$ such
that the following estimates hold:
\underline{\emph{Estimates for the Lagrange multipliers}}
\begin{equation}\label{eqthmLagrimproved}
|\lambda_{i,\varepsilon}-\lambda_{i,0}|\leq C|\log
\varepsilon|\varepsilon^2,\  \textrm{which\ implies\ that}\
|R_{i,\varepsilon}-R_{i,0}|\leq C |\log \varepsilon|\varepsilon^2,\
\ i=1,2;
\end{equation}

\underline{\emph{Outer estimates}}
 \begin{equation}\label{eqthmUnifaway1}
 \|\eta_{1,\varepsilon}-\sqrt{a_{1,\varepsilon}}\|_{L^\infty(|x|\leq
 R_{1,\varepsilon}-\delta)}+\|\eta_{2,\varepsilon}-\sqrt{a_{2,\varepsilon}}\|_{L^\infty(|x|\leq
 R_{1,\varepsilon}-\delta)}\leq C\varepsilon^2,
 \end{equation}
 and
 \begin{equation}\label{eqthmUnifaway2}
 \Big{\|}\eta_{2,\varepsilon}-\sqrt{a_{2,\varepsilon}+\frac{g}{g_2}a_{1,\varepsilon}}\Big{\|}_{L^\infty(R_{1,\varepsilon}+\delta\leq|x|\leq
 R_{2,\varepsilon}-\delta)}\leq C\varepsilon^2,
 \end{equation}
uniformly as $\varepsilon\to 0$;

 \underline{\emph{Algebraic decay
estimates}}
\begin{equation}\label{eqthmAlg1}
\eta_{1,\varepsilon}(r)-\sqrt{a_{1,\varepsilon}(r)}=\mathcal{O}\left(\varepsilon^2|r-R_{1,\varepsilon}|^{-\frac{5}{2}}\right),
\ \
\eta_{2,\varepsilon}(r)-\sqrt{a_{2,\varepsilon}(r)}=\mathcal{O}\left(\varepsilon^2|r-R_{1,\varepsilon}|^{-2}\right),
\end{equation}
if $r\in
[R_{1,\varepsilon}-\delta,R_{1,\varepsilon}-D\varepsilon^\frac{2}{3}]$,
and
\begin{equation}\label{eqthmAlg2}
\eta_{2,\varepsilon}(r)-\sqrt{a_{2,\varepsilon}(r)+\frac{g}{g_2}a_{1,\varepsilon}(r)}=\mathcal{O}\left(\varepsilon^2|r-R_{2,\varepsilon}|^{-\frac{5}{2}}\right)
\ \ \textrm{if}\  \ \ r\in
[R_{2,\varepsilon}-\delta,R_{2,\varepsilon}-D\varepsilon^\frac{2}{3}],
\end{equation}
 uniformly as $\varepsilon\to 0$;

\underline{\emph{Exponential decay estimates}}
\begin{equation}\label{eqthmdecay1}
\eta_{i,\varepsilon}(r)\leq
C\varepsilon^\frac{1}{3}\exp\left\{c\frac{R_{i,\varepsilon}-r}{\varepsilon^\frac{2}{3}}\right\},\
\ r\geq R_{i,\varepsilon},\ \ i=1,2,
\end{equation}
and
\begin{equation}\label{eqthmdecay2}
\eta_{2,\varepsilon}(r)=\sqrt{a_{2,\varepsilon}(r)+\frac{g}{g_2}a_{1,\varepsilon}(r)}+\mathcal{O}(\varepsilon^\frac{2}{3})
\exp\left\{c\frac{R_{1,\varepsilon}-r}{\varepsilon^\frac{2}{3}}\right\}
+\mathcal{O}(\varepsilon^2)\ \ \textrm{if}\ \ r\in
[R_{1,\varepsilon},R_{1,\varepsilon}+\delta],
\end{equation}
uniformly as $\varepsilon\to 0$;

 \underline{\emph{Inner estimates}}
\begin{equation}\label{eqthmInner1}
 \eta_{1,\varepsilon}(r)=\varepsilon^\frac{1}{3}(g_1\Gamma)^{-\frac{1}{6}}\beta_{1,\varepsilon}
 V\left((g_1\Gamma)^\frac{1}{3}\beta_{1,\varepsilon}\frac{r-R_{1,\varepsilon}}{\varepsilon^\frac{2}{3}}
 \right)+\left\{\begin{array}{ll}
           \mathcal{O}\left(\varepsilon+|r-R_{1,\varepsilon}|^\frac{3}{2}\right) & \textrm{if}\ \ r\in[R_{1,\varepsilon}-\delta,R_{1,\varepsilon}],   \\
             &     \\
           \mathcal{O}(\varepsilon)\exp\left\{c\frac{R_{1,\varepsilon}-r}{\varepsilon^\frac{2}{3}} \right\} & \textrm{if}\ \ r\in[R_{1,\varepsilon},
           R_{1,\varepsilon}+\delta],
         \end{array}
\right.
\end{equation}
 \begin{equation}\label{eqthmInner2}\begin{array}{rcl}
\eta_{2,\varepsilon}(r) & = & \sqrt{
a_{2,\varepsilon}(r)+\frac{g}{g_2}a_{1,\varepsilon}(r)-\frac{g}{g_2}(g_1\Gamma)^{-\frac{1}{3}}\varepsilon^\frac{2}{3}\beta_{1,\varepsilon}^2V^2\left((g_1\Gamma)^\frac{1}{3}\beta_{1,\varepsilon}\frac{r-R_{1,\varepsilon}}{\varepsilon^\frac{2}{3}}
 \right)}
 \\
       &   &   \\
      &  &+\left\{\begin{array}{ll}
             \mathcal{O}\left(\varepsilon^\frac{4}{3}+|r-R_{1,\varepsilon}|^2+\varepsilon
      |r-R_{1,\varepsilon}|^\frac{1}{2}+\varepsilon^\frac{1}{3}
       |r-R_{1,\varepsilon}|^\frac{3}{2}\right) & \textrm{if}\ \ r\in[R_{1,\varepsilon}-\delta,R_{1,\varepsilon}], \\
               &   \\
             \mathcal{O}(\varepsilon^\frac{4}{3}) \exp\left\{c\frac{R_{1,\varepsilon}-r}{\varepsilon^\frac{2}{3}}
             \right\}+\mathcal{O}(\varepsilon^2)
             & \textrm{if}\ \ r\in[R_{1,\varepsilon},R_{1,\varepsilon}+\delta],
           \end{array}\right.
\end{array}
 \end{equation}
and
 %\[
 %\eta_{2,\varepsilon}(r)=\left(a_{2,\varepsilon}(r)+\frac{g}{g_2}a_{1,\varepsilon}(r)-\frac{g}{g_2}\eta_{1,\varepsilon}^2(r)
 %\right)^\frac{1}{2} \right| \leq
 %C\varepsilon^\frac{7}{6}+\varepsilon
 %|r-R_{1,\varepsilon}|^\frac{1}{2}\ \textrm{if}\
 %|r-R_{1,\varepsilon}|\leq \delta,
 %\]
 \begin{equation}\label{eqthmInner3}
 \eta_{2,\varepsilon}(r)=\varepsilon^\frac{1}{3}g_2^{-\frac{1}{6}}\beta_{2,\varepsilon}
 V\left(g_2^\frac{1}{3}\beta_{2,\varepsilon}\frac{r-R_{2,\varepsilon}}{\varepsilon^\frac{2}{3}}
 \right)
 +\left\{\begin{array}{ll}
           \mathcal{O}\left(\varepsilon+|r-R_{2,\varepsilon}|^\frac{3}{2}\right) & \textrm{if}\ \ r\in[R_{2,\varepsilon}-\delta,R_{2,\varepsilon}],   \\
             &     \\
           \mathcal{O}(\varepsilon)\exp\left\{c\frac{R_{2,\varepsilon}-r}{\varepsilon^\frac{2}{3}} \right\} & \textrm{if}\ \ r\in[R_{2,\varepsilon},
           R_{2,\varepsilon}+\delta],
         \end{array}
\right.
 \end{equation}
 uniformly as $\varepsilon\to 0$,
where $V$ is the Hastings-McLeod solution \cite{hastingsbook} to the
Painlev\'{e}-II equation \cite{fokas}, namely the unique solution of
the boundary value problem
\begin{equation}\label{eqpainleve}
v''=v(v^2+s),\ s\in \mathbb{R};\ v(s)-\sqrt{-s}\to 0\ \textrm{as}\
s\to -\infty,\ v(s)\to 0 \ \textrm{as}\ s\to \infty,
\end{equation}
and
\[
\beta_{1,\varepsilon}=\left(-a_{1,\varepsilon}'(R_{1,\varepsilon})
\right)^\frac{1}{3},\ \
\beta_{2,\varepsilon}=\left(-a_{2,\varepsilon}'(R_{2,\varepsilon})-\frac{g}{g_2}a_{1,\varepsilon}'(R_{2,\varepsilon})
\right)^\frac{1}{3}.
\]

\end{theorem}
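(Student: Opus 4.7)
The plan is to implement the perturbation strategy outlined in the introduction: use the ground state $(\eta_{1,\varepsilon},\eta_{2,\varepsilon})$ provided by Theorem \ref{thm:uniqueness} together with its Lagrange multipliers $\lambda_{i,\varepsilon}$ (which, by Proposition \ref{prop:convergence_lagrange_multipliers}, are already known to satisfy the crude bound $|\lambda_{i,\varepsilon}-\lambda_{i,0}|\leq C\varepsilon|\log\varepsilon|^{1/2}$); construct by matched asymptotic expansions a sufficiently good approximate solution to the system \eqref{eqsystem} with these same $\lambda_{i,\varepsilon}$; invert the linearized operator around the approximation in appropriate weighted Sobolev norms; run a contraction mapping to produce a genuine positive radial solution $(\tilde\eta_{1,\varepsilon},\tilde\eta_{2,\varepsilon})$ close to the approximation; and invoke the uniqueness part of Theorem \ref{thm:uniqueness} to conclude $(\tilde\eta_{1,\varepsilon},\tilde\eta_{2,\varepsilon})=(\eta_{1,\varepsilon},\eta_{2,\varepsilon})$. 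The refined Lagrange multiplier estimate \eqref{eqthmLagrimproved} will then be recovered a posteriori by inserting the improved profile into the $L^2$ normalization conditions.

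The construction of the approximate solution proceeds by exploiting the reduction \eqref{eqsystemlo1}--\eqref{eqsystemlo2}, which decouples the leading-order algebra. Near $r=R_{1,\varepsilon}$, we substitute \eqref{eqsystemlo2} into \eqref{eqsystem1} to reduce everything, at this scale, to the scalar equation \eqref{eqgroundstate1} whose positive solution $\hat\eta_{1,\varepsilon}$ obeys the full Painlev\'e-II inner / inverted-parabola outer description of Theorem \ref{thmGSNoRotationScalar} (applied to Proposition \ref{proGS}); the companion $\eta_2$ is then reconstructed from $\hat\eta_{1,\varepsilon}$ through \eqref{eqsystemlo2}, producing the $V^2$-correction that appears under the square root in \eqref{eqthmInner2}. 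Near $r=R_{2,\varepsilon}$ the component $\eta_1$ is already exponentially small, so \eqref{eqsystem2} reduces to the scalar equation \eqref{eqgroundstate2} with positive solution $\hat\eta_{2,\varepsilon}$ obeying the analogous scalar description, giving \eqref{eqthmInner3}. Because all of these local descriptions agree to order $\varepsilon^2$ with the Thomas-Fermi profile $(\sqrt{a_{1,\varepsilon}},\sqrt{a_{2,\varepsilon}+(g/g_2)a_{1,\varepsilon}})$ on any fixed interval compactly contained in $(R_{1,0},R_{2,0})$ (and similarly inside $B_{R_{1,0}}$), one glues them smoothly using cutoff functions around a middle radius $r_\ast\in(R_{1,0},R_{2,0})$ to obtain a global approximation $(\eta_1^{\mathrm{ap}},\eta_2^{\mathrm{ap}})$ whose residual in \eqref{eqsystem} is supported in the interpolation strips and is of size $\mathcal{O}(\varepsilon^2)$ in the weighted norms; the pointwise derivative estimates for $\hat\eta_{i,\varepsilon}$ near $R_{i,0}$ required here will be proved by suitably differentiating the equations and using the explicit Hastings--McLeod asymptotics.

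The main technical step is to show that the linearization of \eqref{eqsystem} about $(\eta_1^{\mathrm{ap}},\eta_2^{\mathrm{ap}})$ is an isomorphism in radially symmetric weighted spaces, with inverse bounded by a negative power of $\varepsilon$. Here one reduces the coupled linearized problem, by the same substitution that led to \eqref{eqgroundstate1}--\eqref{eqgroundstate2}, to two essentially independent scalar linearized operators. The key input is that each scalar linearization around $\hat\eta_{i,\varepsilon}$ is nondegenerate with strictly positive spectrum (the scalar ground state being a nondegenerate minimizer), so it admits uniform lower bounds on its smallest eigenvalue by Persson-type arguments; a domain decomposition, cutting the plane into a neighborhood of $R_{1,\varepsilon}$, a neighborhood of $R_{2,\varepsilon}$, and the complementary region where the profiles are well separated from zero, then combines these two scalar inverses into a global inverse for the coupled system. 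The weighted norms (cf.\ \eqref{eqnormStar}) are tailored so that the algebraic weights $|r-R_{i,\varepsilon}|$ present in \eqref{eqthmAlg1}--\eqref{eqthmAlg2} are respected by the linear estimates; the crucial algebraic identity relating these norms to the underlying geometry of the two turning points is the content of Appendix \ref{ApAlg}.

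The hard part will be this linear invertibility with sharp constants, because one must simultaneously handle two well-separated turning points, the degeneration of the coefficients at $R_{i,\varepsilon}$, and the non-cooperative coupling; this is the reason the author works in $L^2$-based weighted Sobolev spaces (rather than the uniform norms used in the scalar case) so that the reduction to the scalar problems is compatible with symmetric spectral theory. Once the fixed-point is carried out in these norms, one pulls back the solution to $L^\infty$ by a bootstrap using the equation: the Painlev\'e inner estimates \eqref{eqthmInner1}--\eqref{eqthmInner3} and the algebraic / exponential decay estimates \eqref{eqthmAlg1}--\eqref{eqthmdecay2} then follow from the corresponding scalar estimates of Theorem \ref{thmGSNoRotationScalar} applied to $\hat\eta_{i,\varepsilon}$, plus the perturbation size obtained from the contraction, while \eqref{eqthmUnifaway1}--\eqref{eqthmUnifaway2} are obtained on compact sets away from the turning points by inserting the improved profile back into \eqref{eqsystem} and using elliptic regularity. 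Finally, testing the two equations of \eqref{eq:main_eta_1_eta_2} against $\eta_{i,\varepsilon}$ and exploiting the $L^2$ constraint, together with the now-known $\mathcal{O}(\varepsilon^2|\log\varepsilon|)$-closeness of $\eta_{i,\varepsilon}^2$ to $a_{i,\varepsilon}$, upgrades \eqref{eq:convergence_lagrange_multipliers} to \eqref{eqthmLagrimproved} and, via \eqref{eqR12}, to the stated bound on $R_{i,\varepsilon}-R_{i,0}$.
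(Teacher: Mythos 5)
Your proposal follows essentially the same perturbative strategy as the paper's proof: construct a global approximation by gluing the two reduced scalar ground states $\hat\eta_{1,\varepsilon},\hat\eta_{2,\varepsilon}$ (with the companion reconstructed via \eqref{eqsystemlo2}), establish invertibility of the linearization in weighted $L^2$-based Sobolev norms by decoupling the system into the two nondegenerate scalar operators, run a contraction mapping, invoke the uniqueness theorem to identify the constructed positive radial solution with the minimizer, and then upgrade the Lagrange multiplier estimate a posteriori from the $L^2$-normalization conditions. One small caution: the contraction is carried out in the norm $\vertiii{\,\cdot\,}$ of \eqref{eqNormtriple}, not in \eqref{eqnormStar}, and passing from that $\mathcal{O}(\varepsilon^{5/3})$ $L^2$-bound to the pointwise algebraic, inner and exponential decay estimates is not a routine bootstrap --- it is the substance of Propositions \ref{prounifAway}--\ref{prounifAlg} and Lemmas \ref{lemUnifeta2nearR1}--\ref{lemDecaytoetai} (barriers, iterated boundary-trace estimates, the $\|\cdot\|_*$-weighted comparison argument of Appendix \ref{ApAlg}, and the blow-up and maximum-principle arguments that yield boundedness and positivity).
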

The proof of this theorem will be completed in Subsection
\ref{secProofs}. Note that in the case $g_1=g_2$,  then an analogous of this theorem holds.
It is simpler since $\eta_{1,\eps}=\eta_{2,\eps}$ so that $R_{1,\eps}=R_{2,\eps}$. The result is just a consequence of Theorem \ref{thmGSscalar} in
 the appendix.

\subsection{An approximate
solution}\label{subApprox} In this subsection we  construct a
sufficiently good approximate solution
$(\check{\eta}_{1,\varepsilon},\check{\eta}_{2,\varepsilon})$ to the
problem (\ref{eqsystem}) such that
$\check{\eta}_{i,\varepsilon}>0$ and
$\check{\eta}_{i,\varepsilon}\to \sqrt{a_i}$, uniformly on
$\mathbb{R}^2$, as $\varepsilon\to 0$, $i=1,2$. The building blocks
of our construction will be the unique positive solutions
$\hat{\eta}_{1,\varepsilon}$ and $\hat{\eta}_{2,\varepsilon}$ of the
reduced problems (\ref{eqgroundstate1}) and (\ref{eqgroundstate2})
respectively.

\subsubsection{The reduced problems}

The asymptotic behavior  of $\hat{\eta}_{i,\varepsilon}$, $i=1,2$,
as $\varepsilon\to 0$, can be deduced, after a proper constant
re-scaling, from the following proposition which is a special case
of the more general result that we prove in  Appendix
\ref{secAppenGS}.
\begin{proposition}\label{proGS}
Suppose that $\lambda_\varepsilon$ satisfy $\lambda_\varepsilon \to
\lambda_0$ as $\varepsilon\to 0$, for some $\lambda_0>0$, and let
\[
 A_{\varepsilon}(x)=
                \lambda_\varepsilon-\mu|x|^2, \ \ x\in \mathbb{R}^2,
\]
with $\mu>0$ independent of $\varepsilon$.

There exists a unique positive solution $u_\varepsilon$ of the
problem
\[
\varepsilon^2 \Delta u =u\left(u^2-A_\varepsilon(x) \right),\ \ x\in
\mathbb{R}^2;\ \ u(x)\to 0 \ \textrm{as}\ |x|\to \infty.
\]
This solution is radially symmetric and, for small $\varepsilon>0$,
satisfies the following properties:
\begin{equation}\label{eqGSuUnif}
\|u_\varepsilon-\sqrt{A_\varepsilon^+}\|_{L^\infty(\mathbb{R}^2)}\leq
C\varepsilon^\frac{1}{3},\ \
\|u_\varepsilon-\sqrt{A_\varepsilon}\|_{C^2\left(|x|\leq
r_\varepsilon-\delta\right)}\leq C\varepsilon^2, \end{equation}
where
$r_\varepsilon=\left(\mu^{-1}\lambda_\varepsilon\right)^\frac{1}{2}$,
for some $\delta \in \left(0,\frac{1}{4}r_0 \right)$,
$r_0=(\mu^{-1}\la_0)^{1/2}$, and
\begin{equation}\label{eq:proGS2}
u_\varepsilon(r)\leq C
\varepsilon^\frac{1}{3}\exp\left\{c\varepsilon^{-\frac{2}{3}}(r_{\varepsilon}-r)\right\},\
\ r\geq r_\varepsilon.
\end{equation}
%Moreover, we have that
%\begin{equation}\label{eqGScomplex}
%\left|u_{\varepsilon}(r)-\sqrt{A_\varepsilon(r)}\right|\leq
%C\varepsilon\sqrt{A_\varepsilon(r)},\ \ 0\leq r\leq
%r_{\varepsilon}-\varepsilon^\frac{1}{3}.
%\end{equation}
 In fact, the potential of the
associated linearized operator satisfies the  lower bound
\begin{equation}\label{eq:proGS3}
3u_\varepsilon^2(r)-A_\varepsilon(r)\geq \left\{\begin{array}{ll}
                                                              c |r-r_{\varepsilon}|+c\varepsilon^\frac{2}{3} &
                                                              \textrm{if}\ \ |r-r_{\varepsilon}|\leq
                                                               \delta, \\
                                                                 &   \\
                                                               c &
                                                               \textrm{otherwise}.
                                                             \end{array}
\right.
\end{equation}

 More precisely,  we
have
\begin{equation}\label{eq:proGS4}
u_\varepsilon(r)=\varepsilon^\frac{1}{3}\beta_{\varepsilon}
V\left(\beta_{\varepsilon}
\frac{r-r_{\varepsilon}}{\varepsilon^\frac{2}{3}}
\right)+\left\{\begin{array}{ll}
                 \mathcal{O}\left(\varepsilon+|r-r_{\varepsilon}|^\frac{3}{2}\right) & \textrm{if}\ \ r_\varepsilon-\delta\leq r\leq r_\varepsilon, \\
                   &   \\
                 \mathcal{O}(\varepsilon) \exp\left\{-c\frac{|r-r_{\varepsilon}|}{\varepsilon^\frac{2}{3}} \right\} &
                 \textrm{if}\  \  r_\varepsilon\leq r \leq r_\varepsilon+\delta,
               \end{array}
\right.
\end{equation}
 where $V$ is the Hastings-McLeod solution, as described in
 (\ref{eqpainleve}), and
\[\beta_{\varepsilon}=\left(-A_{\varepsilon}'(r_{\varepsilon})
\right)^\frac{1}{3}.\] Furthermore, we have
\begin{equation}\label{eqeta1groundinner2}
u_\varepsilon'(r)=\varepsilon^{-\frac{1}{3}}\beta_{\varepsilon}^2
V'\left(\beta_{\varepsilon}
\frac{r-r_{\varepsilon}}{\varepsilon^\frac{2}{3}}
\right)+\mathcal{O}\left(\varepsilon^\frac{1}{3}+|r-r_{\varepsilon}|^\frac{1}{2}\right)\
\ \textrm{if}\ \ |r-r_\varepsilon |\leq \delta,
\end{equation}
uniformly, as $\varepsilon\to 0$. Moreover, there exists $D>0$ such
that the following estimates hold for $r\in [r_\varepsilon-\delta,
r_\varepsilon-D\varepsilon^\frac{2}{3}]$:
\begin{equation}\label{eqGSuAlg}
\begin{array}{c}
 u_\varepsilon
(r)-\sqrt{A_\varepsilon(r)}=\varepsilon^2\mathcal{O}(|r-r_{\varepsilon}|^{-\frac{5}{2}}),
\ \ \
u'_\varepsilon-\left(\sqrt{A_\varepsilon}\right)'=\varepsilon^2\mathcal{O}(|r-r_{\varepsilon}|^{-\frac{7}{2}}), \\
    \\
 \Delta
u_\varepsilon-\Delta\left(\sqrt{A_\varepsilon}\right)=\varepsilon^2\mathcal{O}(|r-r_{\varepsilon}|^{-\frac{9}{2}}),
\end{array}
\end{equation}
uniformly, as $\varepsilon\to 0$.
\end{proposition}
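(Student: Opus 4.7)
The plan is to reduce Proposition \ref{proGS} to the scalar Thomas-Fermi regime of Theorem \ref{thmGSNoRotationScalar}. Setting $\tilde u(y)=u(y/\sqrt{\mu})$ and $\tilde\varepsilon=\sqrt{\mu}\,\varepsilon$ transforms the equation into $\tilde\varepsilon^2\Delta\tilde u=\tilde u(\tilde u^2-(\lambda_\varepsilon-|y|^2))$, so I may assume $\mu=1$ with turning point $r_\varepsilon=\sqrt{\lambda_\varepsilon}$. The one new wrinkle relative to Theorem \ref{thmGSNoRotationScalar} is that $\lambda_\varepsilon$ is a prescribed sequence converging to $\lambda_0$ rather than a Lagrange multiplier tied to an $L^2$-constraint; however, the arguments of \cite{KaraliSourdisGround,GalloPelinovskyAA} do not use the constraint in any essential way once $\lambda$ is fixed, so all their estimates remain valid with constants uniform in $\varepsilon$.

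\textbf{Existence, uniqueness, radial symmetry.} Existence of a positive solution is obtained by minimizing the unconstrained energy $\int_{\R^2}\{\varepsilon^2|\nabla u|^2/2+(|x|^2-\lambda_\varepsilon)u^2/2+u^4/4\}\,dx$ on $H^1(\R^2)\cap L^2(|x|^2\,dx)$; coercivity is provided by the harmonic trap and the direct method applies. Uniqueness of the positive solution follows from the Brezis-Oswald quotient argument, exactly as in Proposition \ref{proUniqGeneral} but scalar and simpler; the boundary-at-infinity terms in the integration by parts are killed by a super-exponential decay estimate analogous to Lemma \ref{lemma:decay_for_large_x}. Radial symmetry is then automatic from uniqueness combined with the rotational invariance of the equation.

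\textbf{Asymptotic analysis by matched expansions.} Following the general scheme of \cite{KaraliSourdisGround}, I would build a radial approximation $\bar u_\varepsilon$ by patching three pieces: (i) the outer profile $\sqrt{A_\varepsilon}$ on $r\le r_\varepsilon-\delta$, which leaves an $O(\varepsilon^2)$ residual in the bulk; (ii) a Painlev\'e-II inner profile $\varepsilon^{1/3}\beta_\varepsilon V(\beta_\varepsilon\varepsilon^{-2/3}(r-r_\varepsilon))$ on $|r-r_\varepsilon|\le\delta$, obtained by linearizing $A_\varepsilon$ at $r_\varepsilon$ and rescaling so that the leading-order equation is precisely \eqref{eqpainleve}; and (iii) an exponential tail $C\varepsilon^{1/3}\exp(c\varepsilon^{-2/3}(r_\varepsilon-r))$ for $r\ge r_\varepsilon+\delta$, which is a supersolution because $|x|^2-\lambda_\varepsilon>0$ there. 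These pieces agree to high order on overlapping annuli and can be glued with smooth cutoffs. From this construction we read off \eqref{eqGSuUnif}, \eqref{eq:proGS2} and \eqref{eq:proGS4} directly; the derivative identity \eqref{eqeta1groundinner2} comes by differentiating the inner piece and controlling the correction via interior Schauder estimates; and the algebraic decay \eqref{eqGSuAlg} follows by iterating $u^2=A_\varepsilon+\varepsilon^2\Delta u/u$ in the intermediate matching annulus $r_\varepsilon-\delta\le r\le r_\varepsilon-D\varepsilon^{2/3}$, seeded by the inner expansion at its outer edge.

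\textbf{The main obstacle.} The delicate step is passing from $\bar u_\varepsilon$ to an actual solution with sharp estimates, by inverting the linearized operator $L_\varepsilon=-\varepsilon^2\Delta+(3\bar u_\varepsilon^2-A_\varepsilon)$ and running a contraction mapping on the residual. Everything hinges on the pointwise lower bound \eqref{eq:proGS3}. This bound relies, in the inner zone, on the classical positivity $3V(s)^2+s\ge c>0$ for $s\in\R$ (a well-known property of the Hastings-McLeod profile, and the only genuine Painlev\'e input to the proof); it follows in the bulk from $3\bar u_\varepsilon^2-A_\varepsilon\approx 2A_\varepsilon$, and in the tail from $3\bar u_\varepsilon^2-A_\varepsilon\approx|x|^2-\lambda_\varepsilon$. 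Once \eqref{eq:proGS3} is available, $L_\varepsilon$ is invertible with bounds uniform in $\varepsilon$ in norms adapted to the $\varepsilon^{2/3}$ inner scale, the contraction closes, and by uniqueness the constructed solution is $u_\varepsilon$; the remaining pointwise estimates of the statement follow by unwinding these norm bounds through standard elliptic regularity applied to $u_\varepsilon-\bar u_\varepsilon$.
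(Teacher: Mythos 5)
Your overall plan is sound and correctly identifies the method: scaling out $\mu$, existence/uniqueness by energy minimization plus a Brezis--Oswald quotient argument, radial symmetry from uniqueness, and then a matched-asymptotics construction (outer inverted parabola, Painlev\'{e}-II corner layer, exponential tail) made rigorous by inverting the linearized operator using the pointwise lower bound \eqref{eq:proGS3} (which, as you note, rests on $3V(s)^2+s\geq c>0$ for the Hastings--McLeod solution). This is indeed the scheme of \cite{KaraliSourdisGround}, which the paper cites for all of \eqref{eqGSuUnif}--\eqref{eq:proGS4} and \eqref{eqeta1groundinner2}; the paper's Proposition \ref{proGS} is stated as a special case of Theorem \ref{thmGSscalar}, whose proof is essentially ``cite \cite{KaraliSourdisGround} and then add the derivative refinements.'' You re-derive the scheme from scratch rather than citing it, which is fine as a route.

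The genuine gap is in your treatment of the second and third estimates in \eqref{eqGSuAlg}, i.e.\ $u'_\varepsilon-(\sqrt{A_\varepsilon})'=\varepsilon^2\mathcal{O}(|r-r_\varepsilon|^{-7/2})$ and $\Delta u_\varepsilon-\Delta(\sqrt{A_\varepsilon})=\varepsilon^2\mathcal{O}(|r-r_\varepsilon|^{-9/2})$. These \emph{do not} follow by ``iterating $u^2=A_\varepsilon+\varepsilon^2\Delta u/u$'' nor by ``unwinding the contraction-mapping norm bounds through standard elliptic regularity,'' because the contraction argument only gives a small error in an $\varepsilon$-adapted norm and provides no mechanism for the algebraically decaying weight in $|r-r_\varepsilon|$; interior Schauder estimates on balls of radius $\sim|r-r_\varepsilon|$ would lose derivatives against the singular weight rather than produce it. The paper's actual argument (proof of Theorem \ref{thmGSscalar}) is more delicate: write $u_\varepsilon-\sqrt{A_\varepsilon}=\varepsilon^2\frac{\Delta(\sqrt{A_\varepsilon})}{2A_\varepsilon}+\phi$, use a higher-order bulk expansion of Dancer--Yan/Li--Yan type (\cite{dancer-lazer,liyanedinburg}) to seed $|\phi|\leq C\varepsilon^4$ on a fixed annulus away from $r_\varepsilon$, observe that $\phi$ solves a linear equation whose potential is bounded below by $c|r-r_\varepsilon|$ with right-hand side $\varepsilon^4\mathcal{O}(|r-r_\varepsilon|^{-9/2})$, and then propagate the seed into the matching region $[r_\varepsilon-\delta,r_\varepsilon-D\varepsilon^{2/3}]$ by a comparison argument with explicit barriers $\pm M\varepsilon^4|r-r_\varepsilon|^{-11/2}$. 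Only after this refinement is in hand does one read off $\Delta(u_\varepsilon-\sqrt{A_\varepsilon})$ from the equation and recover $u'_\varepsilon-(\sqrt{A_\varepsilon})'$ by integrating the radial ODE from the outer edge inward. Without the decomposition, the $O(\varepsilon^4)$ seed, and the barrier argument, you cannot obtain the stated $\varepsilon^2$ precision for the derivatives with the correct weights, and these are precisely the estimates the paper needs in Section \ref{secPerturb} (see \eqref{eqerrorAlg} and the weighted norm \eqref{eqnormStar}).
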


%\begin{rem}
%In fact, the first estimate in (\ref{eqGSuAlg}) follows from the
%stronger estimate:
%\begin{equation}\label{eqstronger}
%u_\varepsilon(r)=\left\{A_{\varepsilon}(r)+\beta_\varepsilon^2\left[
%\beta_\varepsilon
%(r-r_\varepsilon)+\varepsilon^\frac{2}{3}V^2\left(\beta_\varepsilon\frac{r-r_\varepsilon}{\varepsilon^\frac{2}{3}}
%\right) \right]
%\right\}^\frac{1}{2}+\mathcal{O}\left(\varepsilon^2|r-r_\varepsilon|^{-\frac{3}{2}}
%\right),
%\end{equation}
%uniformly on
%$[r_\varepsilon-\delta,r_\varepsilon-D\varepsilon^\frac{2}{3}]$, as
%$\varepsilon\to 0$ (see \cite[Prop. 3.6]{KaraliSourdisGround}).
%\end{rem}

%\begin{rem}
%In this article, we will use the above proposition with
%$|\lambda_\varepsilon-\lambda_0|\leq C|\log
%\varepsilon|^\frac{1}{2}\varepsilon$.
%\end{rem}

\subsubsection{Gluing approximate solutions}
 Consider a one-dimensional smooth cutoff function $\zeta$ such that
\begin{equation}\label{eqzeta}\zeta(t)=1\ \ \textrm{if}\ \ t\leq R_\varepsilon-\delta;\ \ \zeta(t)=0\ \ \textrm{if}\ \ t\geq
R_\varepsilon,\end{equation} where, for convenience, we have denoted
\begin{equation}\label{eqReps}R_\varepsilon=\frac{R_{1,\varepsilon}+R_{2,\varepsilon}}{2},\end{equation}
and $\delta>0$ is a small number that is independent of small
$\varepsilon>0$. Note that $\zeta$ can be chosen independent
of  $\varepsilon>0$ as well. We recall that $\hat{\eta}_{1,\varepsilon}$, $\hat{\eta}_{2,\varepsilon}$
 are the solutions of (\ref{eqgroundstate}).
 In view of (\ref{eq:proGS2}), let
\begin{equation}\label{eqeta1check}
\check{\eta}_{1,\varepsilon}(x)=\zeta(|x|)\hat{\eta}_{1,\varepsilon}(x),\
\ x\in \mathbb{R}^2.
\end{equation}
 Then, let
\begin{equation}\label{eqeta2cut}
\tilde{\eta}_{2,\varepsilon}(x)=\left(a_{2,\varepsilon}(x)+\frac{g}{g_2}a_{1,\varepsilon}(x)-\frac{g}{g_2}\check{\eta}_{1,\varepsilon}^2
\right)^\frac{1}{2},\ \ |x|\leq R_\varepsilon+\delta.
\end{equation}
The motivation for this comes from neglecting the term
$\varepsilon^2 \Delta \eta_2$ in
(\ref{eqsystem2}), since it is expected to be of higher order,
compared to the other terms, in the region $|x|\leq
R_{2,\varepsilon}-\delta$.

From (\ref{eqGSuUnif}), it follows that
\[
\hat{\eta}_{2,\varepsilon}(x)-\left(a_{2,\varepsilon}(x)+\frac{g}{g_2}a_{1,\varepsilon}(x)
\right)^\frac{1}{2}=\mathcal{O}(\varepsilon^2),\ \textrm{in}\
C^2\left(R_\varepsilon\leq |x|\leq R_\varepsilon+\delta\right),\
\textrm{as}\ \varepsilon\to 0.
\]
 In other words, recalling
(\ref{eqzeta}) and (\ref{eqeta2cut}), we have that
\begin{equation}\label{eqeta2gluing}
\hat{\eta}_{2,\varepsilon}(x)-\tilde{\eta}_{2,\varepsilon}(x)=\mathcal{O}(\varepsilon^2),\
\textrm{in}\ C^2\left(R_\varepsilon\leq |x|\leq
R_\varepsilon+\delta\right),\ \textrm{as}\ \varepsilon\to 0.
\end{equation}
Thus, we can smoothly interpolate between
$\tilde{\eta}_{2,\varepsilon}$ and $\hat{\eta}_{2,\varepsilon}$ to
obtain a new approximation $\check{\eta}_{2,\varepsilon}$ such that
\begin{equation}\label{eqeta2check}
\check{\eta}_{2,\varepsilon}(x)=\left\{\begin{array}{ll}
                                     \tilde{\eta}_{2,\varepsilon}(x), & |x|\leq R_\varepsilon, \\
                                       &  \\
                                       \tilde{\eta}_{2,\varepsilon}(x)+\mathcal{O}_{C^2}(\varepsilon^2), &  R_\varepsilon \leq|x|\leq R_\varepsilon+\delta, \\
                                       &  \\
                                      \hat{\eta}_{2,\varepsilon}(x),
                                      &|x|\geq R_\varepsilon+\delta.
                                    \end{array}
 \right.
\end{equation}

To conclude, we define our approximate solution of the system
(\ref{eqsystem}), for small $\varepsilon>0$, to be the pair
$(\check{\eta}_{1,\varepsilon}, \check{\eta}_{2,\varepsilon})$, as
described by (\ref{eqeta1check}) and (\ref{eqeta2check}). We point
out that this approximation satisfies the desired limiting behavior
\begin{equation}\label{eqsingularlimit}
(\check{\eta}_{1,\varepsilon}, \check{\eta}_{2,\varepsilon})\to
(\sqrt{a_1},\sqrt{a_2}),\ \textrm{uniformly in}\ \mathbb{R}^2,\
\textrm{as}\ \varepsilon \to 0,
\end{equation}
where $a_1$ and $a_2$ are as in (\ref{eq:a_i_def}). Moreover,
estimates that quantify this convergence can be derived easily from
the corresponding ones that are available for the ground states
$\hat{\eta}_{1,\varepsilon}$ and $\hat{\eta}_{2,\varepsilon}$ from
Proposition \ref{proGS}.

\subsection{Estimates for the error on the approximate solution}\label{subsecError}

The remainder that is left when substituting the approximate
solution $(\check{\eta}_{1,\varepsilon},
\check{\eta}_{2,\varepsilon})$ to the system (\ref{eqsystem}) is
\begin{equation}\label{eqError}
\mathcal{E}(\check{\eta}_{1,\varepsilon},
\check{\eta}_{2,\varepsilon}) \equiv \left(\begin{array}{c}
                                       E_1 \\
                                         \\
                                       E_2
                                     \end{array}\right)
\equiv\left(\begin{array}{c}
  -\varepsilon^2\Delta  \check{\eta}_{1,\varepsilon}+g_1\check{\eta}_{1,\varepsilon}
  \left(\check{\eta}_{1,\varepsilon}^2-a_{1,\varepsilon} \right)+g\check{\eta}_{1,\varepsilon}
  \left(\check{\eta}_{2,\varepsilon}^2-a_{2,\varepsilon} \right) \\
   \\
-\varepsilon^2\Delta
\check{\eta}_{2,\varepsilon}+g_2\check{\eta}_{2,\varepsilon}\left(\check{\eta}_{2,\varepsilon}^2-a_{2,\varepsilon}
\right)+g\check{\eta}_{2,\varepsilon}\left(\check{\eta}_{1,\varepsilon}^2-a_{1,\varepsilon}
\right)
\end{array}\right).
\end{equation}

The next proposition provides estimates for the $L^2$-norms of
$E_i$, $i=1,2$, which  follow from some delicate pointwise
estimates that will be established in the process.

\begin{proposition}\label{proerror}
The following estimates hold for small $\varepsilon>0$:
\begin{equation}\label{eqerrorestims}
\|E_1\|_{L^2(\mathbb{R}^2)}\leq Ce^{-c\varepsilon^{-\frac{2}{3}}},\
\|E_2\|_{L^2(|x|<R_\varepsilon)}\leq C\varepsilon^\frac{5}{3}\
\textrm{and}\ \|E_2\|_{L^2(|x|>R_\varepsilon)}\leq C\varepsilon^2.
\end{equation}
\end{proposition}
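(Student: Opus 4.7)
The plan is to decompose $\mathbb{R}^2$ into the natural regions dictated by the construction of $(\check{\eta}_{1,\varepsilon},\check{\eta}_{2,\varepsilon})$ in Subsection \ref{subApprox}, and to show that in each region one of the following happens: either an exact algebraic cancellation occurs and we are left only with $-\varepsilon^2 \Delta(\cdot)$; or the approximation used is exponentially accurate; or the gluing produces a remainder of size $O(\varepsilon^2)$ on a region of bounded area.

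For $E_1$, observe first that in the disk $|x|\leq R_{\varepsilon}-\delta$, where $\zeta\equiv 1$, we have $\check{\eta}_{1,\varepsilon}=\hat{\eta}_{1,\varepsilon}$ and, by the definition (\ref{eqeta2cut}) of $\tilde{\eta}_{2,\varepsilon}$, the identity $\check{\eta}_{2,\varepsilon}^2-a_{2,\varepsilon}=\tfrac{g}{g_2}(a_{1,\varepsilon}-\hat{\eta}_{1,\varepsilon}^2)$ holds. Substituting into $E_1$ produces exactly the reduced equation (\ref{eqgroundstate1}) for $\hat{\eta}_{1,\varepsilon}$, so $E_1\equiv 0$ there. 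For $|x|\geq R_{\varepsilon}$ we have $\check{\eta}_{1,\varepsilon}\equiv 0$, hence $E_1\equiv 0$. In the remaining cutoff annulus $R_{\varepsilon}-\delta\leq |x|\leq R_{\varepsilon}$, every factor involves $\hat{\eta}_{1,\varepsilon}$ or its derivatives evaluated at a distance of order one beyond $R_{1,\varepsilon}$; by the exponential decay estimate (\ref{eq:proGS2}) (together with interior Schauder estimates for the derivatives), each term is $O(e^{-c\varepsilon^{-2/3}})$. Since the annulus has bounded area, this yields the $L^2$ bound for $E_1$.

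For $E_2$ on $|x|<R_{\varepsilon}$, the same algebraic identity that made $E_1$ vanish also makes the two nonlinear terms of $E_2$ cancel, leaving $E_2=-\varepsilon^2\Delta \tilde{\eta}_{2,\varepsilon}$. The key computation is then $\|\Delta \tilde{\eta}_{2,\varepsilon}\|_{L^2(|x|<R_\varepsilon)}=O(\varepsilon^{-1/3})$. Writing $\tilde{\eta}_{2,\varepsilon}^2 = \tfrac{1}{g_2}(R_{2,\varepsilon}^2-|x|^2)-\tfrac{g}{g_2}\hat{\eta}_{1,\varepsilon}^2$ using (\ref{eq:a_20out}), the quantity under the square root stays bounded below by a positive constant (since $R_\varepsilon<R_{2,\varepsilon}$ and $\hat{\eta}_{1,\varepsilon}^2$ is uniformly bounded), so $\Delta \tilde{\eta}_{2,\varepsilon}$ is controlled by the first two radial derivatives of $\hat{\eta}_{1,\varepsilon}^2$. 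Split $|x|<R_\varepsilon$ into the Painlev\'e transition layer $|r-R_{1,\varepsilon}|\leq D\varepsilon^{2/3}$, the intermediate algebraic zone $D\varepsilon^{2/3}\leq R_{1,\varepsilon}-r\leq \delta$, and the bulk $r\leq R_{1,\varepsilon}-\delta$ plus $r\geq R_{1,\varepsilon}+\delta$. In the bulk $\hat{\eta}_{1,\varepsilon}-\sqrt{a_{1,\varepsilon}^+}=O(\varepsilon^2)$ in $C^2$ by (\ref{eqGSuUnif}), giving an $O(\varepsilon^2)$ contribution. In the intermediate zone the algebraic estimates (\ref{eqGSuAlg}) imply, after expanding $(\hat{\eta}_{1,\varepsilon}^2)''=2(\hat{\eta}_{1,\varepsilon}')^2+2\hat{\eta}_{1,\varepsilon}\hat{\eta}_{1,\varepsilon}''$, a pointwise bound whose $L^2$-norm on the annulus is $o(\varepsilon^{-1/3})$. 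The dominant contribution comes from the Painlev\'e layer: by (\ref{eq:proGS4}) and (\ref{eqeta1groundinner2}) applied to $\hat{\eta}_{1,\varepsilon}$ we get $(\hat{\eta}_{1,\varepsilon}')^2, \hat{\eta}_{1,\varepsilon}\hat{\eta}_{1,\varepsilon}''=O(\varepsilon^{-2/3})$ in $L^\infty$ on a layer of 2D measure $O(\varepsilon^{2/3})$, producing an $L^2$ contribution of order $\varepsilon^{-2/3}\cdot\varepsilon^{1/3}=\varepsilon^{-1/3}$. Multiplying by $\varepsilon^2$ gives the claimed $\varepsilon^{5/3}$.

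For $E_2$ on $|x|>R_{\varepsilon}$, two sub-regions arise. In the gluing annulus $R_\varepsilon\leq |x|\leq R_\varepsilon+\delta$, (\ref{eqeta2check}) gives $\check{\eta}_{2,\varepsilon}=\tilde{\eta}_{2,\varepsilon}+O_{C^2}(\varepsilon^2)$; the tilde-part again produces exact cancellation of the nonlinear contributions as above (the exponentially small $\check{\eta}_{1,\varepsilon}$ is harmless), and each of the terms $-\varepsilon^2\Delta \tilde{\eta}_{2,\varepsilon}$, $g_2\check{\eta}_{2,\varepsilon}(\check{\eta}_{2,\varepsilon}^2-a_{2,\varepsilon})$ and $g\check{\eta}_{2,\varepsilon}(\check{\eta}_{1,\varepsilon}^2-a_{1,\varepsilon})$ picks up an $O(\varepsilon^2)$ mismatch from the $C^2$ perturbation, yielding $\|E_2\|_{L^\infty}=O(\varepsilon^2)$ on this bounded annulus. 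On $|x|\geq R_\varepsilon+\delta$ we have $\check{\eta}_{1,\varepsilon}\equiv 0$ and $\check{\eta}_{2,\varepsilon}=\hat{\eta}_{2,\varepsilon}$, so using the reduced equation (\ref{eqgroundstate2}) one checks that the term $-g\hat{\eta}_{2,\varepsilon}a_{1,\varepsilon}$ from $E_2$ coincides exactly with the term produced by the $\tfrac{g}{g_2}a_{1,\varepsilon}$ piece in (\ref{eqgroundstate2}), whence $E_2\equiv 0$. I expect the only delicate step to be the bookkeeping of $\|\Delta \tilde{\eta}_{2,\varepsilon}\|_{L^2}$ across the Painlev\'e layer; everything else reduces to the algebraic identities forced by our definition of $\tilde{\eta}_{2,\varepsilon}$ and the already available scalar estimates of Proposition \ref{proGS}.
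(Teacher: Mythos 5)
Your proposal follows the paper's overall strategy quite closely: the same regional decomposition, the same exact algebraic cancellations showing $E_1\equiv 0$ away from the cutoff annulus and $E_2=-\varepsilon^2\Delta\tilde{\eta}_{2,\varepsilon}$ inside $B_{R_\varepsilon}$, the same treatment of the gluing annulus via (\ref{eqeta2gluing}) and of $|x|\geq R_\varepsilon+\delta$ via (\ref{eqgroundstate2}), and the same reduction of everything to $\|\Delta\tilde{\eta}_{2,\varepsilon}\|_{L^2(|r-R_{1,\varepsilon}|<\delta)}\leq C\varepsilon^{-1/3}$. Where you genuinely diverge is in the bookkeeping across the transition layer. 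The paper widens the inner region to $|r-R_{1,\varepsilon}|\leq\varepsilon^{7/12}$, works with the combination $\hat{\eta}_{1,\varepsilon}\Delta\hat{\eta}_{1,\varepsilon}+|\nabla\hat{\eta}_{1,\varepsilon}|^2$, and exploits the Painlev\'e cancellation $V V''+(V')^2=\mathcal{O}(|s|^{-4})$ (relation (\ref{eqpainlevecancel})) before integrating; you instead cut at $D\varepsilon^{2/3}$, use crude term-by-term $L^\infty$ bounds of size $\varepsilon^{-2/3}$ on a layer of measure $\mathcal{O}(\varepsilon^{2/3})$, and integrate the algebraic tails of (\ref{eqGSuAlg}) in $L^2$ over the intermediate zone. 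The arithmetic checks out: $\varepsilon^{-2/3}\cdot\varepsilon^{1/3}=\varepsilon^{-1/3}$ in the layer, and $\|\rho^{-1}+\varepsilon^2\rho^{-4}\|_{L^2(D\varepsilon^{2/3}\leq\rho\leq\delta)}=\mathcal{O}(\varepsilon^{-1/3})$ (not $o(\varepsilon^{-1/3})$ as you write, but that is immaterial). So your route buys a simpler argument that bypasses (\ref{eqpainlevecancel}) and the fine expansion (\ref{eqerror-+}) entirely, at no cost in the final exponent; the paper's sharper cancellation is not actually needed for the $\varepsilon^{5/3}$ bound.

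Two small holes you should patch. First, your decomposition of $\{|x|<R_\varepsilon\}$ literally omits the right-hand zone $R_{1,\varepsilon}+D\varepsilon^{2/3}\leq r\leq R_{1,\varepsilon}+\delta$: your ``intermediate algebraic zone'' is only on the left of $R_{1,\varepsilon}$, and (\ref{eqGSuUnif}) gives $C^2$ closeness only for $|x|\leq R_{1,\varepsilon}-\delta$, so it cannot cover $r\geq R_{1,\varepsilon}+\delta$ either; in both right-hand pieces one must instead invoke the exponential smallness of $\hat{\eta}_{1,\varepsilon}$ beyond $R_{1,\varepsilon}$ (via (\ref{eq:proGS2}), (\ref{eq:proGS4}), (\ref{eqeta1groundinner2}) and the equation) to bound $\hat{\eta}_{1,\varepsilon}|\Delta\hat{\eta}_{1,\varepsilon}|+|\nabla\hat{\eta}_{1,\varepsilon}|^2$ there, which is exactly how the paper handles $r-R_{1,\varepsilon}\geq\varepsilon^{7/12}$. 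Second, the bound $\hat{\eta}_{1,\varepsilon}\hat{\eta}_{1,\varepsilon}''=\mathcal{O}(\varepsilon^{-2/3})$ in the layer does not follow from (\ref{eq:proGS4}) and (\ref{eqeta1groundinner2}) alone; you need to feed (\ref{eqgroundstate1}) back in (in the layer $\hat{\eta}_{1,\varepsilon}=\mathcal{O}(\varepsilon^{1/3})$ and $\hat{\eta}_{1,\varepsilon}^2-a_{1,\varepsilon}=\mathcal{O}(\varepsilon^{2/3})$, whence $\Delta\hat{\eta}_{1,\varepsilon}=\mathcal{O}(\varepsilon^{-1})$), which is also how the paper obtains (\ref{eqerrorLong}). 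Likewise, the positive lower bound for $\tilde{\eta}_{2,\varepsilon}^2$ does not come from mere boundedness of $\hat{\eta}_{1,\varepsilon}^2$; one uses $\hat{\eta}_{1,\varepsilon}^2\leq a_{1,\varepsilon}^++C\varepsilon^{2/3}$ together with the two-disk condition (\ref{eq:condition_on_g_two_disks}), which keeps $a_{2,\varepsilon}$ (resp. $a_{2,\varepsilon}+\frac{g}{g_2}a_{1,\varepsilon}$) bounded below on the relevant region. These are routine fixes using estimates already at your disposal, and with them your argument is complete.
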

\begin{proof}
It follows from the construction of $\check{\eta}_{1,\varepsilon}$
and $\check{\eta}_{2,\varepsilon}$, via (\ref{eq:proGS2}), that
\begin{equation}\label{eqE1}
E_1=0\ \textrm{if}\ |x|\leq R_\varepsilon-\delta\ \textrm{or}\
|x|\geq R_\varepsilon; \ \ |E_1|\leq
Ce^{-c\varepsilon^{-\frac{2}{3}}}\ \textrm{if}\ R_\varepsilon-\delta
\leq|x| \leq R_\varepsilon.
\end{equation}
 On the other side, we have
\begin{equation}\label{eqE2}
E_2=-\varepsilon^2 \Delta \tilde{\eta}_{2,\varepsilon}\ \
\textrm{if}\ |x|\leq R_\varepsilon-\delta;\end{equation}
\[E_2=-\varepsilon^2 \Delta
\tilde{\eta}_{2,\varepsilon}+\mathcal{O}(e^{-c\varepsilon^{-\frac{2}{3}}})\
\textrm{uniformly\ if}\ R_\varepsilon-\delta\leq |x|\leq
R_\varepsilon,\ \textrm{as}\ \varepsilon\to 0;\] \[ E_2=0\
\textrm{if}\ |x|\geq R_\varepsilon+\delta;\ \ |E_2|\leq
C\varepsilon^2 \ \textrm{if}\ R_\varepsilon\leq |x|\leq
R_\varepsilon+\delta\ (\textrm{recall}\ (\ref{eqeta2gluing})).
\]

In view of the previous observations, we only have to show the
second relation in (\ref{eqerrorestims}). In fact, since $\Delta
\tilde{\eta}_{2,\varepsilon}$  remains uniformly bounded if
$|r-R_{1,\varepsilon}|\geq \delta$ as $\varepsilon\to 0$, it
suffices to show that
\begin{equation}\label{eqerrorgoal}
\|\Delta
\tilde{\eta}_{2,\varepsilon}\|_{L^2\left(|r-R_{1,\varepsilon}|<
\delta\right)}\leq C\varepsilon^{-\frac{1}{3}}\ \ \textrm{for\
small}\ \varepsilon>0.
\end{equation}
%(keep in mind that, as we have already pointed out, the radial
%derivative of $\eta_{2,0}$ has a finite jump discontinuity across
%$|x|=R_{1,0}$, and recall (\ref{eqsingularlimit})).

It follows readily from (\ref{eqeta2cut}) and (\ref{eqeta2check})
that
\begin{equation}\label{eqerrorBasic}
|\Delta \tilde{\eta}_{2,\varepsilon}|\leq C
\hat{\eta}_{1,\varepsilon}^2|\nabla
\hat{\eta}_{1,\varepsilon}|^2+C\left|\hat{\eta}_{1,\varepsilon}\Delta
\hat{\eta}_{1,\varepsilon}+|\nabla \hat{\eta}_{1,\varepsilon}|^2
\right|+C\ \ \textrm{if}\ |r-R_{1,\varepsilon}|<\delta.
\end{equation}
Next, we  estimate the terms in the right-hand side by making
use of Proposition \ref{proGS}. To this end, we need to derive a
relation for $\Delta \hat{\eta}_{1,\varepsilon}$ in terms of the
Hastings-McLeod solution near $R_{1,\varepsilon}$. Making use of
(\ref{eqgroundstate1}), (\ref{eq:proGS4}) and the natural bound
$|V(s)|\leq C(|s|^\frac{1}{2}+1),\ s\in \mathbb{R}$, setting
$\tilde{\varepsilon}=(g_1 \Gamma)^{-\frac{1}{2}}\varepsilon$, after
a tedious calculation, we arrive at
\begin{equation}\label{eqerrorLong}
\begin{array}{lll}
  \hat{\eta}_{1,\varepsilon} \Delta
\hat{\eta}_{1,\varepsilon} & = &
\tilde{\varepsilon}^{-\frac{2}{3}}\beta_\varepsilon^4
V^2\left(\beta_\varepsilon
\frac{r-R_{1,\varepsilon}}{\tilde{\varepsilon}^\frac{2}{3}}
\right)\left[V^2\left(\beta_\varepsilon
\frac{r-R_{1,\varepsilon}}{\tilde{\varepsilon}^\frac{2}{3}}
\right)+\beta_\varepsilon
\frac{r-R_{1,\varepsilon}}{\tilde{\varepsilon}^\frac{2}{3}} \right]+ \\
    &   &   \\
    &   &
    +\mathcal{O}\left(\varepsilon^{-2}|r-R_{1,\varepsilon}|^3
    +\varepsilon^{-\frac{4}{3}}|r-R_{1,\varepsilon}|^2+\varepsilon^{-1}|r-R_{1,\varepsilon}|^\frac{3}{2}\right)+
     \\
    &   &   \\
    &   & +\mathcal{O}\left(
\varepsilon^{-\frac{1}{3}}|r-R_{1,\varepsilon}|^\frac{1}{2}
+\varepsilon^{-\frac{2}{3}}|r-R_{1,\varepsilon}|+1+\varepsilon^{-\frac{5}{3}}|r-R_{1,\varepsilon}|^\frac{5}{2}
\right),
\end{array}
\end{equation}
uniformly if $|r-R_{1,\varepsilon}|\leq \delta$, as $\varepsilon \to
0$, where
$\beta_\varepsilon^3=-a_{1,\varepsilon}'(R_{1,\varepsilon})$.
Similarly, but with considerably less effort, it follows from
(\ref{eqeta1groundinner2}), and the bound $\left|V'(s)\right|\leq
C\left(|s|+1 \right)^{-\frac{1}{2}}$, that
\begin{equation}\label{eqerrorLong2}
|\nabla
\hat{\eta}_{1,\varepsilon}(r)|^2=\tilde{\varepsilon}^{-\frac{2}{3}}\beta_\varepsilon^4
\left[V'\left(\beta_\varepsilon
\frac{r-R_{1,\varepsilon}}{\tilde{\varepsilon}^\frac{2}{3}}
\right)\right]^2+\mathcal{O}(1), \ \ \textrm{uniformly\ if}\
|r-R_{1,\varepsilon}|\leq \delta,\ \textrm{as}\ \varepsilon\to 0.
\end{equation}
%It is clear that the term $\varepsilon^{-2}|r-R_{1,\varepsilon}|^3$
%in (\ref{eqerrorLong}) (for example) is not small enough away from
%$R_{1,\varepsilon}$, as required by the estimate (\ref{eqerrorgoal})
%that we want to prove (recall also (\ref{eqerrorBasic})). This
%observation implies that we have to complement these ``inner''
%estimates with corresponding ``outer'' ones outside of a
%neighborhood of $R_{1,\varepsilon}$ (which will be determined in the
%process). This is what we will do next.

 It follows readily from the estimates in
(\ref{eqGSuAlg}) that
\begin{equation}\label{eqerorAlg-}
\hat{\eta}_{1,\varepsilon}^2 |\nabla
\hat{\eta}_{1,\varepsilon}|^2\leq C,
\end{equation}
and
\begin{equation}\label{eqerrorAlg}
\hat{\eta}_{1,\varepsilon} \Delta \hat{\eta}_{1,\varepsilon}+|\nabla
\hat{\eta}_{1,\varepsilon}|^2=\mathcal{O}\left(1+\varepsilon^2
|r-R_{1,\varepsilon}|^{-4} \right),
\end{equation}
uniformly in $-\delta\leq r-R_{1,\varepsilon}\leq
-C\varepsilon^\frac{2}{3}$, as $\varepsilon \to 0$. Keep in mind
that our eventual goal is to show (\ref{eqerrorgoal}). In view of
(\ref{eqerrorBasic}), the above relations imply the partial estimate
\begin{equation}\label{eqerrorgoal1}
\|\Delta \tilde{\eta}_{2,\varepsilon} \|_{L^\infty(-\delta \leq
r-R_{1,\varepsilon}\leq -\varepsilon^\frac{7}{12})}\leq
C\varepsilon^{-\frac{1}{3}}\ \ \textrm{for\ small}\ \varepsilon>0.
\end{equation}
On the other side, by the exponential decay of
$\hat{\eta}_{1,\varepsilon}$ for $r>R_{1,\varepsilon}$, we certainly
have that \begin{equation}\label{eqerrorgoal2}\|\Delta
\tilde{\eta}_{2,\varepsilon} \|_{L^\infty(
\varepsilon^\frac{7}{12}\leq r-R_{1,\varepsilon}\leq \delta)}\leq
C\varepsilon^{-\frac{1}{3}}\ \ \textrm{for\ small}\ \varepsilon>0.
\end{equation}

In the remaining  interval
$(R_{1,\varepsilon}-\varepsilon^\frac{7}{12},R_{1,\varepsilon}+\varepsilon^\frac{7}{12})$
 we use the inner estimates (\ref{eq:proGS4}),
(\ref{eqerrorLong}), and (\ref{eqerrorLong2}), which in particular
imply that
\begin{equation}\label{eqerror-}
\hat{\eta}_{1,\varepsilon}^2 |\nabla
\hat{\eta}_{1,\varepsilon}|^2\leq C,
\end{equation}
and
\begin{equation}\label{eqerror-+}\begin{array}{rcl}
                                   \hat{\eta}_{1,\varepsilon} \Delta \hat{\eta}_{1,\varepsilon}+|\nabla
\hat{\eta}_{1,\varepsilon}|^2 & = &
\tilde{\varepsilon}^{-\frac{2}{3}}\beta_\varepsilon^4
\left\{V\left(\beta_\varepsilon
\frac{r-R_{1,\varepsilon}}{\tilde{\varepsilon}^\frac{2}{3}}
\right)V''\left(\beta_\varepsilon
\frac{r-R_{1,\varepsilon}}{\tilde{\varepsilon}^\frac{2}{3}}
\right)+\left[V'\left(\beta_\varepsilon
\frac{r-R_{1,\varepsilon}}{\tilde{\varepsilon}^\frac{2}{3}}
\right)\right]^2\right\}+ \\
                                     &   &   \\
                                     &   & +\mathcal{O}\left(\varepsilon^{-\frac{1}{4}}\right),
                                 \end{array}
\end{equation}
uniformly if $|r-R_{1,\varepsilon}|\leq \varepsilon^\frac{7}{12}$,
as $\varepsilon\to 0$ (for obtaining the last relation, we have also
used (\ref{eqpainleve})). In order to proceed, we need the
following easy estimate:
\begin{equation}\label{eqpainlevecancel}
V(s)V''(s)+\left[V'(s) \right]^2=\mathcal{O}(|s|^{-4})\ \
\textrm{as}\ |s|\to \infty,
\end{equation}
which follows from the asymptotic behavior
$V(s)=(-s)^\frac{1}{2}+\mathcal{O}\left(|s|^{-\frac{5}{2}}\right)$
as $s\to -\infty$, and from the super-exponential decay of $V$ and
its derivatives  as $s\to \infty$. Now, by (\ref{eqerror-}),
(\ref{eqerror-+}), and (\ref{eqpainlevecancel}), via
(\ref{eqerrorBasic}), we deduce that
\begin{equation}\label{eqerrorgoal3}
\|\Delta \tilde{\eta}_{2,\varepsilon} \|_{L^2(|
r-R_{1,\varepsilon}|\leq \varepsilon^\frac{7}{12})}\leq
C\varepsilon^{-\frac{1}{3}}\ \ \textrm{for\ small}\ \varepsilon>0.
\end{equation}
Finally, the desired estimate (\ref{eqerrorgoal}) follows directly
from (\ref{eqerrorgoal1}), (\ref{eqerrorgoal2}) and
(\ref{eqerrorgoal3}).\end{proof}

\subsection{Linear analysis}\label{subsectionLinear}

 In this part of the paper we are going to
study the linearization of (\ref{eqsystem}) about the approximate
solution
$(\check{\eta}_{1,\varepsilon},\check{\eta}_{2,\varepsilon})$,
namely the linear operator
\begin{equation}\label{eqlinearization--}
\mathcal{L}(\varphi,\psi)\equiv \left(\begin{array}{l}
                                        -\varepsilon^2 \Delta \varphi +\left[g_1(3 \check{\eta}_{1,\varepsilon}^2
                                        -a_{1,\varepsilon})+g(\check{\eta}_{2,\varepsilon}^2-a_{2,\varepsilon})
                                        \right]
                                        \varphi+2g\check{\eta}_{1,\varepsilon}\check{\eta}_{2,\varepsilon}\psi  \\
                                          \\
-\varepsilon^2 \Delta \psi +\left[g_2(3
\check{\eta}_{2,\varepsilon}^2
                                        -a_{2,\varepsilon})+g(\check{\eta}_{1,\varepsilon}^2-a_{1,\varepsilon})
                                        \right]
                                        \psi+2g\check{\eta}_{1,\varepsilon}\check{\eta}_{2,\varepsilon}\varphi\end{array}\right),
\end{equation}
for $(\varphi,\psi)\in D(\mathcal{L})=\left\{(u,v)\in
H^2(\mathbb{R}^2)\times H^2(\mathbb{R}^2)\  : \
\int_{\mathbb{R}^2}^{}|x|^2(u^2+v^2)dx<\infty \right\}$. By
Friedrichs extension, the
operator $\mathcal{L}$ is self-adjoint in $L^2(\mathbb{R}^2)\times
L^2(\mathbb{R}^2)$ with domain $D(\mathcal{L})$.

%. We write the corresponding linearized operator conveniently as
%\begin{equation}\label{eqlinearization}
%\mathcal{L}(\varphi,\psi)\equiv \left(\begin{array}{l}
%                                        -\varepsilon^2 \Delta \varphi +\left[\left(g_1-\frac{g^2}{g_2} \right)(3\check{\eta}_{1,\varepsilon}^2
%                                        -a_{1,\varepsilon}) \right]
%                                        \varphi+2\frac{g^2}{g_2}\check{\eta}_{1,\varepsilon}^2\varphi+2g\check{\eta}_{1,\varepsilon}\check{\eta}_{2,\varepsilon}\psi  \\
%                                          \\
%
%         -\varepsilon^2 \Delta \psi+2g_2 \check{\eta}_{2,\varepsilon}^2\psi+2g\check{\eta}_{1,\varepsilon}\check{\eta}_{2,\varepsilon}\varphi                             \end{array}
% \right),
%\end{equation}
%for $\varphi,\psi\in H^2(\mathbb{R}^2)$.
\subsubsection{Energy estimates for $\mathcal{L}$} We  estimate from below the quotient
\[
\frac{\left(\mathcal{L}(\varphi,\psi),(\varphi,\psi)
\right)}{\left<(\varphi,\psi),(\varphi,\psi) \right>},\ \
(\varphi,\psi)\in D(\mathcal{L})\setminus (0,0),
\]
which turns out to be positive, where $(\cdot,\cdot)$ symbolizes the
usual inner product in $L^2(\mathbb{R}^2)\times L^2(\mathbb{R}^2)$
while $<\cdot,\cdot>$ is a suitably weighted
 one.  In turn, these lower bounds  provide
a-priori  estimates for the problem
$\mathcal{L}(\varphi,\psi)=(f_1,f_2)$.

\emph{Energy estimates in $B_{R_\varepsilon}$}. In the sequel, we
carry out this  plan in detail  in the domain
$B_{R_\varepsilon}$. Analogous results can be deduced in
$\mathbb{R}^2\setminus B_{R_\varepsilon}$ which we will describe
later.

In view of (\ref{eqeta2cut}) and (\ref{eqeta2check}), which imply
that $
\check{\eta}_{2,\varepsilon}^2-a_{2,\varepsilon}=-\frac{g}{g_2}(\check{\eta}_{1,\varepsilon}^2-
a_{1,\varepsilon})$ in $B_{R_\varepsilon}$, we can conveniently
rewrite (\ref{eqlinearization--}) in $B_{R_\varepsilon}$ as
 \begin{equation}\label{eqlinearization}
\mathcal{L}(\varphi,\psi)=\left(\begin{array}{l}
                                         -\varepsilon^2 \Delta \varphi +\left[\left(g_1-\frac{g^2}{g_2} \right)(3\check{\eta}_{1,\varepsilon}^2
                                        -a_{1,\varepsilon}) \right]
                                        \varphi+2\frac{g^2}{g_2}\check{\eta}_{1,\varepsilon}^2\varphi+2g\check{\eta}_{1,\varepsilon}\check{\eta}_{2,\varepsilon}\psi  \\
                                          \\

         -\varepsilon^2 \Delta \psi+2g_2 \check{\eta}_{2,\varepsilon}^2\psi+2g\check{\eta}_{1,\varepsilon}\check{\eta}_{2,\varepsilon}\varphi                             \end{array}
 \right),
\end{equation}
for $\varphi,\psi\in H^2(B_{R_\varepsilon})$ (the reason for not
adding the similar terms in the first row is  to keep the
linearization of (\ref{eqgroundstate1}) about
$\check{\eta}_{1,\varepsilon}$ in the beginning).

\begin{proposition}\label{proL}
The following a-priori estimates hold: Suppose that
\[\varphi,\ \psi\in H^2_{N}(B_{R_\varepsilon})\equiv\left\{v\in
H^2(B_{R_\varepsilon})\ : \ \nu\cdot \nabla v=0\ \ \textrm{on}\
\partial B_{R_\varepsilon} \right\},\]
where $\nu$ denotes the outer unit  normal vector to $\partial
B_{R_\varepsilon}$,  satisfy
\begin{equation}\label{eqLev1}
\mathcal{L}(\varphi,\psi)=\lambda(\varepsilon^\frac{2}{3}\varphi,\psi)
\ \textrm{and}\
\int_{B_{R_\varepsilon}}^{}\left(\varepsilon^\frac{2}{3}\varphi^2+\psi^2
\right)dx=1,
\end{equation}
or
\begin{equation}\label{eqLev2}
\mathcal{L}(\varphi,\psi)=\lambda(\check{\eta}_{1,\varepsilon}^2\varphi,\psi)
\ \textrm{and}\
\int_{B_{R_\varepsilon}}^{}\left(\check{\eta}_{1,\varepsilon}^2\varphi^2+\psi^2
\right)dx=1,
\end{equation}
then
\[
\lambda\geq c.
\]
\end{proposition}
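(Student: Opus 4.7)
The plan is to bound the Rayleigh quotient $\lambda = Q(\varphi,\psi)/W(\varphi,\psi)$ from below, where after testing the eigenvalue equation against $(\varphi,\psi)$ and integrating by parts (with no boundary contribution thanks to the Neumann condition defining $H^2_N$),
\begin{equation*}
Q(\varphi,\psi) := (\mathcal{L}(\varphi,\psi),(\varphi,\psi))_{L^2\times L^2},
\end{equation*}
and $W$ is the weighted $L^2$-norm squared that is normalized to $1$ in \eqref{eqLev1} or \eqref{eqLev2}. The central algebraic observation, using the form \eqref{eqlinearization} of $\mathcal{L}$ in $B_{R_\varepsilon}$, is that the ``pure coupling'' part
\begin{equation*}
2\tfrac{g^{2}}{g_{2}}\check{\eta}_{1,\varepsilon}^{2}\varphi^{2}+2g_{2}\check{\eta}_{2,\varepsilon}^{2}\psi^{2}+4g\,\check{\eta}_{1,\varepsilon}\check{\eta}_{2,\varepsilon}\,\varphi\psi=(A\varphi+B\psi)^{2}
\end{equation*}
is a perfect square, with $A=g\sqrt{2/g_{2}}\,\check{\eta}_{1,\varepsilon}$ and $B=\sqrt{2g_{2}}\,\check{\eta}_{2,\varepsilon}$, so that
\begin{equation*}
Q(\varphi,\psi)=\int_{B_{R_\varepsilon}}\bigl\{\varepsilon^{2}|\nabla\varphi|^{2}+\varepsilon^{2}|\nabla\psi|^{2}+V_{1}\varphi^{2}+(A\varphi+B\psi)^{2}\bigr\}\,dx,\quad V_{1}:=(g_{1}-\tfrac{g^{2}}{g_{2}})(3\check{\eta}_{1,\varepsilon}^{2}-a_{1,\varepsilon}).
\end{equation*}

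Next, I would invoke the scalar estimate \eqref{eq:proGS3} from Proposition \ref{proGS}, applied to $\hat{\eta}_{1,\varepsilon}$ (which coincides with $\check{\eta}_{1,\varepsilon}$ in a fixed neighbourhood of $\partial B_{R_{1,\varepsilon}}$), to obtain $V_{1}\geq c(|r-R_{1,\varepsilon}|+\varepsilon^{2/3})$ in $\{|r-R_{1,\varepsilon}|\leq\delta\}$ and $V_{1}\geq c$ elsewhere on $B_{R_\varepsilon}$; in particular $V_{1}\geq c\varepsilon^{2/3}$ uniformly. Since $R_\varepsilon=(R_{1,\varepsilon}+R_{2,\varepsilon})/2$ is uniformly separated from $R_{2,\varepsilon}$, one also has $B^{2}=2g_{2}\check{\eta}_{2,\varepsilon}^{2}\geq c>0$ on $B_{R_\varepsilon}$. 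The most delicate input is the pointwise comparison $\check{\eta}_{1,\varepsilon}^{2}\leq CV_{1}$ on $B_{R_\varepsilon}$, which I would deduce from the identity $\check{\eta}_{1,\varepsilon}^{2}=V_{1}/[3(g_{1}-g^{2}/g_{2})]+a_{1,\varepsilon}/3$ by noting that $a_{1,\varepsilon}$ vanishes to first order at $R_{1,\varepsilon}$ (so that $a_{1,\varepsilon}^{+}\leq C(R_{1,\varepsilon}-r)^{+}\leq CV_{1}$ near $R_{1,\varepsilon}$), while $\check{\eta}_{1,\varepsilon}^{2}$ is uniformly bounded and $V_{1}\geq c$ away from $R_{1,\varepsilon}$.

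The proof then concludes by a Peter--Paul argument. Writing $\tilde{\psi}:=\psi+(A/B)\varphi$, so that $(A\varphi+B\psi)^{2}=B^{2}\tilde{\psi}^{2}\geq c\,\tilde{\psi}^{2}$, and using $\psi=\tilde{\psi}-(A/B)\varphi$, I would estimate
\begin{align*}
\int\psi^{2}\,dx &\leq 2\int\tilde{\psi}^{2}\,dx+2\int(A/B)^{2}\varphi^{2}\,dx\\
&\leq CQ+C\int\check{\eta}_{1,\varepsilon}^{2}\varphi^{2}\,dx\leq CQ+C\int V_{1}\varphi^{2}\,dx\leq CQ,
\end{align*}
and similarly $\int\varepsilon^{2/3}\varphi^{2}\leq C\int V_{1}\varphi^{2}\leq CQ$ (since $V_{1}\geq c\varepsilon^{2/3}$) and $\int\check{\eta}_{1,\varepsilon}^{2}\varphi^{2}\leq CQ$. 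Substituting into the normalization of \eqref{eqLev1} or \eqref{eqLev2} gives $1\leq CQ=C\lambda$, whence $\lambda\geq c$.

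The main obstacle in executing this plan is establishing the uniform pointwise bound $\check{\eta}_{1,\varepsilon}^{2}\leq CV_{1}$ throughout the Painlev\'{e} transition layer $|r-R_{1,\varepsilon}|\lesssim\varepsilon^{2/3}$, where both sides are of order $\varepsilon^{2/3}$; this comparison is tight and genuinely relies on the sharp lower bound \eqref{eq:proGS3} for the linearized potential together with the non-degenerate vanishing of $a_{1,\varepsilon}$ at $R_{1,\varepsilon}$, both of which are supplied by Proposition \ref{proGS}.
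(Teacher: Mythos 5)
Your argument is correct and reaches the same conclusion, but you organize it differently from the paper. Both proofs start from the same tested bilinear identity \eqref{eqLbilin} with the same perfect-square structure, and both rely on exactly the pointwise inputs
\begin{equation*}
\left(g_1-\tfrac{g^2}{g_2}\right)\bigl(3\check{\eta}_{1,\varepsilon}^2-a_{1,\varepsilon}\bigr)\geq c\max\{\varepsilon^{2/3},\check{\eta}_{1,\varepsilon}^2\}
\quad\text{and}\quad
2g_2\check{\eta}_{2,\varepsilon}^2\geq c \ \text{ on } B_{R_\varepsilon},
\end{equation*}
namely the paper's \eqref{eqLpot1}, \eqref{eqLpot3}, \eqref{eqpotlowerc}. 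The divergence is in how $\int\psi^2$ is controlled. The paper drops the perfect-square term entirely to obtain \eqref{eqLbasic} (control of $\int\varepsilon^{2/3}\varphi^2+\check\eta_{1,\varepsilon}^2\varphi^2$), and then goes back to the \emph{second scalar component} of the eigenvalue system \eqref{eqLev1}, tests it by $\psi$, and applies Young's inequality to produce \eqref{eqLbasic2}; adding the two gives the result. You instead stay entirely at the level of the quadratic form $Q$: keeping the perfect square $B^2\tilde\psi^2$, writing $\psi=\tilde\psi-(A/B)\varphi$, and using the triangle inequality plus $(A/B)^2\leq C\check\eta_{1,\varepsilon}^2$ to bound $\int\psi^2$ by $CQ$ without ever returning to the component PDEs. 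This is a little slicker variationally; the paper's route, by contrast, recycles a scalar Poincar\'e-type estimate for $\psi$ that is re-used elsewhere in Section \ref{subsectionLinear}. One small remark: the comparison $\check{\eta}_{1,\varepsilon}^2\leq CV_1$, which you derive from the algebraic identity $3\check{\eta}_{1,\varepsilon}^2=V_1/(g_1-g^2/g_2)+a_{1,\varepsilon}$ and the linear vanishing of $a_{1,\varepsilon}^+$ at $R_{1,\varepsilon}$, is in the paper obtained more directly by simply juxtaposing the upper bound \eqref{eqLpot2} for $\check{\eta}_{1,\varepsilon}^2$ with the lower bound \eqref{eqLpot1} for $V_1$; both routes give the uniform constant in the Painlev\'e layer, so your worry about tightness there is already taken care of by Proposition \ref{proGS}.
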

\begin{proof}
We use the following estimates: \begin{equation}\label{eqLpot1}
3\check{\eta}_{1,\varepsilon}^2-a_{1,\varepsilon}\geq
\left\{\begin{array}{ll}
                                                              c |r-R_{1,\varepsilon}|+c\varepsilon^\frac{2}{3}, & \textrm{if}\ |r-R_{1,\varepsilon}|\leq
                                                               \delta, \\
                                                                 &   \\
                                                               c, &
                                                               \textrm{otherwise},
                                                             \end{array}
\right.
\end{equation}
and
\begin{equation}\label{eqLpot2}
\check{\eta}_{1,\varepsilon}^2\leq \left\{\begin{array}{ll}
                                            C\varepsilon^\frac{2}{3}+C|r-R_{1,\varepsilon}|, & \textrm{if}\ |r-R_{1,\varepsilon}|\leq \delta, \\
                                              &   \\
                                            C, &\textrm{otherwise},
                                          \end{array}
\right.
\end{equation}
which are inherited from (\ref{eqGSuUnif}), \eqref{eq:proGS3} and (\ref{eq:proGS4}).
%(see (\ref{eqGSpotlower}), (\ref{eqeta1groundinner}) and
%(\ref{eqeta1groundouter})).
In particular,  observe that
\begin{equation}\label{eqLpot3}
3\check{\eta}_{1,\varepsilon}^2-a_{1,\varepsilon}\geq c
\check{\eta}_{1,\varepsilon}^2,\ \ x\in \mathbb{R}^2.
\end{equation} Note also that
\begin{equation}\label{eqpotlowerc}
2g_2 \check{\eta}_{2,\varepsilon}^2\geq c \ \ \textrm{on}\
\bar{B}_{R_\varepsilon}.
\end{equation}

Suppose that (\ref{eqLev1}) holds. Testing  by $(\varphi,\psi)$, in
the usual sense of $L^2(B_{R_\varepsilon})\times
L^2(B_{R_\varepsilon})$, and integrating by parts the resulting
identity, we obtain that
\begin{equation}\label{eqLbilin}
\int_{B_{R_\varepsilon}}^{}\left[\varepsilon^2|\nabla
\varphi|^2+\varepsilon^2|\nabla \psi|^2 +\left(g_1-\frac{g^2}{g_2}
\right)(3\check{\eta}_{1,\varepsilon}^2-a_{1,\varepsilon})\varphi^2+2\left(\frac{g}{\sqrt{g_2}}\check{\eta}_{1,\varepsilon}\varphi
+\sqrt{g_2}\check{\eta}_{2,\varepsilon}\psi
\right)^2\right]dx=\lambda.
\end{equation}
In turn, using (\ref{eqLpot1}) and (\ref{eqLpot3}), we find that
\begin{equation}\label{eqLbasic}
\int_{B_{R_\varepsilon}}^{}\left[\varepsilon^2|\nabla
\varphi|^2+\varepsilon^2|\nabla \psi|^2
+c\varepsilon^\frac{2}{3}\varphi^2+c\check{\eta}_{1,\varepsilon}^2\varphi^2\right]dx\leq\lambda.
\end{equation}
On the other side, the second equation of the system in
(\ref{eqLev1}) can be written as
\[
-\varepsilon^2\Delta
\psi+2g_2\check{\eta}_{2,\varepsilon}^2\psi=\lambda\psi-2g\check{\eta}_{2,\varepsilon}\check{\eta}_{1,\varepsilon}\varphi.
\]
Then, testing the above relation by $\psi$, integrating by parts,
using (\ref{eqpotlowerc}) and Young's inequality, we obtain
\begin{equation}\label{eqLbasic2}
\begin{array}{rcl}
  \int_{B_{R_\varepsilon}}^{}\left(\varepsilon^2|\nabla
\psi|^2+c\psi^2\right)dx & \leq &
\int_{B_{R_\varepsilon}}^{}\left(\lambda\psi^2dx+C
\check{\eta}_{1,\varepsilon}^2\varphi^2\right)dx \\
   &  &  \\
    & \stackrel{(\ref{eqLev1}),(\ref{eqLbasic})}{\leq}  & C \lambda.
\end{array}
\end{equation}
Finally, by adding (\ref{eqLbasic}) and (\ref{eqLbasic2}), recalling
the integral constraint in (\ref{eqLev1}), we deduce that $\lambda
\geq c$, as desired. The case where (\ref{eqLev2}) holds can be
treated analogously.\end{proof}

A direct consequence of Proposition \ref{proL}, and of
(\ref{eqLbilin}), is the following
\begin{cor}\label{corLleft}
If $\varepsilon$ is sufficiently small, there exists $c>0$ such
that \begin{equation}\label{eqLcor1}
\left(\mathcal{L}(\varphi,\psi),(\varphi,\psi) \right)\geq
c\int_{B_{R_\varepsilon}}^{} \left(\varepsilon^2|\nabla
\varphi|^2+\varepsilon^2|\nabla
\psi|^2+\varepsilon^\frac{2}{3}\varphi^2+\check{\eta}_{1,\varepsilon}^2\varphi^2+\psi^2\right)
dx\ \ \ \forall \varphi,\psi \in D(\mathcal{L}),
\end{equation}
where $(\cdot,\cdot)$ denotes the usual inner product in
$L^2(\mathbb{R}^2)\times L^2(\mathbb{R}^2)$.
\end{cor}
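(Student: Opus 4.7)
The plan is to split the inner product on the left-hand side as the integral over $B_{R_\varepsilon}$ plus its complement, and then to combine two complementary lower bounds on $B_{R_\varepsilon}$: one extracted pointwise from the bilinear form identity (\ref{eqLbilin}), which captures the gradient terms and both $\varphi^2$-contributions, and one provided by Proposition \ref{proL}, which captures the $\psi^2$-term. A small convex combination of the two then yields the estimate with $c$ independent of $\varepsilon$.

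For the first bound, testing $\mathcal{L}(\varphi,\psi)$ against $(\varphi,\psi)$ in $L^2(B_{R_\varepsilon})$ and integrating by parts produces exactly the identity (\ref{eqLbilin}). Since $g_1-g^2/g_2=g_1\Gamma>0$ by (\ref{eq:condition_on_g_thomas_fermi}), and the squared coupling term is manifestly non-negative, the pointwise lower bounds (\ref{eqLpot1}) and (\ref{eqLpot3}) immediately yield
\[
\bigl(\mathcal{L}(\varphi,\psi),(\varphi,\psi)\bigr)_{L^2(B_{R_\varepsilon})}\geq \int_{B_{R_\varepsilon}}\!\left[\varepsilon^2|\nabla\varphi|^2+\varepsilon^2|\nabla\psi|^2+c\varepsilon^{2/3}\varphi^2+c\,\check{\eta}_{1,\varepsilon}^{\,2}\varphi^2\right]dx.
\]
For the second bound, the Rayleigh-quotient characterization of the smallest eigenvalue in (\ref{eqLev1}) gives, via Proposition \ref{proL},
\[
\bigl(\mathcal{L}(\varphi,\psi),(\varphi,\psi)\bigr)_{L^2(B_{R_\varepsilon})}\geq c\int_{B_{R_\varepsilon}}(\varepsilon^{2/3}\varphi^2+\psi^2)\,dx,
\]
valid by density for all $(\varphi,\psi)\in H^1(B_{R_\varepsilon})\times H^1(B_{R_\varepsilon})$, and hence for restrictions of functions in $D(\mathcal{L})$.

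The only remaining bookkeeping, and the step I expect to be the most delicate, is to ensure that the complementary contribution $\int_{\mathbb{R}^2\setminus B_{R_\varepsilon}}\mathcal{L}(\varphi,\psi)\cdot(\varphi,\psi)\,dx$ is non-negative, so that it does not spoil the bound. Since the cutoff in (\ref{eqeta1check}) forces $\check{\eta}_{1,\varepsilon}\equiv 0$ for $|x|\geq R_\varepsilon$, the coupling matrix in (\ref{eqlinearization--}) is diagonal there; one then checks that both diagonal potentials are non-negative: the $\varphi$-potential reduces to $-g_1 a_{1,\varepsilon}+g(\check{\eta}_{2,\varepsilon}^{\,2}-a_{2,\varepsilon})\approx -g_1\Gamma\,a_{1,\varepsilon}\geq 0$ using $\check{\eta}_{2,\varepsilon}^{\,2}\approx a_{2,\varepsilon}+(g/g_2)a_{1,\varepsilon}$ together with $a_{1,\varepsilon}\leq 0$ outside $B_{R_{1,\varepsilon}}$, while the $\psi$-potential $g_2(3\check{\eta}_{2,\varepsilon}^{\,2}-a_{2,\varepsilon})-g\,a_{1,\varepsilon}$ is non-negative thanks to (\ref{eq:proGS3}) applied to the reduced ground state $\hat{\eta}_{2,\varepsilon}$ of (\ref{eqgroundstate2}). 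With this in place, the convex combination of the two bounds on $B_{R_\varepsilon}$ produces the desired inequality.
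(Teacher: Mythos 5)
You have the right ingredients --- the quadratic-form identity~(\ref{eqLbilin}), the pointwise bounds~(\ref{eqLpot1}) and~(\ref{eqLpot3}), the eigenvalue bound of Proposition~\ref{proL}, a convex combination, and non-negativity of the decoupled potentials outside $B_{R_\varepsilon}$ --- and this is exactly what the paper invokes. But there is a slip in the bookkeeping. You split the $L^2(\mathbb{R}^2)$ pairing $(\mathcal{L}(\varphi,\psi),(\varphi,\psi))$ by region and then integrate by parts in each region separately, claiming that the restriction to $B_{R_\varepsilon}$ ``produces exactly'' (\ref{eqLbilin}) and that the restriction to the complement is non-negative. Neither claim is correct for a generic $(\varphi,\psi)\in D(\mathcal{L})$: integrating $-\varepsilon^2\Delta\varphi\cdot\varphi$ (and the $\psi$-analogue) by parts over $B_{R_\varepsilon}$ leaves a boundary term $-\varepsilon^2\int_{\partial B_{R_\varepsilon}}\left(\varphi\,\partial_\nu\varphi+\psi\,\partial_\nu\psi\right)\,d\sigma$ of uncontrolled sign, and the outer integral picks up its negative, so neither regional pairing coincides with the corresponding local quadratic form. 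The identity~(\ref{eqLbilin}) is derived in the proof of Proposition~\ref{proL} under the Neumann condition $\nu\cdot\nabla\varphi=\nu\cdot\nabla\psi=0$ on $\partial B_{R_\varepsilon}$, which elements of $D(\mathcal{L})$ do not satisfy.

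The fix is to reverse the order of operations: integrate by parts over all of $\mathbb{R}^2$ first (no boundary term arises, since $\varphi,\psi\in H^2(\mathbb{R}^2)$ decay at infinity), which expresses $(\mathcal{L}(\varphi,\psi),(\varphi,\psi))$ as a single global integral of the local density $\varepsilon^2|\nabla\varphi|^2+\varepsilon^2|\nabla\psi|^2+(\textrm{potential and coupling terms})$, and \emph{then} split that density by region. The contribution over $B_{R_\varepsilon}$ is precisely the expression in~(\ref{eqLbilin}) and is bounded below exactly as you describe, by convexly combining the pointwise estimate with the Rayleigh-quotient consequence of Proposition~\ref{proL}; the contribution over $\mathbb{R}^2\setminus B_{R_\varepsilon}$ is non-negative by the potential bounds you correctly cite. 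No boundary terms arise at any stage, and the argument closes.
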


\emph{Energy estimates in $\mathbb{R}^2\setminus
B_{R_\varepsilon}$.}
 Since $ \check{\eta}_{1,\varepsilon}=0$ in
$\mathbb{R}^2\setminus B_{R_\varepsilon}$, the operator
$\mathcal{L}$ in $\mathbb{R}^2\setminus B_{R_\varepsilon}$ has the
simple ``decoupled'' form
 \[
\mathcal{L}(\varphi,\psi)=\left(\begin{array}{l}
                                         -\varepsilon^2 \Delta \varphi +\left[g(\check{\eta}_{2,\varepsilon}^2
                                        -a_{2,\varepsilon})-g_1a_{1,\varepsilon} \right]\varphi

                                          \\
                                          \\

         -\varepsilon^2 \Delta \psi+g_2\left( 3\check{\eta}_{2,\varepsilon}^2-a_{2,\varepsilon}-\frac{g}{g_2}a_{1,\varepsilon}\right)\psi                             \end{array}
 \right),
\]
for $\varphi,\psi\in H^2(\mathbb{R}^2\setminus B_{R_\varepsilon})$.
Note that
\[
g(\check{\eta}_{2,\varepsilon}^2-a_{2,\varepsilon})-g_1a_{1,\varepsilon}=g\left(\check{\eta}_{2,\varepsilon}^2
-a_{2,\varepsilon}-\frac{g}{g_2}a_{1,\varepsilon}\right)+\left(\frac{g^2}{g_2}-g_1\right)a_{1,\varepsilon}\geq
c,
\]
in $\mathbb{R}^2\setminus B_{R_\varepsilon}$, because
$a_{1,\varepsilon}\leq -c$ and $\left|\check{\eta}_{2,\varepsilon}^2
-a_{2,\varepsilon}-\frac{g}{g_2}a_{1,\varepsilon}\right|\leq
C\varepsilon^\frac{2}{3}$ therein.

In analogy  to (\ref{eqLcor1}), for small $\varepsilon>0$, one can
show rather straightforwardly that
\begin{equation}\label{eqLcor2}
\left(\mathcal{L}(\varphi,\psi),(\varphi,\psi) \right)\geq
c\int_{\mathbb{R}^2\setminus B_{R_\varepsilon}}^{}
\left(\varepsilon^2|\nabla \varphi|^2+\varepsilon^2|\nabla
\psi|^2+\varphi^2+\varepsilon^\frac{2}{3}\psi^2+\check{\eta}_{2,\varepsilon}^2\psi^2\right)
dx,
\end{equation}
for every $(\varphi,\psi) \in D(\mathcal{L})$.

\emph{Energy estimates in $\mathbb{R}^2$.} It follows at once from
(\ref{eqLcor1}) and (\ref{eqLcor2}) that, for small $\varepsilon>0$,
we have
\begin{equation}\label{eqLcor3}\begin{array}{lll}
    \left(\mathcal{L}(\varphi,\psi), (\varphi,\psi)
\right) & \geq & c\varepsilon^2\int_{\mathbb{R}^2}^{} \left(|\nabla
\varphi|^2+|\nabla \psi|^2\right)dx+ c\int_{B_{R_\varepsilon}}^{}
\left(\varepsilon^\frac{2}{3}\varphi^2+\check{\eta}_{1,\varepsilon}^2\varphi^2+\psi^2\right)
dx+ \\
  &   &   \\
      &   & +c\int_{\mathbb{R}^2\setminus B_{R_\varepsilon}}^{}
\left(\varphi^2+\varepsilon^\frac{2}{3}\psi^2+\check{\eta}_{2,\varepsilon}^2\psi^2
\right)dx,
  \end{array}
  \end{equation}
for every $(\varphi,\psi)\in D(\mathcal{L})$.

\subsubsection{Invertibility properties of
$\mathcal{L}$}\label{InvertibilityProperties}

We are now in position to obtain estimates for the solution of the
inhomogeneous problem $\mathcal{L}(\varphi,\psi)=(f_1,f_2)$ in
$\mathbb{R}^2$, with suitable right-hand side.

\begin{proposition}\label{proL=f}
Let $f_i\in L^2(\mathbb{R}^2)$, $i=1,2$. The equation
\begin{equation}\label{eqL=f}
\mathcal{L}(\varphi,\psi)=(f_1,f_2)\ \  \textrm{in}\  \mathbb{R}^2,
\end{equation}
has a unique solution $(\varphi,\psi)\in H^2(\mathbb{R}^2)\times
H^2(\mathbb{R}^2)$, provided that $\varepsilon>0$ is sufficiently
small, independently of $f_i$. Moreover, that solution satisfies
\begin{equation}\label{eqapriori1}
\vertiii{(\varphi,\psi)}^2\leq
C\int_{B_{R_\varepsilon}}^{}\left(\varepsilon^{-\frac{2}{3}}f_1^2+f_2^2
\right)dx+C\int_{\mathbb{R}^2\setminus
B_{R_\varepsilon}}^{}\left(f_1^2+\varepsilon^{-\frac{2}{3}}f_2^2
\right)dx,
\end{equation}
with $C>0$ independent of $\varepsilon$ and $f_i$, where the norm
$\vertiii{(\cdot,\cdot)}$ in $H^1(\mathbb{R}^2)\times
H^1(\mathbb{R}^2)$ is defined by
\begin{equation}\label{eqNormtriple}
\begin{array}{rcc}
  \vertiii{(\varphi,\psi)}^2 & = & \varepsilon^2\left(\|\nabla\varphi\|_{L^2(\mathbb{R}^2)}^2+\|\nabla\psi\|_{L^2(\mathbb{R}^2)}^2
\right)+\varepsilon^\frac{2}{3}\|\varphi\|_{L^2(B_{R_\varepsilon})}^2+\|\psi\|_{L^2(B_{R_\varepsilon})}^2+ \\
    &   &   \\
    &   & +\|\varphi\|_{L^2(\mathbb{R}^2\setminus B_{R_\varepsilon})}^2+\varepsilon^\frac{2}{3}\|\psi\|_{L^2(\mathbb{R}^2\setminus B_{R_\varepsilon})}^2.
\end{array}
\end{equation}

If $\varepsilon>0$ is sufficiently small, there exist $c,C>0$ such
that, for any $f\in L^2(\mathbb{R}^2)$, the solution of
\begin{equation}\label{eqL=etaf}
\mathcal{L}(\varphi,\psi)=(\check{\eta}_{1,\varepsilon}f,0)\ \
\textrm{in}\ \mathbb{R}^2,
\end{equation}
satisfies
\begin{equation}\label{eqapriori2}
\vertiii{(\varphi,\psi)}^2\leq
C\int_{B_{R_\varepsilon}}^{}f^2dx+Ce^{-c\varepsilon^{-\frac{2}{3}}}\int_{\mathbb{R}^2}^{}f^2dx.
\end{equation}

If $\varepsilon>0$ is sufficiently small, there exists $C>0$ such
that, for any $f\in L^2(\mathbb{R}^2)$, the solution of
\begin{equation}\label{eqL=etaf2}
\mathcal{L}(\varphi,\psi)=(0,\check{\eta}_{2,\varepsilon}f)\ \
\textrm{in}\ \mathbb{R}^2,
\end{equation}
satisfies
\begin{equation}\label{eqapriori3}
\vertiii{(\varphi,\psi)}\leq C\|f\|_{L^2(\mathbb{R}^2)}.
\end{equation}

\end{proposition}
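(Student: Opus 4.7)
The plan is to deduce everything from the coercivity estimate \eqref{eqLcor3} already established in the preceding subsubsection. Since $\mathcal{L}$ is self-adjoint and the associated bilinear form is continuous and coercive on the weighted Hilbert space
\[
\mathcal{X}=\Big\{(\varphi,\psi)\in H^1(\mathbb{R}^2)\times H^1(\mathbb{R}^2):\vertiii{(\varphi,\psi)}+\int_{\mathbb{R}^2}|x|^2(\varphi^2+\psi^2)\,dx<\infty\Big\}
\]
(the $|x|^2$ weight absorbs the unbounded parts of the potentials coming from $a_{i,\varepsilon}$ for large $|x|$), I would invoke the Lax--Milgram theorem to get a unique weak solution of \eqref{eqL=f}, and then standard elliptic regularity places it in $H^2\times H^2$. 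Uniqueness is immediate from \eqref{eqLcor3}. The three quantitative estimates then come from carefully pairing the equation with the solution itself and applying Young's or Cauchy--Schwarz inequalities whose weights are tuned to $\vertiii{\cdot}$.

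For \eqref{eqapriori1}, the idea is to pair \eqref{eqL=f} with $(\varphi,\psi)$ in $L^2\times L^2$ and invoke \eqref{eqLcor3} on the left to get $\vertiii{(\varphi,\psi)}^2\leq C\int(f_1\varphi+f_2\psi)\,dx$. Splitting each integral between $B_{R_\varepsilon}$ and its complement, I would apply Young's inequality while matching the weights in $\vertiii{\cdot}^2$: on $B_{R_\varepsilon}$, $|f_1\varphi|\leq\frac{c}{4}\varepsilon^{2/3}\varphi^2+C\varepsilon^{-2/3}f_1^2$ and $|f_2\psi|\leq\frac{c}{4}\psi^2+Cf_2^2$; on the complement the roles of $\varphi$ and $\psi$ are exchanged, so $|f_2\psi|\leq\frac{c}{4}\varepsilon^{2/3}\psi^2+C\varepsilon^{-2/3}f_2^2$ and $|f_1\varphi|\leq\frac{c}{4}\varphi^2+Cf_1^2$. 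Absorbing the small terms into the left-hand side yields \eqref{eqapriori1}.

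For \eqref{eqapriori2}, the idea is to sharpen the Cauchy--Schwarz step by keeping the weight $\check\eta_{1,\varepsilon}$ on both factors:
\[
(\mathcal{L}(\varphi,\psi),(\varphi,\psi))=\int\check\eta_{1,\varepsilon}\,f\,\varphi\,dx\leq\Big(\int\check\eta_{1,\varepsilon}^2\varphi^2\,dx\Big)^{1/2}\|f\|_{L^2(\mathbb{R}^2)}.
\]
The first factor is in turn bounded by $\sqrt{C(\mathcal{L}(\varphi,\psi),(\varphi,\psi))}$ thanks to the $\check\eta_{1,\varepsilon}^2\varphi^2$ term on the right of \eqref{eqLcor3}, so absorbing one factor gives $(\mathcal{L}(\varphi,\psi),(\varphi,\psi))\leq C\|f\|^2_{L^2}$ and hence $\vertiii{(\varphi,\psi)}^2\leq C\|f\|^2_{L^2}$. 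The refinement in \eqref{eqapriori2} then comes from observing that $\check\eta_{1,\varepsilon}\equiv 0$ outside $B_{R_\varepsilon}$ and, by \eqref{eq:proGS2}, is uniformly of size $O(e^{-c\varepsilon^{-2/3}})$ on the annulus $R_{1,\varepsilon}\leq|x|\leq R_\varepsilon$; splitting the integral and extracting the annular contribution produces the exponentially small term. Estimate \eqref{eqapriori3} proceeds along the same lines with $\check\eta_{2,\varepsilon}$ in place of $\check\eta_{1,\varepsilon}$: the quantity $\int\check\eta_{2,\varepsilon}^2\psi^2\,dx$ is controlled by the energy on $B_{R_\varepsilon}$ (using $\|\check\eta_{2,\varepsilon}\|_\infty\leq C$ and the $\psi^2$ term on the right of \eqref{eqLcor3}) and on its complement (directly via the $\check\eta_{2,\varepsilon}^2\psi^2$ term there), and since $\check\eta_{2,\varepsilon}$ is of order $1$ globally no exponential refinement is available or needed.

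The only genuinely subtle point is the bookkeeping around the asymmetric $\varepsilon^{2/3}$ weights in $\vertiii{\cdot}$: they reflect the Painlev\'e transition layer scale at the Thomas--Fermi radii (forced by the lower bound \eqref{eq:proGS3}), and arranging for them to pair correctly on both sides of the Young and Cauchy--Schwarz inequalities is delicate. Once \eqref{eqLcor3} is taken for granted, however, the three estimates reduce to the routine testing-plus-absorption computations sketched above.
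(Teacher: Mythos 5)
Your proposal is correct and follows essentially the same route as the paper: test \eqref{eqL=f} against $(\varphi,\psi)$, invoke the coercivity estimate \eqref{eqLcor3}, and tune the Young or Cauchy--Schwarz weights to match the asymmetric weights in $\vertiii{\cdot}$ before absorbing the small terms into the left-hand side. The only cosmetic difference is that the paper obtains existence and uniqueness directly from the self-adjointness of $\mathcal{L}$ on $D(\mathcal{L})$ together with the triviality of its kernel (a consequence of \eqref{eqLcor3}), whereas you invoke Lax--Milgram on a weighted space, which requires noting in addition that the quadratic growth of $-a_{i,\varepsilon}$ at infinity gives the $|x|^2$-weighted coercivity not literally recorded in \eqref{eqLcor3}; for the three a priori bounds your testing-plus-absorption computations match the paper's line by line, with Cauchy--Schwarz in place of Young for \eqref{eqapriori2}.
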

\begin{proof}
As we have already discussed, the linear operator $\mathcal{L}$ is
self-adjoint in $L^2(\mathbb{R}^2)\times L^2(\mathbb{R}^2)$ with
domain $D(\mathcal{L})$. Relation (\ref{eqLcor3}) certainly implies
that the kernel of $\mathcal{L}$ is empty for small $\varepsilon>0$.
Hence, the existence and uniqueness of a solution $(\varphi,\psi)\in
H^2(\mathbb{R}^2)\times H^2(\mathbb{R}^2)$ to (\ref{eqL=f}) are
clear. We now turn our attention to establishing estimate
(\ref{eqapriori1}). Testing (\ref{eqL=f}) by $(\varphi,\psi)$, and
using part of (\ref{eqLcor3}), we find that
\[
\varepsilon^2\int_{\mathbb{R}^2}^{} \left(|\nabla \varphi|^2+|\nabla
\psi|^2\right)dx+ \int_{B_{R_\varepsilon}}^{}
\left(\varepsilon^\frac{2}{3}\varphi^2+\psi^2\right) dx
+\int_{\mathbb{R}^2\setminus B_{R_\varepsilon}}^{}
\left(\varphi^2+\varepsilon^\frac{2}{3}\psi^2 \right)dx\leq
\]
\[
\leq C\int_{B_{R_\varepsilon}}^{}\left(|f_1\varphi|+|f_2\psi|
\right)dx+C\int_{\mathbb{R}^2\setminus
B_{R_\varepsilon}}^{}\left(|f_1\varphi|+|f_2\psi| \right)dx.
\]
Using Young's inequality, we can bound the first integral in the
right-hand side by
\[
\int_{B_{R_\varepsilon}}^{}\left(\frac{\varepsilon^\frac{2}{3}}{2}\varphi^2+C\varepsilon^{-\frac{2}{3}}f_1^2+\frac{1}{2}\psi^2+Cf_2^2
\right)dx,
\]
and  analogously we can bound the second integral. By absorbing into
the left-hand side the terms that involve $\varphi$ or $\psi$, we
get (\ref{eqapriori1}).

Suppose now that (\ref{eqL=etaf}) holds. As before, but this time
making more use of (\ref{eqLcor3}), we find that
\[
\varepsilon^2\int_{\mathbb{R}^2}^{} \left(|\nabla \varphi|^2+|\nabla
\psi|^2\right)dx+ \int_{B_{R_\varepsilon}}^{}
\left(\varepsilon^\frac{2}{3}\varphi^2+\check{\eta}_{1,\varepsilon}^2\varphi^2+\psi^2\right)
dx +\int_{\mathbb{R}^2\setminus B_{R_\varepsilon}}^{}
\left(\varphi^2+\varepsilon^\frac{2}{3}\psi^2 \right)dx\leq
\]
\[
\leq
C\int_{B_{R_\varepsilon}}^{}\check{\eta}_{1,\varepsilon}|f\varphi|dx+C\int_{\mathbb{R}^2\setminus
B_{R_\varepsilon}}^{}\check{\eta}_{1,\varepsilon}|f\varphi|dx.
\]
The desired estimate (\ref{eqapriori2}) follows readily as before,
using Young's inequality to absorb a term of the form
$\frac{1}{2}\int_{B_{R_\varepsilon}}^{}
\check{\eta}_{1,\varepsilon}^2\varphi^2 dx$ into the left-hand side,
and recalling the exponential decay \eqref{eq:proGS2} of
$\check{\eta}_{1,\varepsilon}$ for $r>R_{1,\varepsilon}$ .

Finally, suppose that (\ref{eqL=etaf2}) holds. As before, making use
of (\ref{eqLcor3}) once more, we arrive at
\[
\varepsilon^2\int_{\mathbb{R}^2}^{} \left(|\nabla \varphi|^2+|\nabla
\psi|^2\right)dx+ \int_{B_{R_\varepsilon}}^{}
\left(\varepsilon^\frac{2}{3}\varphi^2+\psi^2\right) dx
+\int_{\mathbb{R}^2\setminus B_{R_\varepsilon}}^{}
\left(\varphi^2+\varepsilon^\frac{2}{3}\psi^2+\check{\eta}_{2,\varepsilon}^2\psi^2
\right)dx\leq
\]
\[
\leq
C\int_{B_{R_\varepsilon}}^{}\check{\eta}_{2,\varepsilon}|f\psi|dx+C\int_{\mathbb{R}^2\setminus
B_{R_\varepsilon}}^{}\check{\eta}_{2,\varepsilon}|f\psi|dx.
\]
The desired estimate (\ref{eqapriori3}) follows readily as before,
using Young's inequality to absorb  terms of the form
$\frac{1}{2}\int_{\mathbb{R}^2\setminus B_{R_\varepsilon}}^{}
\check{\eta}_{2,\varepsilon}^2\psi^2 dx$ and
$\frac{1}{2}\int_{B_{R_\varepsilon}}^{} \psi^2 dx$ into the
left-hand side.\end{proof}

%\begin{rem}\label{remLradial}
%If $f_i\in L^2_{rad}(\mathbb{R}^2)$, $i=1,2$, then the solutions in
%Proposition \ref{proL=f} are also radial, as otherwise there would
%be infinitely many different solutions through rotations around the
%origin.
%\end{rem}

\subsection{Existence and properties of a positive solution of the system (\ref{eqsystem})}
\label{subsecExPert}

We seek a true solution of (\ref{eqsystem}) in the form
\begin{equation}\label{eqfluctiations}
(\eta_{1,\ep},\eta_{2,\ep})=(\check{\eta}_{1,\varepsilon},\check{\eta}_{2,\varepsilon})+(\varphi,\psi),
\end{equation}
with $\varphi,\psi\in H^2_{rad}(\mathbb{R}^2)$.

In terms of $(\varphi,\psi)$, system (\ref{eqsystem}) becomes
\begin{equation}\label{eqsystemPhiPsi}
-\mathcal{L}(\varphi,\psi)=\mathcal{N}(\varphi,\psi)+\mathcal{E}(\check{\eta}_{1,\varepsilon},\check{\eta}_{2,\varepsilon}),
\end{equation}
where $\mathcal{L}$ is the linear operator in
(\ref{eqlinearization--}), the nonlinear operator $\mathcal{N}$ is
\begin{equation}\label{eqN}
\mathcal{N}(\varphi,\psi)=\left( \begin{array}{c}
                                   N_1(\varphi,\psi) \\
                                     \\
                                   N_2(\varphi,\psi)
                                 \end{array}
\right)=\left(\begin{array}{c}
                                  g_1\varphi^3+3g_1\check{\eta}_{1,\varepsilon}\varphi^2+g\check{\eta}_{1,\varepsilon}\psi^2
                                  +2g\check{\eta}_{2,\varepsilon}\psi\varphi+g\psi^2 \varphi \\
                                   \\
                                  g_2\psi^3+3g_2\check{\eta}_{2,\varepsilon}\psi^2+g\check{\eta}_{2,\varepsilon}\varphi^2
+2g\check{\eta}_{1,\varepsilon}\varphi \psi+g\varphi^2\psi
                                \end{array}
 \right),
\end{equation}
and the remainder
$\mathcal{E}(\check{\eta}_{1,\varepsilon},\check{\eta}_{2,\varepsilon})$
is as in (\ref{eqError}).

In view of Proposition \ref{proL=f}, for small $\varepsilon>0$, we
can define a nonlinear operator
$\mathcal{T}:H^2_{rad}(\mathbb{R}^2)\times
H^2_{rad}(\mathbb{R}^2)\to H^2_{rad}(\mathbb{R}^2)\times
H^2_{rad}(\mathbb{R}^2)$ via the relation
\[
\mathcal{T}(\varphi,\psi)=(\bar{\varphi},\bar{\psi}),
\]
where $(\bar{\varphi},\bar{\psi})\in H^2_{rad}(\mathbb{R}^2)\times
H^2_{rad}(\mathbb{R}^2)$ is uniquely determined from the equation
\begin{equation}\label{eqTbar}
-\mathcal{L}(\bar{\varphi},\bar{\psi})=\mathcal{N}(\varphi,\psi)+\mathcal{E}(\check{\eta}_{1,\varepsilon},\check{\eta}_{2,\varepsilon}).
\end{equation}
Note that Sobolev's inequality implies that functions in
$H^2(\mathbb{R}^2)$ are bounded, in particular
$\mathcal{N}(\varphi,\psi)\in L^2(\mathbb{R}^2)\times
L^2(\mathbb{R}^2)$ for every $(\varphi,\psi)\in
H^2(\mathbb{R}^2)\times H^2(\mathbb{R}^2)$.

For $\varepsilon>0$, $M>1$, let
\[
\mathcal{B}_{\varepsilon,M}=\left\{(\varphi,\psi)\in
H^2_{rad}(\mathbb{R}^2)\times H^2_{rad}(\mathbb{R}^2)\ :\
\vertiii{(\varphi,\psi)}\leq M\varepsilon^\frac{5}{3} \right\}.
\]

The following proposition contains the main properties of the
operator $\mathcal{T}$.

\begin{proposition}\label{proT}
If $M>1$ is sufficiently large, the operator $\mathcal{T}$ maps
$\mathcal{B}_{\varepsilon,M}$ into itself, and its restriction to
$\mathcal{B}_{\varepsilon,M}$ is a contraction with respect to the
$\vertiii{(\cdot,\cdot)}$ norm, provided that $\varepsilon>0$ is
sufficiently small.
\end{proposition}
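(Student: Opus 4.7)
The plan is to apply the Banach contraction principle to $\mathcal{T}$.

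For the first step, I would estimate the ``data'' term $\vertiii{\mathcal{T}(0,0)}$. Since $\mathcal{T}(0,0) = -\mathcal{L}^{-1}\mathcal{E}(\check{\eta}_{1,\varepsilon},\check{\eta}_{2,\varepsilon})$, the a priori estimate \eqref{eqapriori1} combined with Proposition \ref{proerror} yields
\[
\vertiii{\mathcal{T}(0,0)}^2 \leq C\varepsilon^{-2/3}\|E_1\|_{L^2(B_{R_\varepsilon})}^2 + C\|E_2\|_{L^2(B_{R_\varepsilon})}^2 + C\|E_1\|_{L^2(B_{R_\varepsilon}^c)}^2 + C\varepsilon^{-2/3}\|E_2\|_{L^2(B_{R_\varepsilon}^c)}^2 \leq C_0^2\varepsilon^{10/3},
\]
where $C_0$ is independent of $M$ and $\varepsilon$ (the $E_1$ contributions are exponentially small, the $E_2$ contributions give exactly $\varepsilon^{10/3}$). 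This fixes the scale $\varepsilon^{5/3}$ of the fixed-point ball and prescribes the natural lower bound for $M$.

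The heart of the argument is the control of $\vertiii{\mathcal{L}^{-1}\mathcal{N}(\varphi,\psi)}$ for $(\varphi,\psi)\in\mathcal{B}_{\varepsilon,M}$. The key structural observation is that every quadratic term in $\mathcal{N}$, see \eqref{eqN}, carries a factor of $\check{\eta}_{1,\varepsilon}$ or $\check{\eta}_{2,\varepsilon}$. For the four ``matching'' quadratic terms $3g_1\check{\eta}_{1,\varepsilon}\varphi^2$ and $g\check{\eta}_{1,\varepsilon}\psi^2$ in $N_1$, and $3g_2\check{\eta}_{2,\varepsilon}\psi^2$ and $g\check{\eta}_{2,\varepsilon}\varphi^2$ in $N_2$, I would invoke the sharper inverse estimates \eqref{eqapriori2} and \eqref{eqapriori3}, which avoid the damaging $\varepsilon^{-2/3}$ weight and, together with the planar Gagliardo-Nirenberg inequality $\|u\|_{L^4(\mathbb{R}^2)}^4 \leq C\|u\|_{L^2}^2\|\nabla u\|_{L^2}^2$, contribute at most $CM^2\varepsilon^2$ apiece after using that $\vertiii{\cdot,\cdot}\leq M\varepsilon^{5/3}$ implies $\|\nabla\varphi\|_{L^2},\|\nabla\psi\|_{L^2}\leq M\varepsilon^{2/3}$ and $\|\varphi\|_{L^2},\|\psi\|_{L^2}\leq CM\varepsilon^{4/3}$. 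For the two mismatched cross-terms $2g\check{\eta}_{2,\varepsilon}\psi\varphi$ in $N_1$ and $2g\check{\eta}_{1,\varepsilon}\varphi\psi$ in $N_2$, only the generic \eqref{eqapriori1} is available; here the decisive fact is that the very definition \eqref{eqNormtriple} of $\vertiii{\cdot,\cdot}$ provides the sharper smallness $\|\psi\|_{L^2(B_{R_\varepsilon})}\leq M\varepsilon^{5/3}$ (and symmetrically $\|\varphi\|_{L^2(B_{R_\varepsilon}^c)}\leq M\varepsilon^{5/3}$), which via a localized Ladyzhenskaya inequality on $B_{R_\varepsilon}$ improves $\|\psi\|_{L^4(B_{R_\varepsilon})}^2$ from the naive $CM^2\varepsilon^2$ down to $CM^2\varepsilon^{7/3}$; this pushes the cross-term contribution to $CM^2\varepsilon^{11/6}=o(\varepsilon^{5/3})$. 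Finally, the cubic terms, controlled via \eqref{eqapriori1} and the planar bound $\|u\|_{L^6}^6\leq C\|u\|_{L^2}^2\|\nabla u\|_{L^2}^4$, give a negligible $O(M^3\varepsilon^{7/3})$ contribution.

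Combining the two steps,
\[
\vertiii{\mathcal{T}(\varphi,\psi)} \leq \vertiii{\mathcal{T}(0,0)} + \vertiii{\mathcal{L}^{-1}\mathcal{N}(\varphi,\psi)} \leq C_0\varepsilon^{5/3} + C_1 M^2\varepsilon^{11/6},
\]
which is at most $M\varepsilon^{5/3}$ once $M\geq 2C_0$ and $\varepsilon$ is small enough that $C_1 M\varepsilon^{1/6}\leq 1/2$. The contraction property follows by applying the same bookkeeping to $\mathcal{N}(\varphi_1,\psi_1)-\mathcal{N}(\varphi_2,\psi_2)$, written as a sum of products with one factor drawn from $(\varphi_1-\varphi_2,\psi_1-\psi_2)$ and the remaining factors from $\mathcal{B}_{\varepsilon,M}$; one obtains
\[
\vertiii{\mathcal{T}(\varphi_1,\psi_1)-\mathcal{T}(\varphi_2,\psi_2)}\leq CM\varepsilon^{1/6}\,\vertiii{(\varphi_1-\varphi_2,\psi_1-\psi_2)},
\]
a genuine contraction for small $\varepsilon$. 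The main obstacle is precisely the bookkeeping for the two mismatched cross-terms: a naive use of \eqref{eqapriori1} with only the global Gagliardo-Nirenberg $L^4$ bound would leave a critical, non-decaying $M^2\varepsilon^{5/3}$ contribution that would destroy the self-mapping property, and only the finer $L^2(B_{R_\varepsilon})$ smallness of $\psi$ (and the symmetric statement for $\varphi$ outside $B_{R_\varepsilon}$) encoded in the very definition of $\vertiii{\cdot,\cdot}$ rescues the argument. This is the technical estimate that the authors defer to Appendix \ref{ApAlg}.
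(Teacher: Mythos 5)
Your argument is correct and follows essentially the same route as the paper: split the right-hand side according to the structure of the terms, apply the generic inverse estimate (\ref{eqapriori1}) to the error, the cubics and the mismatched cross-terms, apply the sharper estimates (\ref{eqapriori2})--(\ref{eqapriori3}) to the quadratic terms carrying $\check{\eta}_{1,\varepsilon}$ resp. $\check{\eta}_{2,\varepsilon}$, and exploit the Gagliardo--Nirenberg inequality together with the asymmetric weights in $\vertiii{(\cdot,\cdot)}$ to get the $CM^2\varepsilon^{11/6}$ bound for the cross-terms and the $CM\varepsilon^{1/6}$ Lipschitz constant, exactly as in the paper's decompositions (\ref{eqTmappingSeries}) and (\ref{eqTcombo2}). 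One small inaccuracy: the estimate you single out at the end is \emph{not} what Appendix \ref{ApAlg} contains — that appendix proves the weighted a priori bound (\ref{eqaprioriWeight}) used in Proposition \ref{prounifAlg}, whereas the cross-term bookkeeping here is carried out directly in the proof of Proposition \ref{proT}.
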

\begin{proof}
Let $(\varphi,\psi)\in \mathcal{B}_{\varepsilon,M}$, and
$(\bar{\varphi},\bar{\psi})=\mathcal{T}(\varphi,\psi)$. In view of
(\ref{eqN}) and (\ref{eqTbar}), we have
\begin{equation}\label{eqTmappingSeries}
(\bar{\varphi},\bar{\psi})=\sum_{i=1}^{4}(\bar{\varphi}_i,\bar{\psi}_i),
\end{equation}
where $(\bar{\varphi}_i,\bar{\psi}_i)\in
H^2_{rad}(\mathbb{R}^2)\times H^2_{rad}(\mathbb{R}^2)$,
$i=1,\cdots,4$, satisfy
\[
-\mathcal{L}(\bar{\varphi}_1,\bar{\psi}_1)=\left( \begin{array}{c}
                                                    g_1\varphi^3+2g\check{\eta}_{2,\varepsilon}\psi\varphi+g\psi^2 \varphi \\
                                                      \\
                                                    g_2 \psi^3+2g\check{\eta}_{1,\varepsilon}\varphi \psi+g\varphi^2\psi
                                                  \end{array}
\right),
\]
\[
-\mathcal{L}(\bar{\varphi}_2,\bar{\psi}_2)=\left( \begin{array}{c}
                                                    \check{\eta}_{1,\varepsilon}\left(3g_1\varphi^2+g\psi^2\right) \\
                                                      \\
                                                    0
                                                  \end{array}
\right), \ \ \ \ -\mathcal{L}(\bar{\varphi}_3,\bar{\psi}_3)=\left(
\begin{array}{c}
                                                    0 \\
                                                      \\
                                                    \check{\eta}_{2,\varepsilon}\left(3g_2\psi^2+g\varphi^2\right)
                                                  \end{array}
\right),
\]
and
\[
-\mathcal{L}(\bar{\varphi}_4,\bar{\psi}_4)=\mathcal{E}(\check{\eta}_{1,\varepsilon},\check{\eta}_{2,\varepsilon}).
\]
Using Proposition \ref{proL=f}, and the Gagliardo-Nirenberg
interpolation inequality  in order to estimate
the $L^2$-norms of the nonlinear terms, it follows readily that
\begin{equation}\label{eqTmapping123}
\vertiii{(\bar{\varphi}_1,\bar{\psi}_1)}\leq
CM^3\varepsilon^\frac{7}{3}+CM^2\varepsilon^\frac{11}{6}\ \
\textrm{and}\ \ \vertiii{(\bar{\varphi}_i,\bar{\psi}_i)}\leq
CM^2\varepsilon^2,\ i=2,3,
\end{equation}
where $C>0$ is independent of  both $\varepsilon$ and $M$, provided
that $\varepsilon>0$ is sufficiently small. In order to illustrate
the procedure, let us present in detail the proof of the second
bound ($i=2$): Estimate (\ref{eqapriori2}) implies that
\[
\vertiii{(\bar{\varphi}_2,\bar{\psi}_2)}^2\leq
C\int_{B_{R_\varepsilon}}^{}(\varphi^4+\psi^4)dx+Ce^{-c\varepsilon^{-\frac{2}{3}}}\int_{\mathbb{R}^2}^{}(\varphi^4+\psi^4)dx,
\]
with constants $c,C>0$ independent of $\varepsilon,M$, provided that
$\varepsilon>0$ is sufficiently small. Since $(\varphi,\psi)\in
\mathcal{B}_{\varepsilon,M}$, it follows that
\begin{equation}\label{eqTBproperties}
\|\varphi\|_{L^2(B_{R_\varepsilon})}\leq M\varepsilon^\frac{4}{3},\
\ \|\varphi\|_{H^1(\mathbb{R}^2)}\leq M\varepsilon^\frac{2}{3};\ \
\|\psi\|_{L^2(B_{R_\varepsilon})}\leq M\varepsilon^\frac{5}{3},\ \
\|\psi\|_{H^1(\mathbb{R}^2)}\leq M\varepsilon^\frac{2}{3}.
\end{equation}
Now, the desired bound follows via the Gagliardo-Nirenberg
inequality
\begin{equation}\label{eqGagliardo}
\|u\|_{L^p(\Omega)}\leq C_p
\|u\|_{H^1(\Omega)}^{1-\frac{2}{p}}\|u\|_{L^2(\Omega)}^\frac{2}{p},\
\ p\geq 2,
\end{equation}
for $\Omega \subseteq \R^2$ regular, which implies that
\begin{equation}\label{eqL4}
\|\varphi\|_{L^4(B_{R_\varepsilon})}\leq C
\|\varphi\|_{H^1(B_{R_\varepsilon})}^{\frac{1}{2}}\|\varphi\|_{L^2(B_{R_\varepsilon})}^\frac{1}{2}\leq
CM\varepsilon,\end{equation} with constant $C>0$ independent of
$\varepsilon,M$, provided that $\varepsilon>0$ is sufficiently
small, and analogous estimates can be derived for
$\|\psi\|_{L^4(B_{R_\varepsilon})}$,
$\|\varphi\|_{L^4(\mathbb{R}^2)}$ and
$\|\psi\|_{L^4(\mathbb{R}^2)}$. The remaining bounds in
(\ref{eqTmapping123}) can be proven analogously. On the other side,
by (\ref{eqError}), (\ref{eqerrorestims}), and Proposition
\ref{proL=f}, we obtain that
\begin{equation}\label{eqTmapping4}
\vertiii{(\bar{\varphi}_4,\bar{\psi}_4)}\leq
C\varepsilon^\frac{5}{3},
\end{equation}
for small $\varepsilon>0$ (here $C$ is clearly independent of $M$ as
well). Hence, by (\ref{eqTmappingSeries}), (\ref{eqTmapping123}) and
(\ref{eqTmapping4}), we deduce that
\[
\vertiii{(\bar{\varphi},\bar{\psi})}\leq
C\varepsilon^\frac{5}{3}\left(M^3\varepsilon^\frac{2}{3}+M^2\varepsilon^\frac{1}{6}+1\right),
\]
with $C>0$ independent of $\varepsilon,M$, provided that
$\varepsilon>0$ is sufficiently small. Consequently, if we choose
$M=2C$, and fix it from now on, decreasing $\varepsilon>0$ further
if necessary so that
$M^3\varepsilon^\frac{2}{3}+M^2\varepsilon^\frac{1}{6}\leq 1$, it
follows that
\[
\vertiii{(\bar{\varphi},\bar{\psi})}=\vertiii{\mathcal{T}(\varphi,\psi)}\leq
M \varepsilon^\frac{5}{3}.
\]
We conclude that, if $\varepsilon>0$ is sufficiently small, the
operator $\mathcal{T}$ maps $\mathcal{B}_{\varepsilon,M}$ into
itself, as asserted.

It remains to show that the restriction of $\mathcal{T}$ to
$\mathcal{B}_{\varepsilon,M}$ is a contraction with respect to the
$\vertiii{(\cdot,\cdot)}$ norm, provided that $\varepsilon>0$ is
sufficiently small. To this end, let
\begin{equation}\label{eqTcombo0}
(\varphi_i,\psi_i)\in \mathcal{B}_{\varepsilon,M},\ i=1,2,\ \
\textrm{and}\ \
(\bar{\varphi}_i,\bar{\psi}_i)=\mathcal{T}(\varphi_i,\psi_i),\
i=1,2.
\end{equation}
Then, set
\begin{equation}\label{eqTcombo1}
(\bar{w},\bar{z})=(\bar{\varphi}_1-\bar{\varphi}_2,\bar{\psi}_1-\bar{\psi}_2).
\end{equation}
As before, it is convenient to write
\begin{equation}\label{eqTcombo2}
(\bar{w},\bar{z})=\sum_{i=1}^{5}(\bar{w}_i,\bar{z}_i),
\end{equation}
where $(\bar{w}_i,\bar{z}_i)\in H^2_{rad}(\mathbb{R}^2)\times
H^2_{rad}(\mathbb{R}^2)$, $i=1,\cdots,5$, satisfy
\[
-\mathcal{L}(\bar{w}_1,\bar{z}_1)=\left( \begin{array}{c}
                                                    g_1(\varphi_1^2+\varphi_1 \varphi_2+\varphi_2^2)(\varphi_1-\varphi_2) \\
                                                      \\
                                                    g_2 (\psi_1^2+\psi_1 \psi_2+\psi_2^2)(\psi_1-\psi_2)
                                                  \end{array}
\right),
\]
\[
-\mathcal{L}(\bar{w}_2,\bar{z}_2)=2g\left[\psi_2(\varphi_1-\varphi_2)+\varphi_1(\psi_1-\psi_2)\right]
\left(
\begin{array}{c}
                                                    \check{\eta}_{2,\varepsilon} \\
                                                      \\
                                                    \check{\eta}_{1,\varepsilon}
                                                  \end{array}
\right),
\]
\[
-\mathcal{L}(\bar{w}_3,\bar{z}_3)=g \left(
\begin{array}{c}
                                                    \psi_2^2(\varphi_1-\varphi_2)+\varphi_1(\psi_1+\psi_2)(\psi_1-\psi_2) \\
                                                      \\
                                                    \varphi_2^2(\psi_1-\psi_2)+\psi_1(\varphi_1+\varphi_2)(\varphi_1-\varphi_2)
                                                  \end{array}
\right),
\]

\[
-\mathcal{L}(\bar{w}_4,\bar{z}_4)=\left( \begin{array}{c}
                                                    \check{\eta}_{1,\varepsilon}\left[3g_1(\varphi_1+\varphi_2)(\varphi_1-\varphi_2)+
                                                    g(\psi_1+\psi_2)(\psi_1-\psi_2)\right] \\
                                                      \\
                                                    0
                                                  \end{array}
\right),
\]
and
\[
\ \ \ \ -\mathcal{L}(\bar{w}_5,\bar{z}_5)=\left(
\begin{array}{c}
                                                    0 \\
                                                      \\
                                                    \check{\eta}_{2,\varepsilon}\left[3g_2(\psi_1+\psi_2)(\psi_1-\psi_2)+
                                                    g(\varphi_1+\varphi_2)(\varphi_1-\varphi_2)\right]
                                                  \end{array}
\right).
\]
As before, using Proposition \ref{proL=f}, the Gagliardo-Nirenberg
inequality (\ref{eqGagliardo}), and the inequalities
\begin{equation}\label{eqTidentities}
  \begin{array}{ccc}
    \|\varphi\|_{L^2(B_{R_\varepsilon})}\leq
\varepsilon^{-\frac{1}{3}}\vertiii{(\varphi,\psi)}, &
\|\varphi\|_{L^2(\mathbb{R}^2\setminus B_{R_\varepsilon})}\leq
\vertiii{(\varphi,\psi)}, & \|\varphi\|_{H^1(\mathbb{R}^2)}\leq
\varepsilon^{-1}\vertiii{(\varphi,\psi)},   \\
      &   &   \\
    \|\psi\|_{L^2(B_{R_\varepsilon})}\leq
\vertiii{(\varphi,\psi)}, & \|\psi\|_{L^2(\mathbb{R}^2\setminus
B_{R_\varepsilon})}\leq \varepsilon^{-\frac{1}{3}}
\vertiii{(\varphi,\psi)}, & \|\psi\|_{H^1(\mathbb{R}^2)}\leq
\varepsilon^{-1}\vertiii{(\varphi,\psi)},
  \end{array}
\end{equation}
for every $\varphi,\psi\in H^1(\mathbb{R}^2)$, we can show that
\begin{equation}\label{eqTcombo3}
\begin{array}{cc}
  \vertiii{(\bar{w}_1,\bar{z}_1)}\leq
C\varepsilon^\frac{2}{3}\vertiii{(\varphi_1-\varphi_2,\psi_1-\psi_2)},
& \vertiii{(\bar{w}_2,\bar{z}_2)}\leq
C\varepsilon^\frac{1}{6}\vertiii{(\varphi_1-\varphi_2,\psi_1-\psi_2)}, \\
    &   \\
  \vertiii{(\bar{w}_3,\bar{z}_3)}\leq
C\varepsilon^\frac{5}{6}\vertiii{(\varphi_1-\varphi_2,\psi_1-\psi_2)},
& \vertiii{(\bar{w}_i,\bar{z}_i)}\leq
C\varepsilon^\frac{1}{3}\vertiii{(\varphi_1-\varphi_2,\psi_1-\psi_2)},\
i=4,5,
\end{array}
\end{equation}
provided that $\varepsilon>0$ is sufficiently small. In order to
illustrate the procedure, let us present in detail the proof of the
bound for $\vertiii{(\bar{w}_2,\bar{z}_2)}$: From Proposition
\ref{proL=f}, we obtain that
\[\begin{array}{rcl}
    \vertiii{(\bar{w}_2,\bar{z}_2)} & \leq & C\varepsilon^{-\frac{1}{3}}\|\psi_2(\varphi_1-\varphi_2)\|_{L^2(B_{R_\varepsilon})}
+C\varepsilon^{-\frac{1}{3}}\|\varphi_1(\psi_1-\psi_2)\|_{L^2(B_{R_\varepsilon})}+
\\
     &  &  \\
      &   & +C\varepsilon^{-\frac{1}{3}}\|\psi_2(\varphi_1-\varphi_2)\|_{L^2(\mathbb{R}^2\setminus
B_{R_\varepsilon})}
+C\varepsilon^{-\frac{1}{3}}\|\varphi_1(\psi_1-\psi_2)\|_{L^2(\mathbb{R}^2\setminus
B_{R_\varepsilon})}.
  \end{array}
\]
The second term in the right-hand side of the above relation can be
estimated as follows: By the Cauchy-Schwarz inequality, we have
\[\begin{array}{rcl}
    \varepsilon^{-\frac{1}{3}}\|\varphi_1(\psi_1-\psi_2)\|_{L^2(B_{R_\varepsilon})} &
     \leq & \varepsilon^{-\frac{1}{3}}\|\varphi_1\|_{L^4(B_{R_\varepsilon})}
\|\psi_1-\psi_2\|_{L^4(B_{R_\varepsilon})} \\
      &   &   \\
      & \stackrel{(\ref{eqL4})}{\leq} & C\varepsilon^\frac{2}{3}\|\psi_1-\psi_2\|_{L^4(B_{R_\varepsilon})} \\
      &  &  \\
      & \stackrel{(\ref{eqGagliardo}),(\ref{eqTidentities})}{\leq} & C\varepsilon^\frac{1}{6}\vertiii{(\varphi_1-\varphi_2,\psi_1-\psi_2)}.
  \end{array}
\]
The remaining terms can be estimated in a similar fashion, giving
the desired bound for $\vertiii{(\bar{w}_2,\bar{z}_2)}$. The other
bounds in (\ref{eqTcombo3}) can be verified analogously.
Consequently, combining relations (\ref{eqTcombo0}),
(\ref{eqTcombo1}), (\ref{eqTcombo2}), and (\ref{eqTcombo3}), we
infer that
\[
\vertiii{\mathcal{T}(\varphi_1,\psi_1)-\mathcal{T}(\varphi_2,\psi_2)}\leq
C\varepsilon^\frac{1}{6}\vertiii{(\varphi_1,\psi_1)-(\varphi_2,\psi_2)}\
\ \forall\ (\varphi_i,\psi_i)\in \mathcal{B}_{\varepsilon,M},\
i=1,2.
\]
We therefore conclude that, for sufficiently small $\varepsilon>0$,
the restriction of $\mathcal{T}$ to $\mathcal{B}_{\varepsilon,M}$ is
a contraction with respect to the $\vertiii{(\cdot,\cdot)}$ norm, as
asserted.\end{proof}

The  above  proposition implies the main result of this section:
\begin{proposition}\label{proExistSystem}
There exists a constant $M>0$, such that the system (\ref{eqsystem})
has a unique solution $(\eta_{1,\varepsilon},\eta_{2,\varepsilon})$
such that
\begin{equation}\label{eqestimMain}
\vertiii{(\eta_{1,\varepsilon}-\check{\eta}_{1,\varepsilon},\eta_{2,\varepsilon}-\check{\eta}_{2,\varepsilon})}\leq
M\varepsilon^\frac{5}{3},
\end{equation}
if $\varepsilon>0$ is sufficiently small, where the above norm is as
in (\ref{eqNormtriple}).
\end{proposition}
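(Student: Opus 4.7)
The plan is a direct application of the Banach fixed point theorem in the space $H^2_{rad}(\mathbb{R}^2)\times H^2_{rad}(\mathbb{R}^2)$ equipped with the $\vertiii{\cdot}$ norm. First I would observe that the substitution (\ref{eqfluctiations}) recasts the original system (\ref{eqsystem}) in the equivalent fluctuation form (\ref{eqsystemPhiPsi}), and since $\mathcal{L}$ is invertible on radial functions for small $\varepsilon>0$ by Proposition \ref{proL=f} (radial symmetry is preserved because all coefficients of $\mathcal{L}$ and the right-hand side are radial, $\check{\eta}_{i,\varepsilon}$ being radial by construction), the equation (\ref{eqsystemPhiPsi}) is in turn equivalent to the fixed point problem $(\varphi,\psi)=\mathcal{T}(\varphi,\psi)$, where $\mathcal{T}$ is the nonlinear operator defined by (\ref{eqTbar}).

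Next I would appeal directly to Proposition \ref{proT}: there exists $M>0$ (which can be fixed as in its proof, namely $M=2C$ for the universal constant $C$ arising from the bound on $\mathcal{T}(0,0)$ coming from the error estimates of Proposition \ref{proerror}) such that, for all sufficiently small $\varepsilon>0$, the restriction of $\mathcal{T}$ to the closed ball
\[
\mathcal{B}_{\varepsilon,M}=\left\{(\varphi,\psi)\in H^2_{rad}(\mathbb{R}^2)\times H^2_{rad}(\mathbb{R}^2)\ :\ \vertiii{(\varphi,\psi)}\leq M\varepsilon^{5/3}\right\}
\]
is a contraction mapping $\mathcal{B}_{\varepsilon,M}$ into itself. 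Since $\mathcal{B}_{\varepsilon,M}$ is a complete metric space under the $\vertiii{\cdot}$ distance (as a closed subset of the Hilbert space $H^1_{rad}(\mathbb{R}^2)\times H^1_{rad}(\mathbb{R}^2)$ equipped with an equivalent norm, intersected with $H^2\times H^2$ through a weakly closed condition), the Banach fixed point theorem produces a unique $(\varphi_\varepsilon,\psi_\varepsilon)\in \mathcal{B}_{\varepsilon,M}$ with $\mathcal{T}(\varphi_\varepsilon,\psi_\varepsilon)=(\varphi_\varepsilon,\psi_\varepsilon)$.

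Defining $(\eta_{1,\varepsilon},\eta_{2,\varepsilon})=(\check{\eta}_{1,\varepsilon}+\varphi_\varepsilon,\check{\eta}_{2,\varepsilon}+\psi_\varepsilon)$, this pair solves (\ref{eqsystem}) and satisfies (\ref{eqestimMain}) by the very definition of $\mathcal{B}_{\varepsilon,M}$. Uniqueness of the solution within the prescribed $\vertiii{\cdot}$-neighborhood of the approximate solution follows from uniqueness of the fixed point. I would expect no serious obstacle here, since the hard technical work has already been absorbed into Propositions \ref{proerror}, \ref{proL=f}, and \ref{proT}; the only point deserving a short comment is the preservation of radial symmetry by $\mathcal{T}$, which is immediate because $\mathcal{L}$ commutes with rotations on $H^2$ and the source $\mathcal{N}(\varphi,\psi)+\mathcal{E}(\check{\eta}_{1,\varepsilon},\check{\eta}_{2,\varepsilon})$ is radial whenever $(\varphi,\psi)$ is. Finally, I would mention that the connection of this constructed solution to the positive minimizer $(\eta_{1,\varepsilon},\eta_{2,\varepsilon})$ of $E_\varepsilon^0$ (positivity, identification of Lagrange multipliers, and the pointwise estimates of Theorem \ref{thmLong}) will be derived in subsequent steps by combining this $H^2$-smallness with the equations themselves and the uniqueness result of Theorem \ref{thm:uniqueness}.
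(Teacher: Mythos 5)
Your overall strategy is the paper's: reduce (\ref{eqsystem}) to the fixed point problem $(\varphi,\psi)=\mathcal{T}(\varphi,\psi)$ via (\ref{eqfluctiations})--(\ref{eqTbar}) and exploit Proposition \ref{proT}. However, there is a genuine gap at the one point where you wave your hands: the direct appeal to the Banach fixed point theorem requires $\mathcal{B}_{\varepsilon,M}$ to be complete for the $\vertiii{(\cdot,\cdot)}$ metric, and it is not. The norm in (\ref{eqNormtriple}) controls only $\varepsilon$-weighted $H^1$-type quantities (gradients and weighted $L^2$ norms), while membership in $\mathcal{B}_{\varepsilon,M}$ additionally demands $H^2$ regularity; a $\vertiii{\cdot}$-Cauchy sequence of $H^2_{rad}$ functions in the ball converges only in $H^1_{rad}\times H^1_{rad}$, and nothing in the definition of the ball provides a uniform $H^2$ bound from which the limit's $H^2$ membership could be extracted. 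Your phrase ``intersected with $H^2\times H^2$ through a weakly closed condition'' does not repair this: without an a priori $H^2$ bound there is no weak $H^2$ compactness to invoke. The paper is explicit about exactly this obstruction (``the point being that $\mathcal{B}_{\varepsilon,M}$ is not closed in the $\vertiii{(\cdot,\cdot)}$ norm'').

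The fix, which is what the paper does, is to run the Picard iteration by hand: set $(\varphi_0,\psi_0)=(0,0)$ and $(\varphi_{n+1},\psi_{n+1})=\mathcal{T}(\varphi_n,\psi_n)$, note by Proposition \ref{proT} that the iterates stay in $\mathcal{B}_{\varepsilon,M}$ and form a Cauchy sequence in the $\vertiii{\cdot}$ norm, hence converge in $H^1_{rad}(\mathbb{R}^2)\times H^1_{rad}(\mathbb{R}^2)$ to a limit $(\varphi_\infty,\psi_\infty)$ with $\vertiii{(\varphi_\infty,\psi_\infty)}\leq M\varepsilon^{5/3}$. One then passes to the limit in the \emph{weak} form of the iteration to see that $(\varphi_\infty,\psi_\infty)$ is a weak solution of (\ref{eqsystemPhiPsi}), and only afterwards recovers $H^2_{rad}$ membership (indeed smoothness) of the limit from standard elliptic regularity applied to the equation; this is the step that puts the limit back into $\mathcal{B}_{\varepsilon,M}$ and yields the solution of (\ref{eqsystem}) satisfying (\ref{eqestimMain}). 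Uniqueness within $\mathcal{B}_{\varepsilon,M}$ then follows from the contraction estimate alone, which needs no completeness. Your remaining remarks (radial symmetry of the iterates, positivity and identification with the minimizer being postponed) are fine and consistent with the paper.
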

\begin{proof}
In view of Proposition \ref{proT}, for small $\varepsilon>0$, we can
define iteratively a sequence $(\varphi_n,\psi_n)\in
\mathcal{B}_{\varepsilon,M}$ such that
\begin{equation}\label{eqTiter}
(\varphi_{n+1},\psi_{n+1})=\mathcal{T}(\varphi_n,\psi_n),\ n\geq 0,\
(\varphi_0,\psi_0)=(0,0).
\end{equation}
Moreover, the same proposition implies that $(\varphi_n,\psi_n)$ is
a Cauchy sequence in $H^1_{rad}(\mathbb{R}^2)\times
H^1_{rad}(\mathbb{R}^2)$. Hence, we infer that
\[
(\varphi_n,\psi_n)\to (\varphi_\infty,\psi_\infty)\ \ \textrm{in}\
H^1_{rad}(\mathbb{R}^2)\times H^1_{rad}(\mathbb{R}^2),\ \textrm{as}\
n\to \infty,
\]
for some $(\varphi_\infty,\psi_\infty)\in
H^1_{rad}(\mathbb{R}^2)\times H^1_{rad}(\mathbb{R}^2)$ such that
\[
\vertiii{(\varphi_\infty,\psi_\infty)}\leq M
\varepsilon^\frac{5}{3}.
\]
In turn, letting $n\to \infty$ in the weak form of (\ref{eqTiter})
(recall (\ref{eqTbar})) yields that $(\varphi_\infty,\psi_\infty)$
is a weak solution of (\ref{eqsystemPhiPsi}). Then, by standard
elliptic regularity theory, we
deduce that $(\varphi_\infty,\psi_\infty)\in
H^2_{rad}(\mathbb{R}^2)\times H^2_{rad}(\mathbb{R}^2)$ (i.e.
$(\varphi_\infty,\psi_\infty)\in \mathcal{B}_{\varepsilon,M}$) and
is smooth (i.e. a classical solution). The point being  that
$\mathcal{B}_{\varepsilon,M}$ is not closed in the
$\vertiii{(\cdot,\cdot)}$ norm. In fact, equation
(\ref{eqsystemPhiPsi}) has a unique solution in
$\mathcal{B}_{\varepsilon,M}$, as the restriction of $\mathcal{T}$
to $\mathcal{B}_{\varepsilon,M}$ is a contraction with respect to
the $\vertiii{(\cdot,\cdot)}$ norm, provided that $\varepsilon>0$ is
sufficiently small. Consequently, recalling the equivalence of
(\ref{eqsystem}) to (\ref{eqsystemPhiPsi}) via
(\ref{eqfluctiations}), we conclude that the assertions of the
proposition hold.\end{proof}

A direct consequence of (\ref{eqestimMain}) is the following
\begin{cor}\label{cor1}
If $\varepsilon>0$ is sufficiently small, the solutions
$\eta_{1,\varepsilon}$ and $\eta_{2,\varepsilon}$ of the system
(\ref{eqsystem}) that are provided by Proposition
\ref{proExistSystem} satisfy
\begin{equation}\label{eqUniformOut}
\|\eta_{i,\varepsilon}-\check{\eta}_{i,\varepsilon}\|_{L^\infty(|x|\geq\delta)}\leq
C\varepsilon,\ \ \textrm{and}\ \ \eta_{i,\varepsilon}(x)\to 0\
\textrm{as}\ |x|\to \infty,
%\ \ \textrm{and}\ \
%\|\eta_{i,\varepsilon}-\check{\eta}_{i,\varepsilon}\|_{L^\infty\left(\left\{|x|\geq\delta\right\}\cap
%\left\{\left||x|-R_{i,\varepsilon}\right|\geq \delta
%\right\}\right)}\leq C\varepsilon^\frac{7}{6},
 \ \ i=1,2.
\end{equation}
\end{cor}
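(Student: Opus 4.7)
\medskip

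\noindent\textbf{Proof plan for Corollary \ref{cor1}.} The plan is to unpack the abstract norm bound from Proposition \ref{proExistSystem} into its constituent $L^2$ and $H^1$ pieces, exploit the radial symmetry of both $\check\eta_{i,\varepsilon}$ and $\eta_{i,\varepsilon}$ to invoke a Strauss-type pointwise estimate, and then deduce decay at infinity from the behaviour of the approximate profile. Set $\varphi=\eta_{1,\varepsilon}-\check\eta_{1,\varepsilon}$ and $\psi=\eta_{2,\varepsilon}-\check\eta_{2,\varepsilon}$. The definition (\ref{eqNormtriple}) of $\vertiii{(\cdot,\cdot)}$ combined with $\vertiii{(\varphi,\psi)}\leq M\varepsilon^{5/3}$ immediately yields
\[
\|\nabla \varphi\|_{L^2(\R^2)}+\|\nabla \psi\|_{L^2(\R^2)}\leq CM\varepsilon^{2/3},
\]
and, by summing the inside/outside contributions (noting that the weaker weight $\varepsilon^{2/3}$ sits on $B_{R_\varepsilon}$ for $\varphi$ and on its complement for $\psi$),
\[
\|\varphi\|_{L^2(\R^2)}+\|\psi\|_{L^2(\R^2)}\leq CM\varepsilon^{4/3}.
\]

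Next, I would use the fact that the solution produced by Proposition \ref{proExistSystem} is obtained as the limit of iterates $\mathcal{T}^n(0,0)$ inside $H^2_{rad}(\R^2)\times H^2_{rad}(\R^2)$, hence $\eta_{i,\varepsilon}$ is radial; since $\check\eta_{i,\varepsilon}$ is radial by construction, both $\varphi$ and $\psi$ are radial. For radial $u\in H^1(\R^2)$ one has the standard Strauss-type inequality
\[
|u(r)|^2\leq \frac{C}{r}\,\|u\|_{L^2(\R^2)}\,\|\nabla u\|_{L^2(\R^2)},\qquad r>0,
\]
obtained from $u^2(r)=-2\int_r^\infty u(s)u'(s)\,ds$ and Cauchy--Schwarz. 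Applied with $u=\varphi$ or $u=\psi$ and $r\geq \delta$, this gives
\[
\|\varphi\|_{L^\infty(|x|\geq \delta)}^2+\|\psi\|_{L^\infty(|x|\geq \delta)}^2\leq \frac{C}{\delta}\,(\varepsilon^{4/3})(\varepsilon^{2/3})=C_\delta\,\varepsilon^{2},
\]
which is the desired bound $\|\eta_{i,\varepsilon}-\check\eta_{i,\varepsilon}\|_{L^\infty(|x|\geq\delta)}\leq C\varepsilon$. The reason for restricting to $|x|\geq \delta$ is precisely the $r^{-1/2}$ blow-up of the Strauss inequality at the origin.

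Finally, the decay at infinity follows by combining two facts. On one hand, the same Strauss inequality, together with $\varphi,\psi\in L^2\cap\dot H^1$, shows that $\varphi(r),\psi(r)\to 0$ as $r\to\infty$. On the other hand, $\check\eta_{1,\varepsilon}=\zeta(|x|)\hat\eta_{1,\varepsilon}$ is compactly supported in $B_{R_\varepsilon}$ by (\ref{eqzeta})--(\ref{eqeta1check}), while $\check\eta_{2,\varepsilon}$ coincides with $\hat\eta_{2,\varepsilon}$ for $|x|\geq R_\varepsilon+\delta$ and therefore decays exponentially thanks to \eqref{eq:proGS2}. Adding the approximate profile back in, $\eta_{i,\varepsilon}(x)=\check\eta_{i,\varepsilon}(x)+(\varphi\text{ or }\psi)(x)\to 0$ as $|x|\to\infty$, which completes the statement. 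The argument is essentially a bookkeeping of Proposition \ref{proExistSystem} followed by a one-line application of the radial Sobolev embedding, so there is no real obstacle; the only care needed is in tracking the mixed inside/outside $L^2$ weights in the definition of $\vertiii{(\cdot,\cdot)}$ to extract a global $L^2$ bound of size $\varepsilon^{4/3}$.
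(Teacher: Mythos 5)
Your proposal is correct and follows essentially the same route as the paper: extract the bounds $\|\varphi\|_{L^2}+\|\psi\|_{L^2}\leq C\varepsilon^{4/3}$ and $\|\nabla\varphi\|_{L^2}+\|\nabla\psi\|_{L^2}\leq C\varepsilon^{2/3}$ from the $\vertiii{\cdot}$-norm in Proposition \ref{proExistSystem}, then apply the radial Strauss inequality $|x|^{1/2}|v(x)|\leq C\|\nabla v\|_{L^2}^{1/2}\|v\|_{L^2}^{1/2}$ for $|x|\geq\delta$, which is exactly the paper's argument (its inequality (\ref{eqStrauss})). The decay at infinity is likewise obtained there from the same inequality together with the construction of $\check{\eta}_{i,\varepsilon}$, so no further comment is needed.
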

\begin{proof}
Consider the fluctuations
\[
\varphi=\eta_{1,\varepsilon}-\check{\eta}_{1,\varepsilon}\ \
\textrm{and} \ \
\psi=\eta_{2,\varepsilon}-\check{\eta}_{2,\varepsilon}.
\]
It follows from (\ref{eqNormtriple}) and (\ref{eqestimMain}) that
\begin{equation}\label{eqStrauss--}
\|\nabla \varphi\|_{L^2(\mathbb{R}^2)}\leq
C\varepsilon^\frac{2}{3},\ \ \| \varphi\|_{L^2(\mathbb{R}^2)}\leq
C\varepsilon^\frac{4}{3}\ \ \textrm{and}\ \ \|\nabla
\psi\|_{L^2(\mathbb{R}^2)}\leq C\varepsilon^\frac{2}{3},\ \ \|
\psi\|_{L^2(\mathbb{R}^2)}\leq C\varepsilon^\frac{4}{3},
\end{equation}
(note that we did not make full use of (\ref{eqestimMain})). In
order to transform the above into uniform estimates, we  need
the following inequality which can be traced back to
\cite{straussCMP}: There exists a constant $C>0$ such that
\begin{equation}\label{eqStrauss}
|x|^\frac{1}{2}\left|v(x) \right|\leq C\|\nabla
v\|_{L^2(\mathbb{R}^2)}^\frac{1}{2}\|v\|_{L^2(\mathbb{R}^2)}^\frac{1}{2}\
\ \textrm{for}\ \textrm{a.e}\ x\in \mathbb{R}^2,
\end{equation}
and all $v\in H^1_{rad}(\mathbb{R}^2)\equiv\{v\in H^1(\mathbb{R}^2)\
:\ v\ \textrm{is\ radial} \}$. The desired asymptotic behavior in
(\ref{eqUniformOut}) follows at once. Making use of this inequality
for $|x|\geq \delta$, we obtain that
\[
\|\varphi \|_{L^\infty(|x|\geq \delta)}\leq C \varepsilon\ \
\textrm{and} \ \ \|\psi \|_{L^\infty(|x|\geq \delta)}\leq C
\varepsilon,
\]
which are exactly the desired uniform estimates in
(\ref{eqUniformOut}).
%The
%remaining estimates follow readily by using Remark
%\ref{remweightedL2} to strengthen the second and fourth bounds in
%(\ref{eqStrauss--}) (keep in mind (\ref{eqLpot1})).
\end{proof}

We  now turn our attention to establishing uniform estimates on
$\bar{B}_\delta$. The following lemma will come in handy in the
proof of Corollary \ref{corunifInter} below.
\begin{lemma}\label{lemunifBlowUp}
There exists a constant $C>0$ such that the solutions that are
provided by Proposition \ref{proExistSystem} satisfy
\[
\|\eta_{i,\varepsilon}\|_{L^\infty(B_\delta)}\leq C,\ \ i=1,2,
\]
if $\varepsilon>0$ is sufficiently small.
\end{lemma}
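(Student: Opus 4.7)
The key idea is to test the system (\ref{eqsystem}) against the components themselves and form $w=\eta_{1,\varepsilon}^2+\eta_{2,\varepsilon}^2$, thereby obtaining a scalar differential inequality that is independent of the signs of the $\eta_{i,\varepsilon}$. Using $\Delta(\eta_i^2)=2|\nabla\eta_i|^2+2\eta_i\Delta\eta_i\geq 2\eta_i\Delta\eta_i$, multiplying each equation of (\ref{eqsystem}) by $\eta_i$, and summing, I would derive
\[
\varepsilon^2\Delta w\geq 2\big[g_1\eta_1^4+2g\eta_1^2\eta_2^2+g_2\eta_2^4-(g_1a_{1,\varepsilon}+ga_{2,\varepsilon})\eta_1^2-(ga_{1,\varepsilon}+g_2a_{2,\varepsilon})\eta_2^2\big].
\]
The coexistence hypothesis (\ref{eq:condition_on_g_thomas_fermi}) makes the quadratic form $g_1X^2+2gXY+g_2Y^2$ strictly positive definite, hence $g_1\eta_1^4+2g\eta_1^2\eta_2^2+g_2\eta_2^4\geq c(\eta_1^2+\eta_2^2)^2=cw^2$ for some $c>0$. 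The subtracted piece is bounded above by $Cw$ with $C$ independent of $\varepsilon$: where $a_{1,\varepsilon}$ and $a_{2,\varepsilon}$ are positive they are uniformly bounded, and where they are negative the subtracted term has a favorable sign. Combining,
\[
\varepsilon^2\Delta w\geq cw^2-Cw\qquad\text{in }\mathbb{R}^2,
\]
uniformly for small $\varepsilon>0$.

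Since the solution of Proposition \ref{proExistSystem} lies in $H^2_{rad}(\mathbb{R}^2)\times H^2_{rad}(\mathbb{R}^2)$ and is smooth by elliptic regularity, the Strauss radial lemma (compare \eqref{eqStrauss}) gives $\eta_{i,\varepsilon}(r)\to 0$ as $r\to\infty$. Therefore $w\in C^2(\mathbb{R}^2)$ is continuous and vanishes at infinity, so it attains its maximum at some $x_0\in\mathbb{R}^2$. At $x_0$ one has $\Delta w(x_0)\leq 0$; feeding this into the differential inequality forces $cw(x_0)^2\leq Cw(x_0)$, i.e.\ $w(x_0)\leq C/c$. Consequently $\|\eta_{i,\varepsilon}\|_{L^\infty(\mathbb{R}^2)}\leq\sqrt{C/c}$ uniformly in small $\varepsilon$, which is actually a stronger \emph{global} bound than the lemma claims (and dovetails nicely with Corollary \ref{cor1} outside $B_\delta$).

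The main obstruction one must overcome is that positivity of $(\eta_{1,\varepsilon},\eta_{2,\varepsilon})$ is not yet known at this stage, so the Kato-inequality route used in Lemma \ref{lemma:L_infty_bounds} is unavailable. The virtue of the choice $w=\eta_1^2+\eta_2^2$ is precisely that it is sign-blind, and it is exactly the Thomas--Fermi coexistence assumption $g^2<g_1g_2$ which makes the combined cubic self-interaction coercive in $w$. An alternative strategy would be a blow-up rescaling: at a hypothetical sequence of blow-up points $x_n\in\bar B_\delta$ with $M_n=|\eta_{i,\varepsilon_n}(x_n)|\to\infty$, set $\tilde\eta_{j,n}(y)=M_n^{-1}\eta_{j,\varepsilon_n}(x_n+\varepsilon_n M_n^{-1}y)$ and pass to a bounded entire solution of $-\Delta\tilde\eta_j+g_j\tilde\eta_j^3+g\tilde\eta_j\tilde\eta_k^2=0$ with $|\tilde\eta_i(0)|=1$, which is ruled out by the maximum-principle argument at an interior extremum. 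I would nevertheless pursue the direct differential-inequality approach above, since it is cleaner and produces an explicit quantitative constant.
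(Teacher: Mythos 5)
Your main argument is correct, but it takes a genuinely different route from the paper's. The paper proves the lemma by contradiction via a blow-up: it rescales $\eta_{1,\varepsilon_n}$ about a hypothetical sequence of blow-up points, uses elliptic $L^p$ estimates together with the bound $\|\eta_{2,\varepsilon}^2-a_{2,\varepsilon}\|_{L^p(B_\delta)}\leq C_p\varepsilon^{\frac{2}{3}+\frac{1}{p}}$ coming from Proposition \ref{proExistSystem} to pass to the limit, and arrives at a nontrivial bounded entire solution of $\Delta v=g_1v^3$, which is excluded by Brezis's Liouville theorem \cite{brezisLiouville}. Your route is a direct maximum-principle argument on the sign-blind quantity $w=\eta_{1,\varepsilon}^2+\eta_{2,\varepsilon}^2$; it buys a \emph{global} uniform bound on all of $\mathbb{R}^2$ with an explicit constant, avoids any compactness or Liouville theorem, and only needs the decay at infinity already available from Corollary \ref{cor1} (or the Strauss inequality \eqref{eqStrauss}) plus the boundedness of the Lagrange multipliers $\lambda_{i,\varepsilon}$. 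Two small repairs are in order. First, the bound of the subtracted term by $Cw$ should be justified via the identities $g_1a_{1,\varepsilon}+ga_{2,\varepsilon}=\lambda_{1,\varepsilon}-|x|^2$ and $ga_{1,\varepsilon}+g_2a_{2,\varepsilon}=\lambda_{2,\varepsilon}-|x|^2$, immediate from (\ref{eqa12}); your stated reason is inaccurate when $\Gamma_1<0$, since then $a_{2,\varepsilon}$ is positive and unbounded as $|x|\to\infty$, whereas the relevant \emph{combinations} are bounded above by $\max_i\lambda_{i,\varepsilon}$. (Note also that, since $\eta_i^2\geq 0$ and $g>0$, the coercivity of the quartic part does not even need (\ref{eq:condition_on_g_thomas_fermi}).) Second, your closing blow-up sketch is essentially the paper's proof, but as stated it is incomplete: the bounded entire limit cannot be dismissed by evaluating at an interior extremum, since a bounded entire solution need not attain one; the paper instead invokes \cite{brezisLiouville}, and in the limit only the blown-up component survives nontrivially because the coupling terms carry the factor $\mu_n^2\to 0$ and are controlled by the $L^p$ bound above.
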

\begin{proof}
Suppose that the assertion is false. We use a blow-up argument
to arrive at a contradiction (see also \cite{GidasSpruck}). Without
loss of generality, we may assume that there exist $\varepsilon_n\to
0$ and $x_n\in B_\delta$ such that
\[
\eta_{1,\varepsilon_n}(x_n)=\|\eta_{1,\varepsilon_n}\|_{L^\infty(B_\delta)}=M_n\to
\infty.
\]
We may further assume that $x_n\to x_\infty \in \bar{B}_\delta$.
Now, we re-scale $\eta_{1,\varepsilon_n}$ by setting
\[
v_n(y)=\mu_n \eta_{1,\varepsilon_n}(x_n+\varepsilon_n \mu_n y)\ \
\textrm{with}\ \mu_n=M_n^{-1}\to 0.
\]
The function $v_n$ satisfies
\[
-\Delta v_n+g_1v_n^3-g_1\mu_n^2
a_{1,\varepsilon_n}(x_n+\varepsilon_n\mu_n y)v_n+g\mu_n^2\left(
\eta_{2,\varepsilon_n}^2(x_n+\varepsilon_n \mu_n
y)-a_{2,\varepsilon_n}(x_n+\varepsilon_n \mu_n y) \right) v_n=0.
\]
By using elliptic $L^p$ estimates and standard imbeddings, exploiting the bound
\[
\|\eta_{2,\varepsilon}^2-a_{2,\varepsilon}\|_{L^p(B_\delta)}\leq C_p
\varepsilon^{\frac{2}{3}+\frac{1}{p}},\ p\geq 2,\ \
(\textrm{readily\ derivable\ from \ Proposition\
\ref{proExistSystem}}),
\]
we deduce that a subsequence of $v_n$ converges uniformly on compact
sets to a bounded nontrivial solution $v_\infty$ of the problem
\begin{equation}\label{eqBrezis}
\Delta v=g_1 v^3, \qquad v(0)=1,
\end{equation}
in the entire space $\mathbb{R}^2$ or in an open half-space $H$
containing the origin, with zero boundary conditions on $\partial
H$.
%depending on whether
%$x_\infty\in B_\delta$ or $x_\infty\in \partial B_\delta$
%respectively.
Actually, in the latter scenario, one has to perform a rotation and
stretching of coordinates in the resulting limiting equation to get
(\ref{eqBrezis}), see \cite{GidasSpruck}. In any case, by reflecting
$v_\infty$ oddly across $\partial H$ if necessary, we have been led
to a \emph{nontrivial} solution of (\ref{eqBrezis}) in the whole
space $\mathbb{R}^2$. On the other hand, this contradicts  a well
known Liouville type theorem of Brezis
\cite{brezisLiouville}.\end{proof}

The following corollary provides additional information to Corollary \ref{cor1}, but
will be considerably improved in Proposition \ref{prounifAway}.
\begin{cor}\label{corunifInter}
If $\varepsilon>0$ is sufficiently small, the solutions provided by
Proposition \ref{proExistSystem} satisfy
\begin{equation}\label{equnifInter}
\|\eta_{i,\varepsilon}-\sqrt{a_{i,\varepsilon}}\|_{L^\infty(B_\delta)}\leq
C\varepsilon^\frac{1}{3},\ \ i=1,2.
\end{equation}
\end{cor}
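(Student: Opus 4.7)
My plan is to exploit the $L^2$-control on the fluctuations produced by Proposition \ref{proExistSystem} together with the parabolic rescaling $y=x/\varepsilon$, which converts the singularly perturbed system into one with bounded coefficients posed on the expanding ball $B_{\delta/\varepsilon}$; interior elliptic regularity and the two-dimensional Sobolev embedding then upgrade the $L^2$-bound into an $L^\infty$-bound of the required order $\varepsilon^{1/3}$.

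I first reduce the task to controlling the fluctuations $\varphi=\eta_{1,\varepsilon}-\check\eta_{1,\varepsilon}$ and $\psi=\eta_{2,\varepsilon}-\check\eta_{2,\varepsilon}$. Indeed, since $\delta$ is fixed small and the cutoff $\zeta$ is identically $1$ on $B_\delta$, we have $\check\eta_{1,\varepsilon}\equiv \hat\eta_{1,\varepsilon}$ there, so \eqref{eqGSuUnif} applied to \eqref{eqgroundstate1} gives $\|\check\eta_{1,\varepsilon}-\sqrt{a_{1,\varepsilon}}\|_{L^\infty(B_\delta)}\leq C\varepsilon^2$; the explicit definition \eqref{eqeta2cut} of $\check\eta_{2,\varepsilon}$ on $B_\delta$ then yields the analogous estimate $\|\check\eta_{2,\varepsilon}-\sqrt{a_{2,\varepsilon}}\|_{L^\infty(B_\delta)}\leq C\varepsilon^2$. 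It therefore suffices to prove $\|\varphi\|_{L^\infty(B_\delta)}+\|\psi\|_{L^\infty(B_\delta)}\leq C\varepsilon^{1/3}$.

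From Proposition \ref{proExistSystem} and the definition \eqref{eqNormtriple} of $\vertiii{\cdot}$, we have $\|\varphi\|_{L^2(B_{R_\varepsilon})}\leq C\varepsilon^{4/3}$ and $\|\psi\|_{L^2(B_{R_\varepsilon})}\leq C\varepsilon^{5/3}$, while Lemma \ref{lemunifBlowUp} yields the uniform bound $\|\eta_{i,\varepsilon}\|_{L^\infty(B_\delta)}\leq C$, hence also for $\varphi,\psi$. The pair $(\varphi,\psi)$ solves \eqref{eqsystemPhiPsi}; on $B_\delta$ we have $E_1\equiv 0$ and $|E_2|\leq C\varepsilon^2$ (see \eqref{eqE1}--\eqref{eqE2} and Subsection \ref{subsecError}). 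Rescaling $\tilde\varphi(y)=\varphi(\varepsilon y)$, $\tilde\psi(y)=\psi(\varepsilon y)$, the system \eqref{eqsystemPhiPsi} becomes
\[
-\Delta\tilde\varphi=\tilde F_1,\qquad -\Delta\tilde\psi=\tilde F_2\quad\text{in }B_{\delta/\varepsilon},
\]
where the potential and coupling functions inherited from $\mathcal{L}$ are uniformly bounded on $B_\delta$ and, thanks to Lemma \ref{lemunifBlowUp}, the nonlinear contributions satisfy $|\tilde F_j|\leq C(|\tilde\varphi|+|\tilde\psi|+\varepsilon^2)$ pointwise. The rescaled $L^2$-norms are $\|\tilde\varphi\|_{L^2(B_{\delta/\varepsilon})}=\varepsilon^{-1}\|\varphi\|_{L^2(B_\delta)}\leq C\varepsilon^{1/3}$ and $\|\tilde\psi\|_{L^2(B_{\delta/\varepsilon})}\leq C\varepsilon^{2/3}$.

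On any unit ball $B_1(y_0)\subset B_{\delta/\varepsilon}$, the restrictions of $\tilde\varphi,\tilde\psi$ and of $\tilde F_j$ have $L^2$-norm $\leq C\varepsilon^{1/3}$. Standard interior $L^2$-elliptic regularity yields $\|\tilde\varphi\|_{H^2(B_{1/2}(y_0))}+\|\tilde\psi\|_{H^2(B_{1/2}(y_0))}\leq C\varepsilon^{1/3}$, and the two-dimensional embedding $H^2\hookrightarrow L^\infty$ gives $\|\tilde\varphi\|_{L^\infty(B_{1/2}(y_0))}+\|\tilde\psi\|_{L^\infty(B_{1/2}(y_0))}\leq C\varepsilon^{1/3}$, with $C$ independent of $y_0$. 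Covering $B_{\delta/(2\varepsilon)}$ by such half-balls and returning to the original variable yields $\|\varphi\|_{L^\infty(B_{\delta/2})}+\|\psi\|_{L^\infty(B_{\delta/2})}\leq C\varepsilon^{1/3}$; replacing $\delta$ by $\delta/2$ at the outset gives the statement. The main delicate point is the rescaling step: it is only viable because the uniform $L^\infty$-bound of Lemma \ref{lemunifBlowUp} lets us treat the cubic nonlinearity of $\mathcal{N}$ as a mere linear perturbation after rescaling, so that the gain of $\varepsilon^{-1}$ in the $L^2$-rescaling of $\varphi$ is not overwhelmed by losses from the nonlinear terms.
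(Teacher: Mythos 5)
Your argument is correct, and in fact it is essentially a rescaled version of the paper's. The paper works on the fixed ball and combines the $L^2$-estimate $\|\phi\|_{L^2(B_{2\delta})}\leq C\varepsilon^{4/3}$ (where $\phi=\eta_{1,\varepsilon}-\sqrt{a_{1,\varepsilon}}$) with the elliptic estimate $\|\phi\|_{H^2(B_\delta)}\leq C\varepsilon^{-2/3}$, then lands directly on $\varepsilon^{1/3}$ via the two-dimensional Agmon interpolation inequality $\|\phi\|_{L^\infty}\leq C\|\phi\|_{H^2}^{1/2}\|\phi\|_{L^2}^{1/2}$. You avoid citing Agmon's inequality by dilating $y=x/\varepsilon$: this redistributes the powers of $\varepsilon$ so that the rescaled $L^2$-norm on $B_{\delta/\varepsilon}$ and the interior $H^2$-norm on unit balls are both of order $\varepsilon^{1/3}$, and the ordinary Sobolev embedding $H^2\hookrightarrow L^\infty$ together with a covering then finishes the job. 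This is precisely the standard device one uses to prove the $\varepsilon$-weighted Agmon estimate, so the two routes are mathematically equivalent rather than genuinely independent, but yours is arguably more self-contained for a reader unfamiliar with Agmon's lemma. Both proofs hinge on the same two nontrivial inputs --- the $\vertiii{\cdot}$-bound from Proposition \ref{proExistSystem}, and the uniform $L^\infty$ bound of Lemma \ref{lemunifBlowUp}, which is what lets the cubic nonlinearity $\mathcal{N}$ be treated as a bounded linear perturbation after moving it to the right-hand side --- and your preliminary reduction from $\check\eta_{i,\varepsilon}$ to $\sqrt{a_{i,\varepsilon}}$ via the $\mathcal{O}(\varepsilon^2)$ estimate of \eqref{eqGSuUnif} is a correct but minor extra step that the paper sidesteps by centering the fluctuation at $\sqrt{a_{i,\varepsilon}}$ from the outset.
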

\begin{proof}
Let
\begin{equation}\label{eqphiDef}
\phi=\eta_{1,\varepsilon}-\sqrt{a_{1,\varepsilon}},\ \ x\in
\bar{B}_{2\delta}.
\end{equation}
Observe that estimates \eqref{eqGSuUnif} and (\ref{eqestimMain})
imply that
\begin{equation}\label{eqphiL2}
\|\phi\|_{L^2(B_{2\delta})}\leq C \varepsilon^\frac{4}{3}.
\end{equation}
From the first equation in (\ref{eqsystem}), by rearranging terms,
in $B_{2\delta}$ we obtain that
\[
    -\varepsilon^2\Delta
\phi  =
-g_1{\eta}_{1,\varepsilon}\left({\eta}_{1,\varepsilon}+\sqrt{a_{1,\varepsilon}}
\right)\phi  +\varepsilon^2\Delta
      \sqrt{a_{1,\varepsilon}}-g\eta_{1,\varepsilon}(\eta_{2,\varepsilon}^2-a_{2,\varepsilon})=:f.
\]
 By interior elliptic regularity theory, we deduce that
\begin{equation}\label{eqagmon2}
\|\phi\|_{H^2(B_\delta)}\leq
C\left(\varepsilon^{-2}\|f\|_{L^2(B_{2\delta})}+\|\phi\|_{L^2(B_{2\delta})}
\right)\leq C\varepsilon^{-\frac{2}{3}},
\end{equation}
where we also used Lemma \ref{lemunifBlowUp} and (\ref{eqphiL2}).
 Now, by the two-dimensional Agmon inequality \cite[Lem.
13.2]{Agmon}, we infer that
\[
\|\phi\|_{L^\infty(B_\delta)}\leq
C\|\phi\|_{H^2(B_\delta)}^\frac{1}{2}\|\phi\|_{L^2(B_\delta)}^\frac{1}{2}\stackrel{(\ref{eqphiL2}),(\ref{eqagmon2})}{\leq}
C\varepsilon^\frac{1}{3}.
\]
The desired bound for
$\eta_{1,\varepsilon}-\sqrt{a_{1,\varepsilon}}$ follows at once from
(\ref{eqphiDef}) and the above relation. The corresponding bound for
$\eta_{2,\varepsilon}-\sqrt{a_{2,\varepsilon}}$ can be shown
analogously.\end{proof}

We are now in position to show that the solutions in Proposition
\ref{proExistSystem} are in fact positive.

\begin{proposition}\label{proPositive}
If $\varepsilon>0$ is sufficiently small, the solutions in
Proposition \ref{proExistSystem} satisfy
\[
\eta_{i,\varepsilon}>0\ \ \textrm{in}\ \mathbb{R}^2,\ \ i=1,2.
\]
\end{proposition}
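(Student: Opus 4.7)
The plan is to prove $\eta_{i,\varepsilon} > 0$ by a region-by-region analysis, combining the $L^\infty$ control on the fluctuations $\varphi_i := \eta_{i,\varepsilon} - \check{\eta}_{i,\varepsilon}$ from Corollaries \ref{cor1} and \ref{corunifInter} with the refined structure of the reduced ground states in Proposition \ref{proGS}, and closing in the far exterior by a maximum principle argument. Since $\check{\eta}_{i,\varepsilon}$ is non-negative by construction, the strategy is to show that $|\varphi_i|$ is pointwise dominated by $\check{\eta}_{i,\varepsilon}$ in a neighborhood of the support of $\sqrt{a_i}$, and then rule out negative values further out using the coercivity of the effective linear operator there.

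First, on $B_\delta$, Corollary \ref{corunifInter} gives $|\eta_{i,\varepsilon} - \sqrt{a_{i,\varepsilon}}| = O(\varepsilon^{1/3})$, and $\sqrt{a_{i,\varepsilon}}$ is bounded below by a positive constant, so positivity is immediate. On the bulk annulus $\delta \leq r \leq R_{i,\varepsilon} - M\varepsilon^{2/3}$ (for a large $M$ to be chosen), Corollary \ref{cor1} gives $|\varphi_i| \leq C\varepsilon$, while \eqref{eqGSuUnif} together with the linear vanishing of $a_{i,\varepsilon}$ at $R_{i,\varepsilon}$ yields $\check{\eta}_{i,\varepsilon}(r) \geq \sqrt{a_{i,\varepsilon}(r)} - C\varepsilon^{1/3} \geq c\sqrt{M}\,\varepsilon^{1/3}$; taking $M$ large enough defeats the $C\varepsilon$ error. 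On the inner layer $|r - R_{i,\varepsilon}| \leq M\varepsilon^{2/3}$, the Painlev\'e expansion \eqref{eq:proGS4} together with the strict positivity of the Hastings--McLeod function $V$ on $\mathbb{R}$ gives $\check{\eta}_{i,\varepsilon} \geq c_M \varepsilon^{1/3}$ on this compact set of the inner variable, again dominating the $O(\varepsilon)$ fluctuation.

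It remains to handle the far exterior $r \geq R_{i,\varepsilon} + M\varepsilon^{2/3}$ by the maximum principle. Since $\eta_{i,\varepsilon}$ is positive on the sphere $\{|x| = R_{i,\varepsilon} + M\varepsilon^{2/3}\}$ by the previous step and tends to $0$ at infinity (Corollary \ref{cor1}), any point where $\eta_{i,\varepsilon} < 0$ would produce an interior negative minimum $x_0$ with $\Delta \eta_{i,\varepsilon}(x_0) \geq 0$. At $x_0$, equation \eqref{eqsystem1} yields $\eta_{i,\varepsilon}(x_0) \cdot Q(x_0) \geq 0$ where $Q := g_i(\eta_i^2 - a_{i,\varepsilon}) + g(\eta_j^2 - a_{j,\varepsilon})$. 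Using the bulk approximation $\eta_{j,\varepsilon}^2 = a_{j,\varepsilon} + (g/g_j)a_{i,\varepsilon} + O(\varepsilon)$, which follows from Corollary \ref{cor1} and the definition \eqref{eqeta2cut} of $\check{\eta}_{2,\varepsilon}$, $Q$ collapses to $g_i\eta_{i,\varepsilon}^2 - (g_i - g^2/g_j)a_{i,\varepsilon} + O(\varepsilon) \geq cM\varepsilon^{2/3} - O(\varepsilon) > 0$, the positivity of the second term following from the Thomas--Fermi hypothesis \eqref{eq:condition_on_g_thomas_fermi} together with $-a_{i,\varepsilon}(r) \geq cM\varepsilon^{2/3}$ in this region. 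This contradicts $\eta_{i,\varepsilon}(x_0) < 0$.

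The main obstacle is this final step in the overlap annulus $R_{1,\varepsilon} < r < R_{2,\varepsilon}$, where $a_{2,\varepsilon}$ is still positive, so one cannot simply discard the coupling term $g\eta_1(\eta_2^2 - a_{2,\varepsilon})$: it is precisely the cancellation produced by the substitution above, enabled by $g^2 < g_1 g_2$, that forces the effective linear coefficient to be positive. Beyond $R_{2,\varepsilon} + M\varepsilon^{2/3}$ both $a_{1,\varepsilon}$ and $a_{2,\varepsilon}$ are negative and the maximum principle is entirely straightforward without any cancellation.
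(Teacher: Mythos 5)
Your proposal follows essentially the same route as the paper's: establish positivity on $\{|x| \leq R_{i,\varepsilon} + M\varepsilon^{2/3}\}$ from the Painlev\'{e} profile plus the fluctuation bounds, then propagate positivity into the exterior via a maximum-principle argument on the effective coefficient $Q$, with the crucial cancellation in the overlap annulus enabled by $g^2 < g_1 g_2$. Two imprecisions in your error bookkeeping should be flagged, though neither destroys the argument. First, the claim $\eta_{j,\varepsilon}^2 = a_{j,\varepsilon} + (g/g_j)a_{i,\varepsilon} + O(\varepsilon)$ for $r \geq R_{1,\varepsilon} + M\varepsilon^{2/3}$ is not correct: the definition (\ref{eqeta2cut}) carries the term $-\frac{g}{g_2}\check{\eta}_{1,\varepsilon}^2$, which at $r = R_{1,\varepsilon} + M\varepsilon^{2/3}$ is of order $e^{-cM}\varepsilon^{2/3}$, much larger than $\varepsilon$ for fixed $M$. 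This is in fact the reason $M$ must be taken large: one needs $-\bigl(g_1 - \frac{g^2}{g_2}\bigr)a_{1,\varepsilon} \gtrsim M\varepsilon^{2/3}$ to dominate $Ce^{-cM}\varepsilon^{2/3}$, and the paper's proof makes this explicit by increasing $D$ exactly for this purpose; with a genuine $O(\varepsilon)$ error the choice of $M$ would be immaterial at this step. Second, your remark that both $a_{1,\varepsilon}$ and $a_{2,\varepsilon}$ are negative beyond $R_{2,\varepsilon}$ can fail for $a_{2,\varepsilon}$ when $\Gamma_1 < 0$ (i.e. $g > g_1$), since $a_{2,\varepsilon}$ then grows to $+\infty$; what is uniformly negative there is the combination $a_{2,\varepsilon} + \frac{g}{g_2}a_{1,\varepsilon}$, and the positivity of $Q$ follows by the same cancellation, not by termwise sign considerations. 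With these two points corrected, your argument is the paper's argument.
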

\begin{proof}
By virtue  of (\ref{eqGSuUnif}), (\ref{eq:proGS4}),
(\ref{eqUniformOut}), and Corollary \ref{corunifInter}, given $D\geq
1$, we deduce that
\begin{equation}\label{eqpositive1}
\eta_{1,\varepsilon}\geq c_D\varepsilon^\frac{1}{3}>0\ \
\textrm{if}\ |x|\leq R_{1,\varepsilon}+D\varepsilon^\frac{2}{3},
\end{equation}
provided that $\varepsilon>0$ is sufficiently small; where
throughout this proof, unless specified otherwise, the generic
constants $c,C>0$ are also independent of $D\geq 1$. From (\ref{eqsystem1}), we observe that
$\eta_{1,\varepsilon}$ satisfies a linear equation of the form
\begin{equation}\label{eqpositive2}
-\varepsilon^2\Delta
\eta_{1,\varepsilon}+Q(x)\eta_{1,\varepsilon}=0,
\end{equation}
where
\[
Q(x)=g_1
(\eta_{1,\varepsilon}^2-a_{1,\varepsilon})+g(\eta_{2,\varepsilon}^2-a_{2,\varepsilon}).
\]
If $R_{1,\varepsilon}+D\varepsilon^\frac{2}{3}\leq |x|\leq
R_{2,\varepsilon}$, recalling \eqref{eq:proGS2} and
(\ref{eqUniformOut}), we find that
\[
\begin{split}
Q(x)&=g_1
(\eta_{1,\varepsilon}^2-\check{\eta}_{1,\ep}^2+\check{\eta}_{1,\ep}^2 -a_{1,\varepsilon})+g(\eta_{2,\varepsilon}^2-\check{\eta}_{2,\ep}^2+\check{\eta}_{2,\ep}^2-a_{2,\varepsilon})\\
&\geq -g_1 a_{1,\ep}+g(\check{\eta}_{2,\ep}^2-a_{2,\ep})-C\ep \\
&\geq \left(\frac{g^2}{g_2}-g_1\right)
a_{1,\varepsilon}+g\left(\check{\eta}_{2,\varepsilon}^2-a_{2,\varepsilon}-\frac{g}{g_2}a_{1,\varepsilon}\right)-C\varepsilon.
\end{split}
\]
In particular, if $R_{1,\varepsilon}+D\varepsilon^\frac{2}{3}\leq
|x|\leq R_{\varepsilon}$, where
$\check{\eta}_{2,\varepsilon}=\tilde{\eta}_{2,\varepsilon}$, via
(\ref{eqeta2cut}), which implies that the second term in the
right-hand side is $-g^2g_2^{-1}\check{\eta}_{1,\varepsilon}^2$, and
the exponential decay of the ground state
$\hat{\eta}_{1,\varepsilon}$ for $r=|x|>R_{1,\varepsilon}$, we
obtain that
\[\begin{array}{rcl}
    Q(r) & \geq & \left(\frac{g^2}{g_2}-g_1\right) a_{1,\varepsilon}-Ce^{-cD}
\varepsilon^\frac{2}{3} \\
      &   &   \\
      &
\stackrel{(\ref{eq:condition_on_g_thomas_fermi}), (\ref{eqa12}),
(\ref{eqnondegeneracy}),(\ref{eq:convergence_lagrange_multipliers})}{\geq}
& c(r-R_{1,\varepsilon})^2+ cD\varepsilon^\frac{2}{3}-Ce^{-cD}
\varepsilon^\frac{2}{3}, \\
& &\\
& \geq &c(r-R_{1,\varepsilon})^2+c D\varepsilon^\frac{2}{3},
  \end{array}
\]
increasing the value of $D$ if needed,
 provided that $\varepsilon>0$ is sufficiently small.
Clearly, in view of (\ref{eqeta2check}), the same lower bound holds
if $R_\varepsilon \leq |x|\leq R_{\varepsilon}+\delta$. On the other
side, if $R_\varepsilon+\delta\leq |x|\leq R_{2,\varepsilon}$, where
$a_{1,\varepsilon}= -c(r^2-R_{1,\ep}^2)$,
$\check{\eta}_{2,\varepsilon}=\hat{\eta}_{2,\varepsilon}$ and
$\left|\hat{\eta}_{2,\varepsilon}^2-a_{2,\varepsilon}-\frac{g}{g_2}a_{1,\varepsilon}\right|\leq
C\varepsilon^\frac{2}{3}$, we deduce that $Q(r)\geq
c(r-R_{1,\varepsilon})^2$. The latter lower bound also holds if
$|x|\geq R_{2,\varepsilon}$. So far, we have shown that
\begin{equation}\label{eqpositive3}
Q(x)\geq c(r-R_{1,\varepsilon})^2+ cD \varepsilon^\frac{2}{3},\ \
|x|\geq R_{1,\varepsilon}+D\varepsilon^\frac{2}{3},
\end{equation}
provided that $\varepsilon>0$ is sufficiently small. By
(\ref{eqUniformOut}), (\ref{eqpositive1}), (\ref{eqpositive2}),
(\ref{eqpositive3}), and the maximum principle, we deduce that
\[
\eta_{1,\varepsilon}\geq 0\ \ \textrm{if}\ |x|\geq
R_{1,\varepsilon}+D\varepsilon^\frac{2}{3}.
\]
 The desired strict positivity of $\eta_{1,\varepsilon}$
follows immediately from  (\ref{eqpositive1}), (\ref{eqpositive2}),
the above relation, and the strong maximum principle. The
corresponding property for $\eta_{2,\varepsilon}$ can be proven
analogously.\end{proof}

The following lemma is motivated from Lemma
\ref{lemma:decay_for_large_x}, and will be used in the next section.
\begin{lemma}\label{lemsuperExp}
Given $D>1$ sufficiently large, we have
\[
\eta_{i,\varepsilon}(s)\leq
\eta_{i,\varepsilon}(r)\exp\left\{-\frac{D^\frac{1}{3}}{\varepsilon^\frac{2}{3}}(s^2-r^2)
\right\}\ \ \textrm{for}\ \ s\geq r\geq
R_{i,\varepsilon}+D\varepsilon^\frac{2}{3},\ \ i=1,2,
\]
provided that $\varepsilon>0$ is sufficiently small.
\end{lemma}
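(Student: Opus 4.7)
The plan is to adapt the Gaussian barrier argument of Lemma \ref{lemma:decay_for_large_x} to the finer scale $\varepsilon^{2/3}$, exploiting the quantitative lower bound on the linear potential obtained inside the proof of Proposition \ref{proPositive}. Writing the equation for $\eta_{i,\varepsilon}$ in the linear form
\[
-\varepsilon^{2}\Delta\eta_{i,\varepsilon}+Q_{i,\varepsilon}\,\eta_{i,\varepsilon}=0,\qquad Q_{i,\varepsilon}(r):=r^{2}-\lambda_{i,\varepsilon}+g_{i}\eta_{i,\varepsilon}^{2}+g\,\eta_{j,\varepsilon}^{2}\ \ (j\neq i),
\]
I would first record that, for $D>1$ sufficiently large and $\varepsilon>0$ sufficiently small, one has the pointwise bounds $Q_{i,\varepsilon}(s)\ge cD\varepsilon^{2/3}$ for $R_{i,\varepsilon}+D\varepsilon^{2/3}\le s\le S$ (with $S$ any fixed constant) and $Q_{i,\varepsilon}(s)\ge \tfrac{1}{2} s^{2}$ for $s\ge S$. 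For $i=1$ the first bound is precisely \eqref{eqpositive3}, while for $i=2$ it follows directly from $\lambda_{2,\varepsilon}=R_{2,\varepsilon}^{2}$ (see \eqref{eqR12}) combined with positivity of $\eta_{k,\varepsilon}$; the large-$s$ bound in both cases is immediate from $\eta_{k,\varepsilon}(x)\to 0$ as $|x|\to\infty$.

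Next, for fixed $r\ge R_{i,\varepsilon}+D\varepsilon^{2/3}$, I introduce the radial barrier
\[
W(s)=\eta_{i,\varepsilon}(r)\,\exp\bigl\{-\alpha(s^{2}-r^{2})\bigr\},\qquad \alpha:=D^{1/3}\varepsilon^{-2/3},\qquad s\ge r,
\]
and compute, using the radial Laplacian $\Delta W=W''+s^{-1}W'$, that
\[
-\varepsilon^{2}\Delta W+Q_{i,\varepsilon}\,W=\bigl[Q_{i,\varepsilon}(s)-4D^{2/3}\varepsilon^{2/3}s^{2}+4D^{1/3}\varepsilon^{4/3}\bigr]W.
\]
The bracket is nonnegative for every $s\ge r$: in the bounded range $r\le s\le S$ it dominates $cD\varepsilon^{2/3}-4D^{2/3}\varepsilon^{2/3}S^{2}$, which is positive once $D^{1/3}\ge 4S^{2}/c$; and for $s\ge S$ it dominates $\tfrac{1}{2}s^{2}-4D^{2/3}\varepsilon^{2/3}s^{2}$, which is positive for $\varepsilon$ small. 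Hence $W$ is a strictly positive classical supersolution of the scalar linear equation on $\{s>r\}$.

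It then remains to compare. The difference $\phi:=\eta_{i,\varepsilon}-W$ satisfies $\phi(r)=0$, tends to $0$ as $s\to\infty$ (the decay of $\eta_{i,\varepsilon}$ is furnished by Corollary \ref{cor1}), and obeys $-\varepsilon^{2}\Delta\phi+Q_{i,\varepsilon}\phi\le 0$ on $\{s>r\}$. Since $Q_{i,\varepsilon}>0$ there, a positive interior maximum of $\phi$ is forbidden by the maximum principle, so $\phi\le 0$ and the claimed pointwise bound follows. The only delicate step is the tuning of the rate $\alpha=D^{1/3}\varepsilon^{-2/3}$: the exponent $\tfrac{1}{3}$ in $D^{1/3}$ is forced by the balance between the potential's contribution $cD\varepsilon^{2/3}$ near $R_{i,\varepsilon}$ and the barrier's penalty $4\alpha^{2}\varepsilon^{2}R_{i,\varepsilon}^{2}=4D^{2/3}\varepsilon^{2/3}R_{i,\varepsilon}^{2}$, and this balance is precisely what requires $D$ to be taken sufficiently large in the statement.
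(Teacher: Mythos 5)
Your proof is correct and follows essentially the same route as the paper: a Gaussian barrier $W(s)=\eta_{i,\varepsilon}(r)\exp\{-D^{1/3}\varepsilon^{-2/3}(s^2-r^2)\}$, made a supersolution by the potential lower bound (\ref{eqpositive3}) (and its $i=2$ analogue, which you derive directly from $\lambda_{2,\varepsilon}=R_{2,\varepsilon}^2$ rather than asserting ``analogously''), followed by the maximum principle using the decay of $\eta_{i,\varepsilon}$ from Corollary \ref{cor1}. The only cosmetic difference is that you compare supersolution and solution directly for the operator $-\varepsilon^2\Delta+Q_{i,\varepsilon}$, whereas the paper inserts the auxiliary linear operator $-\varepsilon^2\Delta+c[(s-R_{i,\varepsilon})^2+D\varepsilon^{2/3}]$; both yield the same comparison.
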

\begin{proof}
Throughout this proof, the generic constants $c,C>0$ are
independent of both small $\varepsilon>0$ and large $D>1$. Abusing
notation slightly, let
\[
u(x)=u(s)=\exp\left\{-\frac{D^\frac{1}{3}}{\varepsilon^\frac{2}{3}}s^2\right\},\
\ x\in\mathbb{R}^2,\ \ s=|x|.
\]
It is easy to see that
\begin{equation}\label{eqsuperExpLinearOp}
-\varepsilon^2 \Delta
u+c\left[(s-R_{1,\varepsilon})^2+D\varepsilon^\frac{2}{3}
\right]u\geq \left[-4D^\frac{2}{3}\varepsilon^\frac{2}{3}
s^2+c(s-R_{1,\varepsilon})^2+cD\varepsilon^\frac{2}{3} \right] u
\geq 0,
\end{equation}
if $s=|x|\geq R_{1,\varepsilon}+D\varepsilon^\frac{2}{3}$, for
sufficiently large $D>1$, provided that $\varepsilon>0$ is
sufficiently small. Here $c>0$ is as in (\ref{eqpositive3}).  For
such $D,\varepsilon$, and any $r\geq
R_{1,\varepsilon}+D\varepsilon^\frac{2}{3}$, it follows that the
function
\[
v(s)=\eta_{1,\varepsilon}(r)\exp\left\{-\frac{D^\frac{1}{3}}{\varepsilon^\frac{2}{3}}(s^2-r^2)
\right\},\ \ s=|x|\geq r,
\]
is an upper solution of the linear elliptic equation that is defined
by the left-hand side of (\ref{eqsuperExpLinearOp}). On the other
side, by virtue of (\ref{eqpositive2}) and (\ref{eqpositive3}), the
function $\eta_{1,\varepsilon}(s)$ is a lower solution of the same
equation for $|x|>r$, which  clearly coincides with   the upper
solution $v$ on $\partial B_{r}$. Hence, by the maximum principle,
and (\ref{eqUniformOut}), we deduce that
\[
\eta_{1,\varepsilon}(s)\leq v(s),\ \ \forall\ s=|x|\geq r\geq
R_{1,\varepsilon}+D\varepsilon^\frac{2}{3},
\]
for any large $D>1$, provided that $\varepsilon>0$ is sufficiently
small. The validity of the asserted estimate for
$\eta_{1,\varepsilon}$ now follows immediately, while that for
$\eta_{2,\varepsilon}$ follows analogously.
\end{proof}

\subsection{Improved uniform estimates away from $R_{1,\varepsilon}$ and $R_{2,\varepsilon}$}

In this subsection we  show that the uniform estimates in
(\ref{eqUniformOut}), for the difference
$\eta_{i,\varepsilon}-\check{\eta}_{i,\varepsilon}$, can be improved
outside of an $\mathcal{O}(\varepsilon^\frac{2}{3})$-neighborhood of
$R_{i,\varepsilon}$, $i=1,2$.

The results of this subsection, as well as those of the following
one, are not essential for the proof of Theorem
\ref{thm:nonexistence_vortices}, and, depending on the reader's
preference, can be skipped on a first reading.

\begin{proposition}\label{prounifAway}
If $\varepsilon>0$ is sufficiently small, there exist $C,\delta>0$,
with $\delta<\min\left\{\frac{R_{1,0}}{4},\frac{R_{2,0}-R_{1,0}}{4}
\right\}$, such that
\[
\left|\eta_{i,\varepsilon}(r)
-\check{\eta}_{i,\varepsilon}(r)\right|\leq C\varepsilon^2\ \
\textrm{if} \ \ |r-R_{1,\varepsilon}|\geq \delta \ \textrm{and} \
|r-R_{2,\varepsilon}|\geq \delta,\ i=1,2.
\]
\end{proposition}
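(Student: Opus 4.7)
Set $\phi=\eta_{1,\varepsilon}-\check\eta_{1,\varepsilon}$ and $\psi=\eta_{2,\varepsilon}-\check\eta_{2,\varepsilon}$; by Proposition \ref{proExistSystem} and (\ref{eqsystemPhiPsi}) the pair $(\phi,\psi)$ solves the $2\times 2$ system
\[
-\varepsilon^2\Delta\phi+V_1\phi+W\psi=f_1,\qquad -\varepsilon^2\Delta\psi+V_2\psi+W\phi=f_2,
\]
where $V_i$ are the diagonal coefficients of $\mathcal{L}$ read off from (\ref{eqlinearization--}), $W=2g\check\eta_{1,\varepsilon}\check\eta_{2,\varepsilon}\geq 0$, and $(f_1,f_2)=\mathcal{N}(\phi,\psi)+\mathcal{E}(\check\eta_{1,\varepsilon},\check\eta_{2,\varepsilon})$. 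The plan is to upgrade the a priori bounds $|\phi|+|\psi|\leq C\varepsilon$ on $\{|x|\geq\delta\}$ (Corollary \ref{cor1}) and $\leq C\varepsilon^{1/3}$ on $B_\delta$ (Corollary \ref{corunifInter}) to $C\varepsilon^2$ away from the circles $r=R_{i,\varepsilon}$ by running a boundary-layer barrier argument on each of the three radial strips $I_1=\{r\leq R_{1,\varepsilon}-\delta\}$, $I_2=\{R_{1,\varepsilon}+\delta\leq r\leq R_{2,\varepsilon}-\delta\}$, and $I_3=\{r\geq R_{2,\varepsilon}+\delta\}$.

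The first step is to check that $|f_i|\leq C\varepsilon^2$ pointwise on each $I_j$. For $\mathcal{E}$ this follows from Subsection \ref{subApprox}: $E_1$ is supported in $[R_\varepsilon-\delta,R_\varepsilon]$ and of size $\mathcal{O}(e^{-c\varepsilon^{-2/3}})$ there by (\ref{eqE1}); on $I_1\cup I_2$, $E_2=-\varepsilon^2\Delta\tilde\eta_{2,\varepsilon}$ and $\tilde\eta_{2,\varepsilon}$ has bounded Hessian at distance $\delta$ from $R_{1,\varepsilon}$ thanks to the super-exponential decay (\ref{eq:proGS2}) of $\hat\eta_{1,\varepsilon}$ outside its $\mathcal{O}(\varepsilon^{2/3})$-collar, so $|E_2|\leq C\varepsilon^2$; on $I_3$, $E_2\equiv 0$. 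The nonlinearity $\mathcal{N}$ satisfies $|\mathcal{N}(\phi,\psi)|\leq C(|\phi|^2+|\psi|^2)\leq C\varepsilon^2$ on $I_2\cup I_3$ by Corollary \ref{cor1}. On $I_1$ only the weaker bound $C\varepsilon^{2/3}$ is a priori available, but the barrier argument below uses only Dirichlet data on $\partial I_1=\{r=R_{1,\varepsilon}-\delta\}$, where Corollary \ref{cor1} does give $C\varepsilon$; one therefore closes $I_1$ first, which a posteriori yields $|\phi|+|\psi|\leq C\varepsilon^2$ on $I_1$ and hence $|\mathcal{N}|\leq C\varepsilon^4$ there.

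The crucial step is the $M$-matrix structure of the coefficient matrix $\begin{pmatrix}V_1 & W \\ W & V_2\end{pmatrix}$. On $I_1$, using (\ref{eqGSuUnif}) and $\check\eta_i^2\approx a_i$, one has $V_1\approx 2g_1 a_1$, $V_2\approx 2g_2 a_2$, $W\approx 2g\sqrt{a_1 a_2}$, so
\[
V_1 V_2-W^2\approx 4(g_1 g_2-g^2)a_1 a_2\geq c>0
\]
by (\ref{eq:condition_on_g_thomas_fermi}); on $I_2\cup I_3$, $\check\eta_{1,\varepsilon}$ is super-exponentially small by (\ref{eq:proGS2}), so $W$ is negligible and the algebra preceding (\ref{eqLcor2}) gives $V_1,V_2\geq c>0$. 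Hence there exists a positive vector $(A_1,A_2)$ independent of $\varepsilon$, namely a Perron eigenvector of the leading-order cofactor matrix, such that $V_i A_i-W A_j\geq c_0>0$ on $I_j$ with $\{i,j\}=\{1,2\}$. Take the two-component barrier $\bar w_i(r)=A_i\bar w(r)$ with
\[
\bar w(r)=C_1\varepsilon^2+C_2\varepsilon\bigl[e^{-\mu(r-r_-)/\varepsilon}+e^{-\mu(r_+-r)/\varepsilon}\bigr],\qquad \mu=\sqrt{c_0/2},
\]
$r_\pm$ being the endpoints of $I_j$ (each exponential omitted when the corresponding endpoint is absent). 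For $C_1,C_2$ large but $\varepsilon$-independent, a direct computation yields $-\varepsilon^2\Delta\bar w_i+V_i\bar w_i-W\bar w_j\geq |f_i|$ on $I_j$, while Corollaries \ref{cor1} and \ref{corunifInter} ensure $\bar w_i\geq\max\{|\phi|,|\psi|\}$ on $\partial I_j$. By Kato's inequality, $(U_1,U_2)=(\bar w_1-|\phi|,\bar w_2-|\psi|)$ is a super-solution of the cooperative system with $M$-matrix coefficient; inspecting the minimum of $U_i/A_i$ shows $U_i\geq 0$ throughout $I_j$. The exponential layers are $o(\varepsilon^N)$ for every $N$ once we are distance $\delta/2$ from $r_\pm$, so $|\phi|+|\psi|\leq C\varepsilon^2$ there, and redefining $\delta\mapsto 2\delta$ concludes the proof.

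The main obstacle is the off-diagonal coupling $W\geq 0$, which makes $\mathcal{L}$ a competitive (rather than cooperative) operator and precludes any direct scalar comparison; the hypothesis $g^2<g_1 g_2$ saves the day through its linearized incarnation $V_1 V_2>W^2$, which is exactly what makes a vector barrier with suitably scaled components serve as a super-solution for $(|\phi|,|\psi|)$.
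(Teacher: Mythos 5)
Your route is genuinely different from the paper's. The paper does not use any comparison principle here (the authors state they could not make sub/supersolution techniques work for the system): they set $u=\eta_{1,\varepsilon}-\sqrt{a_{1,\varepsilon}}$, $v=\eta_{2,\varepsilon}-\sqrt{a_{2,\varepsilon}}$, subtract the explicit algebraic corrector $(u_0,v_0)=\mathcal{O}_{C^2}(\varepsilon^2)$ obtained by inverting the $2\times 2$ algebraic system with matrix $2g_1a_{1,\varepsilon}$, $2g\sqrt{a_{1,\varepsilon}a_{2,\varepsilon}}$, $2g_2a_{2,\varepsilon}$, and then iterate localized energy (coercivity) estimates as in (\ref{eqStar})--(\ref{eqiter}), choosing good radii by a mean-value argument, until the remainder is $\mathcal{O}(\varepsilon^{10/3})$ in $L^2$; interior elliptic regularity plus Agmon's interpolation then gives the uniform $\mathcal{O}(\varepsilon^2)$ bound. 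Crucially, the paper works with the exact quasilinear form $\mathcal{M}(u,v)$ whose coefficients contain $\eta_i(\eta_i+\sqrt{a_{i,\varepsilon}})$, so no quadratic remainder $\mathcal{N}$ ever has to be controlled. Your vector-barrier scheme (Kato's inequality turning the non-cooperative coupling into a cooperative system for $(|\phi|,|\psi|)$, with the M-matrix condition $V_1V_2>W^2$ supplied by $g^2<g_1g_2$) is a plausible alternative, but as written it has two genuine gaps.

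First, the treatment of $\mathcal{N}$ on $I_1$ is circular. The supersolution inequality $-\varepsilon^2\Delta\bar w_i+V_i\bar w_i-W\bar w_j\geq |f_i|$ must hold at every interior point of $I_1$, and on the ball $B_\delta$ the only a priori information is $|\phi|+|\psi|\leq C\varepsilon^{1/3}$ from Corollary \ref{corunifInter} (Corollary \ref{cor1} gives $C\varepsilon$ only for $|x|\geq\delta$), so there $|\mathcal{N}(\phi,\psi)|$ is only known to be $\mathcal{O}(\varepsilon^{2/3})$, which a barrier of height $C_1\varepsilon^2$ cannot dominate. ``Closing $I_1$ first'' does not remove this: the bound you want on $B_\delta$ is exactly what is needed to make the barrier a supersolution there. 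The fix is either a finite bootstrap ($\varepsilon^{2/3}\to\varepsilon^{4/3}\to\varepsilon^2$, losing an $\mathcal{O}(\varepsilon\log(1/\varepsilon))$ collar at each step), or, cleaner, to imitate the paper and write the equation for the differences in the quasilinear form so that no quadratic remainder appears; neither step is in your argument.

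Second, the constant Perron vector does not exist on $I_1$ in general. With $V_1\approx 2g_1a_1$, $V_2\approx 2g_2a_2$, $W\approx 2g\sqrt{a_1a_2}$, the two requirements $V_1A_1-WA_2\geq c_0$ and $V_2A_2-WA_1\geq c_0$ force $\frac{g}{g_1}\sqrt{a_2/a_1}<A_1/A_2<\frac{g_2}{g}\sqrt{a_2/a_1}$ pointwise. On $I_1$ the ratio $\sqrt{a_2/a_1}$ varies by a factor of order $\delta^{-1/2}$ (since $a_1\sim\delta$ near $r=R_{1,\varepsilon}-\delta$ while $a_2\geq c$), so for small $\delta$ the admissible windows at the center and at the edge of $I_1$ are disjoint and no $r$-independent $(A_1,A_2)$ satisfies both inequalities; this failure occurs precisely in the region where your barrier carries its boundary layer. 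One can repair this with $r$-dependent weights (e.g. $A_1=\sqrt{g_2a_{2,\varepsilon}}$, $A_2=\sqrt{g_1a_{1,\varepsilon}}$), at the price of extra $\varepsilon^2\nabla A_i$ and $\varepsilon^2\Delta A_i$ terms that must be absorbed, and with $c_0$ degenerating like $\delta$; but the construction as stated fails, so the comparison step does not close.
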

\begin{proof}
We  prove the assertion in the case where $r\in
[0,R_{1,\varepsilon}-\delta]$, which reduces to show that
\begin{equation}\label{eqbootstrapDesired}
\left|\eta_{i,\varepsilon}(r)
-\sqrt{a_{i,\varepsilon}(r)}\right|\leq C\varepsilon^2\ \
\textrm{if} \ \ r\in [0,R_{1,\varepsilon}- \delta],\ i=1,2,
\end{equation}
(recall the construction of $\check{\eta}_{i,\varepsilon}$ and also
see (\ref{eqGSuUnif})). In the remaining intervals the proof carries
over analogously.

%The proof of the above relation is quite involved and it consists of
%two main steps: First we show that it holds for a point $r_*\in
%\left(R_{1,\varepsilon}-\delta,R_{1,\varepsilon}-\frac{\delta}{10}
%\right)$, and then we show that if holds for all $r\in [0,r_*]$.

We  first require a rough uniform bound for the radial
derivatives of the functions
\begin{equation}\label{equvDef}
u\equiv \eta_{1,\varepsilon}-\sqrt{a_{1,\varepsilon}}\ \
\textrm{and}\ \ v\equiv
\eta_{2,\varepsilon}-\sqrt{a_{2,\varepsilon}},
\end{equation}
say over the interval $\left[\delta,
R_{1,\varepsilon}-\frac{\delta}{50} \right]$. It follows from
(\ref{eqsystemPhiPsi}), (\ref{eqTBproperties}), and
(\ref{eqUniformOut}) (with a smaller constant $\delta>0$), that
\[
\|\Delta \varphi\|_{L^2\left(\frac{\delta}{2},
R_{1,\varepsilon}-\frac{\delta}{100} \right)} = \|\Delta
(\eta_{1,\varepsilon}-\hat{\eta}_{1,\varepsilon})\|_{L^2\left(\frac{\delta}{2},
R_{1,\varepsilon}-\frac{\delta}{100} \right)}\leq
C\varepsilon^{-\frac{2}{3}}.
\]
In turn, by interior elliptic regularity theory
 and (\ref{eqestimMain}), we deduce that
\[
\|\varphi\|_{H^2\left(\delta, R_{1,\varepsilon}-\frac{\delta}{50}
\right)}\leq C\varepsilon^{-\frac{2}{3}}.
\]
Hence, from (\ref{eqStrauss--}), via (\ref{eqStrauss}) with
$\varphi'$ in place of $v$, we get that
\[
|\varphi'(r)|\leq C,\ \ r\in \left[\delta,
R_{1,\varepsilon}-\frac{\delta}{50} \right].
\]
Thanks to (\ref{eq:convergence_lagrange_multipliers}) and
(\ref{eqGSuUnif}), we infer that
\begin{equation}\label{equ'Unif}
\|u'\|_{L^\infty\left(\delta, R_{1,\varepsilon}-\frac{\delta}{50}
\right)}\leq C.
\end{equation}
Similarly we have
\begin{equation}\label{eqv'Unif}
\|v'\|_{L^\infty\left(\delta, R_{1,\varepsilon}-\frac{\delta}{50}
\right)}\leq C.
\end{equation}

Observe that $u,v$ satisfy
\begin{equation}\label{eqStar}
\mathcal{M}(u,v)\equiv\left(
\begin{array}{c}
  -\varepsilon^2 \Delta u+g_1\eta_1(\eta_1+\sqrt{a_{1,\varepsilon}})u+g\eta_1(\eta_2+\sqrt{a_{2,\varepsilon}})v
     \\
    \\
  -\varepsilon^2 \Delta
  v+g_2\eta_2(\eta_2+\sqrt{a_{2,\varepsilon}})v+g\eta_2(\eta_1+\sqrt{a_{1,\varepsilon}})u
\end{array}
\right)= \left( \begin{array}{c}
                  \varepsilon^2 \Delta\sqrt{a_{1,\varepsilon}} \\
                    \\
                 \varepsilon^2 \Delta\sqrt{a_{2,\varepsilon}}
                \end{array}
\right),
\end{equation}
$x\in B_{\left(R_{1,\varepsilon}-\frac{\delta}{50} \right)}$, having
dropped some $\varepsilon$ subscripts for convenience. By virtue of
(\ref{eq:condition_on_g_thomas_fermi}), there exists a unique
solution $(u_0,v_0)$ to the linear algebraic system
\[\left\{
\begin{array}{ccc}
                         2g_1a_{1,\varepsilon}u+2g\sqrt{a_{1,\varepsilon}}\sqrt{a_{2,\varepsilon}}v&=&\varepsilon^2 \Delta \sqrt{a_{1,\varepsilon}},\\
                           \\
                         2g\sqrt{a_{1,\varepsilon}}\sqrt{a_{2,\varepsilon}}u+2g_2a_{2,\varepsilon}v&=&\varepsilon^2 \Delta
                         \sqrt{a_{2,\varepsilon}},
                       \end{array}\right.
\]
$x\in B_{\left(R_{1,\varepsilon}-\frac{\delta}{50} \right)}$. It
follows readily that
\begin{equation}\label{equ0v0}
\|u_0\|_{C^2\left(\bar{B}_{\left(R_{1,\varepsilon}-\frac{\delta}{50}
\right)}\right)}\leq C\varepsilon^2\ \ \textrm{and}\ \
\|v_0\|_{C^2\left(\bar{B}_{\left(R_{1,\varepsilon}-\frac{\delta}{50}
\right)}\right)}\leq C\varepsilon^2,
\end{equation}
(keep in mind that $a_{i,\varepsilon}\geq c$ in
$B_{\left(R_{1,\varepsilon}-\frac{\delta}{50} \right)}$, $i=1,2$).
We can write
\begin{equation}\label{equvtildaDef}
(u,v)=(u_0,v_0)+(\tilde{u},\tilde{v}),
\end{equation}
where $\tilde{u}, \tilde{v}$ satisfy
\begin{equation}\label{eqStarMcal}
\mathcal{M}(\tilde{u},\tilde{v})=\left( \begin{array}{c}
                  \varepsilon^2 \Delta u_0 \\
                    \\
                 \varepsilon^2 \Delta v_0
                \end{array}
\right)+\left(\begin{array}{c}
                         g_1(2a_{1,\varepsilon}-\eta_1^2-\sqrt{a}_{1,\varepsilon}\eta_1)u_0+g(2\sqrt{a_{1,\varepsilon}}\sqrt{a_{2,\varepsilon}}
                         -\eta_1\eta_2-\sqrt{a_{2,\varepsilon}}\eta_1)v_0 \\
                           g_2(2a_{2,\varepsilon}-\eta_2^2-\sqrt{a}_{2,\varepsilon}\eta_2)v_0+g(2\sqrt{a_{1,\varepsilon}}\sqrt{a_{2,\varepsilon}}
                         -\eta_1\eta_2-\sqrt{a_{1,\varepsilon}}\eta_2)u_0 \\

                       \end{array}
 \right),
\end{equation}
$x\in B_{\left(R_{1,\varepsilon}-\frac{\delta}{50} \right)}$.

Consider any $\rho\in \left(0,R_{1,\varepsilon}-\frac{\delta}{50}
\right]$. By testing the above equation by $(\tilde{u},\tilde{v})$
in the $L^2(B_{\rho})\times L^2 (B_{\rho})$ sense,  making use of
(\ref{eq:g_mathfrak_definition}), Proposition \ref{proExistSystem},
(\ref{equ0v0}), and Young's inequality,  it follows readily that
\begin{equation}\label{eqiter}\int_{B_\rho}^{}
\left(\varepsilon^2|\nabla \tilde{u}|^2+\varepsilon^2|\nabla
\tilde{v}|^2+\tilde{u}^2+\tilde{v}^2\right)dx\leq
C\varepsilon^{\frac{20}{3}}+C\varepsilon^2|\tilde{u}'(\rho)\tilde{u}(\rho)|+C\varepsilon^2|\tilde{v}'(\rho)\tilde{v}(\rho)|,
\end{equation}
provided that $\varepsilon>0$ is sufficiently small. Setting in this
relation $\rho=R_{1,\varepsilon}-\frac{\delta}{50}$, using
(\ref{eqUniformOut}), (\ref{equ'Unif}), (\ref{eqv'Unif}), and
(\ref{equ0v0}), we obtain that
\[\int_{B_{\left(R_{1,\varepsilon}-\frac{\delta}{50}\right)}}^{}\left(\varepsilon^2|\nabla \tilde{u}|^2+\varepsilon^2|\nabla
\tilde{v}|^2+\tilde{u}^2+\tilde{v}^2\right)dx\leq
C\varepsilon^\frac{20}{3}+C\varepsilon^3.\] Thus, there exists
$r_1\in
\left(R_{1,\varepsilon}-\frac{\delta}{49},R_{1,\varepsilon}-\frac{\delta}{50}\right)$
such that
\[
|\tilde{u}(r_1)|+|\tilde{v}(r_1)|\leq C
\varepsilon^\frac{10}{3}+C\varepsilon^\frac{3}{2}\ \ \textrm{and}\ \
|\tilde{u}'(r_1)|+|\tilde{v}'(r_1)|\leq C \varepsilon^\frac{1}{2}.
\]
Doing the same procedure  with $\rho=r_1$, using the above estimates
instead of (\ref{eqUniformOut}) when estimating the boundary terms
in (\ref{eqiter}), yields that
\[
\int_{B_{r_1}}^{}\left(\varepsilon^2|\nabla
\tilde{u}|^2+\varepsilon^2|\nabla
\tilde{v}|^2+\tilde{u}^2+\tilde{v}^2\right)dx\leq
C\varepsilon^\frac{20}{3}+C\varepsilon^4.
\]
Thus, there exists $r_2\in
\left(R_{1,\varepsilon}-\frac{\delta}{48},r_1\right)$ such that
\[
|\tilde{u}(r_2)|+|\tilde{v}(r_2)|\leq C\varepsilon^\frac{10}{3}+C
\varepsilon^2\ \ \textrm{and}\ \
|\tilde{u}'(r_2)|+|\tilde{v}'(r_2)|\leq C\varepsilon.
\]
Iterating this scheme a finite number of times  provides us with an
$r_*\in
\left(R_{1,\varepsilon}-\frac{\delta}{2},R_{1,\varepsilon}-\frac{\delta}{3}\right)$
such that
\begin{equation}\label{eqr*}
\int_{B_{r_*}}^{}(\tilde{u}^2+\tilde{v}^2)dx\leq
C\varepsilon^\frac{20}{3}.
\end{equation}

Now, via (\ref{equ0v0}), (\ref{eqStarMcal}), and interior elliptic
regularity theory,  we find that
\begin{equation}\label{eqH2forAgmon}
\|\tilde{u}\|_{H^2\left(B_{(R_{1,\varepsilon}-\delta)}\right)}\leq
C\varepsilon^2\ \ \textrm{and}\ \
\|\tilde{v}\|_{H^2\left(B_{(R_{1,\varepsilon}-\delta)}\right)}\leq
C\varepsilon^2.
\end{equation}
By the two-dimensional Agmon inequality \cite[Lem. 13.2]{Agmon}, we
infer that
\[
\|\tilde{u}\|_{L^\infty\left(B_{(R_{1,\varepsilon}-\delta)}\right)}\leq
C\|\tilde{u}\|_{H^2\left(B_{(R_{1,\varepsilon}-\delta)}\right)}^\frac{1}{2}
\|\tilde{u}\|_{L^2\left(B_{(R_{1,\varepsilon}-\delta)}\right)}^\frac{1}{2}\stackrel{(\ref{eqr*}),(\ref{eqH2forAgmon})}{\leq}
C\varepsilon^3.
\]
Analogously, we have
\[
\|\tilde{v}\|_{L^\infty\left(B_{(R_{1,\varepsilon}-\delta)}\right)}\leq
 C\varepsilon^3.
\]
The desired estimate (\ref{eqbootstrapDesired}) follows at once from
(\ref{equvDef}), (\ref{equ0v0}), (\ref{equvtildaDef}),  and the
above two relations.\end{proof}

In the following proposition, we  prove estimates in the
intermediate zones, bridging the estimates (\ref{eqUniformOut}) and
those provided by Proposition \ref{prounifAway}, on the left side of
$R_{i,\varepsilon}$, $i=1,2$.
\begin{proposition}\label{prounifAlg}The following estimates hold:
\[
\left|\eta_{1,\varepsilon}(r)-\check{\eta}_{1,\varepsilon}(r)
\right|\leq C\varepsilon^2 |r-R_{1,\varepsilon}|^{-\frac{3}{2}},\ \
\left|\eta_{2,\varepsilon}(r)-\check{\eta}_{2,\varepsilon}(r)
\right|\leq C\varepsilon^2 |r-R_{1,\varepsilon}|^{-1},\ \ r\in
[R_{1,\varepsilon}-\delta,R_{1,\varepsilon}-D\varepsilon^\frac{2}{3}],
\]
and
\[\left|\eta_{2,\varepsilon}(r)-\check{\eta}_{2,\varepsilon}(r)
\right|\leq C\varepsilon^2 |r-R_{2,\varepsilon}|^{-\frac{3}{2}} \ \
\textrm{if} \ \  r\in
[R_{2,\varepsilon}-\delta,R_{2,\varepsilon}-D\varepsilon^\frac{2}{3}],
\]
for some constants $C,\delta,D>0$ ($\delta$ as in Proposition
\ref{prounifAway}), provided that $\varepsilon>0$ is sufficiently
small.
\end{proposition}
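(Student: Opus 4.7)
The plan is to promote the pointwise $\mathcal{O}(\varepsilon^2)$ bound of Proposition \ref{prounifAway} into an algebraically blowing-up bound on the intermediate annulus by running a localized version of the linear energy argument on each slice of a dyadic decomposition centered at $R_{1,\varepsilon}$ (and analogously at $R_{2,\varepsilon}$). Set $\phi = \eta_{1,\varepsilon} - \check{\eta}_{1,\varepsilon}$, $\psi = \eta_{2,\varepsilon} - \check{\eta}_{2,\varepsilon}$. On the interval $[R_{1,\varepsilon}-\delta, R_{1,\varepsilon}-D\varepsilon^{2/3}]$ the approximate solution coincides by construction (cf.\ (\ref{eqeta1check}), (\ref{eqeta2check})) with the scalar building blocks $(\hat{\eta}_{1,\varepsilon}, \tilde{\eta}_{2,\varepsilon})$, so the first component of the remainder $\mathcal{E}$ vanishes identically while the second equals $-\varepsilon^2\Delta\tilde{\eta}_{2,\varepsilon}$; invoking (\ref{eqerrorBasic}) together with the scalar algebraic estimates (\ref{eqGSuAlg}) for $\hat{\eta}_{1,\varepsilon}$, this quantity is pointwise bounded by $C\varepsilon^2 + C\varepsilon^4(R_{1,\varepsilon}-r)^{-4}$, which is algebraically small on the target region.

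Next, I cover the interval by dyadic annuli $A_k = \{d_k \leq R_{1,\varepsilon} - |x| \leq 4 d_k\}$ with $d_k = 2^{-k}\delta$, stopping at $d_K \asymp D\varepsilon^{2/3}$. On each such $A_k$, the scalar lower bound \eqref{eq:proGS3} transferred to the system via the reduction of Subsection \ref{subApprox} gives
\[
g_1\bigl(3\check{\eta}_{1,\varepsilon}^2 - a_{1,\varepsilon}\bigr) \geq c\, d_k, \qquad 2 g_2 \check{\eta}_{2,\varepsilon}^2 \geq c.
\]
Testing the equation $\mathcal{L}(\phi,\psi) = -\mathcal{E} - \mathcal{N}(\phi,\psi)$ of (\ref{eqsystemPhiPsi}) against $(\phi,\psi)$ multiplied by a cutoff supported in a slight enlargement of $A_k$, and absorbing the nonlinear terms via (\ref{eqestimMain}), (\ref{eqGagliardo}) and Young's inequality, yields a localized coercivity estimate
\[
\int_{A_k}\!\!\bigl(\varepsilon^2|\nabla\phi|^2 + \varepsilon^2|\nabla\psi|^2 + d_k\,\phi^2 + \psi^2\bigr)\,dx \leq C\!\!\int_{A_{k-1}\cup A_k\cup A_{k+1}}\!\!\bigl(|E_2|^2 + \text{cutoff errors}\bigr)\,dx.
\]
Iterating this bound inward starting from the $L^\infty$-datum provided by Proposition \ref{prounifAway} at the outer edge gives a weighted $L^2$ bound on each $A_k$ with the predicted algebraic dependence on $d_k$. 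The Agmon interpolation $\|u\|_{L^\infty(A_k)} \leq C\|u\|_{H^2(A_k)}^{1/2}\|u\|_{L^2(A_k)}^{1/2}$, applied on the $d_k$-rescaled annulus and combined with interior elliptic $H^2$-regularity for the linearized equation, upgrades these $L^2$ estimates to the pointwise bounds $|\phi(r)| \leq C\varepsilon^2 d_k^{-3/2}$ and $|\psi(r)| \leq C\varepsilon^2 d_k^{-1}$ on $A_k$, which is the assertion with $d_k \simeq R_{1,\varepsilon}-r$.

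The argument near $R_{2,\varepsilon}$ is simpler: on $[R_{2,\varepsilon}-\delta, R_{2,\varepsilon}-D\varepsilon^{2/3}]$ the coupling with $\eta_{1,\varepsilon}$ is exponentially small by Lemma \ref{lemsuperExp} and the exponential decay of $\hat{\eta}_{1,\varepsilon}$, so the analysis decouples and essentially reduces to the scalar algebraic estimate (\ref{eqGSuAlg}) applied to $\hat{\eta}_{2,\varepsilon}$, whose own $d^{-5/2}$ rate is sharper than the $d^{-3/2}$ claimed.

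The main obstacle will be the asymmetry of the local coercivity—linear degeneration in $\phi$ versus uniform coercivity in $\psi$—which is what forces the two different algebraic rates $d^{-3/2}$ and $d^{-1}$; handling the coupling terms $2g\check{\eta}_{1,\varepsilon}\check{\eta}_{2,\varepsilon}\phi$ and $2g\check{\eta}_{1,\varepsilon}\check{\eta}_{2,\varepsilon}\psi$ consistently requires using the stronger $\psi$-bound to feed the degenerate $\phi$-equation and vice versa, and, at the inner edge $d = D\varepsilon^{2/3}$, matching the estimates with the data supplied by the global Sobolev bound (\ref{eqestimMain}) so that the iteration does not blow up. This matching is the delicate point, and the weighted norms alluded to in (\ref{eqnormStar}), inspired by Pacard--Rivi\`ere, provide the natural bookkeeping framework to close it.
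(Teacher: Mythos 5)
Your plan is a genuinely different route from the paper's — a dyadic annular decomposition of $[R_{1,\varepsilon}-\delta,R_{1,\varepsilon}-D\varepsilon^{2/3}]$ with localized energy estimates, versus the paper's decoupling substitution (solve the $\tilde\psi$-row for $\tilde\psi$ in terms of $\tilde\varphi$, substitute, and land on a scalar equation for $\tilde\varphi$ to which the weighted sup-norm a priori estimate (\ref{eqaprioriWeight}) applies by maximum principle). In principle, a dyadic strategy of this type could work, but as written the proposal has two concrete gaps that you yourself brush against without closing.

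First, the cutoff errors in the localized coercivity estimates. Testing against a cutoff $\chi$ supported on $A_{k-1}\cup A_k\cup A_{k+1}$ generates an error $\varepsilon^2\int|\nabla\chi|^2(\phi^2+\psi^2)\,dx \sim \varepsilon^2 d_k^{-2}\int(\phi^2+\psi^2)$. For the $\psi$-component with coercivity $\gtrsim 1$ this is absorbable as soon as $\varepsilon^2 d_k^{-2}\ll 1$, i.e.\ $d_k\gg\varepsilon$, which holds. But for $\phi$ the coercivity weight is only $\gtrsim d_k$, so absorption requires $\varepsilon^2 d_k^{-2}\ll d_k$, i.e.\ $d_k\gg\varepsilon^{2/3}$: exactly marginal at the inner edge $d_K\asymp D\varepsilon^{2/3}$ you want to reach. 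You cannot simply iterate inward from Proposition~\ref{prounifAway}; at each step the cutoff error competes with the degenerate coercivity, and you never show that the losses telescope rather than accumulate. The paper sidesteps this entirely: (\ref{eqaprioriWeight}) is a global estimate on the whole interval $I$ for a single scalar operator, obtained via the maximum principle applied to $\rho^{3/2}\phi$, not by summing local pieces.

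Second, and more fundamentally, the bootstrap between the degenerate $\phi$-equation and the uniformly coercive $\psi$-equation is identified as ``the delicate point'' but is never carried out. The coupling terms $2g\check\eta_{1,\varepsilon}\check\eta_{2,\varepsilon}\psi$ and $2g\check\eta_{1,\varepsilon}\check\eta_{2,\varepsilon}\phi$ are not small on $I$ — $\check\eta_{1,\varepsilon}\sim d_k^{1/2}$ and $\check\eta_{2,\varepsilon}\sim 1$ there — so they cannot be treated perturbatively without first separating out the leading-order dependence of $\psi$ on $\varphi$. The paper does exactly this via (\ref{eqpsitilda}): the ansatz $\tilde\psi=-\frac{g}{g_2}\frac{\hat\eta_{1,\varepsilon}}{\tilde\eta_{2,\varepsilon}}\tilde\varphi+w+z$ isolates the strongly coupled piece and produces, after substitution, a pure scalar equation of type (\ref{eqAlgLph=h}) for $\tilde\varphi$ closed by a smallness factor $\delta K$. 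Your sketch invokes the weighted norm (\ref{eqnormStar}) only as ``bookkeeping,'' whereas in the paper it is the substantive tool: the a priori estimate (\ref{eqaprioriWeight}) in that norm is what supplies the $\rho^{-3/2}$ rate for $\tilde\varphi$, and the $\rho^{-1}$ rate for $\tilde\psi$ then falls out of the second equation via a barrier argument. Until you specify the precise $L^2\to L^\infty$ step (with the correct $d_k$-rescaled Agmon constants) and show the cross-coupling can be absorbed despite the degeneracy, the exponents $d^{-3/2}$ and $d^{-1}$ are asserted rather than derived. You also state that the scalar rate $d^{-5/2}$ from (\ref{eqGSuAlg}) is ``sharper'' than $d^{-3/2}$; it is the other way around ($d^{-5/2}\geq d^{-3/2}$ for $d\leq 1$), though this is immaterial to your argument near $R_{2,\varepsilon}$.

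Separately, note that the paper first peels off $\varphi_0,\psi_0$ solving explicit scalar boundary-value problems carrying $E_1,E_2,N_2$, so that $\tilde\varphi,\tilde\psi$ satisfy an essentially homogeneous system with much smaller sources (\ref{eqE1tilda})--(\ref{eqE2tilda}). Your proposal folds the source and the fluctuation into one object, which makes the accounting considerably harder because $E_2$ is not small enough in $L^2(A_k)$ for the naive iteration to close at the inner edge.
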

\begin{proof}
We only prove the assertions of the proposition that are
related to $R_{1,\varepsilon}$, since those related to
$R_{2,\varepsilon}$ follow analogously and are in fact considerably
simpler to verify because $\eta_{1,\varepsilon}$ is small beyond all
orders for $r\geq R_{1,\varepsilon}+\delta$.

From (\ref{eqNormtriple}) and (\ref{eqestimMain}), there exists
$C>0$ and a sequence $D_j\to \infty$ such that
\begin{equation}\label{eqDj}
\left|(\eta_{2,\varepsilon}-\check{\eta}_{2,\varepsilon})(R_{1,\varepsilon}-D_j\varepsilon^\frac{2}{3})
\right|\leq C\varepsilon^\frac{4}{3},
\end{equation}
for sufficiently small $\varepsilon>0$, $j\geq 1$.

For $r\in
[R_{1,\varepsilon}-\delta,R_{1,\varepsilon}-D_j\varepsilon^\frac{2}{3}]$,
we can write
\begin{equation}\label{eqphipsiAlg}
\varphi=\eta_{1,\varepsilon}-\check{\eta}_{1,\varepsilon}=
\eta_{1,\varepsilon}-\hat{\eta}_{1,\varepsilon},\ \
\psi=\eta_{2,\varepsilon}-\check{\eta}_{2,\varepsilon}=
\eta_{2,\varepsilon}-\tilde{\eta}_{2,\varepsilon},
\end{equation}
where $\varphi,\psi$ satisfy (\ref{eqsystemPhiPsi}).

Let $\varphi_0,\ \psi_0$ be determined from the problems
\[\left\{
\begin{array}{l}
  -\varepsilon^2 \Delta \varphi_0 +\left[\left(g_1-\frac{g^2}{g_2} \right)(3\hat{\eta}_{1,\varepsilon}^2
                                        -a_{1,\varepsilon}) +2\frac{g^2}{g_2}\hat{\eta}_{1,\varepsilon}^2\right]\varphi_0=E_1,
                                        \ \ r\in (R_{1,\varepsilon}-\delta,R_{1,\varepsilon}-D_j\varepsilon^\frac{2}{3}), \\
    \\
  \varphi_0(R_{1,\varepsilon}-\delta)=\varphi(R_{1,\varepsilon}-\delta),\ \ \varphi_0(R_{1,\varepsilon}-D_j\varepsilon^\frac{2}{3})=
  \varphi(R_{1,\varepsilon}-D_j\varepsilon^\frac{2}{3}),
\end{array}\right.
\]
and
\[\left\{
\begin{array}{l}
  -\varepsilon^2 \Delta \psi_0 +2g_2\tilde{\eta}_{2,\varepsilon}^2                                     \psi_0=N_2(\varphi,\psi)+E_2,
                                        \ \ r\in (R_{1,\varepsilon}-\delta,R_{1,\varepsilon}-D_j\varepsilon^\frac{2}{3}), \\
    \\
  \psi_0(R_{1,\varepsilon}-\delta)=\psi(R_{1,\varepsilon}-\delta),\ \ \psi_0(R_{1,\varepsilon}-D_j\varepsilon^\frac{2}{3})=
  \psi(R_{1,\varepsilon}-D_j\varepsilon^\frac{2}{3}),
\end{array}\right.
\]
where $E_i,\ N_i(\cdot,\cdot)$, $i=1,2$, are as in (\ref{eqError})
and (\ref{eqN}) respectively. By virtue of (\ref{eqE1}),
(\ref{eqE2}), (\ref{eqerrorBasic}), (\ref{eqerorAlg-}),
(\ref{eqerrorAlg}), (\ref{eqLpot1}), (\ref{eqpotlowerc}),
(\ref{eqUniformOut}), Proposition \ref{prounifAway}, and
(\ref{eqDj}), via a standard barrier argument, we deduce that
\begin{equation}\label{eqphi0bound}
\left|\varphi_0(r) \right|\leq C\varepsilon^2+C\varepsilon
\exp\left\{c\frac{r-R_{1,\varepsilon}+D_j\varepsilon^\frac{2}{3}}{\varepsilon^\frac{2}{3}}
\right\},
\end{equation}
\begin{equation}\label{eqpsi0bound}
\left|\psi_0(r) \right|\leq C\varepsilon^2+C\varepsilon^4
|r-R_{1,\varepsilon}|^{-4}+C\varepsilon^\frac{4}{3}
\exp\left\{c\frac{r-R_{1,\varepsilon}+D_j\varepsilon^\frac{2}{3}}{\varepsilon}
\right\},
\end{equation}
if
$r\in[R_{1,\varepsilon}-\delta,R_{1,\varepsilon}-D_j\varepsilon^\frac{2}{3}]$.

We can write
\[
\varphi=\varphi_0+\tilde{\varphi},\ \ \psi=\psi_0+\tilde{\psi},
\]
where $\tilde{\varphi},\ \tilde{\psi}$ satisfy
\begin{equation}\label{eqLtilda}
\left\{\begin{array}{c}
         \mathcal{L}(\tilde{\varphi},
\tilde{\psi})=\left(\begin{array}{c}
                                                   \tilde{E_1} \\
                                                     \\
                                                   \tilde{E_2}
                                                 \end{array}
 \right),\ \
 r\in(R_{1,\varepsilon}-\delta,R_{1,\varepsilon}-D_j\varepsilon^\frac{2}{3}), \\
           \\
         \tilde{\varphi}(R_{1,\varepsilon}-\delta)=\tilde{\psi}(R_{1,\varepsilon}-\delta)=0,\
\
\tilde{\varphi}(R_{1,\varepsilon}-D_j\varepsilon^\frac{2}{3})=\tilde{\psi}(R_{1,\varepsilon}-D_j\varepsilon^\frac{2}{3})=0,
\end{array}
 \right.
\end{equation}
with $\mathcal{L}$ as in (\ref{eqlinearization}), for some functions
$\tilde{E}_i$, $i=1,2$, satisfying the following pointwise
estimates:
\begin{equation}\label{eqE1tilda}
\begin{array}{rcl}
  |\tilde{E}_1| & \leq &  C |\hat{\eta}_{1,\varepsilon} \psi_0|+ |N_1(\varphi,\psi)| \\
    &   &   \\
  \textrm{via}\  (\ref{eqLpot2}), (\ref{eqN}), (\ref{eqUniformOut}),
(\ref{eqpsi0bound}) & \leq & C\varepsilon^2+C\varepsilon^4
|r-R_{1,\varepsilon}|^{-\frac{7}{2}}+C_j\varepsilon^\frac{5}{3}
\exp\left\{c\frac{r-R_{1,\varepsilon}+D_j\varepsilon^\frac{2}{3}}{\varepsilon}
\right\},
\end{array}
\end{equation}
\begin{equation}\label{eqE2tilda}
   |\tilde{E}_2|   \leq    C |\hat{\eta}_{1,\varepsilon} \varphi_0| \stackrel{(\ref{eqLpot2}), (\ref{eqphi0bound})}{\leq} C\varepsilon^2+C_j\varepsilon^\frac{4}{3}
\exp\left\{c\frac{r-R_{1,\varepsilon}+D_j\varepsilon^\frac{2}{3}}{\varepsilon^\frac{2}{3}}
\right\},
\end{equation}
for $
r\in[R_{1,\varepsilon}-\delta,R_{1,\varepsilon}-D_j\varepsilon^\frac{2}{3}]$.

Our plan is to solve the second equation in (\ref{eqLtilda}) for
$\tilde{\psi}$ and substitute into the first, thus reducing the
system to one scalar equation for $\tilde{\varphi}$. Then, we
derive estimates for $\tilde{\varphi}$ in some carefully chosen
weighted norms that we  define afterwards. Let
$I=(R_{1,\varepsilon}-\delta,R_{1,\varepsilon}-D_j\varepsilon^\frac{2}{3})$
and
\[
\rho(r)=R_{1,\varepsilon}-r,\ \ r\in I,
\]
for $\phi\in C^2(I)$, we define
\begin{equation}\label{eqnormStar}
\|\phi\|_*=\varepsilon^2\|\rho^\frac{1}{2}\phi_{rr}\|_{L^\infty(I)}+\varepsilon^2\|\rho^{-\frac{1}{2}}\phi_{r}\|_{L^\infty(I)}
+\|\rho^\frac{3}{2}\phi\|_{L^\infty(I)},
\end{equation}
(for related weighted norms we refer the interested reader to the
monograph \cite{pacard-riviere} and the references therein). In
particular, we  rely on the following a-priori estimate: there
exist $\varepsilon_0,j_0,K>0$ such that if $h\in C(\bar{I})$ and
$\phi\in C^2_r\left(|x|\in \bar{I}\right)$  satisfy
\begin{equation}\label{eqAlgLph=h}
-\varepsilon^2\Delta \phi+\left(g_1-\frac{g^2}{g_2}
\right)(3\hat{\eta}_{1,\varepsilon}^2-a_{1,\varepsilon})\phi=h\ \
\textrm{in}\ I;\ \phi=0\ \ \textrm{on}\ \partial I,
\end{equation}
with $0<\varepsilon<\varepsilon_0$, $j\geq j_0$, then
\begin{equation}\label{eqaprioriWeight}
\|\phi\|_*\leq K\|\rho^\frac{1}{2}h\|_{L^\infty(I)}.
\end{equation}
We stress that the above constant $K$ is also independent of
$\delta$ (i.e. $\varepsilon_0=\varepsilon_0(\delta)$). In the
remainder of this proof, we  denote by $k/K$ a small/large
generic constant that is independent of large $j$ and small
$\delta,\varepsilon$.
 The proof of this
estimate proceeds in two steps. Firstly, similarly to \cite[Prop.
3.5]{KaraliSourdisGround}, using the following consequence of
\eqref{eqGSuAlg}
\begin{equation}\label{eqAlgLowerk} k|r-R_{1,\varepsilon}|\leq 3\hat{\eta}_{1,\varepsilon}^2-a_{1,\varepsilon}\leq
K |r-R_{1,\varepsilon}|, \ \ r\in I,
\end{equation}
 and the maximum principle (in the
equation for $\rho^\frac{3}{2}\phi$), one obtains the partial
estimate
\begin{equation}\label{eqAlgUnifK}\|\rho^\frac{3}{2}\phi\|_{L^\infty(I)}\leq
K\|\rho^\frac{1}{2}h\|_{L^\infty(I)}.\end{equation} Then, the full
a-priori estimate follows by going back to the equation for $\phi$
and using the upper bound in (\ref{eqAlgLowerk}). The details are
 given in Appendix \ref{ApAlg}. From now on, we fix such a large
$j$ and drop the subscript from $D_j$.

In view of the second row in (\ref{eqlinearization}) and
(\ref{eqLtilda}), we can write
\begin{equation}\label{eqpsitilda}
\tilde{\psi}=-\frac{g}{g_2}\frac{\hat{\eta}_{1,\varepsilon}}{\tilde{\eta}_{2,\varepsilon}}
\tilde{\varphi}+w+z,\ \ r\in I,
\end{equation}
where
\begin{equation}\label{eqwEq}
-\varepsilon^2\Delta
w+2g_2\tilde{\eta}_{2,\varepsilon}^2w=-\varepsilon^2
\frac{g}{g_2}\Delta\left(\frac{\hat{\eta}_{1,\varepsilon}}{\tilde{\eta}_{2,\varepsilon}}
\tilde{\varphi} \right)\ \ \textrm{in}\ I;\ \ w=0\ \ \textrm{on}\ \
\partial I,
\end{equation}
and
\begin{equation}\label{eqzEq}
-\varepsilon^2\Delta
z+2g_2\tilde{\eta}_{2,\varepsilon}^2z=\tilde{E}_2\ \ \textrm{in}\
I;\ \ z=0\ \ \textrm{on}\ \ \partial I.
\end{equation}
Using the pointwise estimates
\[\left\{\begin{array}{lll}
    0<\hat{\eta}_{1,\varepsilon}\leq K\rho^\frac{1}{2}, & |\nabla\hat{\eta}_{1,\varepsilon}|\leq K\rho^{-\frac{1}{2}}, & |\Delta\hat{\eta}_{1,\varepsilon}|\leq K\rho^{-\frac{3}{2}}, \\
      &   &   \\
    k\leq \tilde{\eta}_{2,\varepsilon}\leq K, & |\nabla\tilde{\eta}_{2,\varepsilon}|\leq K, &
|\Delta\tilde{\eta}_{2,\varepsilon}|\leq K+ K\varepsilon^2\rho^{-4},
  \end{array}\right.
\]
for $r\in I$, which follow readily  from (\ref{eqgroundstate1}),
(\ref{eqGSuAlg}) (having increased $j$ if needed),
(\ref{eqeta2cut}), (\ref{eqerrorBasic}), (\ref{eqerorAlg-}),
(\ref{eqerrorAlg}),  and (\ref{eqLpot2}), we can bound pointwise the
right-hand side of (\ref{eqwEq}) as
\[
\begin{array}{rcl}
  \varepsilon^2 \frac{g}{g_2} \left|
\Delta\left(\frac{\hat{\eta}_{1,\varepsilon}}{\tilde{\eta}_{2,\varepsilon}}
\tilde{\varphi} \right)\right| & \leq & K\varepsilon^2
\left(\rho^\frac{1}{2}|\Delta \tilde{\varphi}|+
\rho^{-\frac{1}{2}}|\nabla \tilde{\varphi}|+\rho^{-\frac{3}{2}}|\tilde{\varphi}| \right) \\
    &   &   \\
    & \leq & K \left(\varepsilon^2\rho^\frac{1}{2}| \tilde{\varphi}_{rr}|+
\varepsilon^2\rho^{-\frac{1}{2}}|
\tilde{\varphi}_r|+\rho^{\frac{3}{2}}|\tilde{\varphi}| \right)\\
&   &   \\
& =&K\|\tilde{\varphi}\|_*.
\end{array}
\]
Hence, by the maximum principle, we deduce that
\begin{equation}\label{eqwbound}
\|w\|_{L^\infty(I)}\leq K\|\tilde{\varphi}\|_*.
\end{equation}
On the other side, from (\ref{eqE2tilda}), (\ref{eqzEq}) and a
standard comparison argument, it follows that
\begin{equation}\label{eqzbound}
\left|z(r) \right|\leq C\varepsilon^2+C\varepsilon^\frac{4}{3}
\exp\left\{c\frac{r-R_{1,\varepsilon}+D\varepsilon^\frac{2}{3}}{\varepsilon^\frac{2}{3}}
\right\},\ \ r\in I.
\end{equation}
Substituting (\ref{eqpsitilda}) into the first equation of
(\ref{eqLtilda}), recalling (\ref{eqlinearization}), we arrive at
\[
-\varepsilon^2\Delta \tilde{\varphi}+\left(g_1-\frac{g^2}{g_2}
\right)(3\hat{\eta}_{1,\varepsilon}^2-a_{1,\varepsilon})\tilde{\varphi}=\tilde{E}_1
-2g\hat{\eta}_{1,\varepsilon}\tilde{\eta}_{2,\varepsilon}(w+z)\ \
\textrm{in}\ I;\ \ \tilde{\varphi}=0\ \textrm{on}\ \partial I.
\]
Making use of the a-priori estimate (\ref{eqaprioriWeight}), bound
(\ref{eqwbound}), and the easy estimates
\[
\|\rho^\frac{1}{2}\tilde{E}_1\|_{L^\infty(I)}\leq C\varepsilon^2 \ \
(\textrm{recall}\ (\ref{eqE1tilda})),\ \ \
\|\rho^\frac{1}{2}\hat{\eta}_{1,\varepsilon}z\|_{L^\infty(I)}\leq
C\varepsilon^2\ \ (\textrm{recall}\ (\ref{eqLpot2}),\
(\ref{eqzbound})),
\]
we obtain that
\[
\|\tilde{\varphi}\|_*\leq C\varepsilon^2+ \delta
K\|\tilde{\varphi}\|_*,
\]
where we also exploited that $0<\hat{\eta}_{1,\varepsilon}\leq K
\rho^\frac{1}{2}\leq K \delta^\frac{1}{2}$ in $I$. Consequently,
choosing a sufficiently small $\delta$,  and fixing it from now on,
we infer that
\[
\|\tilde{\varphi}\|_*\leq C\varepsilon^2.
\]

 In particular,
for small $\varepsilon$, we have that
\[
\left|\tilde{\varphi}(r) \right|\leq C\varepsilon^2
|r-R_{1,\varepsilon}|^{-\frac{3}{2}},\ \ r\in
[R_{1,\varepsilon}-\delta,R_{1,\varepsilon}-D\varepsilon^\frac{2}{3}].
\]
In turn, from the second equation in (\ref{eqLtilda}), recalling
(\ref{eqLpot2}) and (\ref{eqE2tilda}), via a standard barrier
argument, we find that
\[
\left|\tilde{\psi}(r) \right|\leq C\varepsilon^2
|r-R_{1,\varepsilon}|^{-1},\ \ r\in
[R_{1,\varepsilon}-\delta,R_{1,\varepsilon}-D\varepsilon^\frac{2}{3}].
\]
The desired assertion  of the proposition now follows at once from
(\ref{eqphipsiAlg}), (\ref{eqphi0bound}), (\ref{eqpsi0bound}), and
the above two relations.\end{proof}

\begin{lemma}\label{lemUnifeta2nearR1}
Given $D>0$, we have that
\[
\left|\eta_{2,\varepsilon}(r)-\check{\eta}_{2,\varepsilon}(r)
\right| \leq C\varepsilon^\frac{4}{3},\ \ |r-R_{1,\varepsilon}|\leq
D\varepsilon^\frac{2}{3},
\]
provided that $\varepsilon>0$ is sufficiently small.
\end{lemma}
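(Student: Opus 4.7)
The plan is to derive the desired pointwise estimate from a maximum-principle comparison applied to the linear equation satisfied by $\psi=\eta_{2,\varepsilon}-\check{\eta}_{2,\varepsilon}$. The key algebraic observation is that on the inner region one has $\check{\eta}_{2,\varepsilon}=\tilde{\eta}_{2,\varepsilon}$ by (\ref{eqeta2check}), so the only error introduced by the approximation is the neglected Laplacian $-\varepsilon^{2}\Delta\tilde{\eta}_{2,\varepsilon}=E_{2}$.

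First I would subtract the equation for $\check{\eta}_{2,\varepsilon}$ from (\ref{eqsystem2}), factoring $a^{3}-b^{3}=(a-b)(a^{2}+ab+b^{2})$, to obtain a linear equation for $\psi$ of the form
\begin{equation*}
-\varepsilon^{2}\Delta\psi+Q(x)\psi=-E_{2}-g\check{\eta}_{2,\varepsilon}(\eta_{1,\varepsilon}+\check{\eta}_{1,\varepsilon})\varphi,
\end{equation*}
where $\varphi=\eta_{1,\varepsilon}-\check{\eta}_{1,\varepsilon}$ and $Q(x)=g_{2}(\eta_{2,\varepsilon}^{2}+\eta_{2,\varepsilon}\check{\eta}_{2,\varepsilon}+\check{\eta}_{2,\varepsilon}^{2}-a_{2,\varepsilon})+g(\eta_{1,\varepsilon}^{2}-a_{1,\varepsilon})$. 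Using the defining identity $\tilde{\eta}_{2,\varepsilon}^{2}=a_{2,\varepsilon}+\frac{g}{g_{2}}(a_{1,\varepsilon}-\check{\eta}_{1,\varepsilon}^{2})$ and the pointwise bounds $\eta_{i,\varepsilon}=\check{\eta}_{i,\varepsilon}+\mathcal{O}(\varepsilon)$ from (\ref{eqUniformOut}) together with $\check{\eta}_{1,\varepsilon}=\mathcal{O}(\varepsilon^{1/3})$ from (\ref{eqGSuUnif}) and (\ref{eq:proGS4}), one verifies after expansion that $Q(x)=2g_{2}a_{2,\varepsilon}+2g(a_{1,\varepsilon}-\check{\eta}_{1,\varepsilon}^{2})+\mathcal{O}(\varepsilon^{1/3})\geq c>0$ on the inner region, since $a_{2,\varepsilon}$ is bounded away from zero near $R_{1,\varepsilon}$.

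Second I would bound the source in $L^{\infty}$. The inner region lies inside $\{|x|\leq R_{\varepsilon}\}$ for small $\varepsilon$, so $E_{2}=-\varepsilon^{2}\Delta\tilde{\eta}_{2,\varepsilon}$. The computation leading to (\ref{eqerror-+}), combined with the boundedness of the Painlev\'e combination $VV''+(V')^{2}$ on compact $s$-intervals (the cancellation already exploited in Proposition \ref{proerror} via (\ref{eqpainlevecancel})), yields $|\Delta\tilde{\eta}_{2,\varepsilon}|\leq C\varepsilon^{-2/3}$ on the inner region and hence $|E_{2}|\leq C\varepsilon^{4/3}$. The coupling term is controlled by $C\varepsilon^{1/3}\cdot\varepsilon=C\varepsilon^{4/3}$ via $\eta_{1,\varepsilon}+\check{\eta}_{1,\varepsilon}\leq C\varepsilon^{1/3}$ and $|\varphi|\leq C\varepsilon$ (the latter from (\ref{eqUniformOut}), noting $R_{1,\varepsilon}\geq\delta$). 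Thus the right-hand side of the $\psi$-equation is $\mathcal{O}(\varepsilon^{4/3})$ uniformly on the inner region. At the inner boundary $r=R_{1,\varepsilon}-D\varepsilon^{2/3}$, Proposition \ref{prounifAlg} gives $|\psi|\leq C\varepsilon^{2}(D\varepsilon^{2/3})^{-1}=C\varepsilon^{4/3}/D$; the analogous estimate at $r=R_{1,\varepsilon}+D\varepsilon^{2/3}$ is obtained by repeating the weighted-norm argument of Proposition \ref{prounifAlg} on the annulus $\{R_{1,\varepsilon}+D\varepsilon^{2/3}\leq r\leq R_{\varepsilon}\}$, where $\hat{\eta}_{1,\varepsilon}$ has super-exponential decay.

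Finally I would apply the maximum principle: the constant $\Phi=C_{*}\varepsilon^{4/3}$ satisfies $-\varepsilon^{2}\Delta\Phi+Q\Phi\geq cC_{*}\varepsilon^{4/3}$, so for $C_{*}$ sufficiently large it dominates both the source and the boundary values of $|\psi|$ on the annulus $\{|r-R_{1,\varepsilon}|\leq D\varepsilon^{2/3}\}$, yielding $|\psi|\leq C\varepsilon^{4/3}$ and completing the proof. The main obstacle will be the sharp pointwise bound $|E_{2}|\leq C\varepsilon^{4/3}$: without the Painlev\'e cancellation, the raw scaling gives only $|\Delta\tilde{\eta}_{2,\varepsilon}|\leq C\varepsilon^{-4/3}$ and hence $|E_{2}|\leq C\varepsilon^{2/3}$, which would fall short of the target by a factor of $\varepsilon^{2/3}$ and force a considerably more intricate linearised analysis.
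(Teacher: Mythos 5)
Your overall strategy---a maximum-principle comparison for $\psi=\eta_{2,\varepsilon}-\check{\eta}_{2,\varepsilon}$ on the inner annulus, with a uniform source estimate and boundary data at $r=R_{1,\varepsilon}\pm D\varepsilon^{2/3}$---is exactly what the paper does, and your bound on the coupling term $g\check{\eta}_{2,\varepsilon}(\eta_{1,\varepsilon}+\check{\eta}_{1,\varepsilon})\varphi=\mathcal{O}(\varepsilon^{4/3})$ and the potential estimate $Q\geq c$ (equivalently, $2g_2\tilde{\eta}_{2,\varepsilon}^2\geq c$ as in (\ref{eqpotlowerc})) are both correct. Two points, however, diverge from what would actually close the proof. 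First, the treatment of the boundary value at $r_+=R_{1,\varepsilon}+D\varepsilon^{2/3}$ is incomplete: the weighted norm $\|\cdot\|_*$ in Proposition \ref{prounifAlg} is built around the weight $\rho=R_{1,\varepsilon}-r$ and the linear growth $3\hat{\eta}_{1,\varepsilon}^2-a_{1,\varepsilon}\sim c\,|r-R_{1,\varepsilon}|$ on the \emph{left} of $R_{1,\varepsilon}$; to the right one has a potential bounded below by a constant and the statement you would prove there is Lemma \ref{lemDecaytoetai}, whose boundary condition (\ref{eqdecayBdrys}) is precisely $\psi(R_{1,\varepsilon})=\mathcal{O}(\varepsilon^{4/3})$---the estimate you are in the middle of proving. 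The paper avoids this by a much simpler argument: from $\vertiii{(\varphi,\psi)}\leq M\varepsilon^{5/3}$ (Proposition \ref{proExistSystem}) one has $\|\psi\|_{L^2(B_{R_\varepsilon})}\leq C\varepsilon^{5/3}$, and by the mean-value/coarea argument on an interval of length $\varepsilon^{2/3}$ on \emph{either} side of $R_{1,\varepsilon}$ one finds $r_\pm$ with $|\psi(r_\pm)|\leq C\varepsilon^{4/3}$; no appeal to Proposition \ref{prounifAlg} or its weighted norm is needed for the boundary data.

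Second, your closing worry about the Painlev\'e cancellation is a misconception. On $|r-R_{1,\varepsilon}|\leq D\varepsilon^{2/3}$ the rescaled variable $s=\beta_\varepsilon(r-R_{1,\varepsilon})/\varepsilon^{2/3}$ is bounded, so nothing about the asymptotics $VV''+(V')^2=\mathcal{O}(|s|^{-4})$ (relation (\ref{eqpainlevecancel})) is used; one only needs that $V,V',V''$ are bounded on compact $s$-intervals. Moreover, the raw scaling is already $\mathcal{O}(\varepsilon^{-2/3})$, not $\mathcal{O}(\varepsilon^{-4/3})$: since $\tilde{\eta}_{2,\varepsilon}^2=a_{2,\varepsilon}+\frac{g}{g_2}a_{1,\varepsilon}-\frac{g}{g_2}\hat{\eta}_{1,\varepsilon}^2$ is bounded away from zero there, $\Delta\tilde{\eta}_{2,\varepsilon}$ is controlled by $\Delta(\tilde{\eta}_{2,\varepsilon}^2)$ and $|\nabla(\tilde{\eta}_{2,\varepsilon}^2)|^2$, and the dangerous piece is $\Delta(\hat{\eta}_{1,\varepsilon}^2)=2\hat{\eta}_{1,\varepsilon}\Delta\hat{\eta}_{1,\varepsilon}+2|\nabla\hat{\eta}_{1,\varepsilon}|^2$, each term of which scales like $\varepsilon^{1/3}\cdot\varepsilon^{-1}=\varepsilon^{-2/3}$ and $(\varepsilon^{-1/3})^2=\varepsilon^{-2/3}$ respectively. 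So $|E_2|=\varepsilon^2|\Delta\tilde{\eta}_{2,\varepsilon}|\leq C\varepsilon^{4/3}$ comes out of (\ref{eqerror-}), (\ref{eqerror-+}) directly, with no cancellation required on the inner region; the cancellation (\ref{eqpainlevecancel}) is only needed to control the $L^2$ error over the full range in Proposition \ref{proerror}.
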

\begin{proof}
As in the beginning of the proof of Proposition \ref{prounifAlg},
given $D>0$, if $\varepsilon>0$ is sufficiently small, there exist
$r_-\in \left(R_{1,\varepsilon}
-(D+1)\varepsilon^\frac{2}{3},R_{1,\varepsilon}
-D\varepsilon^\frac{2}{3} \right)$ and $r_+\in
\left(R_{1,\varepsilon} + D
\varepsilon^\frac{2}{3},R_{1,\varepsilon}
+(D+1)\varepsilon^\frac{2}{3} \right)$ such that
\[
\left|\psi(r_\pm) \right|\leq C\varepsilon^\frac{4}{3},
\]
where $\psi=\eta_{2,\varepsilon}-\check{\eta}_{2,\varepsilon}$.
Keeping in mind the proof of Proposition \ref{proerror},
(\ref{eqlinearization}), (\ref{eqLpot2}) and (\ref{eqUniformOut})),
it follows readily from the second equation of
(\ref{eqsystemPhiPsi}) that $\psi$ satisfies
\[
-\varepsilon^2\Delta
\psi+2g_2\tilde{\eta}_{2,\varepsilon}^2\psi=\mathcal{O}(\varepsilon^\frac{4}{3}),
\ \ \textrm{uniformly\ on}\  [r_-,r_+], \ \textrm{as}\
\varepsilon\to 0.
\]
  The assertion of the
lemma follows directly from the above two relations and the maximum
principle, since $\tilde{\eta}_{2,\varepsilon}\geq c$ in this
region.\end{proof}

\begin{lemma}\label{lemDecaytoetai}
If $\varepsilon>0$ is sufficiently small, we have
\[
\left|\eta_{1,\varepsilon}(r)-\check{\eta}_{1,\varepsilon}(r)
\right|\leq C\varepsilon
\exp\left\{c\frac{R_{1,\varepsilon}-r}{\varepsilon^\frac{2}{3}}
\right\},\ \ R_{1,\varepsilon} \leq r\leq R_{1,\varepsilon}+\delta,
\]
\[
\left|\eta_{2,\varepsilon}(r)-\check{\eta}_{2,\varepsilon}(r)
\right|\leq C\varepsilon^2 +C\varepsilon^\frac{4}{3}
\exp\left\{c\frac{R_{1,\varepsilon}-r}{\varepsilon^\frac{2}{3}}
\right\},\ \ R_{1,\varepsilon} \leq r\leq R_{1,\varepsilon}+\delta,
\]
and
\[\left|\eta_{2,\varepsilon}(r)-\check{\eta}_{2,\varepsilon}(r)
\right|\leq C\varepsilon
\exp\left\{c\frac{R_{2,\varepsilon}-r}{\varepsilon^\frac{2}{3}}
\right\},\ \ R_{2,\varepsilon}\leq r\leq R_{2,\varepsilon}+\delta.\]
\end{lemma}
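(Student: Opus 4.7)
The plan is to establish each of the three exponential decay estimates by a maximum principle argument with tailored barriers, along the lines of Lemma \ref{lemma:decay_for_large_x} and the proof of Proposition \ref{proPositive}. In each case I would first extract the scalar linear equation satisfied by $\varphi := \eta_{1,\varepsilon}-\check{\eta}_{1,\varepsilon}$ or $\psi := \eta_{2,\varepsilon}-\check{\eta}_{2,\varepsilon}$ on the relevant interval, identify a pointwise lower bound on the potential, control the boundary data from previously established estimates, and construct an explicit supersolution that matches both the forcing and the boundary values.

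For the first estimate, on $[R_{1,\varepsilon}, R_{1,\varepsilon}+\delta]$ we have $\check{\eta}_{1,\varepsilon} = \hat{\eta}_{1,\varepsilon}$ by (\ref{eqeta1check}). Subtracting (\ref{eqgroundstate1}) from (\ref{eqsystem1}) yields a scalar equation for $\varphi$ whose potential admits the lower bound from (\ref{eqpositive3}), namely $\geq c(r-R_{1,\varepsilon})^2 + cD\varepsilon^{2/3}$ for $r \geq R_{1,\varepsilon}+D\varepsilon^{2/3}$, while the right-hand side (coming from $\eta_{2,\varepsilon}^2-a_{2,\varepsilon}$ multiplied by $\eta_{1,\varepsilon}$) is of higher order. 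A direct computation then shows that $W(r) = A\varepsilon\exp\{\alpha(R_{1,\varepsilon}-r)/\varepsilon^{2/3}\}$ is a supersolution for small $\alpha>0$ and large $A$, since $\alpha^2\varepsilon^{2/3}$ is dominated by the coercivity constant. Boundary data at $r=R_{1,\varepsilon}+D\varepsilon^{2/3}$ come from Corollary \ref{cor1}, and at $r=R_{1,\varepsilon}+\delta$ from the exponential decay supplied by Lemma \ref{lemsuperExp} and (\ref{eq:proGS2}); the comparison principle then closes the bound on $[R_{1,\varepsilon}+D\varepsilon^{2/3}, R_{1,\varepsilon}+\delta]$, and on the residual interval $[R_{1,\varepsilon}, R_{1,\varepsilon}+D\varepsilon^{2/3}]$ the exponential factor is bounded below, so the estimate follows directly from Corollary \ref{cor1}. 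The third estimate is entirely parallel: on $[R_{2,\varepsilon}, R_{2,\varepsilon}+\delta]$ we have $\check{\eta}_{2,\varepsilon} = \hat{\eta}_{2,\varepsilon}$, $\eta_{1,\varepsilon}$ is super-exponentially small by Lemma \ref{lemsuperExp}, and the analogue of (\ref{eqpositive3}) near $R_{2,\varepsilon}$ provides the coercivity needed to repeat the argument verbatim.

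The second estimate is the delicate one. On $[R_{1,\varepsilon}, R_{1,\varepsilon}+\delta]$ one has $\check{\eta}_{2,\varepsilon} = \tilde{\eta}_{2,\varepsilon}$, and the identity (\ref{eqeta2cut}) produces an exact cancellation in the second row of (\ref{eqlinearization--}), reducing the equation for $\psi$ to
\[
-\varepsilon^2 \Delta \psi + 2g_2\, \tilde{\eta}_{2,\varepsilon}^2\, \psi = -2g\, \hat{\eta}_{1,\varepsilon}\, \tilde{\eta}_{2,\varepsilon}\, \varphi - N_2(\varphi,\psi) - E_2,
\]
with potential now uniformly bounded below by a positive constant. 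The source splits into an $\mathcal{O}(\varepsilon^2)$ contribution from $E_2$ (Proposition \ref{proerror}) and, by the first estimate together with (\ref{eq:proGS2}), a coupling contribution $\hat{\eta}_{1,\varepsilon}\varphi$ bounded by $C\varepsilon^{4/3}\exp\{c(R_{1,\varepsilon}-r)/\varepsilon^{2/3}\}$; the nonlinear $N_2$ is absorbed as higher order using the $L^\infty$ bounds already in hand. Accordingly I would use the composite barrier $W(r) = A\varepsilon^2 + A\varepsilon^{4/3}\exp\{\alpha(R_{1,\varepsilon}-r)/\varepsilon^{2/3}\}$: the constant piece handles the $\varepsilon^2$ source thanks to the uniform coercivity, while the exponential piece is a supersolution against the coupling source because $\alpha^2\varepsilon^{2/3}$ is dominated by the uniform positive lower bound on the potential. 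Boundary data at $r=R_{1,\varepsilon}$ are supplied by Lemma \ref{lemUnifeta2nearR1} and at $r=R_{1,\varepsilon}+\delta$ by Proposition \ref{prounifAway}, and the maximum principle closes the argument. The main obstacle is precisely this simultaneous tracking of two disparate source contributions, an algebraic $\varepsilon^2$ remainder and an exponentially decaying $\varepsilon^{4/3}$ term inherited from the first estimate through the coupling, which is what forces the composite form of the barrier; the remaining two estimates require nothing beyond the scalar decay machinery already at hand.
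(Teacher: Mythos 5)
You have the right machinery in hand — scalar reduction on $[R_{1,\varepsilon},R_{1,\varepsilon}+\delta]$, pointwise coercivity, barrier comparison — but you have reversed the order in which the two estimates near $R_{1,\varepsilon}$ must be established, and that reversal is fatal. The paper proves the $\psi$-bound (your ``second estimate'') \emph{before} the $\varphi$-bound, and the order matters because the two scalar problems are of very different strength: the $\psi$-equation has the nondegenerate potential $2g_2\check{\eta}_{2,\varepsilon}^2\geq c$, whereas the $\varphi$-equation's potential degenerates like $c\varepsilon^{2/3}$ near $R_{1,\varepsilon}$. At the stage where you attempt the first estimate, the only bound available on $\psi$ is the crude $|\psi|=\mathcal{O}(\varepsilon)$ from Corollary \ref{cor1}. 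The coupling term in the $\varphi$-equation is then $\mathcal{O}(\hat{\eta}_{1,\varepsilon}\psi)=\mathcal{O}(\varepsilon^{1/3})\exp\{c(R_{1,\varepsilon}-r)/\varepsilon^{2/3}\}\cdot\mathcal{O}(\varepsilon)=\mathcal{O}(\varepsilon^{4/3})\exp\{\cdot\}$, and your candidate barrier $W=A\varepsilon\exp\{\alpha(R_{1,\varepsilon}-r)/\varepsilon^{2/3}\}$ produces only $-\varepsilon^2\Delta W+pW\geq (c-\alpha^2)\varepsilon^{2/3}W=kA\varepsilon^{5/3}\exp\{\cdot\}$, which is \emph{dominated by} the source; the supersolution property fails unless $A=\mathcal{O}(\varepsilon^{-1/3})$, which only yields $|\varphi|\leq C\varepsilon^{2/3}\exp\{\cdot\}$, strictly weaker than claimed. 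Since your second estimate then invokes the first, the argument is circular, and iterating it does not converge to the stated rates.

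The paper avoids the circularity by doing $\psi$ first, precisely because the $\psi$-problem is nondegenerate. With only the crude $\varphi=\mathcal{O}(\varepsilon)$ input, both the coupling term $\mathcal{O}(\hat{\eta}_{1,\varepsilon}\varphi)$ and the remainder $E_2$ (via (\ref{eqE2}), (\ref{eqerrorBasic})) are $\mathcal{O}(\varepsilon^2)+\mathcal{O}(\varepsilon^{4/3})\exp\{\cdot\}$, and since the potential in the $\psi$-equation is bounded below by a constant, the barrier argument returns $|\psi|\leq C\varepsilon^2+C\varepsilon^{4/3}\exp\{\cdot\}$ \emph{without loss}. Only with this refined bound does the $\varphi$-source become $\mathcal{O}(\hat{\eta}_{1,\varepsilon}\psi)=\mathcal{O}(\varepsilon^{5/3})\exp\{\cdot\}$, small enough to be absorbed by the degenerate barrier $A\varepsilon\exp\{\cdot\}$ and produce $|\varphi|\leq C\varepsilon\exp\{\cdot\}$. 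Your treatment of the third estimate near $R_{2,\varepsilon}$ is correct: there $\check{\eta}_{1,\varepsilon}\equiv 0$, the $\psi$-equation decouples, and the scalar argument applies as you describe.
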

\begin{proof}
We  only prove the estimates that concern $R_{1,\varepsilon}$
because those concerning $R_{2,\varepsilon}$ follow analogously. As
in the proof of Proposition \ref{prounifAlg}, let
$\varphi=\eta_{1,\varepsilon}-\check{\eta}_{1,\varepsilon}$ and
$\psi=\eta_{2,\varepsilon}-\check{\eta}_{2,\varepsilon}$. In view of
(\ref{eqlinearization}), (\ref{eqsystemPhiPsi}), (\ref{eqN}), the
relations
\[
E_1=0,\ \ |E_2|\leq C
\varepsilon^\frac{4}{3}\exp\left\{c\frac{R_{1,\varepsilon}-r}{\varepsilon^\frac{2}{3}}
\right\}+C\varepsilon^2,\ \ r\in
[R_{1,\varepsilon},R_{1,\varepsilon}+3\delta]\ (\textrm{from}\
(\ref{eqE1}),\ (\ref{eqE2}),\ (\ref{eqerrorBasic})),
\]
 (\ref{eqUniformOut}), and Proposition
\ref{prounifAlg}, we infer that
\begin{equation}\label{eqdecayEqs}\left\{
\begin{array}{l}
  -\varepsilon^2\Delta \varphi+p(r)\varphi=\mathcal{O}(\hat{\eta}_{1,\varepsilon}\psi), \\
    \\
  -\varepsilon^2\Delta
  \psi+q(r)\psi=\mathcal{O}(\varepsilon^2)
  +\mathcal{O}(\varepsilon^\frac{4}{3})\exp\left\{c\frac{R_{1,\varepsilon}-r}{\varepsilon^\frac{2}{3}}\right\},
\end{array}
\right.\end{equation} uniformly on
$[R_{1,\varepsilon},R_{1,\varepsilon}+3\delta]$, as $\varepsilon\to
0$, for some smooth functions $p,q$ satisfying
\begin{equation}\label{eqdecayPQ}
p\geq c\varepsilon^\frac{2}{3}\ \ \textrm{and}\ \ q\geq c\
(\textrm{recall}\ (\ref{eqLpot1})\ \textrm{and}\
(\ref{eqpotlowerc})).
\end{equation}
 Note that, from
(\ref{eqUniformOut}) and Lemma \ref{lemUnifeta2nearR1}, we have
\begin{equation}\label{eqdecayBdrys}
\left\{\begin{array}{ll}
         \varphi(R_{1,\varepsilon})=\mathcal{O}(\varepsilon), & \varphi(R_{1,\varepsilon}+2\delta)=\mathcal{O}(\varepsilon), \\
           &   \\
         \psi(R_{1,\varepsilon})=\mathcal{O}(\varepsilon^\frac{4}{3}), & \psi(R_{1,\varepsilon}+3\delta)=\mathcal{O}(\varepsilon),
       \end{array}
 \right.
\end{equation}
as $\varepsilon\to 0$. A standard barrier argument yields that
\[
\left|\psi(r) \right|\leq C
\varepsilon^\frac{4}{3}\exp\left\{c\frac{R_{1,\varepsilon}-r}{\varepsilon^\frac{2}{3}}
\right\}+C\varepsilon^2+C\varepsilon
\exp\left\{c\frac{r-R_{1,\varepsilon}-3\delta}{\varepsilon}
\right\},\ \ r\in [R_{1,\varepsilon},R_{1,\varepsilon}+3\delta],
\]
which implies that
\[
\left|\psi(r) \right|\leq C
\varepsilon^\frac{4}{3}\exp\left\{c\frac{R_{1,\varepsilon}-r}{\varepsilon^\frac{2}{3}}
\right\}+C\varepsilon^2,\ \ r\in
[R_{1,\varepsilon},R_{1,\varepsilon}+2\delta],
\]
provided that $\varepsilon>0$ is sufficiently small, as asserted.
Now, via (\ref{eqdecayEqs}) and (\ref{eq:proGS2}), we arrive at
\[
-\varepsilon^2\Delta
\varphi+p(r)\varphi=\mathcal{O}(\varepsilon^\frac{5}{3})\exp\left\{c\frac{R_{1,\varepsilon}-r}{\varepsilon^\frac{2}{3}}
\right\},
\]
uniformly on $[R_{1,\varepsilon},R_{1,\varepsilon}+2\delta]$, as
$\varepsilon\to 0$. Keeping in mind (\ref{eqdecayPQ}) and
(\ref{eqdecayBdrys}), a standard barrier argument yields that
\[
\left|\varphi(r) \right|\leq C
\varepsilon\exp\left\{c\frac{R_{1,\varepsilon}-r}{\varepsilon^\frac{2}{3}}
\right\}+C\varepsilon
\exp\left\{c\frac{r-R_{1,\varepsilon}-2\delta}{\varepsilon^\frac{2}{3}}
\right\},\ \ r\in [R_{1,\varepsilon},R_{1,\varepsilon}+2\delta],
\]
which implies that
\[
\left|\varphi(r) \right|\leq 2C
\varepsilon\exp\left\{c\frac{R_{1,\varepsilon}-r}{\varepsilon^\frac{2}{3}}
\right\},\ \ r\in [R_{1,\varepsilon},R_{1,\varepsilon}+\delta],
\]
as asserted.
\end{proof}

\subsection{Improved estimate for the Lagrange multipliers}
In the sequel, building on Proposition
\ref{prop:convergence_lagrange_multipliers}, via the results of the
previous subsection, we are able to considerably improve the
estimate for $\lambda_{i,\varepsilon}-\lambda_{i,0}$ of the
aforementioned proposition.

\begin{proposition}\label{proLagrangeImproved}
If $\varepsilon>0$ is sufficiently small, we have
\[
|\lambda_{i,\varepsilon}-\lambda_{i,0}|\leq C|\log
\varepsilon|\varepsilon^2,\ \ i=1,2.
\]
\end{proposition}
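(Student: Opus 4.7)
The strategy is to refine the computation in the proof of Proposition~\ref{prop:convergence_lagrange_multipliers} by replacing each Cauchy-Schwarz $L^2$ estimate with a sharper pointwise estimate drawn from the previous subsection (Propositions~\ref{prounifAway} and \ref{prounifAlg}, Lemma~\ref{lemDecaytoetai}, and Lemma~\ref{lemsuperExp}). Recall that testing the first equation of \eqref{eq:main_eta_1_eta_2} by $\eta_{1,\varepsilon}$, subtracting $\lambda_{1,0}$, and using the algebraic identities $|x|^2+g_1a_1+ga_2=\lambda_{1,0}$ on $D_1$, $|x|^2+g_1a_1+ga_2-\lambda_{1,0}=g_1\Gamma\, a_{1,0}^-$ on $D_2\setminus D_1$, etc., yields the exact decomposition
\[
\lambda_{1,\varepsilon}-\lambda_{1,0}=\varepsilon^2\!\!\int|\nabla\eta_1|^2+g_1\!\!\int(\eta_1^2-a_1)^2+g\!\!\int(\eta_1^2-a_1)(\eta_2^2-a_2)+g_1\!\!\int a_1(\eta_1^2-a_1)+g\!\!\int a_1(\eta_2^2-a_2)+\mathcal{B}_1,
\]
where $\mathcal{B}_1=g_1\Gamma\int_{\mathbb{R}^2\setminus D_1}a_{1,0}^-\eta_1^2+g\int_{\mathbb{R}^2\setminus D_2}(a_{2,0}+\tfrac{g}{g_2}a_{1,0})^-\eta_1^2$. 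A symmetric identity holds for $\lambda_{2,\varepsilon}-\lambda_{2,0}$.

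The plan is to bound every piece by $C|\log\varepsilon|\varepsilon^2$, up to a controlled linear combination of $\delta\lambda_1,\delta\lambda_2$ absorbed on the left. The gradient term is handled by splitting the integration into (i) the Painlev\'e boundary layer $|r-R_{1,\varepsilon}|\leq D\varepsilon^{2/3}$ where $|\nabla\eta_1|\sim\varepsilon^{-1/3}$ by \eqref{eqthmInner1}, giving $\varepsilon^2\cdot\varepsilon^{-2/3}\cdot\varepsilon^{2/3}=\varepsilon^2$; (ii) the intermediate zone, where the algebraic estimate \eqref{eqthmAlg1} and an additional derivative estimate yield a logarithm; and (iii) the far field, controlled by Lemma~\ref{lemsuperExp}. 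The quadratic terms are already $\mathcal{O}(\varepsilon^2|\log\varepsilon|)$ by \eqref{etaiai}. The boundary terms $\mathcal{B}_1$ vanish outside $D_1$ except for a factor that is linear in $r-R_{1,0}$, paired against the exponentially small tail from Lemma~\ref{lemDecaytoetai}, which integrates to $\mathcal{O}(\varepsilon^2)$ via $\int_0^\infty t\,e^{-ct/\varepsilon^{2/3}}\,dt\sim\varepsilon^{4/3}$.

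The delicate point is the two linear-in-$(\eta_j^2-a_j)$ terms. For each I write $\eta_j^2-a_j=(\eta_j^2-a_{j,\varepsilon})+(a_{j,\varepsilon}-a_{j,0})$. The first piece is bounded using Proposition~\ref{prounifAway} in $\{|x|\leq R_{j,\varepsilon}-\delta\}$ (a uniform $\varepsilon^2$), Proposition~\ref{prounifAlg} in the intermediate zone (where $|\eta_j^2-a_{j,\varepsilon}|=\mathcal{O}(\varepsilon^2|r-R_{j,\varepsilon}|^{-2})$, paired against the linearly vanishing weight $a_i\leq C|r-R_{i,0}|$, producing the integral $\int_{\varepsilon^{2/3}}^\delta\varepsilon^2/t\,dt=\mathcal{O}(\varepsilon^2|\log\varepsilon|)$), and a crude $\mathcal{O}(\varepsilon^{2/3})$ bound in the boundary layer of width $\varepsilon^{2/3}$, where the linear weight $a_i$ supplies an extra factor $\varepsilon^{2/3}$ for a total of $\varepsilon^{2}$. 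The second piece is more subtle: on the common support $B_{R_{j,\varepsilon}}\cap B_{R_{j,0}}$, $a_{j,\varepsilon}-a_{j,0}$ is the constant $c_j=\tfrac{1}{g_j\Gamma}(\delta\lambda_j-\tfrac{g}{g_{3-j}}\delta\lambda_{3-j})$, so $\int a_i(a_{j,\varepsilon}-a_{j,0})=c_j+\mathcal{O}(|\delta\lambda|^2)$, the error coming from the thin annular region $B_{R_{j,\varepsilon}}\triangle B_{R_{j,0}}$ of width $\mathcal{O}(|\delta\lambda|)$.

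Putting this together produces a pair of equations of the form
\[
(I-M)\begin{pmatrix}\delta\lambda_1\\ \delta\lambda_2\end{pmatrix}=\vec\rho,\qquad |\vec\rho|\leq C|\log\varepsilon|\varepsilon^2+C|\delta\lambda|^2,
\]
with $M$ an explicit constant $2\times 2$ matrix. The bootstrap input $|\delta\lambda|\leq C\varepsilon|\log\varepsilon|^{1/2}$ from Proposition~\ref{prop:convergence_lagrange_multipliers} immediately upgrades the quadratic error to $\mathcal{O}(\varepsilon^2|\log\varepsilon|)$, and the non-degeneracy of $I-M$ (guaranteed by the coexistence condition $\Gamma>0$ together with the specific structure of the coupling $(g_1,g_2,g)$) then yields the claimed estimate for $\delta\lambda_i$. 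The implication for $R_{i,\varepsilon}-R_{i,0}$ is immediate from \eqref{eqR12}.

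The main obstacle is precisely this non-degeneracy step: a literal cancellation of the leading linear contributions turns the system into the tautology $0=\mathcal{O}(|\delta\lambda|^2)$, so one must carefully keep track of the $\mathcal{O}(|\delta\lambda|^2)$ corrections and use the rough bound as a bootstrap. In practice this is most conveniently achieved by estimating the two integrals $\int a_i(\eta_j^2-a_{j,\varepsilon})$ region by region and combining with the constraint $\int\eta_i^2=1$, which provides the additional relation needed to close the system.
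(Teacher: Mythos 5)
Your proposal and the paper share the same technical ingredients (region-by-region pointwise estimates from Propositions~\ref{prounifAway}--\ref{prounifAlg} and Lemmas~\ref{lemUnifeta2nearR1}--\ref{lemDecaytoetai}, plus a bootstrap from Proposition~\ref{prop:convergence_lagrange_multipliers}), but your primary plan --- refining the tested identity from the proof of Proposition~\ref{prop:convergence_lagrange_multipliers} --- actually leads nowhere. The ``non-degeneracy issue'' you flag at the end is not merely delicate; it is a complete cancellation. Write $c_j=\frac{1}{g_j\Gamma}(\delta\lambda_j-\tfrac{g}{g_{3-j}}\delta\lambda_{3-j})$ for the constant offset $a_{j,\varepsilon}-a_{j,0}$ on the common support. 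After splitting $\eta_j^2-a_j=(\eta_j^2-a_{j,\varepsilon}^+)+(a_{j,\varepsilon}^+-a_j)$ and estimating the first piece as $\mathcal{O}(\varepsilon^2|\log\varepsilon|)$, the linear terms of the tested identity reduce to $g_1c_1+gc_2$. A short computation using $\Gamma=1-\tfrac{g^2}{g_1g_2}$ shows
\[
g_1c_1+gc_2=\frac{1}{\Gamma}\Bigl(\delta\lambda_1-\frac{g}{g_2}\delta\lambda_2\Bigr)+\frac{g}{g_2\Gamma}\Bigl(\delta\lambda_2-\frac{g}{g_1}\delta\lambda_1\Bigr)=\delta\lambda_1
\]
\emph{exactly}, so the tested identity collapses to $0=\mathcal{O}\bigl(\varepsilon^2|\log\varepsilon|+|\delta\lambda|^2\bigr)$, which is vacuous given the rough bound $|\delta\lambda|\leq C\varepsilon|\log\varepsilon|^{1/2}$. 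The matrix $I-M$ in your setup is therefore singular (indeed zero), not merely degenerate, and there is no linear system to invert from the equation alone.

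The paper dispenses with the tested identity entirely and works directly from the normalization constraint: it proves $\int_{\mathbb{R}^2}(\eta_{1,\varepsilon}^2-a_{1,\varepsilon}^+)\,dx=\mathcal{O}(\varepsilon^2|\log\varepsilon|)$ (and the analogous region-by-region identity for $\eta_{2,\varepsilon}^2$) by exactly the splitting you describe, then subtracts $\int\eta_i^2=\int a_i=1$ to obtain two genuine linear relations in $(\delta\lambda_1,\delta\lambda_2)$, namely $\delta\lambda_1-\tfrac{g}{g_2}\delta\lambda_2=\mathcal{O}(\varepsilon^2|\log\varepsilon|)$ and a second relation whose leading coefficient reduces to $\lambda_{2,\varepsilon}(1-\tfrac{g}{g_2})$ after substitution; $g<g_2$ closes it. So your closing sentence --- ``combining with the constraint $\int\eta_i^2=1$, which provides the additional relation'' --- identifies the correct mechanism, but it is not an \emph{additional} relation supplementing the tested identity: it is the \emph{only} source of information, and the two constraints $\int\eta_i^2=1$, $i=1,2$, already furnish a determined system. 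The tested-identity portion of your plan should be discarded, not refined.
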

\begin{proof}
Motivated by the proof of the corresponding estimate for the
scalar equation, as given in \cite[Thm. 1.1]{KaraliSourdisGround},
we  first show that
\begin{equation}\label{eqLagr0bef}
                                 \int_{\mathbb{R}^2}^{}(\eta_{1,\varepsilon}^2-a_{1,\varepsilon}^+)dx=\mathcal{O}(|\log
\varepsilon|\varepsilon^2),
\end{equation}
\begin{equation}\label{eqLagr0}
                                 \int_{B_{R_{1,\varepsilon}}}^{}(\eta_{2,\varepsilon}^2-a_{2,\varepsilon})dx
+\int_{\mathbb{R}^2\setminus
B_{R_{1,\varepsilon}}}^{}\left[\eta_{2,\varepsilon}^2-\left(a_{2,\varepsilon}+\frac{g}{g_2}a_{1,\varepsilon}\right)^+\right]dx
=\mathcal{O}(|\log \varepsilon|\varepsilon^2),
                                                              \end{equation}
as $\varepsilon\to 0$, and then exploit that
\begin{equation}\label{eqLagr-}
\int_{\mathbb{R}^2}^{}\eta_{i,\varepsilon}^2dx=\int_{\mathbb{R}^2}^{}a_idx=1,\
\ i=1,2.
\end{equation}

It suffices to establish only the validity of estimate
(\ref{eqLagr0}) because that of (\ref{eqLagr0bef}) follows verbatim.
By (\ref{eqeta2cut}), (\ref{eqeta2check}), Proposition
\ref{prounifAlg} and Lemma \ref{lemUnifeta2nearR1}, we obtain that
\[
\begin{array}{rcl}
  \eta_{2,\varepsilon}^2-a_{2,\varepsilon} & = & \tilde{\eta}_{2,\varepsilon}^2-a_{2,\varepsilon}+
  \mathcal{O}\left(\varepsilon^2|r-R_{1,\varepsilon}|^{-1}\right) \\
   &  &  \\
    & =  & \frac{g}{g_2}(a_{1,\varepsilon}-\hat{\eta}_{1,\varepsilon}^2)+
  \mathcal{O}\left(\varepsilon^2|r-R_{1,\varepsilon}|^{-1}\right),
\end{array}
\]
uniformly on
$[R_{1,\varepsilon}-\delta,R_{1,\varepsilon}-D\varepsilon^\frac{2}{3}]$,
as $\varepsilon\to 0$. Analogously, making use of Lemma
\ref{lemUnifeta2nearR1}, we see that
\[
\eta_{2,\varepsilon}^2-a_{2,\varepsilon}=\frac{g}{g_2}(a_{1,\varepsilon}-\hat{\eta}_{1,\varepsilon}^2)+
  \mathcal{O}(\varepsilon^\frac{4}{3}),
\]
uniformly on
$[R_{1,\varepsilon}-D\varepsilon^\frac{2}{3},R_{1,\varepsilon}+D\varepsilon^\frac{2}{3}]$,
as $\varepsilon\to 0$. Hence, via Proposition \ref{prounifAway}, we
find that
\begin{equation}\label{eqLagr1}
\begin{array}{rcl}
                                \int_{B_{R_{1,\varepsilon}}}^{}(\eta_{2,\varepsilon}^2-a_{2,\varepsilon})dx &
                                = & \frac{g}{g_2}\int_{B_{R_{1,\varepsilon}}}^{}(a_{1,\varepsilon}-\hat{\eta}_{1,\varepsilon}^2)dx+
  \mathcal{O}(|\log \varepsilon|\varepsilon^2) \\
                                  &  &  \\
                                   & \stackrel{(\ref{eqGSuUnif})}{=} &
                                   \frac{g}{g_2}\int_{R_{1,\varepsilon}-\delta<|x|<R_{1,\varepsilon}}^{}(a_{1,\varepsilon}-\hat{\eta}_{1,\varepsilon}^2)dx+
  \mathcal{O}(|\log \varepsilon|\varepsilon^2),
                               \end{array}
\end{equation} as $\varepsilon\to 0$.
Similarly, keeping in mind (\ref{eq:proGS2}), we have
\begin{equation}\label{EqLagr2}
 \int_{R_{1,\varepsilon}<|x|<R_{2,\varepsilon}-\delta}^{}\left(\eta_{2,\varepsilon}^2
 -a_{2,\varepsilon}-\frac{g}{g_2}a_{1,\varepsilon} \right)dx=
 -\frac{g}{g_2}\int_{R_{1,\varepsilon}<|x|<R_{1,\varepsilon}+\delta}^{}
 \hat{\eta}_{1,\varepsilon}^2dx+\mathcal{O}(\varepsilon^2),
\end{equation}
as $\varepsilon\to 0$, where we use Lemmas
\ref{lemUnifeta2nearR1}-\ref{lemDecaytoetai} instead of Proposition
\ref{prounifAlg}. On the other side, thanks to (\ref{eqGSuUnif}) and
Proposition \ref{prounifAlg}, for $r\in
[R_{2,\varepsilon}-\delta,R_{2,\varepsilon}-D\varepsilon^\frac{2}{3}]$,
we find that
\[\begin{array}{rcl}
    \eta_{2,\varepsilon}^2
 -\left(a_{2,\varepsilon}+\frac{g}{g_2}a_{1,\varepsilon}\right) & = &
  \hat{\eta}_{2,\varepsilon}^2
 -\left(a_{2,\varepsilon}+\frac{g}{g_2}a_{1,\varepsilon}\right)+2\hat{\eta}_{2,\varepsilon}
 (\eta_{2,\varepsilon}-\hat{\eta}_{2,\varepsilon})
 +(\eta_{2,\varepsilon}-\hat{\eta}_{2,\varepsilon})^2 \\
      &   &   \\
 & = & \hat{\eta}_{2,\varepsilon}^2
 -\left(a_{2,\varepsilon}+\frac{g}{g_2}a_{1,\varepsilon}\right)
+\mathcal{O}\left(\varepsilon^2|r-R_{2,\varepsilon}|^{-1}\right),
  \end{array}
\]
uniformly, as $\varepsilon\to 0$. Analogously, making use of
(\ref{eqGSuUnif}) and (\ref{eqUniformOut}), we see that
\[
\eta_{2,\varepsilon}^2
 -\left(a_{2,\varepsilon}+\frac{g}{g_2}a_{1,\varepsilon}\right)=
 \hat{\eta}_{2,\varepsilon}^2
 -\left(a_{2,\varepsilon}+\frac{g}{g_2}a_{1,\varepsilon}\right)
+\mathcal{O}(\varepsilon^\frac{4}{3}),
\]
uniformly on
$[R_{2,\varepsilon}-D\varepsilon^\frac{2}{3},R_{2,\varepsilon}+D\varepsilon^\frac{2}{3}]$,
as $\varepsilon\to 0$. Thus,  we get that
\begin{equation}\label{eqLagr3}
\int_{R_{2,\varepsilon}-\delta<|x|<R_{2,\varepsilon}}^{}
\left(\eta_{2,\varepsilon}^2
 -a_{2,\varepsilon}-\frac{g}{g_2}a_{1,\varepsilon}\right)dx=
 \int_{R_{2,\varepsilon}-\delta<|x|<R_{2,\varepsilon}}^{}
\left(\hat{\eta}_{2,\varepsilon}^2
 -a_{2,\varepsilon}-\frac{g}{g_2}a_{1,\varepsilon}\right)dx
+\mathcal{O}(|\log \varepsilon|\varepsilon^2),
\end{equation}
as $\varepsilon\to 0$. Similarly, using Lemma \ref{lemDecaytoetai}
instead of Proposition \ref{prounifAlg}, keeping in mind
(\ref{eq:proGS2}), we obtain that
\begin{equation}\label{eqLagr4}
\int_{|x|>R_{2,\varepsilon}}^{}\eta_{2,\varepsilon}^2dx=
\int_{R_{2,\varepsilon}<|x|<R_{2,\varepsilon}+\delta}^{}\hat{\eta}_{2,\varepsilon}^2dx+\mathcal{O}(\varepsilon^2)\
\ \textrm{as}\ \varepsilon\to 0.
\end{equation}
Now, estimate  (\ref{eqLagr0}) follows readily by adding relations
(\ref{eqLagr1}), (\ref{EqLagr2}), (\ref{eqLagr3}), (\ref{eqLagr4}),
and using the estimates
\[
\int_{\left||x|-R_{1,\varepsilon} \right|<\delta}^{}
(\hat{\eta}_{1,\varepsilon}^2-a_{1,\varepsilon}^+)dx=\mathcal{O}(|\log
\varepsilon|\varepsilon^2),\ \ \int_{\left||x|-R_{2,\varepsilon}
\right|<\delta}^{}\left[\hat{\eta}_{2,\varepsilon}^2-\left(a_{2,\varepsilon}
+\frac{g}{g_2}a_{1,\varepsilon}\right)^+ \right]dx=\mathcal{O}(|\log
\varepsilon|\varepsilon^2),
\]
as $\varepsilon\to 0$, which follow from the proof of Theorem 1.1 in
\cite{KaraliSourdisGround}. The proof of relation (\ref{eqLagr0}) is
complete.

By virtue of (\ref{eqRi-R0}), increasing the value  of $D$, if
needed, we may assume that $|R_{i,\varepsilon}-R_{i,0}|\leq D |\log
\varepsilon|^\frac{1}{2}\varepsilon$, $i=1,2$, for small
$\varepsilon>0$. It follows from  (\ref{eqLagr0bef}),
(\ref{eqLagr0}) and (\ref{eqLagr-}), recalling
 (\ref{eqa12}) and (\ref{eq:convergence_lagrange_multipliers}),   that
\[
\int_{|x|<R_{1,0}-D|\log
\varepsilon|^\frac{1}{2}\varepsilon}^{}(a_{1,\varepsilon}-a_{1,0})dx=\mathcal{O}
(|\log \varepsilon|\varepsilon^2),
\]
\[\begin{array}{c}
    \int_{|x|<R_{1,0}-D|\log
\varepsilon|^\frac{1}{2}\varepsilon}^{}(a_{2,\varepsilon}-a_{2,0})dx+     \\
\\
  +
\int_{R_{1,0}+D|\log
\varepsilon|^\frac{1}{2}\varepsilon<|x|<R_{2,0}-D|\log
\varepsilon|^\frac{1}{2}\varepsilon}^{}\left[(a_{2,\varepsilon}-a_{2,0})
+\frac{g}{g_2}(a_{1,\varepsilon}-a_{1,0})\right]dx      =
\mathcal{O} (|\log \varepsilon|\varepsilon^2),
  \end{array}
 \]
as $\varepsilon\to 0$. In view of (\ref{eqa12}), (\ref{eqR12}), and
(\ref{eq:convergence_lagrange_multipliers}), this leads to the
following system:
\[
(\lambda_{1,\varepsilon}-\lambda_{1,0})-\frac{g}{g_2}(\lambda_{2,\varepsilon}-\lambda_{2,0})=\mathcal{O}
(|\log \varepsilon|\varepsilon^2),
\]
\[
\frac{1}{\Gamma}\left[(\lambda_{2,\varepsilon}-\lambda_{2,0})-\frac{g}{g_1}(\lambda_{1,\varepsilon}-\lambda_{1,0})
\right]\left(\lambda_{1,\varepsilon}-\frac{g}{g_2}\lambda_{2,\varepsilon}\right)
+(\lambda_{2,\varepsilon}-\lambda_{1,\varepsilon})(\lambda_{2,\varepsilon}-\lambda_{2,0})=\mathcal{O}
(|\log \varepsilon|\varepsilon^2),
\]
as $\varepsilon\to 0$. Now, recalling that $g<g_2$, the assertion of
the proposition follows straightforwardly.
\end{proof}

\subsection{Proof of Theorem \ref{thmLong}}\label{secProofs}
 \begin{proof}
 Let $(\eta_1,\eta_2)$ be the unique positive minimizer of
$E_\varepsilon^0$ in $\mathcal{H}$ provided by Theorem
\ref{thm:uniqueness} (2). We saw in Proposition
\ref{prop:convergence_lagrange_multipliers} that the associated
Lagrange multipliers $\la_{i,\ep}$ satisfy
$|\la_{i,\ep}-\la_{i,0}|\leq \ep |\log\ep|^{1/2}$, $i=1,2$. In view
of (\ref{eqUniformOut}) and Proposition \ref{proPositive}, the
solution $(\eta_{1,\varepsilon},\eta_{2,\varepsilon})$ that is
provided by Proposition \ref{proExistSystem} also fashions a
positive radial solution of the system (\ref{eq:main_eta_1_eta_2}),
with \emph{the same} Lagrange multipliers $\la_{i,\ep}$.  Therefore,
by Theorem \ref{thm:uniqueness} (1), it coincides with
$(\eta_1,\eta_2)$.
%In particular, it holds that
%$\|\eta_{i,\varepsilon}\|_{L^2(\mathbb{R}^2)}=1$, $i=1,2$.

%Let $(\eta_1,\eta_2)$ be as in Proposition
%\ref{prop:existence_radial_minimizer}, namely a positive radial minimizer of
%$E_\varepsilon^0$ in $\mathcal{H}$. The solution
%$(\eta_{1,\varepsilon},\eta_{2,\varepsilon})$ that is provided by
%Theorem  \ref{thmTHM} also fashions a positive radial solution of
%the system (\ref{eq:main_eta_1_eta_2}), with \emph{the same}
%Lagrange multipliers $\la_{i,\ep}$ as those corresponding to
%$(\eta_1,\eta_2)$. Moreover, its components tend to zero at
%infinity. Therefore, by Proposition
%\ref{proUniqGeneral}, it coincides with
%$(\eta_1,\eta_2)$. In particular, it holds that
%$\|\eta_{i,\varepsilon}\|_{L^2(\mathbb{R}^2)}=1$, $i=1,2$.
%Consequently, by the proof of Theorem \ref{thm:uniqueness}, we infer
%that the pair $(\eta_{1,\varepsilon},\eta_{2,\varepsilon})$, as
%provided by Theorem \ref{thmTHM}, is the unique minimizer of
%$E^0_\ep$ in $\mathcal{H}^+$.

Estimate (\ref{eqthmLagrimproved}) is proven in Proposition
\ref{proLagrangeImproved}. Estimates
(\ref{eqthmUnifaway1})-(\ref{eqthmUnifaway2}) follow from
Proposition \ref{prounifAway}, the definition of
$\check{\eta}_{i,\varepsilon}$, and the second estimate in
(\ref{eqGSuUnif}). Estimates (\ref{eqthmAlg1})-(\ref{eqthmAlg2})
follow readily from Proposition \ref{prounifAlg}, the definition of
$\check{\eta}_{i,\varepsilon}$ (especially recall (\ref{eqeta2cut})
for the second estimate in (\ref{eqthmAlg1})), and (\ref{eqGSuAlg}).
Estimate (\ref{eqthmdecay1}) follows readily from Lemma
\ref{lemsuperExp}, (\ref{eqGSuUnif}) and (\ref{eqUniformOut});
estimate (\ref{eqthmdecay2}) follows from (\ref{eq:proGS2}),
(\ref{eqeta2cut}) and Lemma \ref{lemDecaytoetai}. Finally, relations
(\ref{eqthmInner1}), (\ref{eqthmInner2}) and (\ref{eqthmInner3}) are
consequences of (\ref{eq:proGS4}), (\ref{eqeta2cut}),
(\ref{eqUniformOut}) and Lemma \ref{lemDecaytoetai}.
\end{proof}

\subsection{Proof of Theorem \ref{thmMain}}
The desired  minimizer $(\eta_{1,\varepsilon},\eta_{2,\varepsilon})$
is that of Theorem \ref{thmLong}. Clearly, estimate (\ref{eq14Lagr})
is the same as (\ref{eqthmLagrimproved}). Estimate
(\ref{eq:main_theorem_relation2}) follows readily by combining
(\ref{eqa12}), (\ref{eqthmAlg1}), (\ref{eqthmAlg2}),
(\ref{eqthmdecay1}), (\ref{eqthmdecay2}), (\ref{eqthmInner2}),  and
(\ref{eqthmInner3}). Estimate (\ref{eq14complex}) follows readily
from (\ref{eqthmUnifaway1}), (\ref{eqthmUnifaway2}),
(\ref{eqthmAlg1}) and (\ref{eqthmAlg2}). In view of
(\ref{eq:a_i_def}), (\ref{eqthmUnifaway1}) and
(\ref{eqthmUnifaway2}), we infer that (\ref{eq14bootstrap}) holds.
Finally, the decay estimate (\ref{eq14decay}) follows immediately
from (\ref{eqthmdecay1}).

%%%%%%%%%%%%%%%%%%%%%%%%%%%%%%%%%%%%%%%%%%%%%%%%%%%%%%%%%%%%%%%%%%%%%%
%%%%%%%%%%%%%%%%%%%%%%%%%%%%%%%%%%%%%%%%%%%%%%%%%%%%%%%%%%%%%%%%%%%%%%
%%%%%%%%%%%%%%%%%%%%%%%%%%%%%%%%%%%%%%%%%%%%%%%%%%%%%%%%%%%%%%%%%%%%%%

\section{Estimates for the annulus case}\label{secann}
In this section, we explain how to extend the previous section to prove Theorem \ref{thmMainann}.
\subsection{Construction of an approximate solution}
\subsubsection{Outer approximations}
As before, we  work with the equivalent problem
(\ref{eqsystem}), where $a_{1,\varepsilon}$, $a_{2,\varepsilon}$ are
the same as in (\ref{eqa12}), and $\lambda_{1,\varepsilon}$,
$\lambda_{2,\varepsilon}$ are provided by Theorem
\ref{thm:uniqueness} in the case of (\ref{eqannulus}). This time,
the problem with both diffusion terms neglected  has a unique
continuous, nonnegative solution given by
\[
\begin{array}{lll}
  \eta_1=\left(a_{1,\varepsilon}+\frac{g}{g_1}a_{2,\varepsilon} \right)^\frac{1}{2}, & \eta_2=0,  & 0\leq r \leq R_{2,\varepsilon}^-, \\
    &   &   \\
  \eta_1=a_{1,\varepsilon}^\frac{1}{2}, & \eta_2=a_{2,\varepsilon}^\frac{1}{2}, & R_{2,\varepsilon}^-\leq r\leq R_{1,\varepsilon}, \\
    &   &   \\
  \eta_1=0,  & \eta_2=\left(a_{2,\varepsilon}+\frac{g}{g_2}a_{1,\varepsilon} \right)^\frac{1}{2}, &R_{1,\varepsilon} \leq r\leq R_{2,\varepsilon}^+, \\
    &   &   \\
  \eta_1=0, & \eta_2=0, & r\geq R_{2,\varepsilon}^+,
\end{array}
\]
where \[
(R_{2,\varepsilon}^-)^2=\frac{1}{\Gamma_1}\left(\lambda_{2,\varepsilon}-\frac{g}{g_1}\lambda_{1,\varepsilon}
\right),\ \ (R_{2,\varepsilon}^+)^2=\lambda_{2,\varepsilon},
\]
and
 \[
R_{1,\varepsilon}^2=\frac{1}{\Gamma_2}\left(\lambda_{1,\varepsilon}-\frac{g}{g_2}\lambda_{2,\varepsilon}
\right).
\]

In view of (\ref{eq:convergence_lagrange_multipliers}), and Remark \ref{rem3.4}, we have that
\begin{equation}\label{riest}
|R_{1,\varepsilon}-R_{1,0}|+|R_{2,\varepsilon}^\pm-R_{2,0}^\pm|\leq
C |\log \varepsilon|^\frac{1}{2}\varepsilon.
\end{equation}
\subsubsection{Inner approximations}
Here, we  define approximate solutions of the problem in
 overlapping intervals around each point
$R_{2,0}^-<R_{1,0}<R_{2,0}^+$.

On  $[0,R_{1,0}-\delta]$, where $\sqrt{a_1}$ is away from zero and
has bounded gradient,
%say when $|r-R_{2,\varepsilon}^-|\leq
%100 \delta$,
we neglect only the term $\varepsilon^2 \Delta \eta_{1,\varepsilon}$
from (\ref{eqsystem}), and get the following problem:
\[
\left\{\begin{array}{c}
  g_1\eta_1\left(\eta_1^2-a_{1,\varepsilon}(r) \right)+g\eta_1\left(\eta_2^2-a_{2,\varepsilon}(r) \right)=0, \\
   \\
-\varepsilon^2\Delta
\eta_2+g_2\eta_2\left(\eta_2^2-a_{2,\varepsilon}(r)
\right)+g\eta_2\left(\eta_1^2-a_{1,\varepsilon}(r) \right)=0.
\end{array}\right.
\]
From the first equation, we find that
\begin{equation}\label{eqeta1nearR2-}
\eta_1^2=a_{1,\varepsilon}+\frac{g}{g_1}(a_{2,\varepsilon}-\eta_2^2).
\end{equation}
Then, from the second equation, we obtain that
\[
-\varepsilon^2\Delta \eta_2+\left(g_2-\frac{g^2}{g_1}
\right)\eta_2(\eta_2^2-a_{2,\varepsilon})=0.
\]

The function $a_{2,\varepsilon}$ is  negative in
$[0,R_{2,\varepsilon}^-)$ and positive in $(R_{2,\varepsilon}^-,
\infty)$. We consider a function $A_{2,\varepsilon}$ which coincides
with $a_{2,\varepsilon}$ on $[0,R_{1,0}+\delta]$, changes sign once
in $(R_{1,0}+\delta,\infty)$, and diverges to $-\infty$ as $r\to
\infty$. We then take as an approximation for $\eta_2$ on
$[0,R_{1,0}-\delta]$ the restriction of the unique positive
 solution $\hat{\eta}_{2,\varepsilon}^-$ of the problem
\[
\varepsilon^2 \Delta \eta=\left(g_2-\frac{g^2}{g_1} \right)\eta
\left(\eta^2-A_{2,\varepsilon}(r) \right)\ \ \textrm{in}\
\mathbb{R}^2,\ \ \eta\to 0\ \textrm{as}\ r\to \infty.
\]
The properties of $\hat{\eta}_{2,\varepsilon}^-$ which we
require are contained in Appendix A. Accordingly, we take as an
approximation for $\eta_1$ on $[0,R_{1,0}-\delta]$ the one given by
(\ref{eqeta1nearR2-}) with $\hat{\eta}_{2,\varepsilon}^-$ in place
of $\eta_2$. The  approximations for
$\eta_{1,\varepsilon}$ and $\eta_{2,\varepsilon}$ on
$[R_{1,0}+\delta,R_{2,0}^+-\delta]$ are the same ones as in the
case of two disks, namely those given by (\ref{eqgroundstate1}) and
(\ref{eqeta2cut}) respectively.
 Analogously, if $r\geq R_{1,0}+\delta$, we take  as an approximation for $\eta_1$ the trivial solution, while for $\eta_2$ the unique positive
solution of the problem (\ref{eqgroundstate2}) which we now call
$\hat{\eta}_{2,\varepsilon}^+$.
\subsubsection{Gluing approximate solutions}
Let
\[
r_\varepsilon=\frac{R_{2,\varepsilon}^-+R_{1,\varepsilon}}{2},\ \
R_\varepsilon=\frac{R_{1,\varepsilon}+R^+_{2,\varepsilon}}{2}.
\]

Analogously to Subsection \ref{subApprox}, we can define a smooth
global approximate solution
$(\check{\eta}_{1,\varepsilon},\check{\eta}_{2,\varepsilon})$ such
that
\begin{equation}\label{eqeta1checkannulus}
\check{\eta}_{1,\varepsilon}=\left\{\begin{array}{ll}
                                      \left(a_{1,\varepsilon}+\frac{g}{g_1}a_{2,\varepsilon}-
                                      \frac{g}{g_1}(\hat{\eta}^-_{2,\varepsilon})^2
\right)^\frac{1}{2}, & 0\leq r\leq r_\varepsilon, \\
                                        &   \\
                                      a_{1,\varepsilon}^\frac{1}{2}+\mathcal{O}_{C^2}(\varepsilon^2), &
                                      r_\varepsilon\leq r \leq r_\varepsilon+\delta, \\
                                        &   \\
                                      \hat{\eta}_{1,\varepsilon}, & r_\varepsilon+\delta\leq
                                      r,
                                    \end{array}
 \right.
\end{equation}

\begin{equation}\label{eqeta2checkannulus}
\check{\eta}_{2,\varepsilon}=\left\{\begin{array}{ll}
                                      \hat{\eta}^-_{2,\varepsilon}, & 0\leq r\leq r_\varepsilon, \\
                                        &   \\
                                      a_{2,\varepsilon}^\frac{1}{2}+\mathcal{O}_{C^2}(\varepsilon^2), &
                                      r_\varepsilon\leq r \leq r_\varepsilon+\delta, \\
                                        &   \\
                                      \left(a_{2,\varepsilon}+\frac{g}{g_2}a_{1,\varepsilon}-
                                      \frac{g}{g_2}\hat{\eta}^2_{1,\varepsilon}
\right)^\frac{1}{2}, & r_\varepsilon+\delta\leq r\leq R_\varepsilon, \\
                                        &   \\
                                      \left(a_{2,\varepsilon}+\frac{g}{g_2}a_{1,\varepsilon}
\right)^\frac{1}{2}+\mathcal{O}_{C^2}(\varepsilon^2), & R_\varepsilon\leq r \leq R_\varepsilon+\delta, \\
                                        &   \\
                                      \hat{\eta}_{2,\varepsilon}^+, &
                                      R_\varepsilon+\delta\leq r.
                                    \end{array}
 \right.
\end{equation}
\subsection{Estimates for the error on the approximate solution}

The remainder $\mathcal{E}(\check{\eta}_{1,\varepsilon},
\check{\eta}_{2,\varepsilon})$ that is left when substituting the
approximate solution $(\check{\eta}_{1,\varepsilon},
\check{\eta}_{2,\varepsilon})$ to the system (\ref{eqsystem}) is as
in (\ref{eqError}).

For convenience, we  set
\begin{equation}\label{Aepsann}
A_\varepsilon=\{x\in \mathbb{R}^2\ :\
r_\varepsilon<|x|<R_\varepsilon \}.
\end{equation}
Analogously to Proposition \ref{proerror} for the case of two disks,
we have
\begin{proposition}\label{proerrorann}
The following estimates hold for small $\varepsilon>0$:
\[
\|E_1\|_{L^2(A_\varepsilon)}\leq C\varepsilon^2,\ \
\|E_2\|_{L^2(A_\varepsilon)}\leq C\varepsilon^\frac{5}{3},
\]
and
\[
\|E_1\|_{L^2(\mathbb{R}^2\setminus A_\varepsilon)}\leq
C\varepsilon^\frac{5}{3},\ \|E_2\|_{L^2(\mathbb{R}^2\setminus
A_\varepsilon)}\leq C\varepsilon^2.
\]
\end{proposition}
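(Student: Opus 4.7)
The plan is to carry out a region-by-region analysis analogous to the proof of Proposition \ref{proerror}, now with two internal transition points $R_{2,\varepsilon}^-$ and $R_{1,\varepsilon}$ flanking $r_\varepsilon$, in place of the single point $R_{1,\varepsilon}$ of the two-disk case. By design, each ``coexistence'' block of the construction (\ref{eqeta1checkannulus})-(\ref{eqeta2checkannulus}) is set up so that one of the two components of the error vanishes identically, leaving only a single $\varepsilon^2$-Laplacian term to control in $L^2$. The domain $A_\varepsilon$ and its complement each contain precisely one such Laplacian contribution, which accounts for the asymmetric $\varepsilon^{5/3}$ vs.\ $\varepsilon^2$ bounds.

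Concretely, on $[0,r_\varepsilon]$ the algebraic identity $\check{\eta}_{1,\varepsilon}^{\,2}-a_{1,\varepsilon}=\frac{g}{g_1}\bigl(a_{2,\varepsilon}-(\hat{\eta}_{2,\varepsilon}^-)^2\bigr)$ combines with the equation satisfied by $\hat{\eta}_{2,\varepsilon}^-$ (whose modified potential $A_{2,\varepsilon}$ agrees with $a_{2,\varepsilon}$ on this interval) to produce $E_2\equiv 0$ and $E_1=-\varepsilon^2\Delta\check{\eta}_{1,\varepsilon}$. Symmetrically, on $[r_\varepsilon+\delta, R_\varepsilon]$, the identity $\tilde{\eta}_{2,\varepsilon}^{\,2}-a_{2,\varepsilon}=\frac{g}{g_2}(a_{1,\varepsilon}-\hat{\eta}_{1,\varepsilon}^2)$ together with (\ref{eqgroundstate1}) gives $E_1\equiv 0$ and $E_2=-\varepsilon^2\Delta\tilde{\eta}_{2,\varepsilon}$. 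On $[R_\varepsilon+\delta,\infty)$, where $\check{\eta}_{1,\varepsilon}\equiv 0$ and $\check{\eta}_{2,\varepsilon}=\hat{\eta}_{2,\varepsilon}^+$ solves (\ref{eqgroundstate2}) exactly, both errors vanish identically; on the two $C^2$-interpolation intervals $[r_\varepsilon,r_\varepsilon+\delta]$ and $[R_\varepsilon,R_\varepsilon+\delta]$, the built-in $\mathcal{O}_{C^2}(\varepsilon^2)$ matching error, combined with the uniform boundedness of all smooth coefficients there, yields $\|E_i\|_{L^\infty}\leq C\varepsilon^2$ and hence $\|E_i\|_{L^2}\leq C\varepsilon^2$. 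Collecting these contributions, the stated bounds follow once the two $L^2$-norms of the Laplacian terms are shown to be of order $\varepsilon^{5/3}$.

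The estimate $\|\varepsilon^2\Delta\tilde{\eta}_{2,\varepsilon}\|_{L^2([r_\varepsilon+\delta, R_\varepsilon])}\leq C\varepsilon^{5/3}$ is literally the content of the corresponding step in the proof of Proposition \ref{proerror} and transfers verbatim. The remaining bound $\|\varepsilon^2\Delta\check{\eta}_{1,\varepsilon}\|_{L^2([0,r_\varepsilon])}\leq C\varepsilon^{5/3}$ is structurally identical: writing $\check{\eta}_{1,\varepsilon}=\bigl(F_\varepsilon-\frac{g}{g_1}(\hat{\eta}_{2,\varepsilon}^-)^2\bigr)^{1/2}$ with $F_\varepsilon:=a_{1,\varepsilon}+\frac{g}{g_1}a_{2,\varepsilon}$ smooth and bounded below on $[0,r_\varepsilon]$, the only singular contributions to $\Delta\check{\eta}_{1,\varepsilon}$ come from terms of the form $(\hat{\eta}_{2,\varepsilon}^-)^2|\nabla\hat{\eta}_{2,\varepsilon}^-|^2$ and $\hat{\eta}_{2,\varepsilon}^-\Delta\hat{\eta}_{2,\varepsilon}^-+|\nabla\hat{\eta}_{2,\varepsilon}^-|^2$ in a neighborhood of $R_{2,\varepsilon}^-$. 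The same three-zone decomposition as in Proposition \ref{proerror} applies: outside an $\varepsilon^{7/12}$-neighborhood of $R_{2,\varepsilon}^-$, the algebraic estimates (\ref{eqGSuAlg}) provide an $L^\infty$-bound of order $\varepsilon^{-1/3}$; inside that neighborhood, the Painlev\'e-II inner expansions analogous to (\ref{eqerrorLong})-(\ref{eqerrorLong2}), together with the cancellation (\ref{eqpainlevecancel}), reduce the otherwise leading-order $\varepsilon^{-2/3}$ divergence of $\hat{\eta}_{2,\varepsilon}^-\Delta\hat{\eta}_{2,\varepsilon}^-+|\nabla\hat{\eta}_{2,\varepsilon}^-|^2$ to an integrable remainder, giving the required $L^2$-bound of order $\varepsilon^{-1/3}$.

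The main obstacle, which is essentially one of bookkeeping, is to verify that the full set of inner Painlev\'e-II expansions and pointwise derivative estimates used for $\hat{\eta}_{1,\varepsilon}$ in Proposition \ref{proerror} applies equally to $\hat{\eta}_{2,\varepsilon}^-$. This is the case because $\hat{\eta}_{2,\varepsilon}^-$ solves a scalar problem of precisely the form covered by Proposition \ref{proGS} (with the coefficient $g_1\Gamma$ replaced by $g_2-g^2/g_1>0$, the transition radius $R_{1,\varepsilon}$ by $R_{2,\varepsilon}^-$, and the potential $a_{1,\varepsilon}$ by $A_{2,\varepsilon}$, which is of the required form on a neighborhood of $R_{2,\varepsilon}^-$), so all the relevant pointwise, algebraic, and derivative estimates from Proposition \ref{proGS} and the more general version assembled in Appendix \ref{secAppenGS} transfer without modification.
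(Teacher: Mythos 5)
Your proposal is correct and follows the same route the paper intends: the paper gives no explicit proof of Proposition \ref{proerrorann}, stating only that it holds ``analogously to Proposition \ref{proerror},'' and you have worked out precisely the correct region-by-region analogy, with the right identification of which component of the error reduces to $-\varepsilon^2\Delta$ of the coexistence ansatz in each zone, and the right appeal to the Painlev\'e cancellation near each transition radius. One minor imprecision: on $[R_\varepsilon+\delta,\infty)$, the construction (\ref{eqeta1checkannulus}) sets $\check{\eta}_{1,\varepsilon}=\hat{\eta}_{1,\varepsilon}$ with no cutoff (unlike the two-disk case (\ref{eqeta1check})), so $E_1$ and $E_2$ do not literally vanish there; they are instead of order $e^{-c\varepsilon^{-2/3}}$ by (\ref{eq:proGS2}), which is of course negligible and does not affect the stated bounds. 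You might also remark, as the paper does at the close of Section \ref{secann}, that near $R_{2,\varepsilon}^-$ the potential $A_{2,\varepsilon}$ changes sign from negative to positive, which is the opposite orientation to the situation at $R_{1,\varepsilon}$; the estimates of Proposition \ref{proGS} and Theorem \ref{thmGSscalar} still apply (this is exactly the multi-zero setting Appendix \ref{secAppenGS} is written to cover), but the reader needs to check the sign convention on $(-1)^i a'(r_i)$ is consistent with $a(0)<0$, which your last paragraph glosses over.
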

\subsection{Linear analysis}\label{subsectionLinearann}

 In  the sequel, we consider the linearization of (\ref{eqsystem}) about the approximate
solution
$(\check{\eta}_{1,\varepsilon},\check{\eta}_{2,\varepsilon})$,
namely the linear operator that is given by
(\ref{eqlinearization--}) for this choice of
$(\check{\eta}_{1,\varepsilon},\check{\eta}_{2,\varepsilon})$.

As in the case of two disks, using that
\[3\check{\eta}_{2,\varepsilon}^2-a_{2,\varepsilon}\geq \left\{\begin{array}{ll}
                                                                 c\max\{\varepsilon^\frac{2}{3},\check{\eta}_{2,\varepsilon}^2 \}, & |r
                                                                 -R_{2,\varepsilon}^-|\leq \delta, \\
                                                                   &   \\
                                                                  c,
                                                                  &r\in
                                                                  [0,R_{2,\varepsilon}^--\delta]\cup
                                                                  [R_{2,\varepsilon}^-+\delta,r_\varepsilon],
                                                                  \end{array}
 \right. \]
\[3\check{\eta}_{1,\varepsilon}^2-a_{1,\varepsilon}\geq \left\{\begin{array}{ll}
                                                                 c\max\{\varepsilon^\frac{2}{3},\check{\eta}_{1,\varepsilon}^2 \}, & |r
                                                                 -R_{1,\varepsilon}|\leq \delta, \\
                                                                   &   \\
                                                                  c, &
                                                                 r\in[r_\varepsilon,R_{1,\varepsilon}-\delta]\cup
                                                                 [R_{1,\varepsilon}+\delta,R_\varepsilon],
                                                                  \end{array}
 \right. \]
and
\[3\check{\eta}_{2,\varepsilon}^2-a_{2,\varepsilon}-\frac{g}{g_2}a_{1,\varepsilon}\geq \left\{\begin{array}{ll}
                                                                 c\max\{\varepsilon^\frac{2}{3},\check{\eta}_{2,\varepsilon}^2 \}, & |r
                                                                 -R_{2,\varepsilon}^+|\leq \delta, \\
                                                                   &   \\
                                                                  c, &
                                                                  r
                                                                  \in
                                                                  [R_\varepsilon,R_{2,\varepsilon}^+-\delta]\cup
                                                                  [R_{2,\varepsilon}^++\delta,\infty),
                                                               \end{array}
 \right. \]we can establish an analog of Proposition \ref{proL=f}.
\begin{proposition}\label{proL=fann}
The assertions of Proposition \ref{proL=f} are valid, provided that in
(\ref{eqapriori1}) and in the definition of the
$\vertiii{\cdot}$-norm in (\ref{eqNormtriple}),
$B_{R_\varepsilon}$ is replaced by $A_\varepsilon$ defined in (\ref{Aepsann}).
\end{proposition}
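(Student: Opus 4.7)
The plan is to run the proof of Proposition~\ref{proL=f} essentially verbatim, with the two-region decomposition $\mathbb{R}^2 = B_{R_\varepsilon}\cup(\mathbb{R}^2\setminus B_{R_\varepsilon})$ replaced by the three-region decomposition $\mathbb{R}^2=B_{r_\varepsilon}\cup A_\varepsilon\cup(\mathbb{R}^2\setminus B_{R_\varepsilon})$ dictated by the new geometry. On each piece, exactly one component of $(\check{\eta}_{1,\varepsilon},\check{\eta}_{2,\varepsilon})$ exhibits a Painlev\'e-type transition while the other is a bulk satisfying $\check{\eta}_{i,\varepsilon}^2\geq c$: the transitioning component is $\check{\eta}_{2,\varepsilon}=\hat{\eta}_{2,\varepsilon}^-$ near $R_{2,\varepsilon}^-$ on $B_{r_\varepsilon}$, then $\check{\eta}_{1,\varepsilon}=\hat{\eta}_{1,\varepsilon}$ near $R_{1,\varepsilon}$ on $A_\varepsilon$, and finally $\check{\eta}_{2,\varepsilon}=\hat{\eta}_{2,\varepsilon}^+$ near $R_{2,\varepsilon}^+$ on $\mathbb{R}^2\setminus B_{R_\varepsilon}$. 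This is exactly what the weights in the $\vertiii{\cdot}$-norm encode: $\varepsilon^{2/3}$ on the transitioning component (so $\varphi$ in $A_\varepsilon$, $\psi$ on the complement) and $1$ on the bulk one.

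The energy estimate on $A_\varepsilon$ is a carbon copy of Proposition~\ref{proL} in the two-disk case, using the coexistence identity $\check{\eta}_{2,\varepsilon}^2-a_{2,\varepsilon}=-\frac{g}{g_2}(\check{\eta}_{1,\varepsilon}^2-a_{1,\varepsilon})$ from \eqref{eqeta2checkannulus}, the rewriting \eqref{eqlinearization}, and the second listed pointwise lower bound. The outer estimate on $\mathbb{R}^2\setminus B_{R_\varepsilon}$ is also unchanged: the operator decouples as $\check{\eta}_{1,\varepsilon}\equiv 0$, the $\varphi$-potential is $\geq c>0$ since $a_{1,\varepsilon}\leq -c$ there while $|\check{\eta}_{2,\varepsilon}^2-a_{2,\varepsilon}-\frac{g}{g_2}a_{1,\varepsilon}|=\mathcal{O}(\varepsilon^{2/3})$, and the $\psi$-potential is precisely the third listed lower bound. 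The only genuinely new ingredient is the energy estimate on $B_{r_\varepsilon}$, where \eqref{eqeta1checkannulus} gives the \emph{opposite} coexistence identity $\check{\eta}_{1,\varepsilon}^2-a_{1,\varepsilon}=-\frac{g}{g_1}(\check{\eta}_{2,\varepsilon}^2-a_{2,\varepsilon})$. Substituting it into the \emph{second} row of $\mathcal{L}$ (the symmetric counterpart of \eqref{eqlinearization} with indices $1$ and $2$ exchanged) and testing by $(\varphi,\psi)$ in $L^2(B_{r_\varepsilon})$ produces
\[
\int_{B_{r_\varepsilon}}\!\Big\{\varepsilon^2|\nabla\varphi|^2+\varepsilon^2|\nabla\psi|^2+\Big(g_2-\tfrac{g^2}{g_1}\Big)(3\check{\eta}_{2,\varepsilon}^2-a_{2,\varepsilon})\psi^2+2\Big(\tfrac{g}{\sqrt{g_1}}\check{\eta}_{2,\varepsilon}\psi+\sqrt{g_1}\,\check{\eta}_{1,\varepsilon}\varphi\Big)^2\Big\}\,dx.
\]
The first listed bound then yields coercivity $c(\varepsilon^{2/3}+\check{\eta}_{2,\varepsilon}^2)\psi^2$. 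Full (weight-$1$) coercivity on $\varphi^2$ is recovered, in the style of \eqref{eqLbasic2}, by testing the first row separately against $\varphi$ and using $\check{\eta}_{1,\varepsilon}^2\geq c$ throughout $B_{r_\varepsilon}$, which is visible from \eqref{eqeta1checkannulus} since $a_{1,\varepsilon}+\frac{g}{g_1}a_{2,\varepsilon}\geq c$ on $[0,R_{2,\varepsilon}^-]$ and $a_{1,\varepsilon}\geq c$ on $[R_{2,\varepsilon}^-,r_\varepsilon]$.

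Summing the three local inequalities delivers the global coercivity bound $(\mathcal{L}(\varphi,\psi),(\varphi,\psi))\geq c\,\vertiii{(\varphi,\psi)}^2$, whence invertibility of $\mathcal{L}$ by self-adjointness and triviality of the kernel; the a~priori estimates \eqref{eqapriori1}--\eqref{eqapriori3} (with $A_\varepsilon$ in place of $B_{R_\varepsilon}$) then follow from the very same Young-inequality absorption arguments as in the two-disk proof. The main technical check is that the $\mathcal{O}_{C^2}(\varepsilon^2)$ gluing corrections built into \eqref{eqeta1checkannulus}--\eqref{eqeta2checkannulus} do not spoil either coexistence identity or the three pointwise potential lower bounds listed before the statement; this is routine because those bounds already carry a margin of order $\varepsilon^{2/3}\gg\varepsilon^2$ in the interpolation zones $[r_\varepsilon,r_\varepsilon+\delta]$ and $[R_\varepsilon,R_\varepsilon+\delta]$, so the perturbations are absorbed without modification of any constant.
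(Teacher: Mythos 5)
Your proof is correct and follows precisely the route the paper intends but leaves implicit: a three-region decomposition of $\mathbb{R}^2$ with the annulus $A_\varepsilon$ in the middle, local coercivity estimates obtained via the mirror-image coexistence identity and completion of the square on $B_{r_\varepsilon}$, the unchanged argument on $A_\varepsilon$ and on $\mathbb{R}^2\setminus B_{R_\varepsilon}$, and the same Young-inequality absorption as in Proposition \ref{proL=f} to recover (\ref{eqapriori1})--(\ref{eqapriori3}) with $B_{R_\varepsilon}$ replaced by $A_\varepsilon$. Your identification of the transitioning component on each piece and the observation that the gluing corrections are harmless because the potentials there carry an $\mathcal{O}(1)$ margin are exactly what is needed to justify the statement.
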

\subsection{Existence and properties of a positive solution to the system (\ref{eqsystem})}
As in Subsection \ref{subsecExPert}, using the properties of the
linearized operator that we discussed above, we construct a
positive, radial solution
$(\eta_{1,\varepsilon},\eta_{2,\varepsilon})$ to
(\ref{eqsystem}), near the approximate one
$(\check{\eta}_{1,\varepsilon},\check{\eta}_{2,\varepsilon})$, for
small $\varepsilon>0$. As before, the first part of the uniqueness
Theorem \ref{thm:uniqueness}  guarantees that this solution is
the desired minimizer.

Using the $\vertiii{\cdot}$-norm, as redefined in Proposition
\ref{proL=fann}, we can show that Propositions \ref{proT},
\ref{proExistSystem} and Corollary \ref{cor1} remain unchanged. We
still denote the corresponding solution by
$(\eta_{1,\varepsilon},\eta_{2,\varepsilon})$. The assertion of the
Lemma \ref{lemunifBlowUp} also remains the same. The only difference
in the proof is  that, say in the equation for $v_n$, we rearrange
the terms differently, namely write
\[
-\Delta v_n+g_1v_n^3-\left[g_1
a_{1,\varepsilon_n}(x_n+\varepsilon_n\mu_n y)+g
a_{2,\varepsilon_n}(x_n+\varepsilon_n \mu_n
y)\right]\mu_n^2v_n+g\mu_n^2
\eta_{2,\varepsilon_n}^2(x_n+\varepsilon_n \mu_n y) v_n=0,
\]
with\[ \|\eta_{2,\varepsilon}\|_{L^p(B_\delta)}\leq C_p
\varepsilon^{\frac{2}{3}+\frac{4}{3p}},\ p\geq 2.
\]
Then, the analog of Corollary \ref{corunifInter} is
\[
\|\eta_{1,\varepsilon}-\sqrt{a_{1,\varepsilon}+\frac{g}{g_1}a_{2,\varepsilon}}\|_{L^\infty(B_\delta)}+
\|\eta_{2,\varepsilon}\|_{L^\infty(B_\delta)} \leq
C\varepsilon^\frac{1}{3}.
\]
The positivity of the constructed solution, namely  the analog of
Proposition \ref{proPositive}, requires some additional
considerations, since $\eta_{2,\varepsilon}$ is also small in the
disk $|x|<R_{2,\varepsilon}^-$:
%%%%%%%%%%%%%%%%%%%%%%%%%%%%%%%%%%%%%%%%%%%%%%%%%%%%%%%%%%%%%%%%%%%%%%%%%%%%

\begin{proposition}\label{proPositiveann}
If $\varepsilon>0$ is sufficiently small, the constructed solutions  satisfy
\[
\eta_{i,\varepsilon}>0\ \ \textrm{in}\ \mathbb{R}^2,\ \ i=1,2.
\]
\end{proposition}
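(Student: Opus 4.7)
The plan is to adapt the two-disk argument of Proposition \ref{proPositive} to the disk-annulus geometry: we reduce the positivity question to showing that each equation can be written as a linear equation of the form $-\varepsilon^2\Delta\eta_{i,\varepsilon}+P_i(x)\eta_{i,\varepsilon}=0$ with $P_i\geq 0$ outside narrow neighborhoods of the interfaces, where a direct pointwise lower bound on $\eta_{i,\varepsilon}$ is already available from the inner (Hastings--McLeod) approximation.

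\textbf{Step 1 (lower bounds near the interfaces).} Combining the analog of Corollary \ref{cor1} with the inner expansion \eqref{eq:proGS4} applied to the reduced ground states $\hat\eta_{1,\varepsilon}$, $\hat\eta_{2,\varepsilon}^\pm$, we first deduce, for any fixed $D\geq 1$ and all sufficiently small $\varepsilon$,
\[
\eta_{1,\varepsilon}(r)\geq c_D\varepsilon^{1/3}\quad\text{for }|r-R_{i,\varepsilon}|\leq D\varepsilon^{2/3}\text{ or } r\leq R_{1,\varepsilon}-D\varepsilon^{2/3},
\]
and an analogous bound for $\eta_{2,\varepsilon}$ in $\{|r-R_{2,\varepsilon}^\pm|\leq D\varepsilon^{2/3}\}\cup\{R_{2,\varepsilon}^-+D\varepsilon^{2/3}\leq r\leq R_{2,\varepsilon}^+-D\varepsilon^{2/3}\}$ (in the annular bulk, $\eta_{2,\varepsilon}\geq c$ by Corollary \ref{corunifInter}).

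\textbf{Step 2 (positivity of $\eta_{1,\varepsilon}$).} We rewrite the first equation of \eqref{eqsystem} as $-\varepsilon^2\Delta\eta_{1,\varepsilon}+Q(x)\eta_{1,\varepsilon}=0$, with $Q(x)=g_1(\eta_{1,\varepsilon}^2-a_{1,\varepsilon})+g(\eta_{2,\varepsilon}^2-a_{2,\varepsilon})$. For $R_{1,\varepsilon}+D\varepsilon^{2/3}\leq r\leq R_{2,\varepsilon}^+$, we substitute $\eta_{2,\varepsilon}^2-a_{2,\varepsilon}=\frac{g}{g_2}a_{1,\varepsilon}+\mathcal O(\varepsilon^{2/3}e^{-cD})$ obtained from $\check\eta_{2,\varepsilon}^2=a_{2,\varepsilon}+\frac{g}{g_2}a_{1,\varepsilon}-\frac{g}{g_2}\check\eta_{1,\varepsilon}^2$ on $[R_{1,\varepsilon},R_\varepsilon]$ and the exponential decay \eqref{eq:proGS2} of $\hat\eta_{1,\varepsilon}$ past $R_{1,\varepsilon}$, to get
\[
Q(r)\geq\Bigl(\tfrac{g^2}{g_2}-g_1\Bigr)a_{1,\varepsilon}-Ce^{-cD}\varepsilon^{2/3}\geq c(r-R_{1,\varepsilon})^2+cD\varepsilon^{2/3},
\]
after choosing $D$ large, thanks to \eqref{eq:condition_on_g_thomas_fermi} and the fact that $a_{1,\varepsilon}$ changes sign at $R_{1,\varepsilon}$ with nonzero slope. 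For $r\geq R_{2,\varepsilon}^+$, both $a_{1,\varepsilon}$ and $a_{2,\varepsilon}$ are very negative and both $\eta_{i,\varepsilon}$ are exponentially small by (the annulus version of) Lemma \ref{lemsuperExp}, so $Q(r)\geq c\,r^2$. Combining with Step 1, the maximum principle applied on $\{r\geq R_{1,\varepsilon}+D\varepsilon^{2/3}\}$ (using $\eta_{1,\varepsilon}\to 0$ at infinity) yields $\eta_{1,\varepsilon}\geq 0$, and the strong maximum principle gives strict positivity.

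\textbf{Step 3 (positivity of $\eta_{2,\varepsilon}$ --- the new piece).} Write $-\varepsilon^2\Delta\eta_{2,\varepsilon}+P(x)\eta_{2,\varepsilon}=0$ with $P(x)=g_2(\eta_{2,\varepsilon}^2-a_{2,\varepsilon})+g(\eta_{1,\varepsilon}^2-a_{1,\varepsilon})$. The region $r\geq R_{2,\varepsilon}^++D\varepsilon^{2/3}$ is handled exactly as for $\eta_{1,\varepsilon}$ above. The genuinely new region is the central disk $0\leq r\leq R_{2,\varepsilon}^--D\varepsilon^{2/3}$, where Step 1 does \emph{not} provide a lower bound and where $\eta_{2,\varepsilon}$ is expected to be exponentially small. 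Here we use the construction \eqref{eqeta1checkannulus}: $\check\eta_{1,\varepsilon}^2=a_{1,\varepsilon}+\frac{g}{g_1}a_{2,\varepsilon}-\frac{g}{g_1}(\hat\eta_{2,\varepsilon}^-)^2$, so
\[
\eta_{1,\varepsilon}^2-a_{1,\varepsilon}=\tfrac{g}{g_1}a_{2,\varepsilon}+\mathcal O(e^{-c\varepsilon^{-2/3}})+\mathcal O(\varepsilon),
\]
and hence
\[
P(r)\geq\Bigl(\tfrac{g^2}{g_1}-g_2\Bigr)a_{2,\varepsilon}-Ce^{-cD}\varepsilon^{2/3}\geq c(r-R_{2,\varepsilon}^-)^2+cD\varepsilon^{2/3},
\]
again using \eqref{eq:condition_on_g_thomas_fermi} (now in the form $g^2/g_1<g_2$), the fact that $a_{2,\varepsilon}<0$ on $[0,R_{2,\varepsilon}^-)$ with nonzero slope at $R_{2,\varepsilon}^-$, and the exponential smallness of $\hat\eta_{2,\varepsilon}^-$ in this region. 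The maximum principle on $\{r\leq R_{2,\varepsilon}^--D\varepsilon^{2/3}\}\cup\{r\geq R_{2,\varepsilon}^++D\varepsilon^{2/3}\}$, matched with the pointwise lower bound of Step 1 on the complementary set, then gives $\eta_{2,\varepsilon}\geq 0$, and strict positivity follows from the strong maximum principle.

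\textbf{Main obstacle.} The only genuinely new difficulty relative to the two-disk case is controlling the sign of the linear potential $P$ in the small central disk where $\eta_{2,\varepsilon}$ can vanish at leading order. This reduces to verifying that the leading cancellation in $\eta_{1,\varepsilon}^2-a_{1,\varepsilon}$ against $g\eta_{2,\varepsilon}^2/g_2$ (in the bulk approximation) leaves a residue of favorable sign, which is precisely guaranteed by $g^2<g_1g_2$; the exponential-type error from the inner $\hat\eta_{2,\varepsilon}^-$ correction is absorbed by choosing $D$ large, exactly as in the analogous neighborhood of $R_{1,\varepsilon}$ in Proposition \ref{proPositive}.
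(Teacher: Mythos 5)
Your proposal is correct and follows essentially the same route as the paper's proof. The paper presents the central-disk argument more tersely: it writes the equation for $\eta_{2,\varepsilon}$ in the form $-\varepsilon^2\Delta\eta_{2,\varepsilon}+(r^2+g_2\eta_{2,\varepsilon}^2+g\eta_{1,\varepsilon}^2-\lambda_{2,\varepsilon})\eta_{2,\varepsilon}=0$ and inserts $\eta_{1,\varepsilon}^2=\frac{\lambda_{1,\varepsilon}-r^2}{g_1}+\mathcal{O}(\varepsilon^{2/3})$ to obtain the potential $\Gamma_1 r^2+\frac{g}{g_1}\lambda_{1,\varepsilon}-\lambda_{2,\varepsilon}+g_2\eta_{2,\varepsilon}^2+\mathcal{O}(\varepsilon^{2/3})$, which is bounded below by $cD\varepsilon^{2/3}$ (with $c=-\Gamma_1 R_{2,0}^->0$) using $r^2\leq (R_{2,\varepsilon}^-)^2-DR_{2,0}^-\varepsilon^{2/3}$, $\Gamma_1<0$, and the identity $\lambda_{2,0}-\frac{g}{g_1}\lambda_{1,0}=\Gamma_1(R_{2,0}^-)^2$ from \eqref{lambdaiann}. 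Your bound $P\geq\bigl(\tfrac{g^2}{g_1}-g_2\bigr)a_{2,\varepsilon}+\mathcal{O}(\varepsilon^{2/3})$ is algebraically identical, since $(\tfrac{g^2}{g_1}-g_2)a_{2,\varepsilon}=\Gamma_1\bigl(r^2-(R_{2,\varepsilon}^-)^2\bigr)$; and both arguments hinge on the same sign information $\Gamma_1<0$ (equivalently $g>g_1$, forced by the annulus assumption) combined with $g^2<g_1g_2$. One small inaccuracy worth fixing: in Step~3 the intermediate error you record, $\mathcal{O}(e^{-c\varepsilon^{-2/3}})+\mathcal{O}(\varepsilon)$, does not match the $(\hat\eta_{2,\varepsilon}^-)^2$ contribution near $r=R_{2,\varepsilon}^--D\varepsilon^{2/3}$, where that term is only of order $\varepsilon^{2/3}e^{-cD}$ (not super-exponentially small in $\varepsilon$); the paper simply records the aggregate error as $\mathcal{O}(\varepsilon^{2/3})$ with a constant independent of $D$, which is exactly what is needed to be dominated by the good term $cD\varepsilon^{2/3}$ once $D$ is taken large. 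Your final displayed inequality does reflect this correctly, so the argument goes through.
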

\begin{proof}
 The main difference with the previous case is in the domain
$|x|<R_{2,\varepsilon}^-$, which we describe below.

We know that
\begin{equation}\label{eq1annulus}
\eta_{1,\varepsilon}=\sqrt{\frac{\lambda_{1,\varepsilon}-r^2}{g_1}}+\mathcal{O}(\varepsilon^\frac{2}{3}),\
\ r\in [0,R_{2,\varepsilon}^--D\varepsilon^\frac{2}{3}],
\end{equation}
where  $\mathcal{O}(\varepsilon^\frac{2}{3})$ in dependent of $D>1$
(this follows directly from the analog of Proposition \ref{prounifAway} or from
the analogs of (\ref{eqeta2gluing}) and Corollary \ref{cor1}), and
\[
\eta_{2,\varepsilon}(R_{2,\varepsilon}^--D\varepsilon^\frac{2}{3})\geq
c \varepsilon^\frac{1}{3}>0.
\]
The function $\eta_{2,\varepsilon}$ satisfies the elliptic equation
\[
-\varepsilon^2 \Delta
\eta_{2,\varepsilon}+(r^2+g_2\eta_{2,\varepsilon}^2+g\eta_{1,\varepsilon}^2-\lambda_{2,\varepsilon})\eta_{2,\varepsilon}=0.
\]
In view of the above, the desired positivity of
$\eta_{2,\varepsilon}$  follows directly from the maximum
principle once we show that
\[
r^2+g_2\eta_{2,\varepsilon}^2+g\eta_{1,\varepsilon}^2-\lambda_{2,\varepsilon}>0,
\ \ r\in [0,R_{2,\varepsilon}^--D\varepsilon^\frac{2}{3}].
\]
Note that, thanks to (\ref{eq1annulus}), the left-hand side equals
\[
\Gamma_1
r^2+\frac{g}{g_1}\lambda_{1,\varepsilon}-\lambda_{2,\varepsilon}+g_2\eta_{2,\varepsilon}^2+\mathcal{O}(\varepsilon^\frac{2}{3}),
\]
where  $\mathcal{O}(\varepsilon^\frac{2}{3})$ in dependent of $D>1$.
In view of (\ref{riest}),  it suffices to show that
\begin{equation}\label{eq2annulus}
\Gamma_1 r^2+\frac{g}{g_1}\lambda_{1,0}-\lambda_{2,0}\geq
cD\varepsilon^\frac{2}{3},\ \ r\in
[0,R_{2,0}^--D\varepsilon^\frac{2}{3}],
\end{equation}
for some constant $c>0$ that is independent of $D,\varepsilon$,
provided that $D$ is sufficiently large and $\varepsilon$
sufficiently small. Observe that, since $r\leq
R_{2,0}^--D\varepsilon^\frac{2}{3}$, we have
\[
r^2\leq
(R_{2,\varepsilon}^-)^2+D^2\varepsilon^\frac{4}{3}-2DR_{2,0}^-\varepsilon^\frac{2}{3}\leq
(R_{2,\varepsilon}^-)^2-DR_{2,0}^-\varepsilon^\frac{2}{3},
\]
provided that $\varepsilon\leq \varepsilon(D)$. Now, recalling that
\[\Gamma_1<0,\]  we can bound the left-hand side of (\ref{eq2annulus})
from below  by
\begin{equation}\label{eq3annulus}
\Gamma_1 (R_{2,0}^-)^2+cD \varepsilon^\frac{2}{3}
+\frac{g}{g_1}\lambda_{1,0}-\lambda_{2,0}.
\end{equation} We use (\ref{lambdaiann})
 to find that the quantity (\ref{eq3annulus}) equals
$cD\varepsilon^\frac{2}{3}$ with $c=-\Gamma_1 R_{2,0}^->0$.
\end{proof}

\subsection{Proof of Theorem \ref{thmMainann}}

The proof for the case where (\ref{eqannulus}) holds, instead of
(\ref{eq:condition_on_g_two_disks}), proceeds along the same lines as the proof of Theorem \ref{thmMain}.
This time, we have to decompose $[0,\infty)$ into four intervals
with boundary points
${R}_{2,0}^-<{R}_{1,0}<{R}_{2,0}^+$. We point out
that the reduced problem near ${R}_{2,0}^-$ is a scalar
equation of the form (\ref{eqGSscalar}) where $a(r)<0$ for $r\in
[0,{R}_{2,0}^-)$, $a({R}_{2,0}^-)=0$, and $a(r_2)=0$ for
some $r_2>{R}_{2,0}^-$, which is covered in Theorem
\ref{thmGSscalar}.

%%%%%%%%%%%%%%%%%%%%%%%%%%%%%%%%%%%%%%%%%%%%%%%%%%%%%%%%%%%%%%%%%%%
%%%%%%%%%%%%%%%%%%%%%%%%%%%%%%%%%%%%%%%%%%%%%%%%%%%%%%%%%%%%%%%%%%%
%%%%%%%%%%%%%%%%%%%%%%%%%%%%%%%%%%%%%%%%%%%%%%%%%%%%%%%%%%%%%%%%%%%
%%%%%%%%%%%%%%%%%%%%%%%%%%%%%%%%%%%%%%%%%%%%%%%%%%%%%%%%%%%%%%%%%%%

\section{The auxiliary functions $F_{1,\varepsilon},\
F_{2,\varepsilon}$} \label{secAuxiliary} Assume that (\ref{eq:condition_on_g_thomas_fermi}),
 (\ref{eq:condition_on_g_two_disks}) and (\ref{eq14+}) hold. In this section, we
consider the auxiliary functions
\begin{equation}\label{eqFiAux}
F_{i,\varepsilon}(r)=\frac{\xi_{i,\varepsilon}(r)}{\eta_{i,\varepsilon}^2(r)},\
\ \textrm{with}\ \
\xi_{i,\varepsilon}(r)=\int_{r}^{\infty}s\eta_{i,\varepsilon}^2(s)ds,\
\ r\geq 0,\ \ i=1,2,
\end{equation}
  which  will play an important role when analyzing the
energy with rotation. In particular, we will link them to the
limiting functions
\begin{equation}\label{eqFiAux000}
F_{i,0}(r)=\left\{\begin{array}{ll}
              \frac{\xi_{i,0}(r)}{a_i(r)},& 0\leq r<R_{i,0}, \\
               &   \\
             0, & \textrm{otherwise},
           \end{array}
           \right.
\ \ \textrm{with}\ \ \xi_{i,0}(r)=\int_{r}^{\infty}s a_i(s)ds,
\end{equation}
where $a_i$ is as in (\ref{eq:a_i_def}). Note that $F_{i,0}$ is bounded
in $\R^2$ since $a_i>0$ for $r<R_{i,0}$, as observed in
\eqref{eq:bound_below_a_1}, \eqref{eq:bound_below_a_2}, and
$F_{i,0}(R_{i,0})=0$. Note also that $F_{i,0}$ is merely continuous
at $R_{i,0}$, as $F_{i,0}'$ has a finite jump discontinuity across
that point.

This section is devoted to proving the following.
\begin{proposition}
Assume that (\ref{eq:condition_on_g_thomas_fermi}) and
 (\ref{eq:condition_on_g_two_disks})  hold.
Let
$F_{i,\varepsilon}$ be given by  (\ref{eqFiAux})
 and $F_{i,0}$ by (\ref{eqFiAux000}).
Then
\[
F_{i,\varepsilon}(r)\leq \left\{\begin{array}{ll}
                              C(R_{i,0}-r)+C\varepsilon^\frac{2}{3}, & \textrm{if}\ \ 0\leq r \leq R_{i,0}, \\
                                &   \\
                              C\varepsilon^\frac{2}{3}, &
                              \textrm{if}\ \ r\geq R_{i,0},
                            \end{array}
 \right.
\]
and $\|F_{i,\varepsilon}-F_{i,0}\|_{L^\infty(\mathbb{R}^2)}\leq
C\varepsilon^\frac{1}{3}$, $i=1,2$, provided that $\varepsilon>0$ is
sufficiently small.
\end{proposition}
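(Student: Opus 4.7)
The plan is to exploit the fine asymptotics for $\eta_{i,\varepsilon}$ established in Theorem \ref{thmMain}/Theorem \ref{thmLong}, partitioning $[0,\infty)$ by $R_{i,0}$ into four overlapping regions: the bulk $[0,R_{i,0}-\delta]$ where $\eta_{i,\varepsilon}^2\asymp a_i\geq c>0$; an algebraic transition $[R_{i,0}-\delta, R_{i,\varepsilon}-D\varepsilon^{2/3}]$ governed by \eqref{eqthmAlg1}--\eqref{eqthmAlg2}; a Painlev\'e inner zone $|r-R_{i,\varepsilon}|\leq D\varepsilon^{2/3}$ governed by the Hastings--McLeod expansions \eqref{eqthmInner1}, \eqref{eqthmInner3}; and an exponential tail $r\geq R_{i,\varepsilon}+D\varepsilon^{2/3}$ controlled by Lemma \ref{lemsuperExp}. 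In the last region, the super-exponential bound $\eta_{i,\varepsilon}(s)\leq \eta_{i,\varepsilon}(r)\exp\{-cD^{1/3}(s^2-r^2)/\varepsilon^{2/3}\}$ gives, via the change of variable $u=s^2-r^2$,
\[
\xi_{i,\varepsilon}(r)\leq \eta_{i,\varepsilon}^2(r)\int_r^\infty s\, e^{-2cD^{1/3}(s^2-r^2)/\varepsilon^{2/3}}ds \leq C\varepsilon^{2/3}\,\eta_{i,\varepsilon}^2(r),
\]
so $F_{i,\varepsilon}(r)\leq C\varepsilon^{2/3}$ immediately.

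To extend the $\mathcal O(\varepsilon^{2/3})$ bound across the Painlev\'e window, I will use that \eqref{eqthmInner1} forces $\eta_{i,\varepsilon}^2(r)\geq c\varepsilon^{2/3}$ on $|r-R_{i,\varepsilon}|\leq D\varepsilon^{2/3}$ since the Hastings--McLeod solution $V$ is strictly positive on any bounded interval. For the numerator, I split $\xi_{i,\varepsilon}(r)$ at $R_{i,\varepsilon}+D\varepsilon^{2/3}$: on the inner slice $[r,R_{i,\varepsilon}+D\varepsilon^{2/3}]$ the substitution $t=\beta_{i,\varepsilon}(s-R_{i,\varepsilon})/\varepsilon^{2/3}$ together with the boundedness of $V$ produces a factor $\varepsilon^{4/3}$, while the outer part is absorbed by the tail bound of the previous step. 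This yields $\xi_{i,\varepsilon}(r)\leq C\varepsilon^{4/3}$ and hence $F_{i,\varepsilon}(r)\leq C\varepsilon^{2/3}$, matching the required bound for $r\geq R_{i,0}$ (recall $|R_{i,\varepsilon}-R_{i,0}|=\mathcal O(|\log\varepsilon|\varepsilon^2)$).

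In the bulk and algebraic transition regions ($r\leq R_{i,\varepsilon}-D\varepsilon^{2/3}$), the relative estimate \eqref{eq14complex} gives $\eta_{i,\varepsilon}^2(r)=a_i(r)(1+\mathcal O(\varepsilon^{1/3}))$, so the denominator is comparable to $a_i(r)$, which vanishes linearly like $R_{i,0}-r$ near the edge. For the numerator I write $\xi_{i,\varepsilon}(r)=\xi_{i,0}(r)+\int_r^\infty s(\eta_{i,\varepsilon}^2-a_i)ds$ and split the correction into the three regions above: the bulk part is $\mathcal O(|\log\varepsilon|\varepsilon^2)$ by \eqref{eq14bootstrap}, the algebraic part is $\mathcal O(\varepsilon^2)$ integrated against $|s-R_{i,\varepsilon}|^{-2}$, which yields $\mathcal O(\varepsilon^{4/3})$ after cutting off at the Painlev\'e scale, and the inner+tail parts are $\mathcal O(\varepsilon^{4/3})$ by the computations above. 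Dividing by $\eta_{i,\varepsilon}^2(r)\geq c\,a_i(r)$ and using $\xi_{i,0}/a_i=F_{i,0}\leq C(R_{i,0}-r)$ proves $F_{i,\varepsilon}(r)\leq C(R_{i,0}-r)+C\varepsilon^{2/3}$, as well as $|F_{i,\varepsilon}-F_{i,0}|\leq C\varepsilon^{1/3}$ on this set after one more division.

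The $L^\infty$ convergence follows by piecing the three types of bounds together: on $[0,R_{i,0}-\delta]$ the errors are $\mathcal O(|\log\varepsilon|\varepsilon^2)$; on $[R_{i,0}-\delta,R_{i,\varepsilon}-D\varepsilon^{2/3}]$ the relative error propagates to $|F_{i,\varepsilon}-F_{i,0}|\leq C\varepsilon^{1/3}F_{i,0}+C\varepsilon^{2/3}/a_i\leq C\varepsilon^{1/3}$ (using $a_i\gtrsim R_{i,0}-r\gtrsim \varepsilon^{2/3}$); and on $[R_{i,\varepsilon}-D\varepsilon^{2/3},\infty)$ both $F_{i,\varepsilon}$ and $F_{i,0}$ are $\mathcal O(\varepsilon^{2/3})$, giving the difference directly. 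For $F_{2,\varepsilon}$ the same scheme applies, with the additional observation that at $R_{1,0}$, where $a_2'$ has only a finite jump and $a_2$ stays bounded below, the comparison is considerably easier (no vanishing denominator), so the only genuinely singular point remains $R_{2,0}$. The main obstacle is the matching across the Painlev\'e layer around $R_{i,\varepsilon}$: both $\xi_{i,\varepsilon}$ and $\eta_{i,\varepsilon}^2$ are small there and must be tracked to leading order simultaneously; the key is that the lower bound $\eta_{i,\varepsilon}^2\geq c\varepsilon^{2/3}$ in the Painlev\'e zone matches the quadratic vanishing of $\xi_{i,\varepsilon}$ so the quotient remains of order $\varepsilon^{2/3}$ uniformly across the transition.
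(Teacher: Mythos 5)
Your proof is correct in essence, but it takes a genuinely different route from the paper. The paper does not work directly with the asymptotics of $\eta_{i,\varepsilon}$ as you do; instead it introduces the scalar auxiliary functions $f_{i,\varepsilon}$ built from the decoupled ground states $\hat\eta_{i,\varepsilon}$ of (\ref{eqgroundstate1})--(\ref{eqgroundstate2}), invokes the already-known bounds for these scalar objects from \cite{AftalionJerrardLetelierJFA} (recorded as Lemma~\ref{lemGSaux}), and reduces the whole proposition to showing $\left|F_{i,\varepsilon}-f_{i,\varepsilon}\right|\leq C\varepsilon^{2/3}$ (Proposition~\ref{proFi}), plus a short patch for $F_{2,\varepsilon}$ on $[0,R_{1,\varepsilon}]$ in Lemma~\ref{lemma:estimates_f_i-f_i0} where $F_{2,0}\neq f_{2,0}$. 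The perturbation $\varphi=\eta_{1,\varepsilon}-\hat\eta_{1,\varepsilon}$ is the central object there, and all the Painlev\'e-layer delicacy is outsourced to the scalar result. Your approach instead re-derives the scalar-type bounds directly inside the coupled setting from Theorem~\ref{thmLong}, using the same four-region decomposition (bulk, algebraic, Painlev\'e, tail) that underlies the scalar argument. What the paper's route buys is modularity and economy: it never has to re-examine the Painlev\'e matching at all. What your route buys is that it sidesteps the slight awkwardness of $F_{2,0}\neq f_{2,0}$ on $r<R_{1,0}$ (compare (\ref{eqFfequal})), since you never pass through $f_{2,\varepsilon}$; you also correctly observe that $a_2$ is bounded below near $R_{1,0}$ so that no denominator degenerates there. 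One bookkeeping slip to fix: in the algebraic window the correction to the numerator is $\left|\xi_{i,\varepsilon}-\xi_{i,0}\right|=\mathcal{O}(\varepsilon^{4/3})$, so the corresponding term in the $L^\infty$ argument should read $C\varepsilon^{4/3}/a_i\leq C\varepsilon^{2/3}$ (not $C\varepsilon^{2/3}/a_i$, which would be $\mathcal{O}(1)$ at the edge $a_i\sim\varepsilon^{2/3}$). With that correction the chain of inequalities closes. Finally note that the statement does not assume (\ref{eq14+}), so the degenerate case $g_1=g_2$ (where $\eta_{1,\varepsilon}\equiv\eta_{2,\varepsilon}$) should be dispatched separately by reduction to the scalar case, as the paper indicates just before the proposition.
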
 This proposition follows from Corollary
\ref{lemma:estimates_f_i} and Lemma \ref{lemma:estimates_f_i-f_i0}.
The proof is made under the additional assumption (\ref{eq14+}). If $g_1 =g_2$, a simpler proof holds since $F_{1,\ep}=F_{2,\ep}$ and the property is that for a single equation \cite{AftalionJerrardLetelierJFA}. The scalar counterparts
\begin{equation}\label{eqfiAux(scalar)}
f_{i,\varepsilon}(r)=\frac{1}{\hat{\eta}_{i,\varepsilon}^2(r)}\int_{r}^{\infty}s\hat{\eta}_{i,\varepsilon}^2(s)ds,\
\ r\geq 0,\ \ i=1,2,
\end{equation}
and their convergence to the corresponding limiting functions
\begin{equation}\label{eqfiAux(scalar)000}
f_{1,0}(r)=\frac{\int_{r}^{\infty}sa_{1,0}^+(s)ds}{a_{1,0}^+(r)}\ \
\textrm{and}\ \
f_{2,0}(r)=\frac{\int_{r}^{\infty}s\left(a_{2,0}+\frac{g}{g_2}a_{1,0}\right)^+(s)ds}{\left(a_{2,0}+\frac{g}{g_2}a_{1,0}\right)^+(r)},
\end{equation}
 have
been studied in \cite[Lem. 2.2]{AftalionJerrardLetelierJFA}. We have \begin{equation}\label{eqFfequal} F_{1,0}\equiv f_{1,0},\
\ \textrm{and}\ \ F_{2,0}\equiv f_{2,0}\ \ \textrm{\emph{only}\
on}\ r\geq R_{1,0}.
\end{equation}
Actually, the ground states  in the latter lemma had unit $L^2$-norm
but its proof carries over  to the above case, yielding the
following lemma.
\begin{lemma}\label{lemGSaux}
Suppose that $u_\varepsilon$ is as in Proposition \ref{proGS} with
$|\lambda_\varepsilon-\lambda_0|\leq C|\log
\varepsilon|^\frac{1}{2}\varepsilon$.
 The auxiliary functions
\[
\xi_\varepsilon(r)=\int_{r}^{\infty}su_\varepsilon^2(s)ds\ \
\textrm{and}\ \
f_\varepsilon(r)=\frac{\xi_\varepsilon(r)}{u_\varepsilon^2(r)},\ \
r\geq 0.
\]
satisfy
\[
f_\varepsilon(r)\leq \left\{\begin{array}{ll}    C(r_0-r)+C\varepsilon^\frac{2}{3}, & \textrm{if}\ \ 0\leq r \leq r_0, \\
                                &   \\    C\varepsilon^\frac{2}{3}, &  \textrm{if}\ \ r\geq r_0,  \end{array}
 \right.\]
and $\|f_\varepsilon-f_0\|_{L^\infty(\mathbb{R}^2)}\leq
C\varepsilon^\frac{1}{3}$, where
\[
f_0(r)=\left\{\begin{array}{ll}
                \frac{1}{A_0(r)}\int_{r}^{r_0}sA_0(s)ds, & \textrm{if}\ \ r<r_0, \\
                  &   \\
                 0, & \textrm{if}\ \ r\geq r_0,
              \end{array}
 \right.
\]
provided that $\varepsilon>0$ is sufficiently small.
\end{lemma}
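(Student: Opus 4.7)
The plan is to adapt verbatim the proof of \cite[Lem.~2.2]{AftalionJerrardLetelierJFA}. A careful inspection of that argument shows that the $L^{2}$ normalization $\|u\|_{L^{2}}=1$ is used nowhere essentially; the proof relies only on the structural properties of the scalar ground state that are collected in Proposition \ref{proGS}, namely (i) $C^{2}$ proximity of $u_{\varepsilon}^{2}$ to $A_{\varepsilon}$ in the bulk $[0,r_{\varepsilon}-\delta]$, (ii) the Painlev\'e--II inner expansion and the pointwise lower bound \eqref{eq:proGS3} in the boundary layer $|r-r_{\varepsilon}|\leq\delta$, and (iii) the exponential decay \eqref{eq:proGS2} for $r\geq r_{\varepsilon}$. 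Since these ingredients are available here for any $\lambda_{\varepsilon}$ satisfying $|\lambda_{\varepsilon}-\lambda_{0}|=O(|\log\varepsilon|^{1/2}\varepsilon)$, the conclusion transfers.

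Concretely, one partitions $[0,\infty)$ into a bulk $[0,r_{\varepsilon}-\delta]$, a transition layer of width $\mathcal{O}(\varepsilon^{2/3})$ around $r_{\varepsilon}$, and an exponentially decaying tail. In the bulk, $u_{\varepsilon}^{2}=A_{\varepsilon}+O(\varepsilon^{2})$ with $A_{\varepsilon}\geq c>0$, while the tail contribution $\int_{r_{\varepsilon}-\delta}^{\infty}s\,u_{\varepsilon}^{2}\,ds$ is shown, using the Painlev\'e expansion and the exponential decay, to equal $\int_{r_{\varepsilon}-\delta}^{r_{\varepsilon}}s\,A_{\varepsilon}\,ds+O(\varepsilon^{4/3})$; dividing by $u_{\varepsilon}^{2}(r)$ yields $f_{\varepsilon}(r)=f_{0}(r)+O(\varepsilon^{1/3})$ throughout the bulk. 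In the transition layer, the inner expansion \eqref{eq:proGS4} combined with the asymptotics of the Hastings--McLeod solution ($V(\tau)\sim\sqrt{-\tau}$ at $-\infty$, super-exponential decay at $+\infty$) gives two-sided bounds $c\,(\varepsilon^{2/3}+(r_{\varepsilon}-r)_{+})\leq u_{\varepsilon}^{2}(r)\leq C(\varepsilon^{2/3}+(r_{\varepsilon}-r)_{+})$, and the same expansion together with integrability of $V^{2}$ on the outer side controls $\xi_{\varepsilon}(r)$ by $C(\varepsilon^{4/3}+(r_{\varepsilon}-r)_{+}^{2})$; taking the quotient delivers the upper bound $f_{\varepsilon}(r)\leq C(r_{0}-r)_{+}+C\varepsilon^{2/3}$. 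In the tail, the effective decay scale of $u_{\varepsilon}^{2}$ is $\varepsilon^{2/3}$, so integrating from $r$ to $\infty$ contributes exactly one factor of $\varepsilon^{2/3}$, yielding $\xi_{\varepsilon}/u_{\varepsilon}^{2}=O(\varepsilon^{2/3})$, which matches $f_{0}\equiv 0$ up to the required precision.

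The main obstacle is the fine matching between bulk and boundary layer in the intermediate zone $r_{\varepsilon}-\delta\leq r\leq r_{\varepsilon}-D\varepsilon^{2/3}$, where both $r_{\varepsilon}-r$ and $u_{\varepsilon}^{2}-A_{\varepsilon}$ are simultaneously small. There the crude uniform bound $u_{\varepsilon}-\sqrt{A_{\varepsilon}}=O(\varepsilon^{1/3})$ is insufficient and one must invoke the sharper algebraic estimate $u_{\varepsilon}-\sqrt{A_{\varepsilon}}=O(\varepsilon^{2}(r_{\varepsilon}-r)^{-5/2})$ from \eqref{eqGSuAlg} to show that the variation of $f_{\varepsilon}-f_{0}$ across this zone does not exceed the targeted $O(\varepsilon^{1/3})$. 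A secondary subtlety is absorbing the discrepancy $r_{\varepsilon}-r_{0}=O(|\log\varepsilon|^{1/2}\varepsilon)$ into the final error without loss. Once these are handled, the remaining work is a careful bookkeeping of error terms across the three regions, all of it drawn directly from the structural estimates of Proposition \ref{proGS}.
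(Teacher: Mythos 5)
Your proposal takes essentially the same route as the paper, which disposes of this lemma with a single sentence preceding its statement: ``the ground states in [Lem.~2.2 of \cite{AftalionJerrardLetelierJFA}] had unit $L^2$-norm but its proof carries over to the above case.'' Your expansion of that remark -- isolating the three ingredients (bulk $C^2$ proximity, Painlev\'e-II inner layer with the lower bound \eqref{eq:proGS3}, exponential tail \eqref{eq:proGS2}) and flagging the intermediate zone where \eqref{eqGSuAlg} is needed -- is an accurate account of why the transfer works, but it is not a different argument.
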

The main task in this section  is to show the
following proposition.
\begin{proposition}\label{proFi}
If $\varepsilon>0$ is sufficiently small, then
\begin{equation}\label{eqF1-f1}
\left|F_{1,\varepsilon}(r)-f_{1,\varepsilon}(r)\right|\leq
C\varepsilon^\frac{2}{3},\ \ r\geq 0,
\end{equation}
and
\begin{equation}\label{eqF2-f2}
\left|F_{2,\varepsilon}(r)-f_{2,\varepsilon}(r)\right|\leq
C\varepsilon^\frac{2}{3},\ \ r\geq R_{1,\varepsilon}.
\end{equation}
\end{proposition}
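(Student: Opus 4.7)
The plan is to prove both (\ref{eqF1-f1}) and (\ref{eqF2-f2}) by the same scheme, based on the exact algebraic identity
\[
F_{i,\varepsilon}(r) - f_{i,\varepsilon}(r) = \frac{\xi_{i,\varepsilon}(r) - \hat\xi_{i,\varepsilon}(r)}{\eta_{i,\varepsilon}^2(r)} + f_{i,\varepsilon}(r)\,\frac{\hat\eta_{i,\varepsilon}^2(r) - \eta_{i,\varepsilon}^2(r)}{\eta_{i,\varepsilon}^2(r)},
\]
where $\hat\xi_{i,\varepsilon}(r) := \int_r^\infty s\,\hat\eta_{i,\varepsilon}^2(s)\,ds$. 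The virtue of placing $f_{i,\varepsilon}$ (rather than $F_{i,\varepsilon}$) in front of the second summand is that $f_{i,\varepsilon}$ is already controlled by Lemma \ref{lemGSaux}, so the argument is non-circular and reduces to pointwise estimates of $\xi_{i,\varepsilon}-\hat\xi_{i,\varepsilon}$, of $\eta_{i,\varepsilon}^2-\hat\eta_{i,\varepsilon}^2$, and of a lower bound on $\eta_{i,\varepsilon}^2$. All of these are directly available from Theorem \ref{thmLong} (which compares the true minimizer to $\check\eta_{i,\varepsilon}$, and whose construction arranges $\check\eta_{i,\varepsilon}\equiv\hat\eta_{i,\varepsilon}$ in precisely the regions that matter) together with Proposition \ref{proGS} applied to $\hat\eta_{i,\varepsilon}$.

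I would split $[0,\infty)$ into three standard zones: the bulk $\{|r-R_{i,\varepsilon}|\geq\delta\}$, the algebraic matching zone $\{D\varepsilon^{2/3}\leq|r-R_{i,\varepsilon}|\leq\delta\}$, and the Painlev\'e boundary layer $\{|r-R_{i,\varepsilon}|\leq D\varepsilon^{2/3}\}$. In the inner bulk, (\ref{eqthmUnifaway1}) and (\ref{eqGSuUnif}) give $|\eta_{i,\varepsilon}-\hat\eta_{i,\varepsilon}|\leq C\varepsilon^2$ with denominator bounded below, producing $|F_{i,\varepsilon}-f_{i,\varepsilon}|=\mathcal{O}(\varepsilon^2)$; in the outer bulk, the super-exponential decay (\ref{eqthmdecay1}) and (\ref{eq:proGS2}) together with $f_{i,\varepsilon}\leq C\varepsilon^{2/3}$ from Lemma \ref{lemGSaux} give the desired bound by the triangle inequality, once the same is checked for $F_{i,\varepsilon}$ (immediate from the decomposition itself plus the bulk estimates). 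In the Painlev\'e layer, (\ref{eqthmInner1}) and (\ref{eq:proGS4}) show that the two ground states share the same Hastings--McLeod leading profile; a direct integration then gives $\xi_{i,\varepsilon}(r),\hat\xi_{i,\varepsilon}(r)=\mathcal{O}(\varepsilon^{4/3})$ while $\eta_{i,\varepsilon}^2,\hat\eta_{i,\varepsilon}^2\sim\varepsilon^{2/3}$, so $F_{i,\varepsilon},f_{i,\varepsilon}=\mathcal{O}(\varepsilon^{2/3})$ and the triangle inequality closes the case.

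The heart of the proof is the matching zone. The algebraic estimates (\ref{eqthmAlg1}) and (\ref{eqGSuAlg}) give $|\eta_{i,\varepsilon}-\hat\eta_{i,\varepsilon}|\leq C\varepsilon^2|r-R_{i,\varepsilon}|^{-5/2}$ together with $\eta_{i,\varepsilon},\hat\eta_{i,\varepsilon}\geq c|R_{i,\varepsilon}-r|^{1/2}$, whence $|\eta_{i,\varepsilon}^2-\hat\eta_{i,\varepsilon}^2|\leq C\varepsilon^2|r-R_{i,\varepsilon}|^{-2}$; integrating yields $|\xi_{i,\varepsilon}-\hat\xi_{i,\varepsilon}|\leq C\varepsilon^{4/3}$ uniformly in $r$ (the matching zone is the dominant contribution). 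Substituting into the identity, the first summand is bounded by $C\varepsilon^{4/3}/(R_{i,\varepsilon}-r)\leq C\varepsilon^{2/3}/D$, and for the second I would combine $f_{i,\varepsilon}(r)\leq C(R_{i,\varepsilon}-r)+C\varepsilon^{2/3}$ from Lemma \ref{lemGSaux} with $|\hat\eta_{i,\varepsilon}^2-\eta_{i,\varepsilon}^2|/\eta_{i,\varepsilon}^2\leq C\varepsilon^2(R_{i,\varepsilon}-r)^{-3}$ to obtain at most $C\varepsilon^2(R_{i,\varepsilon}-r)^{-2}+C\varepsilon^{8/3}(R_{i,\varepsilon}-r)^{-3}\leq C\varepsilon^{2/3}/D^2$. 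Choosing $D$ large but fixed (independent of $\varepsilon$) produces the desired bound. The proof of (\ref{eqF2-f2}) runs identically, with the matching and Painlev\'e zones placed around $R_{2,\varepsilon}$; the thin transition $R_{1,\varepsilon}\leq r\leq R_{1,\varepsilon}+\delta$ is absorbed by (\ref{eqthmdecay2}), and the restriction $r\geq R_{1,\varepsilon}$ is exactly what allows us to ignore the inner disk where $\eta_{2,\varepsilon}^2\approx a_{2,\varepsilon}$ differs from $\hat\eta_{2,\varepsilon}^2\approx a_{2,\varepsilon}+(g/g_2)a_{1,\varepsilon}$ by an $\mathcal{O}(1)$ amount.

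The main technical obstacle is the borderline nature of the matching zone: the pointwise square error $\varepsilon^2(R_{i,\varepsilon}-r)^{-2}$ blows up exactly where the denominator $\eta_{i,\varepsilon}^2\sim(R_{i,\varepsilon}-r)$ vanishes, so both contributions to $F_{i,\varepsilon}-f_{i,\varepsilon}$ only close at the optimal rate $\varepsilon^{2/3}$. The constant is controlled only by pushing $D$ large relative to the fixed geometric constants; this method cannot improve the exponent.
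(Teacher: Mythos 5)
Your proof is correct and follows essentially the same strategy as the paper's: an algebraic decomposition of $F_{i,\varepsilon}-f_{i,\varepsilon}$ in terms of $\varphi=\eta_{i,\varepsilon}-\hat\eta_{i,\varepsilon}$, followed by a three-zone analysis driven by the pointwise comparison of $\eta_{i,\varepsilon}$ with $\hat\eta_{i,\varepsilon}$ from Theorem \ref{thmLong} and Proposition \ref{proGS}. Your identity $F-f=(\xi-\hat\xi)/\eta^2+f\,(\hat\eta^2-\eta^2)/\eta^2$ is a cleaner, untruncated version of the paper's (\ref{eqF1-f1Phi}); the paper instead first truncates the integrals at $R_{1,\varepsilon}+D\varepsilon^{2/3}$ and absorbs the tail via $F_{1,\varepsilon}(R_{1,\varepsilon}+D\varepsilon^{2/3})=\mathcal{O}(\varepsilon^{2/3})$, which is why the $\mathcal{O}(\varepsilon^{2/3})$ error appears as an explicit term in their expansion. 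The main technical difference is in the intermediate zone: the paper places the inner boundary at $R_{1,\varepsilon}-\varepsilon^{1/3}$, chosen so that the crude uniform bound $|\varphi|\leq C\varepsilon$ from Corollary \ref{cor1} already closes (the factor $\eta_{1,\varepsilon}^{-2}(r)\leq C\varepsilon^{-1/3}$ balances against $\varepsilon$ to give $\varepsilon^{2/3}$); you instead push the boundary down to $R_{1,\varepsilon}-D\varepsilon^{2/3}$, where the denominator is only $\mathcal{O}(\varepsilon^{-2/3})$, which forces you to invoke the sharper algebraic bounds (\ref{eqthmAlg1}) and (\ref{eqGSuAlg}) on $\varphi$. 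Both routes saturate at $\varepsilon^{2/3}$ because, as you correctly note, the Painlev\'e layer is the limiting contribution. One small over-claim to flag: in the inner bulk $\{r\leq R_{i,\varepsilon}-\delta\}$ you state $|F_{i,\varepsilon}-f_{i,\varepsilon}|=\mathcal{O}(\varepsilon^2)$; this holds for the second summand, but the first summand $(\xi-\hat\xi)/\eta^2$ still picks up the matching-zone contribution $\mathcal{O}(\varepsilon^{4/3})$ from the tail of the integral $\int_r^\infty$, so the correct rate there is $\mathcal{O}(\varepsilon^{4/3})$ --- of course far stronger than what the proposition demands and thus immaterial.
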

\begin{proof}
From now on, let us fix a large $D>1$ such that Lemma
\ref{lemsuperExp} is valid. The latter lemma, similarly to
\cite[Lem. 2.2]{AftalionJerrardLetelierJFA}, implies that
\begin{equation}\label{eqF1Out}
0<F_{1,\varepsilon}(r)\leq C\varepsilon^\frac{2}{3},\ \ r\geq
R_{1,\varepsilon}+D\varepsilon^\frac{2}{3}.
\end{equation}
Since the above estimate also holds  for $f_{1,\varepsilon}$, by
virtue of Lemma \ref{lemGSaux}, we infer that (\ref{eqF1-f1}) is
valid for $r\geq R_{1,\varepsilon}+D\varepsilon^\frac{2}{3}$.

If $r\leq R_{1,\varepsilon}+D\varepsilon^\frac{2}{3}$, via
(\ref{eqGSuUnif}), \eqref{eq:proGS4},
 and Corollaries \ref{cor1},
\ref{corunifInter}, we have
\[
\frac{\eta_{1,\varepsilon}^2
(R_{1,\varepsilon}+D\varepsilon^\frac{2}{3})}{\eta_{1,\varepsilon}^2(r)}\leq
C.
\]
 Therefore, we can write
\begin{equation}\label{eqF1basic}
\begin{array}{rcl}
  F_{1,\varepsilon}(r) & = & \frac{1}{\eta_{1,\varepsilon}^2(r)}\int_{r}^{R_{1,\varepsilon}+D\varepsilon^\frac{2}{3}}s\eta_{1,\varepsilon}^2(s)ds
+ \frac{\eta_{1,\varepsilon}^2
(R_{1,\varepsilon}+D\varepsilon^\frac{2}{3})}{\eta_{1,\varepsilon}^2(r)}F_{1,\varepsilon}(R_{1,\varepsilon}+D\varepsilon^\frac{2}{3}) \\
   &  &  \\
   & \stackrel{(\ref{eqF1Out})}{=} & \frac{1}{\eta_{1,\varepsilon}^2(r)}\int_{r}^{R_{1,\varepsilon}
+D\varepsilon^\frac{2}{3}}s\eta_{1,\varepsilon}^2(s)ds+\mathcal{O}(\varepsilon^\frac{2}{3}),
\end{array}
\end{equation}
uniformly in $r\geq 0$, as $\varepsilon\to 0$. After rearranging
terms, we find that
\[\begin{array}{rcl}
F_{1,\varepsilon}(r)-f_{1,\varepsilon}(r)&=&
  \frac{1}{\eta_{1,\varepsilon}^2(r)}\int_{r}^{R_{1,\varepsilon}
+D\varepsilon^\frac{2}{3}}s\left[\eta_{1,\varepsilon}^2(s)-\hat{\eta}_{1,\varepsilon}^2(s)\right]ds
+\frac{\hat{\eta}_{1,\varepsilon}^2(r)-\eta_{1,\varepsilon}^2(r)}{\eta_{1,\varepsilon}^2(r)\hat{\eta}_{1,\varepsilon}^2(r)}\int_{r}^{R_{1,\varepsilon}
+D\varepsilon^\frac{2}{3}}s\hat{\eta}_{1,\varepsilon}^2(s)ds+ \\
  &&  \\
 && +\mathcal{O}(\varepsilon^\frac{2}{3}),
\end{array}
\]
uniformly in $r\leq R_{1,\varepsilon} +D\varepsilon^\frac{2}{3}$, as
$\varepsilon\to 0$. Since on this interval we can set
\[
\varphi=\eta_{1,\varepsilon}-\check{\eta}_{1,\varepsilon}=\eta_{1,\varepsilon}-\hat{\eta}_{1,\varepsilon},
\]
we obtain that
\begin{equation}\label{eqF1-f1Phi}
\begin{array}{rcl}
  F_{1,\varepsilon}(r)-f_{1,\varepsilon}(r) & = & \frac{1}{\eta_{1,\varepsilon}^2(r)}\int_{r}^{R_{1,\varepsilon}
+D\varepsilon^\frac{2}{3}}s(\varphi^2+2\hat{\eta}_{1,\varepsilon}\varphi)ds- \\
   &  &  \\
    &  & -\left[
\frac{\varphi^2(r)}{\eta_{1,\varepsilon}^2(r)\hat{\eta}_{1,\varepsilon}^2(r)}
+\frac{2\varphi(r)}{\eta_{1,\varepsilon}^2(r)\hat{\eta}_{1,\varepsilon}(r)}
\right]\int_{r}^{R_{1,\varepsilon}
+D\varepsilon^\frac{2}{3}}s\hat{\eta}_{1,\varepsilon}^2(s)ds+\mathcal{O}(\varepsilon^\frac{2}{3}),
\end{array}
\end{equation}
uniformly in $r\leq R_{1,\varepsilon} +D\varepsilon^\frac{2}{3}$, as
$\varepsilon\to 0$. The above terms can be estimated by first
decomposing the interval $[0,R_{1,\varepsilon}
+D\varepsilon^\frac{2}{3}]$ as $[0,R_{1,\varepsilon}
-\delta]\cup[R_{1,\varepsilon} -\delta,R_{1,\varepsilon}
-\varepsilon^\frac{1}{3}]\cup[R_{1,\varepsilon}
-\varepsilon^\frac{1}{3},R_{1,\varepsilon}
+D\varepsilon^\frac{2}{3}]$ (with $\delta>0$ fixed small), then
making use of the uniform estimates in (\ref{eqUniformOut}) and
Proposition \ref{prounifAway} for $\varphi$, and those in
(\ref{eq:proGS4}) and (\ref{eqGSuAlg}) for
$\hat{\eta}_{1,\varepsilon}$. To illustrate the procedure, let us
estimate in detail the term
\[
\frac{1}{\eta_{1,\varepsilon}^2(r)}\int_{r}^{R_{1,\varepsilon}
+D\varepsilon^\frac{2}{3}}s\hat{\eta}_{1,\varepsilon}(s)\varphi(s)
ds.
\]
If $R_{1,\varepsilon} -\varepsilon^\frac{1}{3}\leq r\leq s \leq
R_{1,\varepsilon} +D\varepsilon^\frac{2}{3}$, since
(\ref{eq:proGS4}) and (\ref{eqUniformOut}) imply that
$\eta_{1,\varepsilon}(r)\geq c\varepsilon^\frac{1}{3}$ and
$\hat{\eta}_{1,\varepsilon}(s)\leq C\varepsilon^\frac{1}{6}$, using
(\ref{eqUniformOut}) to bound $\varphi$, we deduce that
\[
\left|\frac{1}{\eta_{1,\varepsilon}^2(r)}\int_{r}^{R_{1,\varepsilon}
+D\varepsilon^\frac{2}{3}}s\hat{\eta}_{1,\varepsilon}(s)\varphi(s)
ds\right|\leq C\varepsilon^{-\frac{2}{3}}(R_{1,\varepsilon}
+D\varepsilon^\frac{2}{3}-r)\varepsilon^\frac{1}{6}\varepsilon\leq
C\varepsilon^{-\frac{2}{3}}\varepsilon^\frac{1}{3}\varepsilon^\frac{1}{6}\varepsilon=C
\varepsilon^{\frac{2}{3}+\frac{1}{6}}.\] If $R_{1,\varepsilon}
-\delta\leq r\leq s \leq R_{1,\varepsilon}
-\varepsilon^\frac{1}{3}$, arguing similarly, this time noting that
$\eta_{1,\varepsilon}(r)\geq c\varepsilon^\frac{1}{6}$, we find that
\[\left|
\frac{1}{\eta_{1,\varepsilon}^2(r)}\int_{r}^{R_{1,\varepsilon}
+D\varepsilon^\frac{2}{3}}s\hat{\eta}_{1,\varepsilon}(s)\varphi(s)
ds\right|\leq
C\varepsilon^{-\frac{1}{3}}\varepsilon=C\varepsilon^\frac{2}{3}.
\]
Lastly, if $0\leq r\leq s \leq R_{1,\varepsilon}-\delta$, where
$\eta_{1,\varepsilon}\geq c$, via Proposition \ref{prounifAway}, we
get that
\[
\left|\frac{1}{\eta_{1,\varepsilon}^2(r)}\int_{r}^{R_{1,\varepsilon}
+D\varepsilon^\frac{2}{3}}s\hat{\eta}_{1,\varepsilon}(s)\varphi(s)
ds\right|\leq C\varepsilon^\frac{2}{3}.
\]
The remaining terms in (\ref{eqF1-f1Phi}) can be estimated
analogously to complete the proof of (\ref{eqF1-f1}). We point out
that a rather delicate term is
\[
\frac{\varphi(r)}{\eta_{1,\varepsilon}^2(r)\hat{\eta}_{1,\varepsilon}(r)}
\int_{r}^{R_{1,\varepsilon}
+D\varepsilon^\frac{2}{3}}s\hat{\eta}_{1,\varepsilon}^2(s)ds
\]
when $r\in [R_{1,\varepsilon}-\delta,
R_{1,\varepsilon}-\varepsilon^\frac{1}{3} ]$, which can be estimated
as follows: Since in this interval we have
$\hat{\eta}_{1,\varepsilon}(r)\geq c
(R_{1,\varepsilon}-r)^\frac{1}{2}\geq C \ep^{1/6}$ (from
(\ref{eqGSuAlg}) and (\ref{eqnondegeneracy})), and the same holds
for $\eta_{1,\ep}$ via (\ref{eqUniformOut}), it follows that
\[
\left|
\frac{\varphi(r)}{\eta_{1,\varepsilon}^2(r)\hat{\eta}_{1,\varepsilon}(r)}
\int_{r}^{R_{1,\varepsilon}
+D\varepsilon^\frac{2}{3}}s\hat{\eta}_{1,\varepsilon}^2(s)ds\right|
\leq
C\frac{\varepsilon\varepsilon^{-\frac{1}{6}}}{{\eta_{1,\varepsilon}^2(r)}}
\left(|r-R_{1,\varepsilon}|+\varepsilon^\frac{2}{3} \right)\leq
C\varepsilon^\frac{5}{6}.
\]

The validity of estimate (\ref{eqF2-f2}) can be verified
analogously, using (\ref{eqeta2cut}) to show that
$|\eta_{2,\varepsilon}-\hat{\eta}_{2,\varepsilon}|\leq
C\varepsilon^\frac{2}{3}$ in
$[R_{1,\varepsilon},R_{2,\varepsilon}-\delta]$.\end{proof}

The assertion of the following corollary is analogous to the first
assertion of Lemma \ref{lemGSaux} for the scalar case.

\begin{cor}\label{lemma:estimates_f_i}
If $\varepsilon>0$ is sufficiently small, we have
\begin{equation}\label{eqFiRequired}
0<F_{i,\varepsilon}(r)\leq \left\{\begin{array}{ll}
                                    C(R_{i,0}-r) & \textrm{if}\ \ 0\leq r \leq R_{i,0}-\varepsilon^\frac{2}{3}, \\
                                      &   \\
                                    C\varepsilon^\frac{2}{3} & \textrm{if}\ \ r \geq R_{i,0}-\varepsilon^\frac{2}{3},
                                  \end{array}
 \right.
\end{equation}
$i=1,2$, where $\delta>0$ is independent of $\varepsilon$ such that
$R_{2,0}-R_{1,0}>4\delta$ and $R_{1,0}>4\delta$.
\end{cor}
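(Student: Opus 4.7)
\smallskip

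The plan is to deduce the estimates on $F_{i,\varepsilon}$ from the analogous scalar estimates on $f_{i,\varepsilon}$ (Lemma \ref{lemGSaux}), using Proposition \ref{proFi} to compare the two. Positivity $F_{i,\varepsilon}>0$ is immediate from Proposition \ref{proPositive}, so only the upper bounds need work.

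For $i=1$, I will apply Lemma \ref{lemGSaux} to the scalar ground state $\hat{\eta}_{1,\varepsilon}$ (whose relevant radius is $R_{1,\varepsilon}\to R_{1,0}$, with $|R_{1,\varepsilon}-R_{1,0}|\leq C|\log\varepsilon|\varepsilon^2$ by Proposition \ref{proLagrangeImproved}) to obtain
\[
f_{1,\varepsilon}(r)\leq C(R_{1,0}-r)+C\varepsilon^{2/3}\ \ \text{for } 0\leq r\leq R_{1,0},\qquad f_{1,\varepsilon}(r)\leq C\varepsilon^{2/3}\ \ \text{for } r\geq R_{1,0}.
\]
Proposition \ref{proFi} then gives $F_{1,\varepsilon}(r)=f_{1,\varepsilon}(r)+\mathcal{O}(\varepsilon^{2/3})$ uniformly in $r\geq 0$. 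On the range $0\leq r\leq R_{1,0}-\varepsilon^{2/3}$, the additive $\mathcal{O}(\varepsilon^{2/3})$ is dominated by $R_{1,0}-r$, yielding the linear bound $F_{1,\varepsilon}(r)\leq C(R_{1,0}-r)$. On $r\geq R_{1,0}-\varepsilon^{2/3}$, both contributions are $\mathcal{O}(\varepsilon^{2/3})$, giving the second bound.

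For $i=2$ in the exterior range $r\geq R_{1,\varepsilon}$, the same strategy works: applied to $\hat{\eta}_{2,\varepsilon}$ (whose scalar ``edge'' is $R_{2,\varepsilon}\to R_{2,0}$), Lemma \ref{lemGSaux} gives the required bounds for $f_{2,\varepsilon}$, and Proposition \ref{proFi} transfers them to $F_{2,\varepsilon}$ on $[R_{1,\varepsilon},\infty)$ exactly as above. Since $R_{1,\varepsilon}=R_{1,0}+\mathcal{O}(|\log\varepsilon|\varepsilon^2)\ll\varepsilon^{2/3}$-close to $R_{1,0}$, matching this up with the remaining inner interval $[0,R_{1,\varepsilon}]$ introduces no harmful error. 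The hard part (such as it is) is the inner range $0\leq r\leq R_{1,\varepsilon}$, where Proposition \ref{proFi} is silent; I handle it directly. By Theorem \ref{thmLong} (specifically (\ref{eqthmUnifaway1}) together with the lower bound (\ref{eq:bound_below_a_1})--(\ref{eq:bound_below_a_2}) on $a_{2,\varepsilon}$), one has $\eta_{2,\varepsilon}^2(r)\geq c>0$ on $[0,R_{1,\varepsilon}-\delta/2]$, uniformly in small $\varepsilon$, and in particular on all of $[0,R_{1,\varepsilon}]$ after fixing the value at $r=R_{1,\varepsilon}$ using the matching with the exterior. Therefore
\[
F_{2,\varepsilon}(r)\leq \frac{1}{c}\int_0^\infty s\,\eta_{2,\varepsilon}^2(s)\,ds = \frac{1}{2\pi c}\|\eta_{2,\varepsilon}\|_{L^2(\mathbb{R}^2)}^2=\frac{1}{2\pi c},
\]
so $F_{2,\varepsilon}$ is uniformly bounded on $[0,R_{1,\varepsilon}]$. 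Since on this interval $R_{2,0}-r\geq R_{2,0}-R_{1,0}+o(1)\geq 2\delta>0$, the uniform bound can be written as $F_{2,\varepsilon}(r)\leq C\leq C'(R_{2,0}-r)$, giving the desired linear bound (and of course the $C\varepsilon^{2/3}$ regime does not occur here).

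The only mild subtlety is keeping track of the three length scales $R_{i,\varepsilon}-R_{i,0}=\mathcal{O}(|\log\varepsilon|\varepsilon^2)$, the error $\mathcal{O}(\varepsilon^{2/3})$ from Proposition \ref{proFi}, and the transition threshold $\varepsilon^{2/3}$ in the statement; since the first is negligible against the other two for small $\varepsilon$, replacing $R_{i,\varepsilon}$ by $R_{i,0}$ in all the intermediate inequalities is harmless. No genuinely new estimates are required beyond those already in Lemma \ref{lemGSaux}, Proposition \ref{proFi}, and Theorem \ref{thmLong}.
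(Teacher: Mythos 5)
Your approach matches the paper's, which likewise derives~(\ref{eqFiRequired}) by transferring the scalar bounds of Lemma~\ref{lemGSaux} to $F_{i,\varepsilon}$ via Proposition~\ref{proFi} together with Theorem~\ref{thmLong}; you helpfully supply the detail the paper leaves implicit, namely how to handle $F_{2,\varepsilon}$ on $[0,R_{1,\varepsilon}]$ where Proposition~\ref{proFi} is silent, and your idea (uniform lower bound on $\eta_{2,\varepsilon}^2$ plus the $L^2$ constraint) is sound. The only imprecise step is the hand-off from $[0,R_{1,\varepsilon}-\delta/2]$ to $[0,R_{1,\varepsilon}]$ for the lower bound $\eta_{2,\varepsilon}^2\geq c$: rather than invoking~(\ref{eqthmUnifaway1}) and ``matching with the exterior,'' it is cleaner to cite the global uniform estimate~(\ref{eq:main_theorem_relation2}) (i.e.\ $\|\eta_{2,\varepsilon}-\sqrt{a_2}\|_{L^\infty}\leq C\varepsilon^{1/3}$) together with the fact that $a_2\geq c>0$ on $[0,R_{2,0}-\delta]$, which already contains $[0,R_{1,\varepsilon}]$ since $R_{2,0}-R_{1,0}>4\delta$.
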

\begin{proof}
The desired estimate (\ref{eqFiRequired}) follows readily from the
fact that it holds with $f_{i,\varepsilon}$ in place of
$F_{i,\varepsilon}$ (see Lemma \ref{lemGSaux}), via Theorem
\ref{thmLong} and Proposition \ref{proFi}.
\end{proof}

The next lemma is a natural extension of the second assertion of
Lemma \ref{lemGSaux}.

\begin{lemma}\label{lemma:estimates_f_i-f_i0}
If $\varepsilon>0$ is sufficiently small, we have
\[
\|F_{i,\varepsilon}-F_{i,0}\|_{L^\infty(\mathbb{R}^2)}\leq
C\varepsilon^\frac{1}{3},\ \ i=1,2,
\]
where $F_{i,0}$ are as in (\ref{eqFiAux000}).
\end{lemma}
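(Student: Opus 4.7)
The plan is to prove the two cases $i=1$ and $i=2$ separately, using the scalar comparison functions $f_{i,\varepsilon}$ and $f_{i,0}$ from \eqref{eqfiAux(scalar)}--\eqref{eqfiAux(scalar)000} as a bridge wherever possible. The essential inputs are Lemma \ref{lemGSaux} (giving $\|f_{i,\varepsilon}-f_{i,0}\|_{L^\infty}\leq C\varepsilon^{1/3}$), Proposition \ref{proFi} (giving $|F_{i,\varepsilon}-f_{i,\varepsilon}|\leq C\varepsilon^{2/3}$ on the regions stated there), and the relation \eqref{eqFfequal}, namely $F_{1,0}\equiv f_{1,0}$ on all of $[0,\infty)$ while $F_{2,0}\equiv f_{2,0}$ only on $\{r\geq R_{1,0}\}$.

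For $i=1$, the argument is immediate: combining Proposition \ref{proFi} with Lemma \ref{lemGSaux}, and using $F_{1,0}\equiv f_{1,0}$, one has for every $r\geq 0$
\[
|F_{1,\varepsilon}(r)-F_{1,0}(r)|\leq |F_{1,\varepsilon}(r)-f_{1,\varepsilon}(r)|+|f_{1,\varepsilon}(r)-f_{1,0}(r)|\leq C\varepsilon^{2/3}+C\varepsilon^{1/3}\leq C\varepsilon^{1/3}.
\]
For $i=2$, the same triangle inequality works on $r\geq R_{1,\varepsilon}$, where $F_{2,0}=f_{2,0}$ (after accounting for the $\mathcal{O}(\varepsilon|\log\varepsilon|^{1/2})$ gap between $R_{1,\varepsilon}$ and $R_{1,0}$, on which both $F_{2,0}$ and $f_{2,0}$ are Lipschitz, giving error absorbable into $C\varepsilon^{1/3}$).

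The main obstacle, and the only genuinely new estimate needed, is the region $r\in [0,R_{1,\varepsilon}]$ for $i=2$, where $F_{2,0}$ and $f_{2,0}$ disagree (since $a_2=a_{2,0}$ there but $f_{2,0}$ uses $a_{2,0}+(g/g_2)a_{1,0}$). Here I would estimate $F_{2,\varepsilon}-F_{2,0}$ directly. Write
\[
F_{2,\varepsilon}(r)-F_{2,0}(r)=\frac{\xi_{2,\varepsilon}(r)-\xi_{2,0}(r)}{\eta_{2,\varepsilon}^2(r)}+\xi_{2,0}(r)\,\frac{a_{2,0}(r)-\eta_{2,\varepsilon}^2(r)}{\eta_{2,\varepsilon}^2(r)\,a_{2,0}(r)}.
\]
For the denominators, $a_{2,0}$ is bounded below on $[0,R_{1,0}]$ by a positive constant (by \eqref{eq:bound_below_a_1}/\eqref{eq:bound_below_a_2}, using \eqref{eq:condition_on_g_two_disks}), and hence so is $\eta_{2,\varepsilon}^2$ for small $\varepsilon$, thanks to the uniform estimate \eqref{eq:main_theorem_relation2}. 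For the numerators, \eqref{eq:main_theorem_relation2} gives $\|\eta_{2,\varepsilon}^2-a_2\|_{L^\infty(\mathbb{R}^2)}\leq C\varepsilon^{1/3}$, which controls the second term by $C\varepsilon^{1/3}$. For the first term we split the integral defining $\xi_{2,\varepsilon}-\xi_{2,0}$ as $\int_r^{R_{2,0}+1}s(\eta_{2,\varepsilon}^2-a_2)\,ds+\int_{R_{2,0}+1}^\infty s\eta_{2,\varepsilon}^2\,ds$; the first piece is $\mathcal{O}(\varepsilon^{1/3})$ by \eqref{eq:main_theorem_relation2}, and the second is exponentially small by the decay estimate \eqref{eq14decay}.

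Combining these bounds gives $|F_{2,\varepsilon}(r)-F_{2,0}(r)|\leq C\varepsilon^{1/3}$ on $[0,R_{1,\varepsilon}]$, which together with the triangle-inequality argument on $[R_{1,\varepsilon},\infty)$ completes the proof. The subtle point to watch is continuity at $r=R_{1,\varepsilon}$: since $F_{2,0}$ is only continuous (not $C^1$) at $R_{1,0}$, one must check that the matching between the two regions is consistent up to error $\mathcal{O}(\varepsilon^{1/3})$, which follows because both $F_{2,0}(R_{1,\varepsilon})$ and $f_{2,0}(R_{1,\varepsilon})$ agree at $r=R_{1,0}$ (where $a_{1,0}$ vanishes) and differ by $\mathcal{O}(|R_{1,\varepsilon}-R_{1,0}|)=\mathcal{O}(|\log\varepsilon|\varepsilon^2)$ nearby.
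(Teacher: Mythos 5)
Your proof is correct and follows the same broad strategy as the paper (scalar comparison away from $R_{1,0}$, direct estimate in the central region), but the central-region estimate is organized differently. The paper writes both $F_{2,\varepsilon}$ and $F_{2,0}$ via the propagation identity
\[
F(r)=\frac{1}{w^2(r)}\int_r^{r_0}s\,w^2(s)\,ds+\frac{w^2(r_0)}{w^2(r)}F(r_0),\qquad r_0=R_{1,\varepsilon}+\varepsilon^{2/3},
\]
anchored at the interior endpoint where the triangle-inequality estimate \eqref{eqF2at} is already known, then compares the two expressions using the $\mathcal{O}(\varepsilon^{2/3})$ closeness of $\eta_{2,\varepsilon}$ and $\sqrt{a_2}$ on $[0,R_{1,\varepsilon}+\varepsilon^{2/3}]$ (from Proposition \ref{prounifAway} and Corollary \ref{cor1}). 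You instead expand $F_{2,\varepsilon}-F_{2,0}$ as a sum of two quotients and bound each directly, using only the global $\mathcal{O}(\varepsilon^{1/3})$ uniform bound \eqref{eq:main_theorem_relation2} plus the decay \eqref{eq14decay} for the tail of $\xi_{2,\varepsilon}$. Your version is a bit more self-contained because it does not rely on propagating the estimate from the endpoint $r_0$; both deliver the same $\mathcal{O}(\varepsilon^{1/3})$ final bound. One minor remark: the concern about ``matching'' at $r=R_{1,\varepsilon}$ in your last paragraph is unnecessary --- the two regions are handled by independent estimates that both hold up to the overlap, so no continuity check is required. Also, strictly speaking you should use $a_2$ rather than $a_{2,0}$ in your decomposition (they coincide only for $r\leq R_{1,0}$), but the discrepancy over $[\min\{R_{1,0},R_{1,\varepsilon}\},\max\{R_{1,0},R_{1,\varepsilon}\}]$ is $\mathcal{O}(\varepsilon|\log\varepsilon|^{1/2})$ and is harmless, as you essentially acknowledge.
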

\begin{proof}
The proof is based on the fact that
\[
\|f_{i,\varepsilon}-f_{i,0}\|_{L^\infty(\mathbb{R}^2)}\leq
C\varepsilon^\frac{1}{3},\ \ i=1,2, \ \ (\textrm{see\ Lemma
\ref{lemGSaux}}),
\]
where $f_{i,0}$ are as in (\ref{eqfiAux(scalar)000}). In view of
Proposition \ref{proFi} and (\ref{eqFfequal}), we infer that the
assertion of the lemma is valid for $i=1$ and that there exists some
$C>0$ such that
\begin{equation}\label{eqF2at}
\left|F_{2,\varepsilon}(r)-F_{2,0}(r) \right|\leq
C\varepsilon^\frac{1}{3},\ \ r \geq
R_{1,\varepsilon}+\varepsilon^\frac{2}{3},
\end{equation}
(recall also (\ref{eqRi-R0})). So, for the proof to be completed, it
remains to show that there exists some $C>0$ such that
\begin{equation}\label{eqFonly}
\left|F_{2,\varepsilon}(r)-F_{2,0}(r) \right|\leq
C\varepsilon^\frac{1}{3},\ \ 0\leq r \leq
R_{1,\varepsilon}+\varepsilon^\frac{2}{3}.
\end{equation}
To this end, for $0\leq r\leq
R_{1,\varepsilon}+\varepsilon^\frac{2}{3}$, we write
\[
F_{2,\varepsilon}(r)=\frac{1}{\eta_{2,\varepsilon}^2(r)}\int_{r}^{R_{1,\varepsilon}+\varepsilon^\frac{2}{3}}
s\eta_{2,\varepsilon}^2(s)ds+\frac{\eta_{2,\varepsilon}^2(R_{1,\varepsilon}+\varepsilon^\frac{2}{3})}{\eta_{2,\varepsilon}^2(r)}
F_{2,\varepsilon}(R_{1,\varepsilon}+\varepsilon^\frac{2}{3}),
\]
and
\[
F_{2,0}(r)=\frac{1}{\eta_{2,0}^2(r)}\int_{r}^{R_{1,\varepsilon}+\varepsilon^\frac{2}{3}}
s\eta_{2,0}^2(s)ds+\frac{\eta_{2,0}^2(R_{1,\varepsilon}+\varepsilon^\frac{2}{3})}{\eta_{2,0}^2(r)}
F_{2,0}(R_{1,\varepsilon}+\varepsilon^\frac{2}{3}).
\]
Now, estimate (\ref{eqFonly}) follows readily from (\ref{eqF2at})
and the property that
\[
\left|\eta_{2,\varepsilon}(r)-\eta_{2,0}(r) \right|\leq
C\varepsilon^\frac{2}{3},\ \ 0\leq r \leq
R_{1,\varepsilon}+\varepsilon^\frac{2}{3}.
\]
The latter estimate is a consequence of
(\ref{eqeta1check})-(\ref{eqeta2cut}), the fact that
$\|\hat{\eta}_{1,\varepsilon}^2-a_{1,\varepsilon}^+\|_{L^\infty(\mathbb{R}^2)}\leq
C\varepsilon^\frac{2}{3}$ (see\ Proposition \ref{proGS}), Corollary
\ref{cor1} and Proposition \ref{prounifAway}.\end{proof}

%For completeness,  we state the following lemma  which can be
%verified by direct calculations.
%\begin{lemma}\label{lemma:xi_f_explicit}
%We have
%\[
%\xi_{1,0}(r)=\frac{\Gamma_2}{4g_1\Gamma}(R_{1,0}^2-r^2)^2, \qquad
%f_{1,0}(r)=\frac{R_{1,0}^2-r^2}{4}
%\]
%for $r\leq R_{1,0}$ and both functions are $0$ for $r\geq R_{1,0}$,
%while
%\[
%\xi_{2,0}(r)=\frac{1}{4g_2}(R_{2,0}^2-r^2)^2, \qquad
%f_{2,0}(r)=\frac{R_{2,0}^2-r^2}{4}
%\]
%for $R_{1,0}\leq r\leq R_{2,0}$ and both functions are $0$ for
%$r\geq R_{2,0}$.
%\end{lemma}

Finally, we have another estimate which will be used later.
\begin{lemma}\label{lemma:nabla_xi_estimate}
There exists $C>0$ such that $\|\nabla
\xi_{i,\varepsilon}\|_{L^\infty(\R^2)}\leq C$, $i=1,2$.
\end{lemma}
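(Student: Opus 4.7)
The plan is as follows. Since $\eta_{i,\varepsilon}$ is radially symmetric (by Theorem \ref{thm:uniqueness}), so is $\xi_{i,\varepsilon}$, and in fact
\[
|\nabla \xi_{i,\varepsilon}(x)| = |\xi_{i,\varepsilon}'(r)| = r\, \eta_{i,\varepsilon}^2(r), \quad r = |x|.
\]
Thus the task reduces to showing $r\eta_{i,\varepsilon}^2(r) \leq C$ uniformly in $r \geq 0$ and in small $\varepsilon>0$.

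I will split according to whether $r$ is in a fixed compact region containing the support of $a_i$ or far outside it. On the compact region $r \leq R_{i,0}+1$, Lemma \ref{lemma:L_infty_bounds} gives $\eta_{i,\varepsilon}^2 \leq \lambda_{i,\varepsilon}/g_i \leq C$ (the Lagrange multipliers being bounded by Proposition \ref{prop:convergence_lagrange_multipliers}), so $r\eta_{i,\varepsilon}^2(r) \leq (R_{i,0}+1)\cdot C$. On the exterior region $r \geq R_{i,0}+1$, the exponential decay estimate \eqref{eq14decay} from Theorem \ref{thmMain} gives
\[
r\eta_{i,\varepsilon}^2(r) \leq C r \varepsilon^{2/3} \exp\!\left\{-2c\,\frac{r-R_{i,0}}{\varepsilon^{2/3}}\right\}.
\]
Setting $u = 2c(r-R_{i,0})/\varepsilon^{2/3}$, the right-hand side becomes $R_{i,0}\varepsilon^{2/3} e^{-u} + (2c)^{-1}\varepsilon^{4/3} u e^{-u}$, which is uniformly bounded (in fact $o(1)$ as $\varepsilon \to 0$) since $ue^{-u} \leq e^{-1}$ for $u \geq 0$.

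There is no real obstacle beyond invoking the already-established exponential decay: the factor of $r$ is dominated by the Gaussian-like suppression over a length scale $\varepsilon^{2/3}$, and the only care needed is to check that both terms that arise from writing $r = R_{i,0} + \varepsilon^{2/3}u/(2c)$ remain $\mathcal{O}(1)$. The analogous statement for the annulus case (Theorem \ref{thmMainann}) follows identically, using \eqref{eq14decay} with $R_{2,0}^+$ in place of $R_{2,0}$ for $i=2$.
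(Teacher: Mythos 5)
Your argument is correct: the reduction $|\nabla\xi_{i,\varepsilon}|=r\eta_{i,\varepsilon}^2(r)$ is exactly the paper's starting point (identity \eqref{eq:xi'}), and the factor $r$ is indeed harmless against an exponential with rate $\varepsilon^{-2/3}$. The route differs, however, in which decay estimate is invoked. The paper's proof is a one-liner: after \eqref{eq:xi'} it cites only Lemma \ref{lemma:decay_for_large_x}, i.e.\ the elementary maximum-principle bound $\eta_{i,\varepsilon}(s)\leq \max_{\partial B_r}\eta_{i,\varepsilon}\cdot\exp\bigl(-\tfrac{1}{2\varepsilon}\sqrt{\tfrac{k}{1+k}}(s^2-r^2)\bigr)$ for $s\geq r\geq\sqrt{(1+k)\lambda_{i,\varepsilon}}$, combined (implicitly) with the uniform sup bound of Lemma \ref{lemma:L_infty_bounds} and the boundedness of $\lambda_{i,\varepsilon}$ from Proposition \ref{prop:convergence_lagrange_multipliers} on the remaining compact region — which is precisely your first case. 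You instead handle the exterior region with the refined decay \eqref{eq14decay} of Theorem \ref{thmMain}. This is valid under the standing hypotheses of Section \ref{secAuxiliary}, but it is a much heavier tool than needed and slightly less general: \eqref{eq14decay} rests on the full perturbation construction and requires \eqref{eq14+} ($g_1<g_2$) and the two-disk condition, whereas Theorem \ref{thm:nonexistence_vortices} allows $g_1=g_2$, and the paper's choice of Lemma \ref{lemma:decay_for_large_x} makes the conclusion available for any positive solution (including $g_1=g_2$, where one would otherwise have to fall back on the scalar Theorem \ref{thmGSNoRotationScalar}, and the annulus configuration) with constants manifestly uniform in $\varepsilon$. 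So: same skeleton, but the paper buys robustness and brevity by using the crude Gaussian decay at rate $\varepsilon^{-1}$, while your version leans on the sharp boundary-layer estimate that is only established later and under stronger structural assumptions.
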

\begin{proof}
We have
\begin{equation}\label{eq:xi'}
\xi_{i,\varepsilon}'(r)=-r\eta_{i,\varepsilon}^2(r)
\end{equation}
so that the result follows from Lemma \ref{lemma:decay_for_large_x}.
\end{proof}

\section{The energy minimizer with rotation}\label{secRotation}

In this section, we study the behavior of the minimizers of the
energy functional $E^\Omega_\ep$ in the space $\mathcal{H}$, defined
in (\ref{eqEnergyOmega}) and  \eqref{H_space_definition}
respectively, as $\varepsilon\to 0$. In the following we assume
\begin{equation}\label{eq:Omega_condition1}
\Omega\leq C|\log\ep|,
\end{equation}
for some constant $C$ independent of $\ep$. Any minimizer
$(u_1,u_2)=(u_{1,\ep},u_{2,\ep})$ of  $E^\Omega_\ep$ in
$\mathcal{H}$ solves the following system
\begin{equation*}
\left\{ \begin{array}{ll}
-\ep^2 \Delta u_1+u_1(|x|^2+g_1|u_1|^2+g|u_2|^2)+2\ep^2i\Omega x^\perp\cdot\nabla u_1=\mu_{1,\ep}u_1 \quad & \text{ in } \R^2, \\
-\ep^2 \Delta u_2+u_2(|x|^2+g_2|u_2|^2+g|u_1|^2)+2\ep^2i\Omega
x^\perp\cdot\nabla u_2=\mu_{2,\ep}u_2  \quad & \text{ in } \R^2,
\end{array}\right.
\end{equation*}
for some Lagrange multipliers $\mu_{1,\ep},\mu_{2,\ep}$. The
existence of a minimizer when $\Omega$ satisfies
\eqref{eq:Omega_condition1} is a consequence of Lemma
\ref{lemma:rotation_term_bound} below and of the compactness induced
by the fact that the harmonic potential $|x|^2$ diverges as $|x|\to
\infty$.

\subsection{Energy estimates}

The following proof uses some ideas from \cite[Lem.
3.1]{IgnatMillotJFA}.

\begin{lemma}\label{lemma:rotation_term_bound}
We have
\[
\begin{split}
\Omega\sum_{j=1}^2 \left|\int_{\R^2} x^\perp \cdot (iu_j,\nabla u_j)
\,dx \right| \leq
\sum_{j=1}^2 \int_{\R^2}\frac{|\nabla u_j|^2}{4}\,dx +2\Omega^2 (R_{1,0}^2+R_{2,0}^2)+ \\
\frac{2\Omega^2g_1\Gamma}{\Gamma_2}\int_{\R^2\setminus
D_1}a_{1,0}^-|u_1|^2\,dx+ 2\Omega^2g_2\int_{\R^2\setminus D_2}
\left(a_{2,0}+\frac{g}{g_2}a_{1,0}\right)^-|u_2|^2 \,dx.
\end{split}
\]
\end{lemma}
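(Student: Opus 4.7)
The plan is to start from the pointwise bound
\[
\bigl|x^{\perp}\cdot (iu_j,\nabla u_j)\bigr|\le |x|\,|u_j|\,|\nabla u_j|,
\]
which follows directly from the identity $x^{\perp}\cdot(iu_j,\nabla u_j)=x^{\perp}\cdot \operatorname{Im}(\bar u_j\nabla u_j)$ and $|\operatorname{Im}(\bar u_j\nabla u_j)|\le |u_j||\nabla u_j|$. Integrating and applying the Cauchy--Schwarz inequality in $L^2$, then Young's inequality with parameter chosen so that the gradient contribution is exactly $\tfrac14 |\nabla u_j|^2$, I obtain
\[
\Omega\Bigl|\int_{\R^2} x^{\perp}\cdot (iu_j,\nabla u_j)\,dx\Bigr|
\le \int_{\R^2}\frac{|\nabla u_j|^2}{4}\,dx
+\Omega^2 \int_{\R^2}|x|^2|u_j|^2\,dx.
\]
This reduces the lemma to estimating $\Omega^2\int |x|^2|u_j|^2\,dx$ in the claimed form.

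The second step is an elementary pointwise bound on $|x|^2$ tailored to each component. I plan to use the trivial inequality $|x|^2\le 2R^2+2(|x|^2-R^2)^+$, valid for every $R>0$ and $x\in\R^2$, with $R=R_{1,0}$ for $u_1$ and $R=R_{2,0}$ for $u_2$. The key point is that the positive parts can be rewritten in terms of $a_{1,0}^-$ and $\bigl(a_{2,0}+\tfrac{g}{g_2}a_{1,0}\bigr)^-$ via the explicit formulas from Subsection~\ref{subsecLimitingProf}. More precisely, using \eqref{eq:a_i0_def} and \eqref{eq:a_20out},
\[
(|x|^2-R_{1,0}^2)^+=\frac{g_1\Gamma}{\Gamma_2}\,a_{1,0}^-(x),\qquad
(|x|^2-R_{2,0}^2)^+=g_2\Bigl(a_{2,0}+\frac{g}{g_2}a_{1,0}\Bigr)^{\!-}(x),
\]
which converts the bound into
\[
|x|^2\le 2R_{1,0}^2+\frac{2g_1\Gamma}{\Gamma_2}a_{1,0}^-(x),\qquad
|x|^2\le 2R_{2,0}^2+2g_2\Bigl(a_{2,0}+\frac{g}{g_2}a_{1,0}\Bigr)^{\!-}(x).
\]

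Finally, I multiply the first bound by $|u_1|^2$ and the second by $|u_2|^2$, integrate over $\R^2$, and use the normalization $\|u_j\|_{L^2(\R^2)}=1$ from the definition of $\mathcal{H}$. Since the positive parts $a_{1,0}^-$ and $(a_{2,0}+\tfrac{g}{g_2}a_{1,0})^-$ are supported in $\R^2\setminus D_1$ and $\R^2\setminus D_2$ respectively, this yields
\[
\Omega^2\!\int_{\R^2}\!|x|^2|u_1|^2\,dx\le 2\Omega^2R_{1,0}^2+\frac{2\Omega^2 g_1\Gamma}{\Gamma_2}\!\int_{\R^2\setminus D_1}\!a_{1,0}^-|u_1|^2\,dx,
\]
\[
\Omega^2\!\int_{\R^2}\!|x|^2|u_2|^2\,dx\le 2\Omega^2R_{2,0}^2+2\Omega^2 g_2\!\int_{\R^2\setminus D_2}\!\Bigl(a_{2,0}+\tfrac{g}{g_2}a_{1,0}\Bigr)^{\!-}|u_2|^2\,dx.
\]
Summing over $j=1,2$ and combining with the Young estimate completes the proof.

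There is no real obstacle here: the result is essentially an a priori energy estimate of Cauchy--Schwarz/Young type, and the only structural ingredient is the explicit affine relationship between the harmonic potential $|x|^2$ and the limiting profiles $a_i$, encoded in \eqref{eq:a_i0_def}--\eqref{eq:a_20out}. The slight loss of a factor of two (relative to the tight bound) is intentional and comes from using the looser inequality $|x|^2\le 2R_{i,0}^2+2(|x|^2-R_{i,0}^2)^+$, which spreads the contribution symmetrically and fits the form in which the bound will be used later in the control of the reduced energy in Section~\ref{secRotation}.
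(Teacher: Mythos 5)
Your proof is correct and follows essentially the same route as the paper: Young's inequality to get $\tfrac14|\nabla u_j|^2+\Omega^2|x|^2|u_j|^2$, then an affine pointwise bound of $|x|^2$ by the negative parts of the limiting profiles, and the constraint $\|u_j\|_{L^2}=1$. The only cosmetic difference is that you encode the domain split as a single global inequality $|x|^2\le 2R^2+2(|x|^2-R^2)^+$, whereas the paper explicitly partitions $\R^2$ into $\{|x|\le\sqrt{2}R_{i,0}\}$ and its complement; the two are equivalent and give identical constants.
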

\begin{proof}
We have
\[
\left|\Omega \int_{\R^2} x^\perp \cdot (iu_i,\nabla u_i) \,dx
\right| \leq \int_{\R^2}\left(\frac{|\nabla
u_i|^2}{4}+\Omega^2|x|^2|u_i|^2 \right)\,dx .
\]
We need to estimate the second term in the right hand side. Let us
start with $i=1$. Notice that
\[
|x|^2\leq
-\frac{2g_1\Gamma}{\Gamma_2}a_{1,0}(x)=\frac{2g_1\Gamma}{\Gamma_2}a_{1,0}^-(x)
\qquad \text{for } |x|\geq \sqrt{2}R_{1,0}.
\]
This implies
\[
\begin{split}
\int_{\R^2} |x|^2|u_1|^2 \,dx & = \int_{\{|x|\leq\sqrt{2}R_{1,0}\}}|x|^2|u_1|^2\,dx + \int_{\{|x|>\sqrt{2}R_{1,0}\}}|x|^2|u_1|^2\,dx \\
& \leq 2R_{1,0}^2 + \frac{2g_1\Gamma}{\Gamma_2}\int_{\R^2\setminus
D_1}a_{1,0}^-|u_1|^2\,dx.
\end{split}
\]
Similarly, for $i=2$, we have
\[
|x|^2\leq 2g_2\left(a_{2,0}+\frac{g}{g_2}a_{1,0}\right)^-(x) \qquad
\text{for } |x|\geq \sqrt{2}R_{2,0},
\]
so that
\[
\int_{\R^2} |x|^2|u_2|^2 \,dx \leq 2R_{2,0}^2 + 2g_2
\int_{\R^2\setminus D_2}
\left(a_{2,0}+\frac{g}{g_2}a_{1,0}\right)^-|u_2|^2\,dx,
\]
which concludes the proof of the lemma.
\end{proof}
In analogy to Proposition \ref{prop:energy_estimates}, we have the
following lemma.

\begin{lemma}\label{lemma:energy_estimates_u}
Let $(u_{1},u_{2})$ be a minimizer of $E^\Omega_\ep$ in
$\mathcal{H}$. There exists $C>0$ independent of $\ep$ such that,
for $i=1,2$, we have
\[
\int_{\R^2} |\nabla u_i|^2\,dx \leq C|\log\ep|^2,
\]
\[
\int_{\R^2} \left(|u_i|^2-a_i\right)^2\,dx \leq C \ep^2|\log\ep|^2,
\]
\[
\int_{\R^2\setminus D_1}|u_1|^2a_{1,0}^-\,dx+\int_{\R^2\setminus
D_2}(|u_1|^2+|u_2|^2)\left(a_{2,0}+\frac{g}{g_2}a_{1,0}\right)^-\,dx\leq
C\ep^2|\log\ep|^2.
\]
\end{lemma}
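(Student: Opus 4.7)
The plan is to mimic the proof of Proposition \ref{prop:energy_estimates}, but now with the rotation term present, and to exploit the fact that the competitor functions used there are real valued so that they feel no rotation.

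First, I would write $E_\ep^\Omega(u_1,u_2)=\tilde{E}_\ep^0(u_1,u_2)+K-\Omega\sum_{j=1}^2\int_{\R^2}x^\perp\cdot(iu_j,\nabla u_j)\,dx$ using Lemma \ref{lemma:E_tilde_def}, and consider the real valued competitors $(\tilde\eta_1,\tilde\eta_2)$ from the proof of Proposition \ref{prop:energy_estimates}. Since these are real, the rotation term vanishes on them, so $E_\ep^\Omega(\tilde\eta_1,\tilde\eta_2)=E_\ep^0(\tilde\eta_1,\tilde\eta_2)\leq C|\log\ep|+K$. By minimality of $(u_1,u_2)$, this yields
\[
\tilde{E}_\ep^0(u_1,u_2)\leq C|\log\ep|+\Omega\sum_{j=1}^2\Big|\int_{\R^2}x^\perp\cdot(iu_j,\nabla u_j)\,dx\Big|.
\]

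Next, I would apply Lemma \ref{lemma:rotation_term_bound} to the right hand side and invoke the coercivity inequality \eqref{eq:coercivity}--\eqref{eq:g_mathfrak_definition} to lower bound the cross term in $\tilde{E}_\ep^0$, obtaining
\[
\begin{split}
\sum_{j=1}^2\int_{\R^2}\frac{|\nabla u_j|^2}{2}dx&+\frac{\gamma}{4\ep^2}\sum_{j=1}^2\int_{\R^2}(|u_j|^2-a_j)^2dx+\frac{g_1\Gamma}{2\ep^2}\int_{\R^2\setminus D_1}|u_1|^2a_{1,0}^-dx\\
&+\frac{1}{2\ep^2}\int_{\R^2\setminus D_2}(g|u_1|^2+g_2|u_2|^2)\Big(a_{2,0}+\tfrac{g}{g_2}a_{1,0}\Big)^-dx\\
&\leq C|\log\ep|+\sum_{j=1}^2\int_{\R^2}\frac{|\nabla u_j|^2}{4}dx+2\Omega^2(R_{1,0}^2+R_{2,0}^2)\\
&\quad+\frac{2\Omega^2 g_1\Gamma}{\Gamma_2}\int_{\R^2\setminus D_1}a_{1,0}^-|u_1|^2dx+2\Omega^2 g_2\int_{\R^2\setminus D_2}\Big(a_{2,0}+\tfrac{g}{g_2}a_{1,0}\Big)^-|u_2|^2dx.
\end{split}
\]

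The key step is the absorption: half of the kinetic term on the left dominates the kinetic term produced by Young's inequality on the right. For the weighted $L^2$ terms, the hypothesis $\Omega\leq C|\log\ep|$ gives $\ep^2\Omega^2\leq C\ep^2|\log\ep|^2\to 0$, so the rotation-generated weighted contributions are of order $\ep^2|\log\ep|^2$ times the corresponding terms of $\tilde{E}_\ep^0$ and hence can be absorbed into them for $\ep$ small. After absorption, the leading right hand side term is $2\Omega^2(R_{1,0}^2+R_{2,0}^2)\leq C|\log\ep|^2$, and the bound $C|\log\ep|$ from the upper energy estimate is of lower order. The three stated estimates then follow immediately by reading off the three remaining families of terms on the left.

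The main obstacle is bookkeeping rather than conceptual: one must check that the constants in the weighted terms coming from Lemma \ref{lemma:rotation_term_bound} (namely $2\Omega^2 g_1\Gamma/\Gamma_2$ and $2\Omega^2 g_2$) match in structure with the weighted terms present in $\tilde{E}_\ep^0$ (with coefficients $g_1\Gamma/(2\ep^2)$ and $g_2/(2\ep^2)$ respectively), so that the absorption is genuinely possible. This is why Lemma \ref{lemma:rotation_term_bound} was stated with precisely those weights, making the argument essentially automatic once that lemma is in hand.
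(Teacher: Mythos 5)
Your proposal is correct and follows essentially the same route as the paper: upper-bound $E_\ep^\Omega(u_1,u_2)$ by its value on a real-valued test pair (the paper uses the positive minimizer $(\eta_1,\eta_2)$ of $E_\ep^0$, you use the explicit competitors $(\tilde\eta_1,\tilde\eta_2)$, which is an immaterial difference), then lower-bound $E_\ep^\Omega(u_1,u_2)$ via Lemma \ref{lemma:E_tilde_def}, the coercivity \eqref{eq:g_mathfrak_definition}, and Lemma \ref{lemma:rotation_term_bound}, and absorb the rotation terms using $\Omega\leq C|\log\ep|$ so that $\ep^2\Omega^2\to 0$. The bookkeeping of the weighted coefficients you describe is exactly what the paper does, so the argument is complete.
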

\begin{proof}
On the one hand,  by the definition of minimizers, by Lemma
\ref{lemma:E_tilde_def} and Proposition \ref{prop:energy_estimates}, we have
\begin{equation}\label{eq:energy_estimates_u}
E_\ep^\Omega(u_1,u_2)\leq
E_\ep^\Omega(\eta_1,\eta_2)=E_\ep^0(\eta_1,\eta_2)\leq C|\log\ep|+K,
\end{equation}
with $K$ as in Lemma \ref{lemma:E_tilde_def}. On the other hand, we have
\[
E_\ep^\Omega(u_1,u_2)= \tilde{E}_\ep^0(u_1,u_2)+K-\Omega
\sum_{j=1}^2 \int_{\R^2} x^\perp \cdot (iu_j,\nabla u_j) \,dx .
\]
The right hand side can be bounded from below by means of
\eqref{eq:g_mathfrak_definition} and of Lemma
\ref{lemma:rotation_term_bound} as follows:
\[
\begin{split}
E_\ep^\Omega(u_1,u_2)\geq \sum_{i=1}^2 \int_{\R^2}\left\{ \frac{|\nabla u_i|^2}{4}+\frac{\gamma}{4\ep^2}(|u_i|^2-a_i)^2 \right\}\,dx \\
+g_1\Gamma\left(\frac{1}{2\ep^2}-\frac{2\Omega^2}{\Gamma_2}\right)\int_{\R^2\setminus
D_1} a_{1,0}^-|u_1|^2\,dx
+\frac{g}{2\ep^2}\int_{\R^2\setminus D_2}|u_1|^2\left(a_{2,0}+\frac{g}{g_2}a_{1,0}\right)^-\,dx \\
+g_2\left(\frac{1}{2\ep^2}-2\Omega^2\right)\int_{\R^2\setminus D_2}
\left(a_{2,0}+\frac{g}{g_2}a_{1,0}\right)^-|u_2|^2\,dx
-2\Omega^2(R_{1,0}^2+R_{2,0}^2)+K.
\end{split}
\]
The result follows by combining the last inequality with
\eqref{eq:energy_estimates_u} and by using
\eqref{eq:Omega_condition1}.
\end{proof}
In analogy to Lemma \ref{lemma:L_infty_bounds}, we have the
following

\begin{lemma}\label{lemma:L_infty_bounds_u}
Let $(u_1,u_2)$ be a minimizer of $E^\Omega_\ep$ in $\mathcal{H}$.
For $\ep$ sufficiently small, we have
\[
|u_i|^2\leq \mu_{i,\ep}/g_i, \qquad \|\nabla
u_i\|_{L^\infty(\R^2)}\leq
\frac{C\sqrt{\mu_{i,\ep}}(\mu_{i,\ep}+\mu_{j,\ep}+1)+C}{\ep}
\]
for some $C>0$, $i=1,2$, $j\neq i$.
\end{lemma}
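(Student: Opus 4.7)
The plan is to adapt the proof of Lemma \ref{lemma:L_infty_bounds} to the complex-valued, rotating setting. The principal new feature is the first-order rotation term $2\ep^2i\Omega x^\perp\cdot\nabla u_i$, which couples to the phase of $u_i$ and does not vanish under the manipulations used in the non-rotating case. The strategy is to reduce the matter to a scalar inequality for $|u_i|$ via a gauge-covariant square completion. Specifically, writing $u_i=|u_i|e^{i\phi_i}$ on the open set $\{|u_i|>0\}$, one has $(iu_i,\nabla u_i)=|u_i|^2\nabla\phi_i$, and a direct computation yields the pointwise identity
\[
\ep^2|\nabla u_i|^2-2\ep^2\Omega\,x^\perp\cdot(iu_i,\nabla u_i)=\ep^2\bigl|\nabla|u_i|\bigr|^2+\ep^2|u_i|^2|\nabla\phi_i-\Omega x^\perp|^2-\ep^2\Omega^2|x|^2|u_i|^2.
\]

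Multiplying the Euler--Lagrange equation by $\bar u_i$, taking real parts, substituting the identity above, and dividing by $|u_i|$ gives, on $\{|u_i|>0\}$,
\[
-\ep^2\Delta|u_i|+\ep^2|u_i|\,|\nabla\phi_i-\Omega x^\perp|^2+|u_i|\bigl((1-\ep^2\Omega^2)|x|^2+g_i|u_i|^2+g|u_j|^2\bigr)=\mu_{i,\ep}|u_i|.
\]
Since assumption \eqref{eq:Omega_condition1} yields $\ep^2\Omega^2\le C\ep^2|\log\ep|^2<1$ for small $\ep$, the three terms involving $|\nabla\phi_i-\Omega x^\perp|^2$, $(1-\ep^2\Omega^2)|x|^2$, and $g|u_j|^2$ are all manifestly nonnegative and may be discarded. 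The argument of Lemma \ref{lemma:L_infty_bounds} then applies verbatim: setting $w_i=(\sqrt{g_i}|u_i|-\sqrt{\mu_{i,\ep}})/\ep$ and invoking Kato's inequality yields $\Delta w_i^+\ge(w_i^+)^3$, whence Brezis's Liouville-type theorem \cite{brezisLiouville} forces $w_i^+\equiv 0$, i.e.\ $|u_i|^2\le\mu_{i,\ep}/g_i$.

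For the gradient bound I would rescale as in the proof of Lemma \ref{lemma:L_infty_bounds}: for fixed $x\in\R^2$ and $L>0$, define $z_i(y)=u_i(x+\ep y)$ on $B_{2L}(0)$. The rescaled equation becomes
\[
-\Delta z_i+2i\ep\Omega\,(x+\ep y)^\perp\cdot\nabla z_i=z_i\bigl(\mu_{i,\ep}-|x+\ep y|^2-g_i|z_i|^2-g|z_j|^2\bigr)=:h_{i,\ep}(y).
\]
From the $L^\infty$ bound just established together with a rotation-adapted analogue of Lemma \ref{lemma:decay_for_large_x} (whose proof carries over since the effective trapping potential $(1-\ep^2\Omega^2)|x|^2$ is still superlinearly confining), $u_i$ decays super-exponentially, which controls the term $|x+\ep y|^2|z_i|$ and gives $\|h_{i,\ep}\|_{L^\infty(B_{2L})}\le C\sqrt{\mu_{i,\ep}}(\mu_{i,\ep}+\mu_{j,\ep}+1)$ uniformly in $x$. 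The drift coefficient has $L^\infty$ norm of order $\ep\Omega$, which is $O(\ep|\log\ep|)\to 0$ by \eqref{eq:Omega_condition1}, so standard interior $W^{2,p}$ regularity (with $p>2$) followed by Morrey embedding, applied to the operator $-\Delta+\vec b\cdot\nabla$ with bounded drift, yields $\|\nabla z_i\|_{L^\infty(B_L)}\le C(\|h_{i,\ep}\|_\infty+\|z_i\|_\infty)$; unscaling provides the stated bound, the additive ``$+C$'' absorbing the drift-times-$z_i$ contribution. The main obstacle is ensuring that the rotation does not destroy the Kato/Brezis scheme, and this is exactly what the square-completion identity accomplishes, converting the indefinite rotation term into a discardable nonnegative magnetic-kinetic density plus a negative correction $-\ep^2\Omega^2|x|^2|u_i|^2$ that is dominated by the harmonic trap for small $\ep$.
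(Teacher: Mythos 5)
For the bound $|u_i|^2\le\mu_{i,\ep}/g_i$ your argument is essentially the paper's, just phrased through the polar decomposition. The paper never introduces the phase: it computes $\Delta(|u_i|^2)=2|\nabla u_i|^2+2(u_i,\Delta u_i)$ and absorbs the rotation via $-2\Omega x^\perp\cdot(iu_i,\nabla u_i)\ge-\Omega^2|x|^2|u_i|^2-|\nabla u_i|^2$, which is exactly your square-completion estimate after the magnetic kinetic density has been discarded; since $\ep^2\Omega^2<1$ for small $\ep$ by \eqref{eq:Omega_condition1}, the trap absorbs the loss, and Kato plus Brezis is then applied to $w_i=(g_i|u_i|^2-|\mu_{i,\ep}|)/\ep^2$ (giving $\Delta w_i^+\ge 2(w_i^+)^2$) rather than to your modulus-based $w_i$. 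Working with $|u_i|^2$ is not purely cosmetic: in the rotating problem $u_i$ may vanish, so $\phi_i$ is undefined on the zero set, your scalar equation for $|u_i|$ (and the division by $|u_i|$) holds only on $\{|u_i|>0\}$, and $\Delta|u_i|$ is a priori only a distribution across that set. This is repairable (e.g.\ $w_i^+$ is supported where $|u_i|\ge\sqrt{\mu_{i,\ep}/g_i}>0$, or regularize with $\sqrt{|u_i|^2+\delta^2}$), but the paper's formulation avoids the issue altogether. Also, you write $\sqrt{\mu_{i,\ep}}$ before knowing its sign; the paper runs the Liouville step with $|\mu_{i,\ep}|$ and only afterwards deduces $\mu_{i,\ep}>0$ by testing the equation with $u_i$.

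The gradient bound is where your proposal has a genuine flaw. After rescaling, the drift coefficient is $2\ep\Omega(x+\ep y)^\perp$, whose $L^\infty$ norm on the rescaled ball is of order $\ep\Omega|x|$, not $\ep\Omega$; it is therefore neither small nor even bounded uniformly in the base point $x$, so ``standard interior $W^{2,p}$ regularity with bounded drift'' cannot be invoked uniformly, and the super-exponential decay of $u_i$ does not rescue this step directly, because the drift multiplies $\nabla z_i$ rather than $z_i$. The paper makes no pointwise smallness claim: it keeps the rotation term on the right-hand side, i.e.\ $h_{i,\ep}$ contains $-2\ep^2 i\Omega(y-x)^\perp\cdot\nabla z_i$, and bounds this contribution in $L^2$ on the rescaled ball by $\ep\Omega\|x^\perp\cdot\nabla u_i\|_{L^2}\le C$, using the energy estimate of Lemma \ref{lemma:energy_estimates_u} (namely $\int_{\R^2}|\nabla u_i|^2\,dx\le C|\log\ep|^2$) together with \eqref{eq:Omega_condition1}; this $L^2$ bookkeeping is precisely what produces the additive $+C$ in the stated gradient bound, and an interior $H^2$ estimate plus a bootstrap then concludes. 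Either adopt that device, or split $\R^2$ into a bounded region and a far region treated separately via decay estimates in the spirit of Lemma \ref{lemma:decay_for_large_x}; as written, your drift-smallness step fails for $|x|$ large.
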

\begin{proof}
For $\ep$ sufficiently small, the following holds
\[
\Delta(|u_j|^2)=2|\nabla u_j|^2+2(u_j,\Delta u_j) \geq
\frac{2}{\ep^2} |u_j|^2(g_j|u_j|^2-|\mu_{j,\ep}|),
\]
where we use $-2\Omega x^\perp\cdot(iu_j,\nabla u_j)\geq
-\Omega^2|x|^2|u_j|^2-|\nabla u_j|^2$ and condition
\eqref{eq:Omega_condition1}. We can proceed very similarly to Lemma
\ref{lemma:L_infty_bounds}: let
\[
w_i=\frac{g_i|u_i|^2-|\mu_{i,\ep}|}{\ep^2} \quad \text{ we have }
\quad \Delta w_i^+\geq 2(w_i^+)^2
\]
so that we conclude again with the non-existence result by Brezis
\cite{brezisLiouville}. Note that by testing the equation of $u_i$
by $u_i$ itself, and working as above, yields that
$\mu_{i,\varepsilon}>0$.

To prove the second part, fix $x\in\R^2$, $L>0$ and for $y\in
B_{2L}(x)$, let $z_i(y)=u_i(\ep(y-x))$. Then
\[
-\Delta z_i=-z_i(\ep^2|y-x|^2+g_i |z_i|^2+g
|z_j|^{2}-\mu_{i,\ep})-2\ep^2i\Omega (y-x)^\perp\cdot \nabla  z_i
=:h_{i,\ep}(y).
\]
We have, by Lemma \ref{lemma:energy_estimates_u} and by
\eqref{eq:Omega_condition1},
\[
\ep^2\Omega \|(y-x)^\perp\cdot \nabla  z_i\|_{L^2(B_{2L}(x))} =
\ep\Omega \|x^\perp\cdot\nabla u_i\|_{L^2(B_{2\ep L}(0))} \leq C
\]
for a constant $C$ independent of $x$. Therefore, using also the
$L^\infty$-bound above, we have $\|h_{i,\ep}\|_{L^2(B_{2L}(x))}\leq
C\sqrt{\mu_{i,\ep}}(\mu_{i,\ep}+\mu_{j,\ep}+1)+C$. We deduce that
$\|z_i\|_{H^2(B_{L}(x))}\leq
C\sqrt{\mu_{i,\ep}}(\mu_{i,\ep}+\mu_{j,\ep}+1)+C$ and we conclude by
a bootstrap argument.
\end{proof}

\begin{lemma}\label{lemma:lagrange_multipliers_mu}
Let $(u_{1},u_{2})$ be a minimizer of $E^\Omega_\ep$ in
$\mathcal{H}$ and denote by $\mu_{i,\ep}$ the associated Lagrange
multipliers. There exists $C>0$ independent of $\ep$ such that, for
$i=1,2$,
\[
|\mu_{i,\ep}|\leq C.
\]
\end{lemma}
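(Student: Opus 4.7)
The plan is to multiply the Euler--Lagrange equation satisfied by $u_i$ by $\bar u_i$, integrate over $\mathbb{R}^2$, and use the $L^2$-constraint to obtain a clean formula for $\mu_{i,\ep}$. Concretely, testing
\[
-\ep^2\Delta u_i+u_i(|x|^2+g_i|u_i|^2+g|u_j|^2)+2\ep^2i\Omega x^\perp\cdot\nabla u_i=\mu_{i,\ep}u_i
\]
against $\bar u_i$ and integrating by parts (the boundary term at infinity vanishes because $(u_1,u_2)\in\mathcal{H}$ and by the super-exponential decay of $u_i,\nabla u_i$, which follows, as in Lemmas \ref{lemma:decay_for_large_x}--\ref{lemma:L_infty_bounds_u} applied to this equation), yields
\[
\mu_{i,\ep}=\int_{\R^2}\!\bigl[\ep^2|\nabla u_i|^2+|u_i|^2(|x|^2+g_i|u_i|^2+g|u_j|^2)\bigr]dx-2\ep^2\Omega\!\int_{\R^2}\!x^\perp\cdot(iu_i,\nabla u_i)\,dx.
\]
To see that the rotation contribution is real, I would write $\bar u_i\nabla u_i=\tfrac12\nabla|u_i|^2+i(iu_i,\nabla u_i)$ and observe that $\int x^\perp\cdot\nabla|u_i|^2\,dx=0$ because $\operatorname{div}(x^\perp)=0$.

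The second step is to bound each of the three pieces above by a constant independent of $\ep$. The kinetic piece is immediate: Lemma \ref{lemma:energy_estimates_u} gives $\ep^2\int|\nabla u_i|^2\,dx\leq C\ep^2|\log\ep|^2=o(1)$. For the trapping-potential piece $\int|x|^2|u_i|^2\,dx$, I would split into $\{|x|\leq \sqrt{2}R_{i,0}\}$, where $\int|x|^2|u_i|^2\,dx\leq 2R_{i,0}^2$ by the $L^2$-normalization, and its complement, where $|x|^2$ is controlled by the negative part of the limit profile exactly as in Lemma \ref{lemma:rotation_term_bound}: $|x|^2\leq \tfrac{2g_1\Gamma}{\Gamma_2}a_{1,0}^-$ for $|x|\geq\sqrt{2}R_{1,0}$ and $|x|^2\leq 2g_2(a_{2,0}+(g/g_2)a_{1,0})^-$ for $|x|\geq\sqrt{2}R_{2,0}$. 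The weighted integrals in Lemma \ref{lemma:energy_estimates_u} then give $\int|x|^2|u_i|^2\,dx\leq C$. For the nonlinear terms, the triangle inequality together with the $L^2$-closeness of $|u_i|^2$ to the compactly supported $a_i$ yields
\[
\int_{\R^2}|u_i|^4\,dx\leq 2\int_{\R^2}(|u_i|^2-a_i)^2\,dx+2\int_{\R^2}a_i^2\,dx\leq C,
\]
and $\int|u_1|^2|u_2|^2\,dx\leq\tfrac12\int(|u_1|^4+|u_2|^4)\,dx\leq C$ by Young's inequality.

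Finally, for the rotation integral, Cauchy--Schwarz combined with the two bounds just established gives
\[
2\ep^2\Omega\Bigl|\int_{\R^2}\!x^\perp\cdot(iu_i,\nabla u_i)\,dx\Bigr|\leq 2\ep^2\Omega\,\||x|u_i\|_{L^2}\|\nabla u_i\|_{L^2}\leq C\ep^2|\log\ep|\cdot|\log\ep|=o(1),
\]
using \eqref{eq:Omega_condition1}. Summing all three contributions yields the two-sided bound $|\mu_{i,\ep}|\leq C$, with even a little room to spare.

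No step here is genuinely hard; the only mildly subtle point is the identification of the real part of the rotation term and the justification of the integration by parts, both of which rest on $\operatorname{div}(x^\perp)=0$ and on the Gaussian decay provided by the harmonic trap (Lemma \ref{lemma:L_infty_bounds_u} applied after absorbing the rotation into a lower-order term exactly as in the proof of that lemma). The crucial structural input is that all \emph{a priori} large quantities (the kinetic integral, $\Omega$, and the exterior $|x|^2$-moment) are controlled by the weighted $L^2$ estimates of Lemma \ref{lemma:energy_estimates_u}, which inherit the favourable $|\log\ep|^2$-growth from the energy comparison in \eqref{eq:energy_estimates_u}.
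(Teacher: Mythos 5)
Your proof is correct and follows essentially the same route as the paper: test the Euler--Lagrange equation against $\bar u_i$, integrate by parts (legitimate since $u_i\in H^1(\R^2,\C)$ with finite second moment), and bound each resulting term using Lemma~\ref{lemma:energy_estimates_u}. The only cosmetic differences are that you bound the rotation contribution directly by Cauchy--Schwarz together with the bounded $|x|^2$-moment, whereas the paper invokes the pre-packaged Lemma~\ref{lemma:rotation_term_bound}, and that you estimate the nonlinear terms directly rather than via the rewriting in Proposition~\ref{prop:convergence_lagrange_multipliers}; both yield the same $O(1)$ bound.
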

\begin{proof}
 We test the equation for $u_i$ by $u_i$ itself and integrate by parts, which is possible since $u_i\in H^1(\mathbb{R}^2,\mathbb{C})$.
%The boundary term vanishes because
%\[
%\left|\int_{\partial B_R} u_i\nabla u_i\cdot \nu \,d\sigma\right|
%\leq C \frac{1+\mu_{i,\ep}}{\ep} \int_{\partial B_R} |u_i| \,d\sigma
%\to 0 \quad \text{as } R\to\infty.
%\]
The term containing $\Omega$ can be bounded by means of Lemma
\ref{lemma:rotation_term_bound}, whereas the other terms can be
rewritten as in Proposition
\ref{prop:convergence_lagrange_multipliers}. Finally, the desired
bound follows from the energy estimates of Lemma
\ref{lemma:energy_estimates_u}.
\end{proof}

\subsection{Non-existence of vortices}

The proof presented here is an adaptation of the proof of the main
theorem in \cite{AftalionJerrardLetelierJFA}. Let us start with the
following splitting of the energy, which is introduced in
\cite{AftalionMasonWei}.

\begin{lemma}\label{lemma:splitting_energy}
Let $(u_1,u_2)$ be any minimizer of  $E^\Omega_\ep$ in $\mathcal{H}$
and let $(\eta_1,\eta_2)$ be the unique positive minimizer of $E_\ep^0$ in
$\mathcal{H}$ provided by Theorem \ref{thm:uniqueness}. Let
\[
v_i=\frac{u_i}{\eta_i}, \quad \text{for } i=1,2.
\]
Then
\[
E^\Omega_\ep(u_1,u_2)=E^0_\ep(\eta_1,\eta_2)+F_\ep^\Omega(v_1,v_2),
\qquad \text{where}
\]
\[
\begin{split}
F_\ep^\Omega(v_1,v_2)=\sum_{j=1}^2\int_{\R^2}\left\{
\frac{\eta_j^2}{2}|\nabla v_j|^2+\frac{g_j}{4\ep^2}\eta_j^4(|v_j|^2-1)^2-\eta_j^2\Omega x^\perp\cdot(iv_j,\nabla v_j) \right\}\, dx \\
+\frac{g}{2\ep^2}
\int_{\R^2}\eta_1^2\eta_2^2(1-|v_1|^2)(1-|v_2|^2)\, dx.
\end{split}
\]
\end{lemma}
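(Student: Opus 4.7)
\textbf{Proof plan for Lemma \ref{lemma:splitting_energy}.} The plan is to substitute $u_j=\eta_j v_j$ directly into $E_\ep^\Omega(u_1,u_2)$, expand every term, and show that what remains after subtracting $E_\ep^0(\eta_1,\eta_2)+F_\ep^\Omega(v_1,v_2)$ cancels by virtue of the Euler--Lagrange equations for $(\eta_1,\eta_2)$, in very close analogy with Proposition \ref{lemma:splitting_energy_no_rotation}.

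First I would handle the rotation term separately: since $\eta_j$ is real, a direct computation gives
\[
(iu_j,\nabla u_j)=\eta_j^2 (iv_j,\nabla v_j)+\eta_j\nabla\eta_j\,(iv_j,v_j),
\]
and the second summand vanishes because $(iv_j,v_j)=\operatorname{Im}|v_j|^2=0$. Hence the rotation contribution in $E_\ep^\Omega(u_1,u_2)$ is exactly $-\Omega\sum_j\int \eta_j^2 x^\perp\!\cdot\!(iv_j,\nabla v_j)\,dx$, i.e.\ already matches the rotation part of $F_\ep^\Omega(v_1,v_2)$. Expanding the remaining terms using $|\nabla u_j|^2=\eta_j^2|\nabla v_j|^2+2\eta_j(v_j,\nabla v_j)\cdot\nabla\eta_j+|v_j|^2|\nabla\eta_j|^2$, one checks by a short algebraic computation (grouping quartic, potential and coupling contributions) that
\[
E_\ep^\Omega(u_1,u_2)-E_\ep^0(\eta_1,\eta_2)-F_\ep^\Omega(v_1,v_2)=\sum_{j=1}^{2}R_j,
\]
where
\[
R_j=\int_{\R^2}\!\Big\{\tfrac{|\nabla\eta_j|^2}{2}(|v_j|^2\!-\!1)+\eta_j\nabla\eta_j\!\cdot\!(v_j,\nabla v_j)+\tfrac{\eta_j(|v_j|^2\!-\!1)}{2\ep^2}\bigl(|x|^2\eta_j+g_j\eta_j^3+g\eta_j\eta_{3-j}^2\bigr)\Big\}dx.
\]

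The next step is to recognize the bracketed factor $|x|^2\eta_j+g_j\eta_j^3+g\eta_j\eta_{3-j}^2$ as $\lambda_{j,\ep}\eta_j+\ep^2\Delta\eta_j$, thanks to (\ref{eq:main_eta_1_eta_2}). Testing that equation against $\eta_j(|v_j|^2-1)/(2\ep^2)$, the Lagrange-multiplier term drops out because
\[
\int_{\R^2}\eta_j^2(|v_j|^2-1)\,dx=\int_{\R^2}(|u_j|^2-\eta_j^2)\,dx=1-1=0,
\]
so that the algebraic piece of $R_j$ rewrites as $\tfrac12\int\Delta\eta_j\cdot\eta_j(|v_j|^2-1)\,dx$. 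Integration by parts against $\eta_j(|v_j|^2-1)$, together with $\nabla(|v_j|^2)=2(v_j,\nabla v_j)$, yields
\[
\tfrac12\int_{\R^2}\Delta\eta_j\cdot\eta_j(|v_j|^2-1)\,dx=-\int_{\R^2}\Big\{\tfrac{|\nabla\eta_j|^2}{2}(|v_j|^2-1)+\eta_j\nabla\eta_j\cdot(v_j,\nabla v_j)\Big\}dx,
\]
which is exactly the opposite of the first two terms of $R_j$. Consequently $R_j=0$, giving the claimed splitting.

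The only genuinely delicate step will be justifying this integration by parts on the whole plane: a priori, $v_j=u_j/\eta_j$ may blow up at infinity since $\eta_j\to 0$, so one cannot integrate by parts with $v_j$ as a test function. Following the strategy of Proposition \ref{lemma:splitting_energy_no_rotation}, I would instead carry out the integration by parts on a ball $B_R$, writing the boundary term in the form
\[
\int_{\partial B_R}\Big(\tfrac{|u_j|^2}{\eta_j}-\eta_j\Big)\nabla\eta_j\cdot\nu\,d\sigma,
\]
and show that along a suitable sequence $R_k\to\infty$ these terms vanish. This uses the radial symmetry of $\eta_j$ and the estimate $|\eta_j'(R)|\leq C_\ep R\,\eta_j(R)$ of Lemma \ref{lemma:eta'/eta_bound}, the $L^\infty$ bound on $u_j$ from Lemma \ref{lemma:L_infty_bounds_u}, and the finite-mass conditions $\int|x|^2(|u_j|^2+\eta_j^2)\,dx<\infty$ implicit in $\mathcal{H}$, combined with the co-area formula exactly as in the proof of Proposition \ref{lemma:splitting_energy_no_rotation}. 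Once this selection argument gives $R_k$ along which the boundary terms vanish, passing to the limit $R_k\to\infty$ concludes the identity.
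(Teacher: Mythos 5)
Your proof is correct and takes essentially the same route as the paper, which simply refers back to Proposition \ref{lemma:splitting_energy_no_rotation}: the rotation term factors cleanly as $(iu_j,\nabla u_j)=\eta_j^2(iv_j,\nabla v_j)$ because $(iv_j,v_j)=0$, the remaining residual $R_j$ is killed by testing the Euler--Lagrange equation for $\eta_j$ against $\eta_j(|v_j|^2-1)$, and the boundary terms along a sequence $R_k\to\infty$ are controlled exactly as in the no-rotation case using Lemma \ref{lemma:eta'/eta_bound} and the finite second moments. The only inessential deviation is citing the $L^\infty$ bound of Lemma \ref{lemma:L_infty_bounds_u} for the boundary estimate, which is not actually needed since the $\eta_j$ factors cancel and the co-area/finite-mass argument alone suffices.
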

We skip the proof since it is similar to the one of Proposition
\ref{lemma:splitting_energy_no_rotation}.
 An integration by parts and  assumption
(\ref{eq:condition_on_g_thomas_fermi}) yield

\begin{lemma}\label{lemma:tilde_F_negative}
Let $F_{i,\ep}$ be the auxiliary functions introduced in
\eqref{eqFiAux} and let $\gamma$ be as in
\eqref{eq:g_mathfrak_definition}. Then we have
\[
\begin{split}
\tilde F_\ep^\Omega(v_1,v_2)=\sum_{i=1}^2 \int_{\R^2}\left\{
\frac{\eta_i^2}{2}\left(|\nabla v_i|^2-4\Omega F_{i,\ep}
Jv_i\right)+ \frac{\gamma}{4\ep^2}\eta_i^4(|v_i|^2-1)^2 \right\} \,
dx\leq 0,
\end{split}
\]
where $Jv_j=(i\partial_{x_1}v_j,\partial_{x_2}v_j)$ stands for the
Jacobian of $v_j$.
\end{lemma}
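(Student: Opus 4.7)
The plan is to combine the energy-splitting identity of Lemma~\ref{lemma:splitting_energy}, the integration-by-parts trick that converts the rotation term into a Jacobian times the auxiliary function $F_{i,\ep}$, and the Cauchy--Schwarz absorption of the cross term $\eta_1^2\eta_2^2(1-|v_1|^2)(1-|v_2|^2)$ based on condition~\eqref{eq:condition_on_g_thomas_fermi}. The starting point is the obvious inequality $F_\ep^\Omega(v_1,v_2)\leq 0$: since $(u_1,u_2)$ is a minimizer of $E_\ep^\Omega$ and $(\eta_1,\eta_2)$ is admissible with $E_\ep^\Omega(\eta_1,\eta_2)=E_\ep^0(\eta_1,\eta_2)$ (the rotation term vanishes because $\eta_i$ are real), Lemma~\ref{lemma:splitting_energy} gives $F_\ep^\Omega(v_1,v_2)=E_\ep^\Omega(u_1,u_2)-E_\ep^0(\eta_1,\eta_2)\leq 0$.

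The key manipulation is the rewriting of the rotation term using $\xi_{i,\ep}$. Since $\xi_{i,\ep}$ is radial with $\xi_{i,\ep}'(r)=-r\eta_{i,\ep}^2$, we have the pointwise identity $\eta_{i,\ep}^2\, x=-\nabla\xi_{i,\ep}$, hence $\eta_{i,\ep}^2\, x^\perp=-(\nabla\xi_{i,\ep})^\perp$. I would therefore integrate by parts, exploiting the identity $(\nabla\xi)^\perp\cdot(iv,\nabla v)=-\nabla\cdot\bigl(\xi\,(iv,\nabla v)^\perp\bigr)+2\xi\,Jv$ (up to a sign convention), which reduces, after the boundary terms are shown to vanish, to
\[
-\int_{\R^2}\eta_{i,\ep}^2\,\Omega\, x^\perp\cdot(iv_i,\nabla v_i)\,dx=-2\Omega\int_{\R^2}\xi_{i,\ep}\,Jv_i\,dx=\int_{\R^2}\frac{\eta_{i,\ep}^2}{2}(-4\Omega F_{i,\ep}Jv_i)\,dx,
\]
using $F_{i,\ep}=\xi_{i,\ep}/\eta_{i,\ep}^2$. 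Justifying the vanishing of the boundary terms is the main technical point I would need to address: I would integrate on $B_R$, use Lemma~\ref{lemma:nabla_xi_estimate} (so $\xi_{i,\ep}$ is bounded) together with Lemma~\ref{lemma:decay_for_large_x} and Lemma~\ref{lemma:L_infty_bounds_u} (super-exponential decay of $u_i,\eta_i$ and control of $\nabla u_i$, $\nabla \eta_i$), and select a sequence $R_k\to\infty$ along which the boundary integrals tend to zero by the co-area formula, exactly as in the analogous step of Proposition~\ref{proUniqGeneral}.

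Next, to handle the intercomponent term $\tfrac{g}{2\ep^2}\int\eta_1^2\eta_2^2(1-|v_1|^2)(1-|v_2|^2)\,dx$, I would apply Young's inequality together with the choice of $\gamma$ in~\eqref{eq:g_mathfrak_definition}, just as in the proof of Proposition~\ref{proUniqGeneral} (see~\eqref{eq:coercivity}). This yields
\[
\Bigl|\tfrac{g}{\ep^2}\int_{\R^2}\eta_1^2\eta_2^2(1-|v_1|^2)(1-|v_2|^2)\,dx\Bigr|\leq\sum_{i=1}^2\int_{\R^2}\frac{g_i-\gamma}{2\ep^2}\eta_i^4(|v_i|^2-1)^2\,dx,
\]
so the cross term is dominated by a fraction of the two diagonal quartic terms; subtracting it leaves the coefficient $\gamma/(4\ep^2)$ in front of each $\eta_i^4(|v_i|^2-1)^2$, which is exactly the integrand appearing in $\tilde F_\ep^\Omega$.

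Combining these two observations gives $\tilde F_\ep^\Omega(v_1,v_2)\leq F_\ep^\Omega(v_1,v_2)\leq 0$, finishing the proof. The computational steps are essentially routine once the right objects are introduced; the only real obstacle is the careful justification of integration by parts at infinity, where I would rely on the decay estimates mentioned above and a cutoff/co-area argument to dispose of the boundary contributions.
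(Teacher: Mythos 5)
Your proposal is correct and follows essentially the same route as the paper: minimality gives $F_\ep^\Omega(v_1,v_2)\le 0$, the rotation term is converted into $2\Omega\int\xi_{i,\ep}Jv_i\,dx$ by integration by parts with the boundary contribution killed along a sequence $R_k\to\infty$ (the paper bounds it via $F_{i,\ep}\le C\ep^{2/3}$ and the finite $H^1$ energy of $u_i$, which is the same co-area device you invoke), and the cross term is absorbed exactly as in \eqref{eq:coercivity} using $\gamma$ from \eqref{eq:g_mathfrak_definition}, yielding $\tilde F_\ep^\Omega\le F_\ep^\Omega\le 0$. The only nit is the sign in your intermediate divergence identity, which you already flag as a convention issue and which does not affect the final, correct identity.
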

\begin{proof}
First we prove that we can rewrite $F_\ep^\Omega$  in terms of $F_{i,\ep}$ as follows
\begin{equation}\label{eq:F_ep_Omega}
\begin{split}
F_\ep^\Omega(v_1,v_2)=\sum_{i=1}^2 \int_{\R^2}\left\{
\frac{\eta_i^2}{2}\left(|\nabla v_i|^2-4\Omega F_{i,\ep} Jv_i\right)+ \frac{g_i}{4\ep^2}\eta_i^4(|v_i|^2-1)^2 \right\}\,dx \\
+\frac{g}{2\ep^2} \int_{\R^2}
\eta_1^2\eta_2^2(1-|v_1|^2)(1-|v_2|^2)\, dx.
\end{split}
\end{equation}
Indeed, by \eqref{eq:xi'}, the following holds
\[
\nabla^\perp\xi_i=(-\partial_{x_2}\xi_i,\partial_{x_1}\xi_i) =
-\eta_i^2 x^\perp,
\]
%(note that our definition for $\nabla^\perp$ has the opposite sign
%from that in  some other references such as \cite{pacard-riviere}),
and Stokes theorem yields
\[
\int_{\partial B_R} \xi_j (iv_j,\nabla v_j)^\perp\cdot\nu \, d\sigma
= \int_{B_R} \{ -\xi_j \nabla\times(iv_j,\nabla v_j) + \eta_j^2
x^\perp \cdot (iv_j,\nabla v_j) \}\,dx,
\]
where $\nabla\times(iv_j,\nabla
v_j)=\partial_{x_1}(iv_j,\partial_{x_2}
v_j)-\partial_{x_2}(iv_j,\partial_{x_1} v_j) = 2 Jv_j$. The boundary
term vanishes because, by Corollary \ref{lemma:estimates_f_i}, for $R$ large, we have
\[
\left|\int_{\partial B_R} \xi_j (iv_j,\nabla v_j)^\perp\cdot\nu \,
d\sigma\right|= \left|\int_{\partial B_R} F_{j,\ep}(iu_j,\nabla
u_j)\,d\sigma\right| \leq C\ep^{2/3} \int_{\partial B_R} (|\nabla
u_j|^2+|u_j|^2)\,d\sigma
\]
which vanishes along a sequence $R_k\to\infty$. Hence we have
obtained
\[
\int_{\R^2} \eta_j^2 x^\perp\cdot(i v_j,\nabla
v_j)\,dx=2\int_{\R^2}\eta_j^2F_{j,\ep} J v_j \,dx,
\]
and \eqref{eq:F_ep_Omega} is proved. Then, reasoning as in
\eqref{eq:coercivity}, we deduce that $F_\ep^\Omega(v_1,v_2)\geq
\tilde F_\ep^\Omega(v_1,v_2)$. On the other hand, since $(u_1,u_2)$
is a minimizer and $(\eta_1,\eta_2)$ is real valued, we have
\[
E_\ep^\Omega(u_1,u_2)\leq
E_\ep^\Omega(\eta_1,\eta_2)=E_\ep^0(\eta_1,\eta_2),
\]
which, by Lemma \ref{lemma:splitting_energy}, implies that
$F_\ep^\Omega(v_1,v_2)\leq 0$.
\end{proof} The rest of the section is devoted to proving that $v_i=1$.

Let $0\leq \chi_i \leq 1$ be regular cut-off functions with the
property that
\[
\chi_i(r)=1 \text{ for } r\leq R_{i,0}-2|\log\ep|^{-3/2}
\quad\text{and} \quad \chi_i(r)=0 \text{ for } r\geq
R_{i,0}-|\log\ep|^{-3/2},
\]
and moreover $\|\nabla\chi_i\|_{L^\infty(\R^2)}\leq
2|\log\ep|^{3/2}$. We  estimate $\tilde F_\ep^\Omega(v_1,v_2)$
according to the following splitting
\[
\tilde F_\ep^\Omega(v_1,v_2)=A_1+B_1-C_1+A_2+B_2-C_2,
\]
where
\[
A_i=\int_{\R^2} \chi_i \left\{  \frac{\eta_i^2}{2}|\nabla v_i|^2+
\frac{\gamma}{4\ep^2}\eta_i^4(|v_i|^2-1)^2\right\}\,dx
\]
\[
B_i= \int_{\R^2}(1-\chi_i)\left\{ \frac{\eta_i^2}{2}\left(|\nabla
v_i|^2-4\Omega F_{i,\ep} Jv_i\right)+
\frac{\gamma}{4\ep^2}\eta_i^4(|v_i|^2-1)^2 \right\} \, dx
\]
\[
C_i=2\Omega \int_{\R^2} \chi_i \xi_i Jv_i \,dx.
\]
Lemma \ref{lemma:tilde_F_negative} immediately provides
\begin{equation}\label{eq:A+B<C}
A_1+B_1+A_2+B_2\leq C_1+C_2.
\end{equation}

\begin{proposition}\label{prop:B_i>0}
With the notation above, for $\varepsilon$ small, we have $B_i\geq 0$ for $i=1,2$ so that
$A_1+A_2\leq C_1+C_2$.
\end{proposition}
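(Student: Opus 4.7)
The plan is to exhibit the integrand of $B_i$ as pointwise non-negative on the support of $1-\chi_i$, using (a) the smallness of $F_{i,\eps}$ established in Corollary \ref{lemma:estimates_f_i}, and (b) the elementary Jacobian inequality $|Jv_i|\leq\frac{1}{2}|\nabla v_i|^2$. The cut-off width $|\log\eps|^{-3/2}$ and the velocity bound $\Omega\leq\omega_0|\log\ep|$ from \eqref{eq:Omega_condition1} have been designed precisely so that these two ingredients combine favorably.

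First I would localize $F_{i,\eps}$ on $\mathrm{supp}(1-\chi_i)\subset\{r\geq R_{i,0}-2|\log\eps|^{-3/2}\}$. Splitting this set into $[R_{i,0}-2|\log\eps|^{-3/2},R_{i,0}-\eps^{2/3}]$ and $[R_{i,0}-\eps^{2/3},\infty)$, and applying the two cases of Corollary \ref{lemma:estimates_f_i}, I obtain in both subregions (using $\eps^{2/3}\ll|\log\eps|^{-3/2}$ for small $\eps$)
\[
0\leq F_{i,\eps}(r)\leq C\,|\log\eps|^{-3/2}.
\]

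Second, writing the complex inner product as $(iu,w)=\mathrm{Im}(\bar u w)$, one checks that
\[
Jv_i=\mathrm{Im}\bigl(\overline{\partial_{x_1}v_i}\,\partial_{x_2}v_i\bigr),\qquad|Jv_i|\leq|\partial_{x_1}v_i||\partial_{x_2}v_i|\leq\tfrac{1}{2}|\nabla v_i|^2.
\]
Combined with the previous step and with $\Omega\leq\omega_0|\log\eps|$, this yields the pointwise bound
\[
|\nabla v_i|^2-4\Omega F_{i,\eps}\,Jv_i\geq\bigl(1-2\Omega F_{i,\eps}\bigr)|\nabla v_i|^2\geq\bigl(1-C\omega_0|\log\eps|^{-1/2}\bigr)|\nabla v_i|^2
\]
on $\mathrm{supp}(1-\chi_i)$, which is $\geq\tfrac12|\nabla v_i|^2\geq 0$ once $\eps$ is small enough. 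Since the remaining contribution $\frac{\gamma}{4\eps^2}\eta_i^4(|v_i|^2-1)^2$ is non-negative, the entire integrand of $B_i$ is non-negative, giving $B_i\geq 0$. The inequality $A_1+A_2\leq C_1+C_2$ is then an immediate consequence of \eqref{eq:A+B<C}.

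There is no real obstacle here: the proposition is essentially a bookkeeping step, whose job is to justify the choice of the cut-off width relative to the rotation speed. The substantive input comes from Corollary \ref{lemma:estimates_f_i}, which was itself the technical heart of Section \ref{secAuxiliary}; once that estimate is in hand, the present proposition reduces to the elementary Jacobian inequality together with the arithmetic $|\log\eps|\cdot|\log\eps|^{-3/2}=|\log\eps|^{-1/2}\to 0$.
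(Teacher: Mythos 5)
Your proof is correct and follows essentially the same route as the paper: restrict to $\mathrm{supp}(1-\chi_i)$, apply Corollary \ref{lemma:estimates_f_i} to get $F_{i,\ep}\le C|\log\ep|^{-3/2}$ there, invoke $|Jv_i|\le\tfrac12|\nabla v_i|^2$, and conclude that the integrand of $B_i$ is pointwise non-negative once $\Omega F_{i,\ep}\le 1/4$, which holds for small $\ep$ by \eqref{eq:Omega_condition1}. The only cosmetic difference is that you spell out the split of $\mathrm{supp}(1-\chi_i)$ at $R_{i,0}-\ep^{2/3}$ and the derivation of the Jacobian inequality, which the paper leaves implicit.
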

\begin{proof}
Due to the definition of $B_i$, we can restrict our attention to the
set
\[
\text{supp}(1-\chi_i)=\{ x:\ |x|> R_{i,0}-2|\log\ep|^{-3/2} \}.
\]
Corollary \ref{lemma:estimates_f_i} implies that in such a set we
have $F_{i,\ep}\leq C|\log\ep|^{-3/2}$. Hence assumption
\eqref{eq:Omega_condition1} implies that  $\Omega F_{i,\ep}\leq 1/4$,
for $\ep$ sufficiently small. Recalling that $|J v_i|\leq|\nabla
v_i|^2/2$, we deduce that
\[
|\nabla v_i|^2-4\Omega F_{i,\ep} Jv_i \geq \frac{1}{2} |\nabla
v_i|^2,
\]
and as a consequence,
\begin{equation}\label{eq:B_i_bound_below}
B_i\geq \int_{\R^2}(1-\chi_i)\left\{ \frac{\eta_i^2}{4}|\nabla
v_i|^2+ \frac{\gamma}{4\ep^2}\eta_i^4(|v_i|^2-1)^2 \right\} \, dx.
\end{equation}
The second part of the statement is obtained by combining with
\eqref{eq:A+B<C}.
\end{proof}

\begin{lemma}\label{lemma:tilde_ep_definition}
Let
\[
\tilde\ep_i=\ep\gamma^{-1/2}
\left(\inf_{\{\text{supp}\chi_i\}}\eta_i\right)^{-1}.
\]
There exists $C>0$  such that $\tilde\ep_i \leq C \ep
|\log\ep|^{3/4}$.
\end{lemma}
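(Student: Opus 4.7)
The plan is to show that $\inf_{\text{supp}\,\chi_i}\eta_i \geq c|\log\varepsilon|^{-3/4}$, since $\gamma$ is a fixed positive constant independent of $\varepsilon$, and from this the stated bound on $\tilde\varepsilon_i$ follows immediately from its definition.

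First, I would identify where on $\text{supp}\,\chi_i = \{|x|\leq R_{i,0} - |\log\varepsilon|^{-3/2}\}$ the infimum of $\eta_i$ is (essentially) attained. Since $\sqrt{a_i}$ is strictly positive and bounded away from zero on any compact set contained in $\{|x| < R_{i,0}\}$, and since Theorem \ref{thmMain} ensures $\eta_i$ is uniformly close to $\sqrt{a_i}$ in the interior, the infimum behavior is controlled by what happens near the boundary $|x| = R_{i,0} - |\log\varepsilon|^{-3/2}$.

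Next, I would apply estimate (\ref{eq14complex}), which gives
\[
\eta_{i,\varepsilon}(r) \geq (1 - C\varepsilon^{1/3})\sqrt{a_i(r)}
\]
for $|x| \leq R_{i,0} - \varepsilon^{1/3}$. Note that for small $\varepsilon$ we have $|\log\varepsilon|^{-3/2} \gg \varepsilon^{1/3}$, so every point in $\text{supp}\,\chi_i$ lies inside the range of validity of (\ref{eq14complex}). Then I would use the explicit formulae (\ref{eq:a_i0_def}) and (\ref{eq:a_20out}), from which $a_i$ vanishes linearly at $R_{i,0}$:
\[
a_i(r) \geq c(R_{i,0} - r) \quad \text{for } r \text{ near } R_{i,0},
\]
so at $r = R_{i,0} - |\log\varepsilon|^{-3/2}$ we obtain $\sqrt{a_i(r)} \geq c|\log\varepsilon|^{-3/4}$. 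Combining, the infimum satisfies $\inf_{\text{supp}\,\chi_i}\eta_i \geq c|\log\varepsilon|^{-3/4}$ for $\varepsilon$ small.

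There is no real obstacle here; the only thing to verify carefully is the comparison $\varepsilon^{1/3} \leq |\log\varepsilon|^{-3/2}$ for small $\varepsilon$, which holds because powers of $\varepsilon$ decay faster than any negative power of $|\log\varepsilon|$. Plugging the resulting lower bound into the definition of $\tilde\varepsilon_i$ gives
\[
\tilde\varepsilon_i = \varepsilon\gamma^{-1/2}\bigl(\inf_{\text{supp}\,\chi_i}\eta_i\bigr)^{-1} \leq C\varepsilon|\log\varepsilon|^{3/4},
\]
as claimed.
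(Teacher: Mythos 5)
Your proof is correct and takes essentially the same route as the paper: both arguments reduce to the lower bound $\eta_i\geq c|\log\varepsilon|^{-3/4}$ on $\mathrm{supp}\,\chi_i$, coming from the linear vanishing of $a_i$ at $R_{i,0}$ (so $a_i\geq c|\log\varepsilon|^{-3/2}$ on the support of $\chi_i$) combined with the closeness of $\eta_i$ to $\sqrt{a_i}$ from Theorem \ref{thmMain}. The only cosmetic difference is that the paper uses the additive estimate (\ref{eq:main_theorem_relation2}), $\eta_i\geq\sqrt{a_i}-C\varepsilon^{1/3}$, whereas you use the relative estimate (\ref{eq14complex}), which additionally requires your (correct and harmless) verification that $\mathrm{supp}\,\chi_i\subset\{|x|\leq R_{i,0}-\varepsilon^{1/3}\}$ for small $\varepsilon$.
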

\begin{proof}
Clearly $a_i\geq c|\log\ep|^{-3/2}$ in $\{\text{supp}\chi_i\}$.
 Hence property \eqref{eq:main_theorem_relation2} implies that
\begin{equation}\label{eq:eta_bound_from_below}
\eta_i\geq\sqrt{a_i}-C\ep^{1/3}\geq c|\log\ep|^{-3/4} \quad \text{in
} \{\text{supp}\chi_i\},
\end{equation}
which provides the statement.
\end{proof}

\begin{lemma}\label{lemma:estimate_on_supp_chi}
There exists $C$ independent of $\ep$ such that, for small $\varepsilon$,
\[
\sum_{i=1}^2 \int_{\{\text{supp}\chi_i\}} \left\{ \frac{|\nabla
v_i|^2}{2}+\frac{1}{4\tilde\ep_i^2}(|v_i|^2-1)^2 \right\}\,dx \leq C
|\log\ep|^{3/2}(C_1+C_2).
\]
\end{lemma}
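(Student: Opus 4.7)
The starting point is the inequality $A_1+B_1+A_2+B_2\le C_1+C_2$: combining Lemma \ref{lemma:tilde_F_negative} (which gives $\sum_i(A_i+B_i-C_i)\le0$) with the non-negativity $B_i\ge0$ from Proposition \ref{prop:B_i>0} and $A_i\ge0$ by construction. In particular,
\[
A_i+B_i\;\le\;C_1+C_2,\qquad i=1,2.
\]

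The plan is to extract, from the left-hand side, a lower bound in which the coercive density $\tfrac{|\nabla v_i|^2}{2}+\tfrac{1}{4\tilde\ep_i^2}(|v_i|^2-1)^2$ appears on $\operatorname{supp}\chi_i$. The first step is to merge the $\chi_i$-weighted integrand defining $A_i$ with the $(1-\chi_i)$-weighted lower bound for $B_i$ given by \eqref{eq:B_i_bound_below}. Since $\tfrac{\chi_i}{2}+\tfrac{1-\chi_i}{4}\ge\tfrac14$ and $\chi_i+(1-\chi_i)=1$ pointwise, this gives
\[
A_i+B_i\;\ge\;\int_{\R^2}\left\{\frac{\eta_i^2}{4}|\nabla v_i|^2+\frac{\gamma}{4\ep^2}\eta_i^4(|v_i|^2-1)^2\right\}dx.
\]

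Next, on $\operatorname{supp}\chi_i$, set $m_i:=\inf_{\operatorname{supp}\chi_i}\eta_i$; the very definition of $\tilde\ep_i$ yields $\tfrac{\gamma m_i^2}{\ep^2}=\tfrac{1}{\tilde\ep_i^2}$, hence $\eta_i\ge m_i$ there implies $\tfrac{\gamma\eta_i^4}{\ep^2}\ge\tfrac{m_i^2}{\tilde\ep_i^2}$. Restricting the previous inequality to $\operatorname{supp}\chi_i$ and factoring out $m_i^2$,
\[
A_i+B_i\;\ge\;m_i^2\int_{\operatorname{supp}\chi_i}\left\{\frac{|\nabla v_i|^2}{4}+\frac{1}{4\tilde\ep_i^2}(|v_i|^2-1)^2\right\}dx.
\]
The lower bound $m_i\ge c|\log\ep|^{-3/4}$ established in the proof of Lemma \ref{lemma:tilde_ep_definition} (via \eqref{eq:eta_bound_from_below}) gives $m_i^{-2}\le C|\log\ep|^{3/2}$. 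Combining with $A_i+B_i\le C_1+C_2$, summing over $i=1,2$, and absorbing the harmless factor between $\tfrac14$ and $\tfrac12$ into $C$, yields the asserted estimate.

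There is no serious obstacle. The only subtlety is that on the transition annulus $\{R_{i,0}-2|\log\ep|^{-3/2}\le|x|\le R_{i,0}-|\log\ep|^{-3/2}\}$ the cutoff $\chi_i$ inside $A_i$ degenerates to zero, so $A_i$ alone does not control the gradient on all of $\operatorname{supp}\chi_i$; pairing it with the $(1-\chi_i)$-weighted coercive lower bound for $B_i$ — which is exactly what Proposition \ref{prop:B_i>0} delivers — fills this gap and produces a uniformly positive weight $\ge 1/4$ throughout $\operatorname{supp}\chi_i$. Once this is observed, the rest reduces to reading off the pointwise bound on $\eta_i$ from Theorem \ref{thmMain}.
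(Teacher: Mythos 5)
Your proof is correct and follows essentially the same route as the paper's: both use the lower bound \eqref{eq:B_i_bound_below} for $B_i$ to recover a coercive integral over all of $\R^2$, then pass to $\operatorname{supp}\chi_i$ using the definition of $\tilde\ep_i$ together with the pointwise bound $\eta_i\gtrsim|\log\ep|^{-3/4}$ from \eqref{eq:eta_bound_from_below}, and close with \eqref{eq:A+B<C}. The only cosmetic difference is that the paper writes $A_i+2B_i$ to recover the exact coefficient $\tfrac12$ on $|\nabla v_i|^2$, while you keep $A_i+B_i$ and absorb the resulting factor $2$ into $C$ at the end.
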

\begin{proof}
Recalling that $\tilde\ep_i^2\geq\ep^2/(\gamma\eta_i^2)$ in
$\{\text{supp}\chi_i\}$ and relation
\eqref{eq:eta_bound_from_below}, we deduce
\[
\begin{split}
\int_{\{\text{supp}\chi_i\}} & \left\{ \frac{|\nabla v_i|^2}{2}+\frac{1}{4\tilde\ep_i^2}(|v_i|^2-1)^2 \right\} \,dx \\
& \leq C |\log\ep|^{3/2} \int_{\{\text{supp}\chi_i\}} \left\{
\frac{\eta_i^2}{2}|\nabla
v_i|^2+\frac{\gamma}{4\ep^2}\eta_i^4(|v_i|^2-1)^2 \right\}\,dx.
\end{split}
\]
On the other side, estimate \eqref{eq:B_i_bound_below} implies that
\[
\int_{\mathbb{R}^2} \left\{ \frac{\eta_i^2}{2}|\nabla
v_i|^2+\frac{\gamma}{4\ep^2}\eta_i^4(|v_i|^2-1)^2 \right\}\,dx  \leq
A_i + 2 B_i.
\]
The result follows by summing the above for $i=1,2$ and combining
with \eqref{eq:A+B<C}.
\end{proof}

\begin{proposition}\label{prop:estimate_log_ep-11}
Suppose that
\begin{equation}\label{eq:Omega_condition2}
2\Omega\max_{i=1,2} \{\|F_{i,0}\|_{L^\infty(\R^2)}\} \leq
|\log\ep|-(\alpha+1)\log|\log\ep|,
\end{equation}
for a suitable $\alpha>0$, where $F_{i,0}$ are as in
(\ref{eqFiAux000}). There exists $C>0$ independent of $\ep$ such
that
\[
A_i+B_i+|C_i|\leq C|\log\ep|^{-11}, \quad \text{for } i=1,2.
\]
\end{proposition}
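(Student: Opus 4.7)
The plan is to estimate $|C_i|$ via a Jerrard--Soner Jacobian estimate from \cite{jerard} and close the argument by absorbing $|C_i|$ into $A_i+B_i$ using the hypothesis \eqref{eq:Omega_condition2}. This mirrors the scalar case in \cite{AftalionJerrardLetelierJFA}, applied component-wise thanks to the decoupling provided by the constant $\gamma$ in \eqref{eq:g_mathfrak_definition}.

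First, I would prepare the weight $\chi_i\xi_i = \chi_i\eta_i^2 F_{i,\ep}$. Using Lemma \ref{lemma:estimates_f_i-f_i0} and Theorem \ref{thmMain}, this is $L^\infty$-close to $\chi_i a_i F_{i,0}$ up to an error $\mathcal{O}(\ep^{1/3})$, while Corollary \ref{lemma:estimates_f_i} and Lemma \ref{lemma:nabla_xi_estimate} give the uniform and Lipschitz bounds $\|\chi_i\xi_i\|_{L^\infty}=\mathcal{O}(1)$, $\|\nabla(\chi_i\xi_i)\|_{L^\infty}=\mathcal{O}(|\log\ep|^{3/2})$. Together with Lemma \ref{lemma:estimate_on_supp_chi}, these provide the inputs to the Jacobian estimate.

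Second, I would invoke the weighted Jacobian estimate of Jerrard--Soner (\cite{jerard}) for the complex map $v_i$ relative to the rescaled parameter $\tilde\ep_i$ of Lemma \ref{lemma:tilde_ep_definition}. It yields, schematically,
\[
\Bigl|\int \chi_i\xi_i\,Jv_i\,dx\Bigr| \leq \frac{\|\chi_i F_{i,0}\|_{L^\infty}}{|\log\tilde\ep_i|}\int\chi_i\Bigl(\frac{\eta_i^2}{2}|\nabla v_i|^2+\frac{\gamma}{4\ep^2}\eta_i^4(|v_i|^2-1)^2\Bigr)dx + \mathcal{R}_i,
\]
where $\mathcal{R}_i$ collects remainders controlled by $\|\xi_i-a_iF_{i,0}\|_{L^\infty}$, $\|\nabla(\chi_i\xi_i)\|_{L^\infty}$, $\tilde\ep_i$, and the total rescaled energy bound of Lemma \ref{lemma:estimate_on_supp_chi}. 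Multiplying by $2\Omega$, the main term reads $(2\Omega\|F_{i,0}\|_{L^\infty}/|\log\tilde\ep_i|)\,A_i$, and since Lemma \ref{lemma:tilde_ep_definition} gives $|\log\tilde\ep_i|=|\log\ep|+\mathcal{O}(\log|\log\ep|)$, hypothesis \eqref{eq:Omega_condition2} forces
\[
\frac{2\Omega\,\|F_{i,0}\|_{L^\infty}}{|\log\tilde\ep_i|}\leq 1-\frac{\alpha\log|\log\ep|}{|\log\ep|}\bigl(1+o(1)\bigr).
\]

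Third, the absorption. Plugging the Jacobian bound into $A_1+B_1+A_2+B_2\leq C_1+C_2\leq|C_1|+|C_2|$, which holds thanks to Proposition \ref{prop:B_i>0} (all $A_i$, $B_i$ are nonnegative), gives
\[
\frac{\alpha\log|\log\ep|}{|\log\ep|}\sum_{i=1,2}(A_i+B_i) \leq \mathcal{R}_1+\mathcal{R}_2.
\]
Tracking the remainders, using $\tilde\ep_i\leq C\ep|\log\ep|^{3/4}$ from Lemma \ref{lemma:tilde_ep_definition} and the polynomial Lipschitz blow-up $\mathcal{O}(|\log\ep|^{3/2})$ of $\chi_i\xi_i$, one obtains $\mathcal{R}_i=\mathcal{O}(\ep^\sigma)$ for some $\sigma>0$, hence certainly $\mathcal{R}_i=\mathcal{O}(|\log\ep|^{-12}\log|\log\ep|)$, and therefore $A_i+B_i=\mathcal{O}(|\log\ep|^{-11})$. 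Feeding this back into the Jacobian inequality yields the same bound for $|C_i|$, completing the proof.

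The main obstacle is obtaining the Jacobian inequality with the \emph{sharp} constant $\|F_{i,0}\|_{L^\infty}/|\log\tilde\ep_i|$, rather than a cruder bound involving the full energy divided by only $|\log\tilde\ep_i|^{1/2}$; only the sharp form makes the absorption close once \eqref{eq:Omega_condition2} is invoked. This relies on a vortex-ball construction on the rescaled map $v_i$ combined with a weighted integration-by-parts exploiting the Lipschitz regularity of $\chi_i\xi_i$, and on treating the lower-order cross term $\frac{g}{2\ep^2}\eta_1^2\eta_2^2(1-|v_1|^2)(1-|v_2|^2)$ via the pointwise inequality \eqref{eq:g_mathfrak_definition}, which is what allows the two components to be handled independently.
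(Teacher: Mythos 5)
Your overall strategy---bound $|C_i|$ by the Jacobian estimate of \cite{jerard} with weight $\chi_i\xi_i$, then absorb into $A_i+B_i$ using \eqref{eq:Omega_condition2} and Proposition \ref{prop:B_i>0}---is the same as the paper's. The genuine gap is in the quantitative form of the Jacobian estimate you invoke: you assume an inequality with the \emph{sharp} constant $\|F_{i,0}\|_{L^\infty}/|\log\tilde\ep_i|$ in front of the weighted energy \emph{and} a remainder $\mathcal{R}_i=\mathcal{O}(\ep^\sigma)$ for some fixed $\sigma>0$. No such estimate is available from \cite{jerard} (nor is one proved in \cite{AftalionJerrardLetelierJFA}): the lemma used there gives the main term only with an excess factor $k>1$, and its error is $C\ep^\beta\bigl(1+\sum_i|C_i|\bigr)$ with the exponent $\beta$ tied to $k-1$ (here $\beta=(k-1)/100$). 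Since hypothesis \eqref{eq:Omega_condition2} only leaves a margin of order $\alpha\log|\log\ep|/|\log\ep|$ for the absorption, one is forced to take $k-1$ of exactly that order, so $\ep^\beta$ is only \emph{logarithmically} small; with the choice $\alpha=1300$ in \eqref{eq:alpha_k_beta_def} this gives $\ep^\beta=|\log\ep|^{-13}$ (see \eqref{eq:eps_power_beta}), and dividing by the absorption gap $\alpha^2\log^2|\log\ep|/|\log\ep|^2$ is precisely what produces the exponent $-11$ in the statement. A sharp-constant estimate with a polynomially small remainder would, if true, give a far stronger conclusion, but it is exactly the coupling between the excess factor and the error exponent that prevents it; your closing paragraph names ``sharpness of the constant'' as the obstacle yet never confronts this coupling, and the assertion $\mathcal{R}_i=\mathcal{O}(\ep^\sigma)$, which carries the whole proof, is left unsupported.

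Two further points. First, the error term in the estimate actually used involves $\sum_i|C_i|$ itself, so it must be absorbed into the left-hand side (the paper's factor $1-C\ep^\beta$); your $\mathcal{R}_i$, which you say is controlled in part by the rescaled energy bound of Lemma \ref{lemma:estimate_on_supp_chi}, i.e.\ by $C|\log\ep|^{3/2}(C_1+C_2)$, needs the same treatment to avoid circularity, and this is not spelled out. Second, your final numerology (``$\mathcal{O}(\ep^\sigma)$, hence certainly $\mathcal{O}(|\log\ep|^{-12}\log|\log\ep|)$, therefore $A_i+B_i=\mathcal{O}(|\log\ep|^{-11})$'') lands on the correct exponent only by fiat: the exponent $11$ is dictated by the specific constants $\alpha$, $k$, $\beta$ of \eqref{eq:alpha_k_beta_def}, which your argument never introduces. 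To repair the proof you should state the Jacobian lemma in the factor-$k$ form, choose $k$ and $\beta$ as in \eqref{eq:alpha_k_beta_def}, and run the absorption as in the paper, using Lemma \ref{lemma:estimates_f_i-f_i0} to replace $\|F_{i,\ep}\|_{L^\infty}$ by $(1+C\ep^{1/3})\|F_{i,0}\|_{L^\infty}$ and Lemma \ref{lemma:tilde_ep_definition} to control $|\log\tilde\ep_i|$.
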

\begin{proof}
We use a result by Jerrard \cite{jerard}, as it is stated in
\cite{AftalionJerrardLetelierJFA}. Following the last mentioned
paper, we let
\begin{equation}\label{eq:alpha_k_beta_def}
\alpha=1300, \quad k=1+\alpha\frac{\log|\log\ep|}{|\log\ep|}, \quad
\beta=\frac{k-1}{100}.
\end{equation}
Notice that
\begin{equation}\label{eq:eps_power_beta}
\ep^\beta=|\log\ep|^{-\alpha/100}=|\log\ep|^{-13}.
\end{equation}
As in \cite{AftalionJerrardLetelierJFA}, we can write \cite[Lemma
8]{jerard} as
\[
\begin{split}
\sum_{i=1}^2 |C_i|\leq 2\Omega k \sum_{i=1}^2 \int_{\R^2}
\frac{\chi_i\xi_i}{|\log\tilde\ep_i|}\left\{ \frac{|\nabla
v_i|^2}{2}+ \frac{1}{4\tilde\ep_i^2}(|v_i|^2-1)^2\right\}\,dx
+C\ep^\beta(1+\sum_{i=1}^2 |C_i|),
\end{split}
\]
where $\tilde\ep_i$ is defined in Lemma
\ref{lemma:tilde_ep_definition}. This formulation only makes use of
the estimates in Lemmas \ref{lemma:nabla_xi_estimate} and
\ref{lemma:estimate_on_supp_chi}, so that it holds also in our case.
Now, recalling that $\xi_i=F_{i,\ep}\eta_i^2$ and that
$\tilde\ep_i^2\geq\ep^2/(\gamma\eta_i^2)$ in
$\{\text{supp}\chi_i\}$, we deduce
\[
\sum_{i=1}^2 |C_i| \leq 2\Omega k \sum_{i=1}^2
\frac{\|F_{i,\ep}\|_{L^\infty(\R^2)}}{|\log\tilde\ep_i|} A_i +
C\ep^\beta(1+\sum_{i=1}^2|C_i|),
\]
so that
\[
(1-C\ep^\beta)\sum_{i=1}^2|C_i| \leq 2\Omega k \sum_{i=1}^2
\frac{\|F_{i,\ep}\|_{L^\infty(\R^2)}}{|\log\tilde\ep_i|} A_i
+C\ep^\beta.
\]
We estimated $F_{i,\ep}$ in Lemma \ref{lemma:estimates_f_i-f_i0},
which provides
\[
\|F_{i,\ep}\|_{L^\infty(\R^2)}\leq (1+C\ep^{1/3})
\|F_{i,0}\|_{L^\infty(\R^2)}\leq (1+C\ep^\beta)
\|F_{i,0}\|_{L^\infty(\R^2)},
\]
where the last inequality holds for $\ep$ sufficiently small by
virtue of \eqref{eq:eps_power_beta}. Also, Lemma
\ref{lemma:tilde_ep_definition} implies that, for every $K>0$, we
have
\[
|\log\tilde\ep_i|\geq (|\log\ep|-\log|\log\ep|)(1+K\ep^\beta)
\]
for $\ep$ sufficiently small with respect to $K$. By combining these
facts with assumption \eqref{eq:Omega_condition2} and with our
choice of $k$ in \eqref{eq:alpha_k_beta_def} we obtain
\begin{equation}\label{eq:bound_C_i}
\begin{split}
|C_1|+|C_2| & \leq \left(1-\alpha \frac{\log|\log\ep|}{|\log\ep|-\log|\log\ep|}\right) k (A_1+A_2) + C\ep^\beta \\
& \leq \left( 1-\alpha^2\frac{\log^2|\log\ep|}{|\log\ep|^2} \right)
(A_1+A_2)+C\ep^\beta.
\end{split}
\end{equation}
Recalling Proposition \ref{prop:B_i>0} we deduce
\[
A_1+A_2\leq |C_1|+|C_2|\leq \left(
1-\alpha^2\frac{\log^2|\log\ep|}{|\log\ep|^2} \right) (A_1+A_2)+
C\ep^\beta,
\]
so that
\[
A_1+A_2\leq
\frac{C\ep^\beta}{\alpha^2}\frac{|\log\ep|^2}{\log^2|\log\ep|} \leq
C |\log\ep|^{-11},
\]
where in the last step we replaced relation
\eqref{eq:eps_power_beta}. Being $A_i$ non-negative quantities, the
last estimate holds for both terms. In turn we deduce from
\eqref{eq:bound_C_i} that $|C_i|\leq C |\log\ep|^{-11}$, and from
\eqref{eq:A+B<C} that
\[
B_1+B_2\leq A_1+B_1+A_2+B_2\leq C_1+C_2\leq C |\log\ep|^{-11}.
\]
Being $B_i$ non-negative by Proposition \ref{prop:B_i>0}, the
estimate holds for both terms.
\end{proof}

We can now derive a ``clearing-out'' property (see also
\cite{bethuel}).

\begin{proposition}\label{prop:v>1/2}
Suppose that \eqref{eq:Omega_condition2} holds. For $\ep$
sufficiently small, we have
\[
|v_i|\geq \frac{1}{2} \quad\text{in } \{\text{supp}\chi_i\}.
\]
\end{proposition}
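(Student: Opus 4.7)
The plan is to carry out a standard Ginzburg--Landau clearing-out argument: combine the already-established smallness of a rescaled Ginzburg--Landau energy on $\{\mathrm{supp}\,\chi_i\}$ with a local Lipschitz estimate on $v_i$, and derive a contradiction from the mismatch between the energy budget and the energy forced by a near-zero of $v_i$.

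First, I would assemble the energy upper bound. Combining Proposition \ref{prop:estimate_log_ep-11} (which gives $|C_1|+|C_2|\leq C|\log\ep|^{-11}$ under hypothesis \eqref{eq:Omega_condition2}) with Lemma \ref{lemma:estimate_on_supp_chi}, I obtain
\[
\sum_{i=1}^{2}\int_{\{\mathrm{supp}\,\chi_i\}}\left\{\frac{|\nabla v_i|^2}{2}+\frac{1}{4\tilde\ep_i^{\,2}}(|v_i|^2-1)^2\right\}dx\leq C|\log\ep|^{-19/2},
\]
so the relevant rescaled Ginzburg--Landau energy on $\{\mathrm{supp}\,\chi_i\}$ tends to $0$.

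Next I would derive a uniform Lipschitz bound for $v_i=u_i/\eta_i$ on $\{\mathrm{supp}\,\chi_i\}$. Writing $\nabla v_i=\nabla u_i/\eta_i-u_i\nabla\eta_i/\eta_i^{2}$ and using $\|u_i\|_{L^\infty}+\|\eta_i\|_{L^\infty}\leq C$, $\|\nabla u_i\|_{L^\infty}\leq C/\ep$ (Lemma \ref{lemma:L_infty_bounds_u}, together with the Lagrange multiplier bound of Lemma \ref{lemma:lagrange_multipliers_mu}), $\|\nabla\eta_i\|_{L^\infty}\leq C/\ep$ (Lemma \ref{lemma:L_infty_bounds}), and the lower bound $\eta_i\geq c|\log\ep|^{-3/4}$ on $\{\mathrm{supp}\,\chi_i\}$ from \eqref{eq:eta_bound_from_below}, I get
\[
\|\nabla v_i\|_{L^\infty(\mathrm{supp}\,\chi_i)}\leq M_\ep:=C|\log\ep|^{3/2}/\ep.
\]

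Then comes the clearing-out step. Suppose $|v_i(x_0)|<1/2$ at some $x_0\in\{\mathrm{supp}\,\chi_i\}$. The Lipschitz bound forces $|v_i(x)|\leq 3/4$ on $B_{1/(4M_\ep)}(x_0)$, so that $(|v_i|^2-1)^2\geq (7/16)^2$ there. The potential part of the energy over this ball is then bounded below by a multiple of $\mathrm{vol}(B_{1/(4M_\ep)})/\tilde\ep_i^{\,2}$, which, using $\tilde\ep_i\leq C\ep|\log\ep|^{3/4}$ from Lemma \ref{lemma:tilde_ep_definition}, is at least $c|\log\ep|^{-9/2}$. Since $|\log\ep|^{-19/2}\ll|\log\ep|^{-9/2}$ for small $\ep$, this contradicts the energy upper bound, yielding the claim.

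The main technical obstacle is ensuring that, when $x_0$ lies near the outer edge of $\{\mathrm{supp}\,\chi_i\}$, the ball $B_{1/(4M_\ep)}(x_0)$ is still contained in a region where the energy upper bound and the lower bound $\eta_i\gtrsim|\log\ep|^{-3/4}$ apply. This is not a real problem because $1/M_\ep\sim\ep|\log\ep|^{-3/2}$ is much smaller than the width $|\log\ep|^{-3/2}$ of the cut-off transition; I would handle it by repeating the argument of Propositions \ref{prop:B_i>0}--\ref{prop:estimate_log_ep-11} with a slightly enlarged cut-off (supported in $\{|x|\leq R_{i,0}-\tfrac12|\log\ep|^{-3/2}\}$), losing at most a polylog factor which does not affect the final comparison $|\log\ep|^{-19/2}\ll|\log\ep|^{-9/2}$.
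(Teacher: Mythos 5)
Your proposal is correct and follows essentially the same clearing-out strategy as the paper: combine the $O(|\log\ep|^{-19/2})$ bound on the rescaled Ginzburg--Landau energy over $\{\mathrm{supp}\,\chi_i\}$ (from Proposition~\ref{prop:estimate_log_ep-11} and Lemma~\ref{lemma:estimate_on_supp_chi}) with the Lipschitz bound $\|\nabla v_i\|_{L^\infty(\mathrm{supp}\,\chi_i)}\leq C|\log\ep|^{3/2}/\ep$, then contradict the bound by the potential energy forced on a small ball around a hypothetical point where $|v_i|$ is small. The only substantive difference is cosmetic: the paper goes through the slightly stronger intermediate inequality $|v_i|>1-|\log\ep|^{-1}$ and uses the smaller ball $B_{\ep|\log\ep|^{-5/2}}(x_0)$ (yielding a one-power-of-log gap, $|\log\ep|^{-7}$ versus $|\log\ep|^{-8}$), whereas you target the threshold $1/2$ directly with the natural ball of radius $\sim 1/M_\ep$, obtaining a larger margin ($|\log\ep|^{-9/2}$ versus $|\log\ep|^{-19/2}$); since the rest of the paper only ever uses $|v_i|\geq 1/2$, your version is a little cleaner and avoids the paper's implicit need to shrink $r_0$ by a small constant so the Lipschitz variation stays below $|\log\ep|^{-1}$.
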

\begin{proof}
We shall prove that
\begin{equation}\label{eq:v>1/2}
|v_i|>1-|\log\ep|^{-1} \quad\text{in } \{\text{supp}\chi_i\},
\end{equation}
for $i=1,2$, which implies the statement. By combining Lemma
\ref{lemma:estimate_on_supp_chi} with Proposition
\ref{prop:estimate_log_ep-11} we obtain
\[
\sum_{i=1}^2 \int_{\{\text{supp}\chi_i\}} \left\{ \frac{|\nabla
v_i|^2}{2}+\frac{1}{4\tilde\ep_i^2}(|v_i|^2-1)^2 \right\}\,dx \leq C
|\log\ep|^{3/2}|\log\ep|^{-11}.
\]
Then Lemma \ref{lemma:tilde_ep_definition} provides
\begin{equation}\label{eq:estimate_on_supp_chi}
\frac{1}{\ep^2} \int_{\{\text{supp}\chi_i\}} (|v_i|^2-1)^2 \,dx \leq
C|\log\ep|^{-8}
\end{equation}
for $i=1,2$. Next we observe that
\begin{equation}\label{eq:estimate_nabla_v_i}
\|\nabla v_i\|_{L^\infty(\{\text{supp}\chi_i\})} \leq
C\frac{|\log\ep|^{3/2}}{\ep}.
\end{equation}
This comes from the fact that $\|\nabla u_i\|_{L^\infty(\R^2)}\leq
C/\ep$, as can be seen by combining Lemmas
\ref{lemma:L_infty_bounds_u} and
\ref{lemma:lagrange_multipliers_mu}, and that $\nabla v_i=\nabla
u_i/\eta_i-u_i\nabla \eta_i/\eta_i^2$, together with estimate
\eqref{eq:eta_bound_from_below}. Suppose by contradiction that
\eqref{eq:v>1/2} does not hold, i.e. there exists $x_0\in
\{\text{supp}\chi_i\}$ such that
\[
|v_i(x_0)|\leq 1-|\log\ep|^{-1} \quad \text{as } \ep\to0.
\]
Then \eqref{eq:estimate_nabla_v_i} implies
\[
|v_i(x)|\leq 1-C|\log\ep|^{-1} \quad \text{in } B_{r_0}(x_0) \quad
\text{with } r_0=\ep|\log\ep|^{-5/2},
\]
so that
\[
\frac{1}{\ep^2} \int_{B_{r_0}(x_0)\cap\{\text{supp}\chi_i\}}
(|v_i|^2-1)^2 \,dx \geq C |\log\ep|^{-7},
\]
which contradicts \eqref{eq:estimate_on_supp_chi} for $\ep$
sufficiently small. Therefore \eqref{eq:v>1/2} is proved.
\end{proof}

\subsection{Proof of Theorem \ref{thm:nonexistence_vortices}}
We are now in position to give the proof of Theorem
\ref{thm:nonexistence_vortices}.

\begin{proof}[Proof of Theorem \ref{thm:nonexistence_vortices}]
We take $\Omega\leq \omega_0|\log\ep|-\omega_1 \log|\log\ep|$ with
$\omega_0,\omega_1$ such that \eqref{eq:Omega_condition2} holds
(recall that $F_{i,0}$ is bounded in $\R^2$). Thanks to the previous
proposition the quantity $w_i=v_i/|v_i|$ is well defined in
$\{\text{supp}\chi_i\}$ and satisfies $Jw_i=0$ (see
\cite{AftalionJerrardLetelierJFA}). Hence, we find that
\[
\begin{split}
C_j&=2\Omega \int_{\{\text{supp}\chi_j\}} \chi_j\xi_j(Jv_j-Jw_j )\,dx \\
&= 2\Omega \int_{\{\text{supp}\chi_j\}} \nabla^\perp(\chi_j\xi_j)
[(iv_j,\nabla v_j)-(iw_j,\nabla w_j)]\,dx.
\end{split}
\]
Writing $v_j=\rho_je^{i\phi_j}$ in $\{\text{supp}\chi_j\}$ we see
that $(iv_j,\nabla v_j)=\rho_j^2\nabla\phi_j$ and $(iw_j,\nabla
w_j)=\nabla\phi_j$, so that Proposition \ref{prop:v>1/2} implies
\[
|(iv_j,\nabla v_j)-(iw_j,\nabla
w_j)|=\frac{|\rho_j^2-1|}{\rho_j}|\rho_j\nabla\phi_j| \leq 2
|\rho_j^2-1||\rho_j\nabla\phi_j|  \leq
2\left||v_j|^2-1\right||\nabla v_j|.
\]
We insert it in the previous estimate to obtain
\[
\begin{split}
C_j&\leq 2\Omega \|\nabla(\chi_j\xi_j)\|_{L^\infty(\{\text{supp}\chi_j\})} \int_{\{\text{supp}\chi_j\}} 2\left||v_j|^2-1\right|\cdot|\nabla v_j|\,dx \\
&\leq 4\sqrt{2} \Omega \|\nabla(\chi_j\xi_j)\|_{L^\infty(\R^2)} \int_{\{\text{supp}\chi_j\}} \left\{ \frac{\tilde\ep_j}{2}|\nabla v_j|^2+\frac{1}{4\tilde\ep_j}\left(|v_j|^2-1\right)^2\right\}\,dx \\
&\leq C \Omega \ep|\log\ep|^{9/4} \int_{\{\text{supp}\chi_j\}}
\left\{ \frac{|\nabla
v_j|^2}{2}+\frac{1}{4\tilde\ep_j^2}\left(|v_j|^2-1\right)^2\right\}\,dx,
\end{split}
\]
where we used Lemma \ref{lemma:tilde_ep_definition} and the estimate
$\|\nabla(\chi_i\xi_i)\|_{L^\infty(\mathbb{R}^2)}\leq
C|\log\ep|^{3/2}$. We sum for $i=1,2$ and then we use the assumption
\eqref{eq:Omega_condition1}, and Lemma
\ref{lemma:estimate_on_supp_chi}, to obtain $C_1+C_2\leq
C\ep|\log\ep|^{19/4}(C_1+C_2)$, so that $C_1+C_2\leq (C_1+C_2)/2$
for $\ep$ sufficiently small. Since $C_1+C_2$ is non-negative by
Proposition \ref{prop:B_i>0}, we conclude that $C_1+C_2=0$. In turn,
relation \eqref{eq:A+B<C} and Proposition \ref{prop:B_i>0} imply
also $A_i=B_i=0$ for $i=1,2$, that is
\[
A_i=\int_{\R^2} \chi_i \left\{  \frac{\eta_i^2}{2}|\nabla v_i|^2+
\frac{\gamma}{4\ep^2}\eta_i^4(|v_i|^2-1)^2\right\}\,dx=0
\]
and (see \eqref{eq:B_i_bound_below})
\[
\int_{\R^2}(1-\chi_i)\left\{ \frac{\eta_i^2}{4}|\nabla v_i|^2+
\frac{\gamma}{4\ep^2}\eta_i^4(|v_i|^2-1)^2 \right\} \, dx=0.
\]
Therefore, we infer that $v_i$ are both constants of modulus 1 as we wanted to
prove.
\end{proof}

\appendix
\section{The scalar ground state}\label{secAppenGS}
Throughout Subsections \ref{subApprox}-\ref{subsecError}, we
have referred to the following
\begin{theorem}\label{thmGSscalar}
Assume that $a\in C^1[0,\infty)$ satisfies $a'(0)=0$, there exist
positive numbers $r_1<r_2<\cdots<r_n$ such that $a(r_i)=0$,
$a(r)\neq 0$ if $r\neq r_i$, and $(-1)^ia'(r_i)>0$, $i=1,\cdots,n$,
and $a(r)\to -\infty$ as $r\to \infty$. Assume also that
$\mu_\varepsilon \in \mathbb{R}$ satisfy $\mu_\varepsilon \to 0$ as
$\varepsilon \to 0$.

Let $A_\varepsilon=a+\mu_\varepsilon$. For sufficiently small
$\varepsilon>0$, by the implicit function theorem, there exist
$0<r_{1,\varepsilon}<r_{2,\varepsilon}<\cdots<r_{n,\varepsilon}$
such that $r_{i,\varepsilon}\to r_i$ as $\varepsilon\to 0$,
satisfying $A_\varepsilon(r_{i,\varepsilon})=0$,
$A_\varepsilon(r)\neq 0$ if $r\neq r_{i,\varepsilon}$, and
$(-1)^iA_\varepsilon'(r_{i,\varepsilon})>0$, $i=1,\cdots,n$.

If $\varepsilon>0$ is sufficiently small, there exists a positive
radially symmetric solution $\eta_\varepsilon\in C^2(\mathbb{R}^2)$
to the problem
\begin{equation}\label{eqGSscalar}
\varepsilon^2 \Delta \eta=\eta\left(\eta^2-A_\varepsilon(x)\right),\
\ x\in \mathbb{R}^2,\ \ \eta(x)\to 0\ \textrm{as}\ |x|\to \infty,
\end{equation}
such that
\begin{equation}\label{eqGSunifglob}
\|\eta_\varepsilon-\sqrt{A_\varepsilon^+}\|_{L^\infty(\mathbb{R}^2)}\leq
C\varepsilon^\frac{1}{3},
\end{equation}
and
\begin{equation}\label{eqGSpotlower}
3\eta_\varepsilon^2-A_\varepsilon\geq \left\{\begin{array}{ll}
                                                              c |r-r_{i,\varepsilon}|+c\varepsilon^\frac{2}{3}, &
                                                              \textrm{if}\ |r-r_{i,\varepsilon}|\leq
                                                               \delta, \\
                                                                 &   \\
                                                               c, &
                                                               \textrm{otherwise},
                                                             \end{array}
\right.
\end{equation}
for some $\delta \in
\left(0,\frac{1}{4}\min_{i=1,\cdots,n-1}\{r_{i+1}-r_i\} \right)$.
 More precisely,  we
have
\begin{equation}\label{eqeta1groundinner}
\eta_\varepsilon(r)=\varepsilon^\frac{1}{3}(-1)^{i+1}\beta_{i,\varepsilon}
V\left(\beta_{i,\varepsilon}
\frac{r-r_{i,\varepsilon}}{\varepsilon^\frac{2}{3}}
\right)+\left\{\begin{array}{ll}
                 \mathcal{O}\left(\varepsilon+|r-r_{i,\varepsilon}|^\frac{3}{2}\right) & \textrm{if}\ 0\leq (-1)^i(r-r_{i,\varepsilon})\leq \delta, \\
                   &   \\
                 \mathcal{O}(\varepsilon) \exp\left\{-c\frac{|r-r_{i,\varepsilon}|}{\varepsilon^\frac{2}{3}} \right\} &
                 \textrm{if} \ -\delta \leq (-1)^i(r-r_{i,\varepsilon})\leq
                 0,
               \end{array}
\right.
\end{equation}
where
\[
\beta_{i,\varepsilon}^3=-a'(r_{i,\varepsilon}),\ \ i=1,\cdots,n,
\]
and $V$ is the Hastings-McLeod solution, as described in
(\ref{eqpainleve}). Estimate (\ref{eqeta1groundinner}) can be
differentiated once to give
\begin{equation}\label{eqeta1groundinner2bis}
\eta_\varepsilon'(r)=\varepsilon^{-\frac{1}{3}}(-1)^{i+1}\beta_{i,\varepsilon}^2
V'\left(\beta_{i,\varepsilon}
\frac{r-r_{i,\varepsilon}}{\varepsilon^\frac{2}{3}}
\right)+\mathcal{O}\left(\varepsilon^\frac{1}{3}+|r-r_{i,\varepsilon}|^\frac{1}{2}\right)\
\ \textrm{if}\ |r-r_{i,\varepsilon}|\leq \delta,
\end{equation}
uniformly, as $\varepsilon\to 0$. On the other side, we have
\begin{equation}\label{eqeta1groundouter}
\eta_\varepsilon
(r)-\sqrt{A_\varepsilon(r)}=\varepsilon^2\mathcal{O}(|r-r_{i,\varepsilon}|^{-\frac{5}{2}})\
\ \textrm{if}\ \ C\varepsilon^\frac{2}{3}\leq
(-1)^i(r-r_{i,\varepsilon})\leq \delta,
\end{equation}
uniformly, as $\varepsilon\to 0$. Furthermore,
\begin{equation}\label{eqGSunifAway}
\left| \eta_\varepsilon-\sqrt{A_\varepsilon^+}\right|\leq
C\varepsilon^2\ \ \textrm{in}\ I_\delta\equiv
[0,r_{1,\varepsilon}-\delta]\cup
[r_{1,\varepsilon}+\delta,r_{2,\varepsilon}-\delta]\cup \cdots \cup
[r_{n,\varepsilon}+\delta,\infty),
\end{equation}
and
\begin{equation}\label{eqGSscalarDecay}
\eta_\varepsilon(r)\leq C
\varepsilon^\frac{1}{3}\exp\left\{-c\varepsilon^{-\frac{2}{3}}\min_{i=1,\cdots,n}|r-r_{i,\varepsilon}|\right\}\
\ \textrm{if}\  \ A_\varepsilon^+(r)=0.
\end{equation}
Moreover, if $a(x)=a(|x|)\in C^4(\mathbb{R}^2)$,
\begin{equation}\label{eqeta1groundouter'}
\eta'_\varepsilon-\left(\sqrt{A_\varepsilon}\right)'=\varepsilon^2\mathcal{O}(|r-r_{i,\varepsilon}|^{-\frac{7}{2}}),
\end{equation}
and
\begin{equation}\label{eqeta1groundouter''}
\Delta
\eta_\varepsilon-\Delta\left(\sqrt{A_\varepsilon}\right)=\varepsilon^2\mathcal{O}(|r-r_{i,\varepsilon}|^{-\frac{9}{2}}),\
\ \textrm{if}\ \ C\varepsilon^\frac{2}{3}\leq
(-1)^i(r-r_{i,\varepsilon})\leq \delta,
\end{equation}
uniformly, as $\varepsilon\to 0$, and
\begin{equation}\label{eqGSscalarBootstrap}
\|\eta_\varepsilon-\sqrt{A_\varepsilon^+}\|_{C^2(I_\delta)}\leq
C\varepsilon^2.
\end{equation}
\end{theorem}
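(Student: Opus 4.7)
The plan is to extend the one-interface theory of Theorem \ref{thmGSNoRotationScalar} (as developed in \cite{KaraliSourdisGround, GalloPelinovskyAA}) to the present multi-zero setting, building the solution $\eta_\varepsilon$ by a perturbation argument around a carefully constructed approximation. First I would build an approximate solution $\check{\eta}_\varepsilon$ globally on $[0,\infty)$ as follows: on each $\delta$-neighborhood $I_i=[r_{i,\varepsilon}-\delta,r_{i,\varepsilon}+\delta]$ of a zero, take the inner Painlev\'{e}-II profile
\[
P_i(r)=\varepsilon^{\frac{1}{3}}(-1)^{i+1}\beta_{i,\varepsilon}V\!\left(\beta_{i,\varepsilon}\frac{r-r_{i,\varepsilon}}{\varepsilon^{\frac{2}{3}}}\right),
\]
with $V$ the Hastings--McLeod solution of \eqref{eqpainleve} and $\beta_{i,\varepsilon}^3=-a'(r_{i,\varepsilon})$ (sign chosen so that $P_i>0$ on the side where $A_\varepsilon>0$); away from all the $I_i$, use $\sqrt{A_\varepsilon^+}$, which is smooth and bounded away from zero in those regions. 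Because the asymptotics of $V$ give $V(s)=\sqrt{-s}+\mathcal O(|s|^{-5/2})$ as $s\to-\infty$ and super-exponential decay as $s\to+\infty$, the inner and outer pieces match up to $\mathcal O(\varepsilon^2)$ in $C^2$ on the annular matching zones, so a standard cutoff interpolation yields a smooth positive $\check{\eta}_\varepsilon$ satisfying $\|\check{\eta}_\varepsilon-\sqrt{A_\varepsilon^+}\|_{L^\infty}\leq C\varepsilon^{1/3}$ globally.

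Next I would estimate the remainder $\mathcal{E}=-\varepsilon^2\Delta\check{\eta}_\varepsilon+\check{\eta}_\varepsilon(\check{\eta}_\varepsilon^2-A_\varepsilon)$. In the outer regions $\mathcal{E}$ is $\mathcal O(\varepsilon^2)$ since $\sqrt{A_\varepsilon^+}$ is an exact solution of the algebraic reduction and has bounded derivatives there; on each $I_i$ the Painlev\'{e} equation kills the leading singular behavior, leaving an error of size $\mathcal O(\varepsilon^{1/3})$ pointwise but only $\mathcal O(\varepsilon^{4/3})$ after weighting, which feeds into a weighted norm just as in \cite{KaraliSourdisGround}. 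The analysis of the linearized operator $\mathcal L u=-\varepsilon^2\Delta u+(3\check{\eta}_\varepsilon^2-A_\varepsilon)u$ is the crux: thanks to the linear nondegeneracy of $V$ and the fact that the Painlev\'{e} operator $-\partial_s^2+(3V^2+s)$ has strictly positive spectrum, a quadratic form computation near each $r_{i,\varepsilon}$ yields the pointwise lower bound \eqref{eqGSpotlower}; away from the $r_{i,\varepsilon}$ the potential is bounded below by $2A_\varepsilon^+\geq c>0$ in the bulk and by $|A_\varepsilon|\geq c$ in the decay regions. Splitting $\mathbb R^2$ into the various regions and running a domain-decomposition / Agmon-type energy estimate as in \cite[Prop.~3.5]{KaraliSourdisGround} then delivers invertibility of $\mathcal L$ in appropriate (weighted) $L^2$ and uniform norms, with an inverse of size $\mathcal O(\varepsilon^{-2/3})$ against right-hand sides of size $\mathcal O(\varepsilon^{1/3}\|\cdot\|_*)$.

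With these in hand, a contraction mapping argument on the equation
\[
-\mathcal L(\varphi)=3\check{\eta}_\varepsilon\varphi^2+\varphi^3+\mathcal{E}
\]
in the ball $\|\varphi\|_*\leq M\varepsilon^{5/3}$ produces a genuine solution $\eta_\varepsilon=\check{\eta}_\varepsilon+\varphi$, which is positive by a barrier/maximum-principle argument (analogous to Proposition \ref{proPositive}) using the pointwise lower bounds on $\check{\eta}_\varepsilon$ and the strict positivity of the potential beyond each $r_{i,\varepsilon}+D\varepsilon^{2/3}$. The existence, uniform estimate \eqref{eqGSunifglob}, inner expansion \eqref{eqeta1groundinner}, and decay \eqref{eqGSscalarDecay} all follow directly, while \eqref{eqeta1groundinner2bis} comes from interior $W^{2,p}$ estimates applied to the equation satisfied by $\varphi$. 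The outer algebraic estimates \eqref{eqeta1groundouter}, \eqref{eqeta1groundouter'}, \eqref{eqeta1groundouter''} are then obtained by applying the weighted-norm analysis of Subsection \ref{subsecError} (in its scalar version) on each one-sided interval $[r_{i,\varepsilon}+D\varepsilon^{2/3},r_{i,\varepsilon}+\delta]$ using the weighted a priori estimate \eqref{eqaprioriWeight}, and \eqref{eqGSunifAway}--\eqref{eqGSscalarBootstrap} follow by an iterative Agmon-type bootstrap exactly as in Proposition \ref{prounifAway}, using the $C^4$ regularity of $a$ to handle two spatial derivatives of $\sqrt{A_\varepsilon}$.

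The principal obstacle is the multiplicity of interfaces combined with the alternating signs: because $A_\varepsilon$ is positive on some intervals bordered by two zeros (say $r_{i,\varepsilon}$ and $r_{i+1,\varepsilon}$), we must glue two inner Painlev\'{e} profiles of opposite orientations against the same outer $\sqrt{A_\varepsilon^+}$ piece and verify that the remainder, the linear invertibility, and the contraction argument remain uniform in the number and position of interfaces. This is handled by localizing all estimates to one interface at a time and using the separation $r_{i+1}-r_i\geq 4\delta$, so that the Painlev\'{e} corrections centered at different $r_{i,\varepsilon}$ interact only through exponentially small errors of order $e^{-c\varepsilon^{-2/3}}$ and can therefore be absorbed into the $\mathcal O(\varepsilon^2)$ outer budget; once this decoupling is verified, the remainder of the proof is a parallel repetition of the single-interface arguments on each localized region.
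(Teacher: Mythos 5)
The paper's proof of Theorem~\ref{thmGSscalar} is largely by citation: every assertion up through (\ref{eqGSscalarDecay}) and (\ref{eqeta1groundinner2bis}) is attributed to \cite{KaraliSourdisGround}, which already treats a general potential with alternating sign changes and no radial hypothesis, and (\ref{eqGSscalarBootstrap}) is attributed to \cite{IgnatMillotJFA} plus a standard bootstrap. The only part the appendix actually proves afresh is the pair (\ref{eqeta1groundouter'})--(\ref{eqeta1groundouter''}). Your proposal instead reconstructs the scalar ground state from scratch via a Painlev\'e-matched approximation and a contraction argument; that is the method of \cite{KaraliSourdisGround} restated, which is not wrong in spirit, but it is considerably more work than the paper needs, and it misidentifies where the extra $C^4$ hypothesis actually enters.

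The genuine gap is in your treatment of (\ref{eqeta1groundouter'}) and (\ref{eqeta1groundouter''}). You claim to obtain them from the weighted a priori estimate (\ref{eqaprioriWeight}) with the $\|\cdot\|_*$-norm of (\ref{eqnormStar}). But that norm controls $\rho^{3/2}|\phi|$, $\varepsilon^2\rho^{-1/2}|\phi_r|$, $\varepsilon^2\rho^{1/2}|\phi_{rr}|$ (with $\rho=|r-r_{i,\varepsilon}|$); feeding it the natural forcing $h=\varepsilon^2\Delta\sqrt{A_\varepsilon}\sim\varepsilon^2\rho^{-3/2}$ gives $\|\rho^{1/2}h\|_{L^\infty(I)}\sim\varepsilon^{4/3}$ (attained at $\rho\sim\varepsilon^{2/3}$), hence at best $|\phi_r|\lesssim\varepsilon^{-2/3}\rho^{1/2}$ and $|\phi_{rr}|\lesssim\varepsilon^{-2/3}\rho^{-1/2}$. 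These are not even $o(1)$ at $\rho\sim\delta$, whereas (\ref{eqeta1groundouter'})--(\ref{eqeta1groundouter''}) claim $\varepsilon^2\rho^{-7/2}$ and $\varepsilon^2\rho^{-9/2}$, which are $\mathcal O(\varepsilon^2)$ there. The weighted-norm machinery was built for a different purpose (controlling the \emph{difference} between the system's true solution and the glued approximation in Proposition~\ref{prounifAlg}), not for extracting the fine algebraic profile of the scalar ground state itself. The paper's mechanism is genuinely different: one peels off the explicit first-order correction by writing
\begin{equation*}
\eta_\varepsilon-\sqrt{A_\varepsilon}=\varepsilon^2\,\frac{\Delta\sqrt{A_\varepsilon}}{2A_\varepsilon}+\phi,
\end{equation*}
shows $|\phi|\leq C\varepsilon^4$ on $[r_{i,\varepsilon}+(-1)^i\delta,r_{i,\varepsilon}+(-1)^i 2\delta]$ by the WKB expansions of \cite{dancer-lazer,liyanedinburg}, and then runs a one-sided barrier comparison (with the lower bound $\eta_\varepsilon(\eta_\varepsilon+\sqrt{A_\varepsilon})\geq c\rho$ against a residual of size $\varepsilon^4\rho^{-9/2}$) to conclude $|\phi|\leq C\varepsilon^4\rho^{-11/2}$. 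The explicit main term $\varepsilon^2\Delta\sqrt{A_\varepsilon}/(2A_\varepsilon)\sim\varepsilon^2\rho^{-5/2}$, together with its derivatives, carries the whole $\rho^{-5/2},\rho^{-7/2},\rho^{-9/2}$ cascade, and it is precisely in differentiating $\Delta\sqrt{A_\varepsilon}$ (twice) that the $C^4$ regularity of $a$ is used. You need to introduce this explicit correction term; a generic weighted a priori bound of the type (\ref{eqaprioriWeight}) will not reveal the $\rho^{-7/2}$ and $\rho^{-9/2}$ rates.
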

\begin{proof}
All the assertions up to (\ref{eqGSscalarDecay}) are essentially
contained in \cite[Thm. 1.1]{KaraliSourdisGround}, where in fact no
radial symmetry is imposed on $a(\cdot)$. Actually, relation
(\ref{eqeta1groundinner2bis})  can be proven by combining the proof
of Corollary 4.1 in \cite{KaraliSourdisGround} with relation (3.40)
therein. In passing, we note that $V'(s)<0$, $s\in \mathbb{R}$.

Let us further assume that $a(x)=a(|x|)\in C^4(\mathbb{R}^2)$. In
order to establish relations
(\ref{eqeta1groundouter'})-(\ref{eqeta1groundouter''}), we need
a refinement of (\ref{eqeta1groundouter}). Motivated from the
identity
\[
\varepsilon^2 \Delta
\left(\eta_\varepsilon-\sqrt{A_\varepsilon}\right)-
\eta_\varepsilon\left(\eta_\varepsilon+\sqrt{A_\varepsilon}
\right)\left(\eta_\varepsilon-\sqrt{A_\varepsilon}
\right)=-\varepsilon^2 \Delta \left(\sqrt{A_\varepsilon}\right)
\]
if $C\varepsilon^\frac{2}{3}\leq (-1)^i(r-r_{i,\varepsilon})\leq
\delta$ (see also \cite[Prop. 2.1]{IgnatMillotJFA}), we let
\begin{equation}\label{eqerroreta1fluctuation}
\eta_\varepsilon-\sqrt{A_\varepsilon}=\varepsilon^2 \frac{\Delta
\left(\sqrt{A_\varepsilon}\right)}{2A_\varepsilon}+\phi\ \
\textrm{if}\ \ C\varepsilon^\frac{2}{3}\leq
(-1)^i(r-r_{i,\varepsilon})\leq \delta,
\end{equation}
for some fluctuation function $\phi$. Pushing further the analysis
in \cite[Thm. 2.1]{dancer-lazer} or \cite[Thm. 1.1]{liyanedinburg},
it can be shown that
\begin{equation}\label{eqerrorDanceYan}
|\phi(r)|\leq C\varepsilon^4\ \ \textrm{if}\ \delta \leq
(-1)^i(r-r_{i,\varepsilon})\leq 2\delta.
\end{equation}
 Making use of
(\ref{eqeta1groundouter}), and recalling that
\begin{equation}\label{eqa1eps'}A_\varepsilon'(r_{i,\varepsilon})=a'(r_{i,\varepsilon})\to -c_i<0\ \
\textrm{as}\ \ \varepsilon\to 0,\end{equation} it follows readily
that
\[
\varepsilon^2 \Delta \phi-\eta_\varepsilon\left(\eta_\varepsilon
+\sqrt{A_\varepsilon}\right)\phi=\varepsilon^4
\mathcal{O}\left(|r-r_{i,\varepsilon}|^{-\frac{9}{2}} \right) \
\textrm{if}\ \  C\varepsilon^\frac{2}{3}\leq
(-1)^i(r-r_{i,\varepsilon})\leq \delta,
\]
uniformly, as $\varepsilon\to 0$. Since
\[\eta_{\varepsilon}\left(\eta_\varepsilon+\sqrt{A_\varepsilon}
\right)\geq c |r-r_{i,\varepsilon}|\ \ \textrm{if}\ \
C\varepsilon^\frac{2}{3}\leq (-1)^i(r-r_{i,\varepsilon})\leq
\delta,\] (from (\ref{eqeta1groundouter}) and (\ref{eqa1eps'})), a
standard comparison argument yields that
\[
|\phi(r)|\leq C\varepsilon^4 |r-r_{i,\varepsilon}|^{-\frac{11}{2}},\
\ C\varepsilon^\frac{2}{3}\leq (-1)^i(r-r_{i,\varepsilon})\leq
\delta,
\]
where we have also used that
\[
\left|\phi\left(r_{i,\varepsilon}+(-1)^i\delta\right)\right|\leq
C\varepsilon^4\ \ \textrm{and}\ \
\left|\phi\left(r_{i,\varepsilon}+(-1)^iC\varepsilon^\frac{2}{3}\right)\right|\leq
C\varepsilon^\frac{1}{3},\] which follow from
(\ref{eqerrorDanceYan}) and (\ref{eqeta1groundouter}) respectively;
one plainly uses barriers of the form $\pm M
\varepsilon^4|r-r_{i,\varepsilon}|^{-\frac{11}{2}}$ with $M$ chosen
sufficiently large, see also \cite[Lem. 2.1]{GalloPelinovskyAA} or
\cite[Lem. 3.10]{karaliSourdisPoincare} for related arguments when
the problem is independent of $\varepsilon$ (for the present
argument to work it is crucial that $|r-r_{i,\varepsilon}|\geq
\varepsilon^\frac{2}{3}$). Consequently, recalling
(\ref{eqerroreta1fluctuation}), we have shown the following
refinement of (\ref{eqeta1groundouter}):
\[
\eta_\varepsilon-\sqrt{A_{\varepsilon}}=\varepsilon^2 \frac{\Delta
\left(\sqrt{A_\varepsilon}\right)}{2A_\varepsilon}+\varepsilon^4\mathcal{O}\left(|r-r_{i,\varepsilon}|^{-\frac{11}{2}}\right),
\]
uniformly if $C\varepsilon^\frac{2}{3}\leq
(-1)^i(r-r_{i,\varepsilon})\leq \delta$, as $\varepsilon\to 0$,
which complements (\ref{eqerrorDanceYan}). In turn, via
(\ref{eqGSscalar}) and some straightforward calculations, this can
be shown to imply (\ref{eqeta1groundouter''}). Equivalently, we have
that
\[
\left(r(\eta_\varepsilon-\sqrt{A_\varepsilon})'
\right)'=\varepsilon^2\mathcal{O}\left(|r-r_{i,\varepsilon}|^{-\frac{9}{2}}\right),
\]
if $C\varepsilon^\frac{2}{3}\leq (-1)^i(r-r_{i,\varepsilon})\leq
\delta$. Integrating the above identity from
$r_{i,\varepsilon}+(-1)^i\delta$ to
$r_{i,\varepsilon}+(-1)^iC\varepsilon^\frac{2}{3}$, and using that
$\left(\eta_\varepsilon-\sqrt{A_\varepsilon}\right)'\left(r_{i,\varepsilon}+(-1)^i\delta\right)=\mathcal{O}(\varepsilon^2)$
as $\varepsilon\to 0$ (from \cite[Prop. 2.1]{IgnatMillotJFA}), we
arrive at (\ref{eqeta1groundouter'}). Finally, relation
(\ref{eqGSscalarBootstrap}) is shown in \cite[Prop.
2.1]{IgnatMillotJFA} to hold in  the $C^1$-topology but their proof
carries over to yield the same estimate in $C^m$, $m\geq 2$, via a
standard bootstrap argument (as in \cite[Thm. 1]{bethuelCVPDE}),
provided that the coefficients in the equation are sufficiently
smooth.\end{proof}

\section{Proof of the technical estimate (\ref{eqaprioriWeight}) in Proposition
\ref{prounifAlg}}\label{ApAlg} Here we present the
\begin{proof}[Proof of (\ref{eqaprioriWeight})] Suppose that $\phi$ satisfies
(\ref{eqAlgLph=h}) for some $h\in C(\bar{I})$.

Firstly, we  establish (\ref{eqAlgUnifK}). Let
\[
\Phi=\rho^\frac{3}{2}\phi.
\]
It is easy to see that $\Phi$ satisfies
\begin{equation}\label{eqAlgApPhiEq}
-\varepsilon^2\Phi_{rr}-\varepsilon^2\left(3\rho^{-1}+\frac{1}{r}
\right)\Phi_r+Q(r)\Phi=\rho^\frac{3}{2}h,\ \ r\in I;\ \  \Phi=0\
\textrm{on} \ \partial I,
\end{equation}
 where
\[
Q(r)=\left(g_1-\frac{g^2}{g_2}
\right)(3\hat{\eta}_{1,\varepsilon}^2-a_{1,\varepsilon})-\frac{15}{4}\varepsilon^2\rho^{-2}-\frac{3}{2r}\varepsilon^2\rho^{-1}.
\]
Observe that, thanks to the lower bound in (\ref{eqAlgLowerk}), we
have
\begin{equation}\label{eqAlgApQlo}
Q(r)\geq \rho
\left(k-\frac{15}{4}\varepsilon^2\rho^{-3}-K\varepsilon^2\rho^{-2}
\right)\geq \rho(k-KD_j^{-3})\geq k \rho,
\end{equation}
provided that $D_j$ is sufficiently large. We may assume, without
loss of generality, that $\Phi,h\geq 0$ (by writing $h=h^+-h^-$ if
necessary). If $\Phi$ attains its maximum value at a point $r_0\in
I$, then $\Phi_{rr}(r_0)\leq 0$ and $\Phi_r(r_0)=0$. So, letting
$\rho_0=R_{1,\varepsilon}-r_0$, via (\ref{eqAlgApPhiEq}) and
(\ref{eqAlgApQlo}), we obtain that
\[
k\rho_0 \Phi(r_0)\leq \rho_0^\frac{3}{2} h(r_0),
\]
i.e., $\Phi(r_0)\leq K\|\rho^\frac{1}{2} h\|_{L^\infty(I)}$ which
clearly implies the validity of (\ref{eqAlgUnifK}).

By (\ref{eqAlgLph=h}), the upper bound in (\ref{eqAlgLowerk}), and
(\ref{eqAlgUnifK}),  we find that
\begin{equation}\label{eqAlgAp0}
\varepsilon^2\|\rho^\frac{1}{2}\Delta \phi\|_{L^\infty(I)}\leq K
\|\rho^\frac{1}{2}h\|_{L^\infty(I)}.
\end{equation}
From this, we  derive a pointwise estimate for $\phi_r$ by
making use of the identity
\begin{equation}\label{eqAlgAp1}
r\phi_r(r)-r_0\phi_r(r_0)=\int_{r_0}^{r}s\Delta \phi ds,\ \ \forall\
r_0,r\in I.
\end{equation}
We can choose $r_0\in
(R_{1,\varepsilon}-2D_j\varepsilon^\frac{2}{3},R_{1,\varepsilon}-D_j\varepsilon^\frac{2}{3})$
such that
\[
\phi_r(r_0)=\frac{\phi(R_{1,\varepsilon}-D_j\varepsilon^\frac{2}{3})-\phi(R_{1,\varepsilon}-2D_j\varepsilon^\frac{2}{3})}{D_j\varepsilon^\frac{2}{3}}.
\]
It follows from (\ref{eqAlgUnifK}) that \[\left|
\phi_r(r_0)\right|\leq
K\varepsilon^{-\frac{5}{3}}\|\rho^\frac{1}{2}h\|_{L^\infty(I)}.\] In
turn, via (\ref{eqAlgAp0}) and (\ref{eqAlgAp1}), we get that
\[
\begin{array}{rcl}
  \left|\phi_r(r)\right| & \leq  & K\varepsilon^{-\frac{5}{3}}\|\rho^\frac{1}{2}h\|_{L^\infty(I)}
  +\|\rho^\frac{1}{2}\Delta
  \phi\|_{L^\infty(I)}\left|\int_{r_0}^{r}(R_{1,\varepsilon}-s)^{-\frac{1}{2}}ds\right|
   \\
   &  &  \\
    &  \leq &
    K\varepsilon^{-\frac{5}{3}}\|\rho^\frac{1}{2}h\|_{L^\infty(I)}+K\varepsilon^{-2}\|\rho^\frac{1}{2}h\|_{L^\infty(I)}
    \left|\rho_0^\frac{1}{2}-\rho^\frac{1}{2} \right|,
\end{array}
\]
$r\in I$. Hence, since $\rho\geq D_j \varepsilon^\frac{2}{3}$ and
$D_j \varepsilon^\frac{2}{3} \leq \rho_0\leq 2D_j
\varepsilon^\frac{2}{3}$, we infer that
\[
\rho^{-\frac{1}{2}}\left|\phi_r(r) \right|\leq
K\varepsilon^{-2}\|\rho^\frac{1}{2}h\|_{L^\infty(I)},\ \ r\in I.
\]
Now, the desired estimate (\ref{eqaprioriWeight}) follows by
combining (\ref{eqAlgUnifK}), (\ref{eqAlgAp0}) and the above
relation.
\end{proof}

\end{document}